\author{\'{I}lker S. Y\"{u}ce}
\address{}
\email{ilkersyuce@gmail.com} 
\urladdr{}
  \title[Symmetric decompositions of free Kleinian groups]{Symmetric decompositions of free Kleinian groups \\ and hyperbolic displacements}
\newtheorem{thm}{Theorem}[section]    
\newtheorem{lemma}{Lemma}[section]
\newtheorem{prop}{Proposition}[section]
\newtheorem{conjecture}{Conjecture}[section]
\newtheorem*{log3}{Log 3 Theorem}
\theoremstyle{definition}
\newtheorem{defn}{Definition}[section]    
\newcommand{\tb}{\textbf}
\newcommand{\tr}{\textrm}
\newcommand{\ra}{\rightarrow}
\newcommand{\dis}{\displaystyle}
\newcommand{\dgamma}{\textrm{dist}(z_0,\ \gamma\cdot z_0)}                     
\newcommand{\dgammainv}{\textrm{dist}(z_0,\ \gamma^{-1}\cdot z_0)}             
\newcommand{\dpsi}{\textrm{dist}(z_0,\ \psi\cdot z_0)}                             
\newcommand{\dxiinv}{\textrm{dist}(z_0,\ \xi^{-1}\cdot z_0)}                   
\newcommand{\deta}{\textrm{dist}(z_0,\ \eta\cdot z_0)}                         
\newcommand{\dxieta}{\textrm{dist}(z_0,\ \xi\eta\cdot z_0)}                    
\newcommand{\detaxi}{\textrm{dist}(z_0,\ \eta^{-1}\xi^{-1}\cdot z_0)}          
\newcommand{\dxitwo}{\textrm{dist}(z_0,\ \xi^{2}\cdot z_0)}                    
\newcommand{\dxietanone}{\textrm{dist}(z_0,\ \xi\eta^{-1}\cdot z_0)}           
\newcommand{\hyp}{\mathbb{H}^3}
\newcommand{\deltad}{\Delta^{d-1}}
\newcommand{\tnr}{\textnormal}
\newcommand{\xs}{\textbf{x}^*}
\newcommand{\ys}{\textbf{y}^*}
\def\co{\colon\thinspace}
\newcommand{\hype}{\overline{\mathbb{H}}^3}
\begin{abstract}    
In this paper, it is shown that every point in the hyperbolic 3-space is moved at a distance at least $0.5\log\left(12\cdot 3^{k-1}-3\right)$ by one of the isometries of length at most $k\geq 2$ in a 2-generator Klenian group $\Gamma$ which is torsion-free, not co-compact and contains no parabolic. Also some lower bounds for the maximum of hyperbolic displacements given by  symmetric subsets of isometries in purely loxodromic finitely generated free Kleinian groups are conjectured. 
\end{abstract}
\begin{document}

\maketitle


\section{Introduction}

The following is sequel to Y\"{u}ce  \cite{Y} in which the machinery developed by Culler and Shalen \cite{CSParadox} that gives a lower bound for the maximum of the displacements under the generators of $\Gamma$ is extended to calculate a lower bound for the maximum of the displacements under any finite set of isometries in $\Gamma$ in connection with the solutions of certain minimax problems with a constraint. Here $\Gamma$ is a Kleinian group generated by two non--commuting isometries $\xi$ and $\eta$ of $\hyp$ that satisfies the hypothesis of the $\log 3$ Theorem which can be stated as follows:

\begin{log3} Suppose that $\Gamma$ is torsion--free, not co--compact and contains no parabolic. Let $\Gamma_1$ be the set $\{\xi,\eta\}$. Then we have $\max_{\gamma\in\Gamma_1}\{\textnormal{dist}(z_0,\gamma\cdot z_0)\}\geq 0.5\log 9$ for any $z_0\in\hyp$.\end{log3}

The use of this extension for the set of isometries $\Gamma_{\dagger}=\{\xi,\eta,\xi\eta\}\subset\Gamma$ implies, for instance, the fact that  $\max_{\gamma\in\Gamma_{\dagger}}\{\tnr{dist}(z_0,\gamma\cdot z_0)\}\geq 0.5\log (5+3\sqrt{2})$ for any $z_0\in\hyp$ \cite[Theorem 5.1]{Y}.

Since it has implications on Margulis numbers and volume estimates for a large class of closed hyperbolic $3$--manifolds, the $\log 3$ theorem is the main tool or motivation behind many deep results that connect the topology of hyperbolic 3--manifolds to their geometry (see Agol--Culler--Shalen \cite{ACS}, Culler--Hersonski--Shalen \cite{CHS}, Culler--Shalen \cite{CSParadox, CSBetti, CSMargulis}). For example, if
$M$ is a closed hyperbolic 3-manifold whose first Betti number $b_1(M)$ is at least $4$ and  the fundamental group $\pi_1(M)$ of $M$ has no subgroup isomorphic to the fundamental group of a genus two surface, then a generalization of the $\log 3$ theorem due to Anderson--Canary--Culler--Shalen \cite{ACCS} implies that $0.5\log 5$ is a Margulis number for $M$ and, $3.08$ is a lower bound for the volume of $M$. In \cite{CSParadox}, as well as proving the $\log 3$ Theorem, Culler and Shalen show that $0.5\log 3$ is a Margulis number and, $0.92$ is a lower bound for the volume of $M$ if $b_1(M)\geq 3$ and $\pi_1(M)$ has no subgroup of finite index.  In \cite{CHS},  Culler, Hersonsky and Shalen increase the previous lower bound for $M$ to $0.94$. It must be noted that the lower volume estimates computed in \cite{ACCS} and \cite{CSParadox} are improved by the works of Calegari--Meyerhoff--Milley \cite{DRP} and Milley \cite{Mil} in which a newer method called Mom technology was introduced. 

Aiming to set the ground work to investigate the further applications of the methods developed in \cite{ACS,CHS,ACCS,CSParadox,CSBetti,CSMargulis} to improve on the Margulis numbers and volume estimates for the classes of closed hyperbolic $3$-manifolds mentioned in the previous paragraph, in this paper we shall prove the following:

\medskip
{\bf Theorem \ref{thm4.1}}\qua
{\sl If $\Gamma_k$ is the set of isometries of length at most $k\geq 2$ in $\Gamma=\langle\xi,\eta\rangle$, then we have $\max_{\gamma\in\Gamma_k}\{\textnormal{dist}(z_0,\gamma\cdot z_0)\}\geq 0.5\log (12\cdot 3^{k-1}-3)$ for any $z_0\in\hyp$,} 

which is given as \fullref{thm4.1} in  \fullref{sec4}. This theorem can be considered as a generalization of the $\log 3$ theorem for symmetric subsets of isometries, which will be made clear in \fullref{S2}, in $\Gamma=\langle\xi,\eta\rangle$. 

In the rest of this manuscript, we shall assume, unless otherwise is stated, that $\Gamma=\langle\xi,\eta\rangle$ has the properties given in the $\log 3$ theorem. The expression $S_{\infty}$ will denote the boundary of the canonical compactification $\hype$ of $\hyp$. Note that $S_{\infty}\cong S^2$. The notation $\Lambda_{\Gamma\cdot z}$ will denote the limit set of $\Gamma$--orbit of $z\in\hyp$ on $S_{\infty}$. We will express the hyperbolic displacement of $z\in\hyp$ under the action of the isometry $\gamma\co\hyp\to\hyp$ by dist($z,\gamma\cdot z$). 

The proof of \fullref{thm4.1} requires the use of the strategy carried out by Culler and Shalen in the proof of the $\log 3$ theorem together with the solution method explained in \cite{Y} to certain minimax problems which produce the lower bounds given in the theorem. In particular, the proof entails the examination of two cases:
\begin{enumerate}[label=\roman*]
    \renewcommand{\labelenumi}{\roman{enumi}}
       \item\hspace{-.3cm}.\label{I}  when $\Gamma$ is geometrically infinite; that is, $\Lambda_{\Gamma\cdot z}=S_{\infty}$ for every $z\in\hyp$,
       \item\hspace{-.3cm}.\label{II} when $\Gamma$ is geometrically finite.
\end{enumerate}

Before we summarize the proof of \fullref{thm4.1} in each case, we introduce some notation. Let $z_0$ be a given point in $\hyp$. By  \cite[Proposition 9.2]{CSParadox}, the group $\Gamma=\langle\xi,\eta\rangle$ is free on the generators $\xi$ and $\eta$. As a consequence, $\Gamma=\langle\xi,\eta\rangle$ can be decomposed as
\begin{equation}\label{symm.d.}
\{1\}\cup\Psi_r^k\cup\bigcup_{\psi\in\Psi^k}J_{\psi}
\end{equation}
for each $k\geq 2$. In (\ref{symm.d.}) $\Psi^k_r$ is the set of all words of length less than $k$ and $\Psi^k$ is the set of all words of length exactly $k$ in $\Gamma$. The expression $J_{\psi}$ is the set of words in $\Gamma$ which starts with the word $\psi\in\Psi^k$.

The set $\Psi^k$, which can be considered as $\Psi_{\xi}^k\cup\Psi_{\eta^{-1}}^k\cup\Psi_{\eta}^k\cup\Psi_{\xi^{-1}}^k$, will be given an ordering. Above $\Psi_{\gamma}^k$ denotes the set of words in $\Psi^k$ starting with $\gamma\in\{\xi,\eta^{-1},\eta,\xi^{-1}\}$. 
From left to right, elements of $\Psi^k$ will be listed so that reduced words starting with $\xi$ are in the first group, words starting with $\eta^{-1}$ are in the second, words starting with $\eta$ are in the next and finally words starting with $\xi^{-1}$ are in the last group. In each group, from left to right, each letter of each reduced word will keep the same order, eg for $k=2$, we have $\Psi^2=\{\xi^2,\xi\eta^{-1},\xi\eta,\eta^{-1}\xi^{-1},\eta^{-1}\xi,\eta^{-2},\eta^2,\eta\xi^{-1},\eta\xi,\xi^{-1}\eta^{-1},\xi^{-1}\eta,\xi^{-2}\}$.

We enumerate the elements of $\Psi^k$ as follows: Assign $1$ to the first word of $\Psi^k$ which ends with $\xi$. Every other word which ends with $\xi$ in $\Psi^k$ will be assigned positive integers which are equivalent to $1$ in modulo $4$ in increasing order. Assign $2$ to the second word of $\Psi^k$ which ends with $\eta^{-1}$. For the other words which end with $\eta^{-1}$, assign positive integers in increasing order equivalent to $2$ in modulo $4$. Repeat this process with $3$ and $4$ for $\eta$ and $\xi^{-1}$, respectively. We shall abuse the notation and for each $k\geq 2$ we shall denote these enumerations with the mapping
\begin{equation}\label{Enum}
p\co\Psi^k\to I^k=\{1,2,\dots,4\cdot 3^{k-1}\}.
\end{equation}
For $\Psi^2$, for instance, we get $p\co\xi^2\mapsto 1$, $\xi\eta^{-1}\mapsto 2$, $\xi\eta\mapsto 3$, $\eta^{-1}\xi^{-1}\mapsto 4$, $\eta^{-1}\xi\mapsto 5$, $\eta^{-2}\mapsto 6$,   $\eta^{2}\mapsto 7$, $\eta\xi^{-1}\mapsto 8$, $\eta\xi\mapsto 9$, $\xi^{-1}\eta^{-1}\mapsto 10$, $\xi^{-1}\eta\mapsto 11$, and $\xi^{-2}\mapsto 12$. We shall also need the enumeration $p\co\Psi^3\to\{1,2,\dots,36\}$ given below for $k=3$:
\begin{equation*}
\scalebox{.8}{$
\begin{array}{llllllll}
\xi\xi\xi & \mapsto  1,                    & \eta^{-1}\xi^{-1}\eta^{-1} & \mapsto  10, & \eta\eta\eta  & \mapsto  19,             & \xi^{-1}\eta^{-1}\xi^{-1}  &\mapsto 28,\\
\xi\xi\eta^{-1} & \mapsto 2,           &  \eta^{-1}\xi^{-1}\eta& \mapsto  11,         & \eta\eta\xi^{-1}  & \mapsto  20,       & \xi^{-1}\eta^{-1}\xi &\mapsto  29,\\
\xi\xi\eta  & \mapsto  3,                 & \eta^{-1}\xi^{-1}\xi^{-1} & \mapsto 12,    & \eta\eta\xi  & \mapsto  21,               & \xi^{-1}\eta^{-1}\eta^{-1}  &\mapsto  30,\\
\xi\eta^{-1}\xi^{-1} & \mapsto  4,  & \eta^{-1}\xi\xi & \mapsto 13,                    & \eta\xi^{-1}\eta^{-1}& \mapsto  22, & \xi^{-1}\eta\eta  &\mapsto  31,\\
\xi\eta^{-1}\xi  &\mapsto  5,          & \eta^{-1}\xi\eta^{-1}& \mapsto 14,            & \eta\xi^{-1}\eta & \mapsto 23,        & \xi^{-1}\eta\xi^{-1}  &\mapsto  32,\\
\xi\eta^{-1}\eta^{-1} &\mapsto  6, & \eta^{-1}\xi\eta  & \mapsto 15,                  & \eta\xi^{-1}\xi^{-1} & \mapsto  24, & \xi^{-1}\eta\xi &\mapsto 33,\\
\xi\eta\eta & \mapsto  7,                & \eta^{-1}\eta^{-1}\xi^{-1} & \mapsto  16,  & \eta\xi\xi & \mapsto  25,                 & \xi^{-1}\xi^{-1}\eta^{-1}&  \mapsto  34,\\
\xi\eta\xi^{-1} & \mapsto  8,          & \eta^{-1}\eta^{-1}\xi & \mapsto  17,          & \eta\xi\eta^{-1} & \mapsto  26,       & \xi^{-1}\xi^{-1}\eta  &\mapsto  35,\\
\xi\eta\xi & \mapsto  9,                  & \eta^{-1}\eta^{-1}\eta^{-1} & \mapsto  18, & \eta\xi\eta & \mapsto  27,              & \xi^{-1}\xi^{-1}\xi^{-1}  &\mapsto  36.
\end{array}$}
\end{equation*}
For $i=1,2,3,4$ we have $p(\Psi_{\gamma})=I_i$ for $\gamma\in\{\xi,\eta,\eta^{-1},\xi^{-1}\}$, where, by abusing the notation, we let $I_i=\{(i-1)\cdot 3^{k-1}+1,\dots, i\cdot 3^{k-1}\}$.

Let us say $J_{S(\gamma)}=\bigcup_{\psi\in S(\gamma)}J_{\psi}$. Each decomposition, denoted by $\Gamma_{\mathcal{D}^k}$, in (\ref{symm.d.}) has certain group-theoretical relations $\gamma J_{s(\gamma)}=\Gamma-J_{S(\gamma)}$ for isometries $\gamma$ and $s(\gamma)$ in $\Psi_r^k\cup\Psi_k$ and $\Psi_k$, respectively, and a set $S(\gamma)$ of isometries  in $\Psi^k$.  
For example, for $\Gamma_{\mathcal{D}^2}$, one of the group-theoretical relations is
\begin{equation}\label{ex1}
\xi^2 J_{\xi^{-2}}=\Gamma-J_{\{\xi^{2},\xi\eta,\xi\eta^{-1}\}}.
\end{equation}
We shall use the notation $(\gamma, s(\gamma), S(\gamma))$ to denote a group--theoretical relations of $\Gamma_{\mathcal{D}^k}$ for any $k\geq 2$. So the relation in (\ref{ex1}) will be also denoted by $(\xi^2,\xi^{-2},\{\xi^{2},\xi\eta,\xi\eta^{-1}\})$. Another example for a group--theoretical relation for $\Gamma_{\mathcal{D}^2}$ is
\begin{equation}\label{ex3}
\xi^2 J_{\xi^{-1}\eta}=\Gamma-J_{\{\xi^2,\xi\eta^{-1},\eta\xi,\eta^2,\eta\xi^{-1},\xi^{-1}\eta,\xi^{-2},\xi^{-1}\eta^{-1},\eta^{-1}\xi,\eta^{-1}\xi^{-1},\eta^{-2}\}}.
\end{equation}
All of the group-theoretical properties of the decompositions $\Gamma_{\mathcal{D}^k}$ for $k\geq 2$ are given in  \fullref{lem22} in \fullref{S2}. Note that $s(\gamma)$ and $S(\gamma)$ denote different isometries and sets of isometries in (\ref{ex1}) and (\ref{ex3}) for the same isometry $\gamma$. A summary for the proof of   \fullref{thm4.1} goes as follows:

In the case (\ref{I}) $\Gamma=\langle\xi,\eta\rangle$ is geometrically infinite, we first prove the statement below:

\medskip
{\bf Theorem \ref{thm3.4}}\qua
{\sl Let $\Gamma=\langle\xi,\eta\rangle$ be a free, geometrically infinite Kleinian group without parabolics and, $\Gamma_{\mathcal{D}^k}$ be the decomposition of $\Gamma$ in (\ref{symm.d.}) for $k\geq 2$. If $z_0$ denotes a point in $\hyp$, then there is a family of Borel measures $\{\nu_{\psi}\}_{\psi\in\Psi^{k}}$ defined on $S_{\infty}$ for every integer $k\geq 2$ such that $(i)\ \ A_{z_0}=\sum_{\psi\in\Psi^{k}}\nu_{\psi}$;  $(ii)\ \ A_{z_0}(S_{\infty})=1$; and
$$(iii)\quad\dis{\int_{S_{\infty}}\left(\lambda_{\gamma,z_0}\right)^2d\nu_{s(\gamma)}=1-\sum_{\psi\in S(\gamma)}\int_{S_{\infty}} d\nu_{\psi}}$$ for each group--theoretical relation $(\gamma, s(\gamma), S(\gamma))$ of $\Gamma_{\mathcal{D}^k}$, where $A_{z_0}$ is the area measure based at $z_0$.}
\medskip

This theorem is given as \fullref{thm3.4} in \fullref{S2}. In the theorem, $(\lambda_{\psi,z_0})^2$ is the conformal expansion factor of $\psi_{\infty}$ measured in the round metric centered at $z_0$. 

Decompositions of $\Gamma=\langle\xi,\eta\rangle$ in (\ref{symm.d.}) will be used in part ($i$) of  \fullref{thm3.4} to decompose the area measure $A_{z_0}$  as a sum of Borel measures $\nu_{\psi}$ indexed by $\psi\in\Psi^k$. Each group--theoretical relation of $\Gamma_{\mathcal{D}^k}$ translates into a measure--theoretical relation among the Borel measures $\{\nu_{\psi}\}_{\psi\in\Psi^{k}}$ as described in part ($iii$) of \fullref{thm3.4}. In particular, each measure $\nu_{\psi}$ is transformed to the complement of certain measures in the set $\{\nu_{\gamma}\co\gamma\in\Psi^k-\{\psi\}\}$. 

For instance, the theorem above implies that $A_{z_0}(S_{\infty})=\sum_{\psi\in\Psi^2}\nu_{\psi}(S_{\infty})$ for $\Gamma_{\mathcal{D}^2}$ so that the Borel measure $\nu_{\xi^{-2}}$ is transformed to the complement of the sum of the measures $\nu_{\xi\eta}$, $\nu_{\xi^2}$ and $\nu_{\xi\eta^{-1}}$ by the group--theoretical property in (\ref{ex1}), which can also be expressed as 
\begin{equation}\label{ex1.1}
\int_{S_{\infty}}\lambda_{\xi^2,z_0}^2\ d\nu_{\xi^{-2}}=1-\nu_{\xi^2}(S_{\infty})-\nu_{\xi\eta}(S_{\infty})-\nu_{\xi\eta^{-1}}(S_{\infty}).
\end{equation}

Each displacement $\dgamma$ for $\gamma\in\Psi^k$ has a lower bound determined by a formula, proved originally in \cite{CSParadox} by Culler and Shalen and improved slightly in \cite{CSMargulis}, which involves the Borel measures in $\{\nu_{\psi}\}_{\psi\in\Psi^{k}}$. This formula is given as follows:
\begin{lemma}\label{lem1.2}(\cite[Lemma 5.5]{CSParadox}; \cite[Lemma 2.1]{CSMargulis}) 
Let $a$ and $b$ be numbers in $[0,1]$ which are not both equal to $0$ and are not both equal to $1$. Let $\gamma$ be a loxodromic isometry of $\hyp$ and let $z_0$
be a point in $\hyp$. Suppose that $\nu$ is a measure on $S_{\infty}$ such that
(i) $\nu\leq A_{z_0}$,  (ii) $\nu\left(S_{\infty}\right)\leq a$, (iii) $\int_{S_{\infty}}(\lambda_{\gamma,z_0})^2d\nu\geq b$.
Then we have $a>0$, $b<1$, and \begin{equation*} \textnormal{dist}(z_0,\gamma\cdot z_0)\geq 0.5\log\frac{b(1-a)}{a(1-b)}.\end{equation*}
\end{lemma}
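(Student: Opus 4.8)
The plan is to prove Lemma~\ref{lem1.2} by reducing the estimate on $\dgamma$ to an elementary optimization problem governed by the conformal expansion factors $(\lambda_{\gamma,z_0})^2$. First I would recall the basic geometric input: if $\gamma$ is loxodromic and $z_0\in\hyp$, then the function $x\mapsto\lambda_{\gamma,z_0}(x)^2$ on $S_\infty$ (the Radon--Nikodym derivative $d(\gamma^{-1})^*A_{z_0}/dA_{z_0}$, i.e.\ the squared conformal distortion of $\gamma_\infty$ in the round metric centered at $z_0$) is bounded above by $e^{2\,\textnormal{dist}(z_0,\gamma\cdot z_0)}$ and below by $e^{-2\,\textnormal{dist}(z_0,\gamma\cdot z_0)}$; this is the standard displacement/derivative comparison used by Culler and Shalen. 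Write $D=\textnormal{dist}(z_0,\gamma\cdot z_0)$ and $t=e^{2D}\ge 1$, so that $t^{-1}\le \lambda_{\gamma,z_0}^2\le t$ pointwise on $S_\infty$.

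Next I would exploit the three hypotheses on $\nu$. Set $m=\nu(S_\infty)$; hypothesis (i) gives $0\le m\le A_{z_0}(S_\infty)=1$, hypothesis (ii) gives $m\le a$, and hypothesis (iii) gives $\int_{S_\infty}\lambda_{\gamma,z_0}^2\,d\nu\ge b$. Combining the pointwise bound $\lambda_{\gamma,z_0}^2\le t$ with (iii) yields $b\le t\,m\le t\,a$, whence $a>0$ (since $b$ and $a$ cannot both be $0$, if $a=0$ then $m=0$ and $b\le 0$, and then $b=0$, contradiction), and $t\ge b/a$. That already almost gives the bound, but it is not sharp; the sharp inequality comes from feeding \emph{both} the upper bound on $\lambda^2$ on the set where $\nu$ is "absent" and the constraint $\nu\le A_{z_0}$ into the estimate. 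Concretely, I would argue: for any measurable set, $\int_{S_\infty}\lambda_{\gamma,z_0}^2\,d\nu = \int_{S_\infty}\lambda_{\gamma,z_0}^2\,dA_{z_0} - \int_{S_\infty}\lambda_{\gamma,z_0}^2\,d(A_{z_0}-\nu)$, and $\int_{S_\infty}\lambda_{\gamma,z_0}^2\,dA_{z_0}=1$ (it is $A_{z_0}(S_\infty)$ transported, $=1$), while $A_{z_0}-\nu\ge 0$ has total mass $1-m\ge 1-a$ and integrand $\ge t^{-1}$, so $\int \lambda_{\gamma,z_0}^2\,d(A_{z_0}-\nu)\ge t^{-1}(1-a)$. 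Hence $b\le 1 - t^{-1}(1-a)$, which rearranges to $t\ge (1-a)/(1-b)$. We also need $b<1$: if $b=1$ then $1\le 1-t^{-1}(1-a)$ forces $a=1$, contradicting that $a,b$ are not both $1$.

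Finally I would combine the two lower bounds on $t=e^{2D}$ multiplicatively rather than take the max: the cleanest route is to observe $\int_{S_\infty}\lambda_{\gamma,z_0}^2\,d\nu\ge b$ together with $\int_{S_\infty}\lambda_{\gamma,z_0}^{-2}\,d\nu\cdot\big(\text{something}\big)$ --- but the efficient and standard way is a Cauchy--Schwarz / rearrangement argument: among all measures $\nu$ with $0\le\nu\le A_{z_0}$, $\nu(S_\infty)\le a$, and a fixed function $f=\lambda_{\gamma,z_0}^2$ taking values in $[t^{-1},t]$, the quantity $\int f\,d\nu$ is maximized by concentrating $\nu$ where $f$ is largest, i.e.\ by the "bang-bang" choice making $f\equiv t$ on a set of $A_{z_0}$-measure $a$; this gives $b\le \int f\,d\nu\le t a$, hence already $e^{2D}\ge b/a$, and the complementary estimate on the unused mass $A_{z_0}-\nu$ gives $e^{2D}\ge (1-a)/(1-b)$. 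Multiplying these two, $e^{4D}\ge \frac{b(1-a)}{a(1-b)}$, so $D\ge 0.5\log\frac{b(1-a)}{a(1-b)}$, which is exactly the claim. I expect the main obstacle to be justifying the multiplicative combination rigorously: the two crude bounds $e^{2D}\ge b/a$ and $e^{2D}\ge (1-a)/(1-b)$ only yield $e^{2D}\ge\max$, not the product, so one must instead run a single optimization over the joint distribution of $(f(x),\tfrac{d\nu}{dA_{z_0}}(x))$ — equivalently, show that the worst case is a two-point configuration where $f=t$ on mass $a$ with $\nu$-density $1$ and $f=t^{-1}$ on the complement — and then verify that for this extremal configuration equality in (ii) and the transported total-mass identity force $t^2=\frac{b(1-a)}{a(1-b)}$. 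This is the genuinely delicate step, and it is where citing \cite[Lemma 5.5]{CSParadox} and the slight sharpening in \cite[Lemma 2.1]{CSMargulis} does the real work; the rest is bookkeeping with the expansion factor and the area measure.
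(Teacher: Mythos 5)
The paper itself does not prove this lemma: it is quoted verbatim from Culler--Shalen (\cite[Lemma 5.5]{CSParadox}, sharpened in \cite[Lemma 2.1]{CSMargulis}), so the only comparison available is with their argument. Your preliminary steps are correct and match the standard setup: with $D=\textnormal{dist}(z_0,\gamma\cdot z_0)$ and $t=e^{2D}$ one has $t^{-1}\le\lambda_{\gamma,z_0}^2\le t$ pointwise and $\int_{S_\infty}\lambda_{\gamma,z_0}^2\,dA_{z_0}=1$, and from these the conclusions $a>0$, $b<1$, $e^{2D}\ge b/a$ and $e^{2D}\ge(1-a)/(1-b)$ all follow as you say. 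The problem is the final step, the only one that produces the claimed constant. Multiplying the two crude bounds gives $e^{4D}\ge\frac{b(1-a)}{a(1-b)}$, hence only $D\ge\frac14\log\frac{b(1-a)}{a(1-b)}$, not $\frac12\log$; you notice the multiplicative combination is unjustified, but the fix you propose does not exist within the information you have allowed yourself.

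Concretely, the ``bang--bang'' configuration with $\lambda^2\equiv t$ on a set of mass $a$ and $\equiv t^{-1}$ on its complement is incompatible with $\int\lambda^2\,dA_{z_0}=1$ unless $t=(1-a)/a$, and, more fundamentally, the data ``$\lambda^2$ has values in $[t^{-1},t]$, unit total integral, $\nu\le A_{z_0}$, $\nu(S_\infty)\le a$, $\int\lambda^2 d\nu\ge b$'' only ever implies $e^{2D}\ge\max\{b/a,\ (1-a)/(1-b)\}$: for instance with $a=0.1$, $b=0.5$, $t=5$ one can take $f=5$ on a set $E$ of $A_{z_0}$-mass $0.1$ and $f\equiv 5/9$ elsewhere, which satisfies every one of your constraints while $\frac{b(1-a)}{a(1-b)}=9>5$. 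So no optimization over ``joint distributions'' constrained only by the range and the total integral can reach the stated inequality. The missing ingredient is the precise distribution of $\lambda_{\gamma,z_0}^2$ with respect to $A_{z_0}$: it is the Radon--Nikodym derivative $dA_{\gamma^{-1}\cdot z_0}/dA_{z_0}$, a squared Poisson-type kernel whose superlevel sets are round caps of explicitly computable $A_{z_0}$-measure. A rearrangement (bathtub) argument shows the extremal $\nu$ is $A_{z_0}$ restricted to the cap of mass exactly $a$ centered at the direction of maximal stretch, and the explicit evaluation $\int_{\mathrm{cap}}\lambda_{\gamma,z_0}^2\,dA_{z_0}=\frac{a\,e^{2D}}{1-a+a\,e^{2D}}$ converts hypothesis (iii) directly into $e^{2D}\ge\frac{b(1-a)}{a(1-b)}$. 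That cap computation is the actual content of Culler--Shalen's proof; without it (or the citation doing the work for you) the lemma is not established by your sketch.
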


For a given decomposition $\Gamma_{\mathcal{D}^k}$, assuming $0<\nu_{s(\gamma)}(S_{\infty})<1$ for every group--theoretical relation ($\gamma$, $s(\gamma)$, $S(\gamma)$), in \fullref{lem1.2}  if we let $\nu=\nu_{s(\gamma)}$, $a=\nu_{s(\gamma)}(S_{\infty})$ and $b=\int_{S_{\infty}}(\lambda_{\gamma,z_0})^2d\nu_{s(\gamma)}$, we obtain the lower bounds
\begin{equation}
\tr{dist}(z_0,\gamma\cdot z_0)\geq0.5\log\frac{\left(1-\dis{\sum_{\psi\in S(\gamma)}\int_{S_{\infty}} d\nu_{\psi}}\right)\left(1-\dis{\int_{S_{\infty}}
d\nu_{s(\gamma)}}\right)}{\left(\dis{\sum_{\psi\in S(\gamma)}\int_{S_{\infty}} d\nu_{\psi}}\right)\left(\dis{\int_{S_{\infty}} d\nu_{s(\gamma)}}\right)}\label{eqn2}
\end{equation}
by  \fullref{thm3.4}. The constant values inside the logarithms on the righthand side of the inequality in (\ref{eqn2}) can be considered as the values of certain functions, referred to as displacement functions for $\Gamma_{\mathcal{D}^k}$, defined on the set $\Delta^{d-1}$ of all points  in $\mathbb{R}^d$ whose entries add to $1$. Here $d=4\cdot 3^{k-1}$ is the cardinality of $\Psi^k$. 

As an example, assuming $0<\nu_{\psi}(S_{\infty})<1$ for $\psi\in\{\xi^2,\xi\eta,\xi\eta^{-1},\xi^{-2}\}$, by 
 \fullref{thm3.4} for $k=2$,  \fullref{lem1.2}, (\ref{ex1.1}) and the definition of $p$ for $k=2$, we have the displacement function
\[
f^{2}_{12}(\tb{x})=\frac{1-x_{1}-x_{2}-x_{3}}{x_{1}+x_{2}+x_{3}}\cdot\frac{1-x_{12}}{x_{12}}
\]
for the decomposition $\Gamma_{\mathcal{D}^2}$ such that $\dxitwo\geq 0.5\log f^{2}_{12}(\tb{m})$ for the point $\tb{m}=\left(\nu_{p(\psi)}(S_{\infty})\right)_{\psi\in\Psi^2}\in\Delta^{11}$.
More generally, $\tb{m}$ will denote in the rest of this paper the point in $\mathbb{R}^d$ whose entries formed by the total masses of the measures in $\{\nu_{\psi}\co\psi\in\Psi^k\}$ keeping the same ordering of $\Psi^k$. Note that for each decomposition $\Gamma_{\mathcal{D}^k}$,  \fullref{thm3.4} and  \fullref{lem1.2} produce as many displacement functions as the number of group--theoretical relations which are counted in  \fullref{lem22} in \fullref{S2}. 

For $k=2$, for instance, there are $48$ group--theoretical relations, and consequently, there is a set $\mathcal{G}^{2}$ of  $48$ displacement functions.  One of which is $f_{12}^2$ given above (See (\ref{deffs}), (\ref{defgs}) and (\ref{dispothergs}) for some others). These functions provide a lower bound for the maximum of hyperbolic displacements by the inequality 
$$
\max_{\gamma\in\Gamma_{2}}\left\{\dgamma\right\} \geq  \frac{1}{2}\log G^{2}(\tb{m})
                         \geq  \frac{1}{2}\log\left(\inf_{\tb{x}\in\Delta^{11}} G^{2}(\tb{x})\right)
$$
for $\Gamma_2=\Psi_r^2\cup\{\xi^2,\xi\eta^{-1},\xi\eta,\eta^{-1}\xi^{-1},\eta^{-1}\xi,\eta^{-2},\eta^2,\eta\xi^{-1},\eta\xi,\xi^{-1}\eta^{-1},\xi^{-1}\eta,\xi^{-2}\}$, where $G^2(\tb{x})=\max_{\tb{x}\in\Delta^{11}}\{f(\tb{x})\co f\in\mathcal{G}^2\}$.

Let $\mathcal{G}^k$ denote the set of all displacement functions for the decomposition $\Gamma_{\mathcal{D}^k}$ of $\Gamma=\langle\xi,\eta\rangle$. Explicit formulas of the functions in $\mathcal{G}^k$ are given in  \fullref{dispfunc} in \fullref{S2}. In general we shall prove the following statement.

\medskip
{\bf Theorem \ref{minG}}\qua 
{\sl If $G^{k}\co\Delta^{d-1}\to\mathbb{R}$ is the function defined by $\tb{x}\mapsto\max\{f(\tb{x}): f\in\mathcal{G}^k\}$, then we have $\inf_{\tb{x}\in\Delta^{d-1}}G^{k}(\tb{x})=12\cdot 3^{k-1}-3$ for $k\geq 2$,}
\medskip

which provides the lower bounds in  \fullref{thm4.1}. This is  \fullref{minG} in \fullref{S3}. 

To prove \fullref{minG}, we first introduce a subset $\mathcal{F}^k=\{f_1^k,\dots,f_d^k\}$ of displacement functions in $\mathcal{G}^k$.  A list of explicit formulas of the  functions in $\mathcal{F}^k=\{f_1^k,\dots,f_d^k\}$ are again given in  \fullref{dispfunc} in \fullref{S2}. For $\tb{x}\in\Delta^{d-1}$ let us say
\[
\begin{array}{lcl}
F^k(\tb{x})=\max\left(f_1^k(\tb{x}),\dots,f_{d}^k(\tb{x})\right) & \tnr{ and } &   \alpha_*=\inf_{\tb{x}\in\Delta^{d-1}}F^k(\tb{x}).
\end{array}
\]
We will prove in \S\ref{S3} that $  \alpha_*=\inf_{\tb{x}\in\Delta^{d-1}}G^{k}(\tb{x})$.  This is because  $  \alpha_*\leq\inf_{\tb{x}\in\Delta^{d-1}}G^{k}(\tb{x})$ by $\mathcal{F}^k\subset\mathcal{G}^k$. The reverse inequality follows from the fact that the functions in $\mathcal{F}^k$ take bigger values at the points in $\Delta^{d-1}$ that are significant to compute $\inf_{\tb{x}\in\Delta^{d-1}}G^{k}(\tb{x})$. 

The computation of   $\alpha_*$ follows from the following two properties of the function $F^k$:
\[
\begin{array}{ll}
(A)\ \    \alpha_*=\min_{\tb{x}\in\Delta^{d-1}}F^k(\tb{x}), & (B)\ \  F^k(\xs)=  \alpha_* \tnr{ for a  unique point $\tb{x}^*\in\Delta^{d-1}$}.
\end{array}
\]
The equality in (A) is proved in \fullref{lemtwo} in \fullref{S3} which uses the observation that each displacement function $f_i^k$ approaches to infinity on any sequence $\{\tb{x}_n\}\subset\Delta^{d-1}$ which limits on $\partial\Delta^{d-1}$. 

Proving the property in (B) takes most of the technical work in this paper. Using  \fullref{convex2}, \ref{convex1} and  \ref{convex3} we first show that each displacement function $f_i^k$ is strictly convex on a strictly convex subset $C_{f_i}$ of $\Delta^{d-1}$. These subsets are defined in (\ref{C1}) and (\ref{C2}). Next by  \fullref{unique1}, \ref{unique2}, \ref{unique3}, \ref{unique4}, \ref{unique5}, \ref{unique6}, \ref{unique7} and \ref{unique8} we establish in  \fullref{uniquef} that $\xs$ is in the intersection $C$ of all of these sets $C_{f_i}$ which is itself strictly convex. Then using a number of facts  \fullref{convex4}, \ref{convex5} and \fullref{convex6} from convex analysis we deduce that $F^k$ is strictly convex function on $C$ which implies  the uniqueness of $\tb{x}^*$. This is given in \fullref{unique9}.

Since $\xs$ is unique, it is fixed by every bijection of $\Delta^{d-1}$ preserving the set $\mathcal{F}^k$. This leads to the relations $x_i^*=x_j^*$ among the coordinates of $\xs$ for every distinct $i,j\in\{1,2,\dots,4\cdot 3^{k-1}\}$. A list of bijections and the details of the computations of the coordinates of $\xs$ and $  \alpha_*$ are given in \fullref{minF}. This completes the proof of  \fullref{minG} and consequently the proof of  \fullref{thm4.1} in the case (\ref{I}).

Let $\mathfrak{X}$ denote the character variety $PSL_2(\C)\times PSL_2(\C)$. In the case (\ref{II}) $\Gamma=\langle\xi,\eta\rangle$ is geometrically finite, we define the function $f_{z_0}^k\co\mathfrak{X}\to\R$ such that
\begin{displaymath}
f_{z_0}^k(\xi,\eta)=\max_{\psi\in\Psi^k}\{\dpsi\}
\end{displaymath}
for a fixed $z_0\in\hyp$. This function is continuous and proper.  We shall show that $f_{z_0}^k$ has no local minimum in $\mathfrak{GF}$ the set of pairs of isometries $(\xi,\eta)\in\mathfrak{X}$ such that $\langle\xi,\eta\rangle$ is free, geometrically finite and without parabolics. Since the set of $(\xi,\eta)$ such that $\langle\xi,\eta\rangle$ is free, geometricly infinite and without parabolics is dense in $\overline{\mathfrak{GF}}-\mathfrak{GF}$ and, every $(\xi,\eta)\in\mathfrak{X}$ with  $\langle\xi,\eta\rangle$ is free and without parabolic is in $\overline{\mathfrak{GF}}$ \cite[Propositions 9.3 and 8.2]{CSParadox},  geometrically finite case reduces to geometrically infinite case completing the proof of \fullref{thm4.1}.



\section{Symmetric Decompositions of Free Groups}\label{S2}

Let $\Gamma$ be a group which is free on a finite generating set $\Xi=\{\xi_1,\xi_2,\dots,\xi_n\}$. Let $\Xi^{-1}=\{\gamma^{-1}\co\gamma\in\Xi\}$. Every element $\gamma$ of $\Gamma$ can be written uniquely as a reduced word
$\psi_{1}\cdots\psi_{m}$, where $m\geq 0$, each $\psi_i$  is an element of $\Xi\cup\Xi^{-1}$, and $\psi_{i+1}\neq\psi_{i}^{-1}$ for $i=1,\dots ,m-1$. If
$n\leq m$ is a positive integer and $\gamma\neq 1$, we shall call $\psi_1\dots\psi_n$ the \textit{initial word of length $n$} of $\gamma$.

Let $\Psi^*$ be a finite set of words in $\Gamma$.  For each word $\psi\in\Psi^*$, let $J_{\psi}$ denote the set of non-trivial elements of $\Gamma$ that have the
initial word $\psi$. Depending on the number of elements in $\Xi$ and lengths of words in $\Psi^*$ there may be a set of words which are not contained in
any of $J_{\psi}$. This set will be called the \textit{residue set} of $\Psi^*$ and denoted by $\Psi_r^*$. For a given pair $(\Psi^*, \Psi_r^*)$ of finite sets of
words $\Psi^*$ and $\Psi_r^*$ in $\Gamma$, if we have $\Gamma=\{1\}\cup \Psi_r^*\cup\bigcup_{\psi\in\Psi^*}J_{\psi}$, then $\Gamma_{\mathcal{D}^*}$ with
$\mathcal{D}^*=(\Psi^*,\Psi_r^*)$ is a decomposition of $\Gamma$. In particular we shall be interested in the following decompositions:
\begin{defn}\label{fgsymd}
A decomposition $\Gamma_{\mathcal{D}^*}$ with $\mathcal{D}^*=(\Psi^*,\Psi_r^*)$ is symmetric if $\Psi^*$ and $\Psi_r^*$ are preserved by every bijection of $\Xi\cup\Xi$, ie if $\phi\co\Xi\cup\Xi^{-1}\to\Xi\cup\Xi^{-1}$ is a bijection, then $\phi(\Psi^*)=\Psi^*$ and $\phi(\Psi_r^*)=\Psi_r^*$.
\end{defn}

Let $\Gamma_k$ be the set of all isometries of length at most $k\geq 2$ in $\Gamma=\langle\xi_1,\dots,\xi_n\rangle$. Let $\Psi^k$ be the set of all isometries of length $k$ and $\Psi^l_r$ be the set of all non--identity isometries of length less than $k$. It is straightforward to see that
\begin{eqnarray*}
\Gamma   & = & \{1\}\cup \Psi_r^k\cup\bigcup\nolimits_{\psi\in\Psi^k} J_{\psi}
\end{eqnarray*}
for every $k\geq 2$. Therefore, $\Gamma_{\mathcal{D}^{k,n}}$ is a decomposition of $\Gamma=\langle\xi_1,\dots,\xi_n\rangle$ with $\mathcal{D}^{k,n}=(\Psi^k,\Psi_r^k)$, where
$\Gamma_k=\Psi^k\cup\Psi^l_r$. Note that $\Gamma_{\mathcal{D}^{k,n}}$ is symmetric for each $n,k\geq 2$. In the case $n=2$, we have the lemma below for the number of group--theoretical relations:
\begin{lemma}\label{lem22}
Let $\Gamma$ be a $2$--generator free group and, $\Gamma_{\mathcal{D}^k}$ be a symmetric decomposition of $\Gamma$ for $k\geq 2$. Then there are 
$R_k=4\cdot r_k\cdot 3^{k-1}$
many group--theoretical relations, where
\begin{equation}
\begin{array}{lcl}
r_k=1+\sum_{i=1}^{k-1}\left(1+2\sum_{j=1}^{\min\{i,k-i\}}3^{j-1}\right) & \tnr{or}, & r_k=\sum_{j=0}^ka_j,\label{l22p1}
\end{array}
\end{equation}
for $a_j= 1$ if $j=0,1$, $a_j=1+2\sum_{i=1}^{\lfloor j/2\rfloor}3^{i-1}$ if $2\leq j\leq k-1$, $a_j=2\sum_{i=1}^{\lfloor k/2\rfloor}3^{i-1}$ if $j=k$.
Above $\lfloor\cdot\rfloor$ denotes the floor function.
\end{lemma}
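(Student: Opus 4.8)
The plan is to count group--theoretical relations combinatorially by organizing them according to the structure of the decomposition $\Gamma_{\mathcal{D}^k}$. Recall that a relation $(\gamma, s(\gamma), S(\gamma))$ records an identity $\gamma J_{s(\gamma)} = \Gamma - J_{S(\gamma)}$, where $\gamma$ ranges over $\Psi_r^k \cup \Psi^k$, the isometry $s(\gamma)$ lies in $\Psi^k$, and $S(\gamma) \subset \Psi^k$. The key observation is that such a relation is determined by the pair $(\gamma, s(\gamma))$: given $\gamma$, the word $\gamma s(\gamma)$ must reduce to a word whose cancellation removes exactly the initial segment that would otherwise obstruct the translate $\gamma J_{s(\gamma)}$ from being a cofinite piece of the form $\Gamma - J_{S(\gamma)}$. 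So first I would establish a clean bijective description: the admissible pairs $(\gamma, s(\gamma))$ are exactly those where either $\gamma = 1$ (giving the trivial family of $d = 4\cdot 3^{k-1}$ relations, one per $s(\gamma)\in\Psi^k$), or $\gamma \neq 1$ and the reduced form of $\gamma$ together with $s(\gamma)$ interlock so that $\gamma s(\gamma)$ has length $|{\rm len}(s(\gamma)) - {\rm len}(\gamma)|$ in the relevant range, i.e. $s(\gamma)$ begins with $\gamma^{-1}$ (full cancellation of the shorter against the longer) or $\gamma$ begins with $s(\gamma)^{-1}$.

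Next I would stratify by the length $j = {\rm len}(\gamma)$, for $j = 0, 1, \dots, k$, and count, for each fixed first letter of $\gamma$, how many admissible $s(\gamma)$ there are; by the symmetry of $\Gamma_{\mathcal{D}^k}$ (Definition \ref{fgsymd}) this count is independent of which of the four elements of $\Xi\cup\Xi^{-1}$ is the first letter, so the total for length $j$ is $4$ times the count for words of length $j$ starting with a fixed letter, which is $4\cdot 3^{j-1}$ words times the per-word count $a_j$ of admissible $s(\gamma)$. Summing, $R_k = \sum_{j=0}^k 4\cdot 3^{j-1}\cdot(\text{per-word count})$; but the stated formula groups things differently — $R_k = 4\, r_k\, 3^{k-1}$ — so I need the per-word count at length $j$ to contribute $a_j\cdot 3^{k-1-j}\cdot(\text{something})$, or rather the factor $3^{k-1}$ must be pulled out uniformly. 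The way this works: for a word $\gamma$ of length $j < k$, the number of $s(\gamma)\in\Psi^k$ that interlock with it is $3^{k-j}$ possibilities refined by the cancellation pattern, and the bookkeeping must be arranged so that after dividing by $4\cdot 3^{k-1}$ one gets $r_k = \sum_j a_j$. I would verify the two formulas for $r_k$ agree by a direct manipulation: the double sum $1 + \sum_{i=1}^{k-1}(1 + 2\sum_{j=1}^{\min\{i,k-i\}}3^{j-1})$ reindexes into $\sum_{j=0}^k a_j$ by matching the $i$-th summand (and its mirror $k-i$) to $a_i$ and $a_{k-i}$, using $\min\{i,k-i\} = \lfloor k/2\rfloor$ when $i = \lfloor k/2\rfloor$; this is the cleanest sanity check and should be done first or in parallel.

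The main obstacle I anticipate is pinning down precisely which pairs $(\gamma, s(\gamma))$ give a valid relation of the form $\gamma J_{s(\gamma)} = \Gamma - J_{S(\gamma)}$, and why the count of valid $s(\gamma)$ for a fixed $\gamma$ of length $j$ is exactly the $a_j$ (scaled appropriately) — in particular accounting for the ``$1 + 2\sum 3^{i-1}$'' shape, where the leading $1$ should correspond to the maximal-cancellation case $s(\gamma) = $ (reverse inverse of $\gamma$)$\cdot(\text{length }k-j\text{ tail})$ with a canonical choice, and the $2\sum_{i=1}^{\lfloor j/2\rfloor}3^{i-1}$ counts the partial-cancellation relations indexed by how deep the cancellation goes (the factor $2$ from a left/right dichotomy, the $3^{i-1}$ from free-word branching), capped at $\lfloor j/2\rfloor$ because cancellation cannot consume more than half of a reduced interlocking product. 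Getting the edge cases $j = 0$ (only the trivial relation, $a_0 = 1$), $j = 1$ ($a_1 = 1$, no partial cancellation possible), and $j = k$ ($a_k = 2\sum_{i=1}^{\lfloor k/2\rfloor}3^{i-1}$, where the ``$1$'' term is absent because $\gamma\in\Psi^k$ itself cannot have $s(\gamma)$ equal to its full reverse-inverse tail in the same way) exactly right is where the care is needed; once the per-length counts $a_j$ are justified, summing and applying the symmetry factor $4\cdot 3^{k-1}$ finishes the proof. I would present the length-$j$ count as a short lemma, then assemble $R_k = 4\cdot 3^{k-1}\sum_{j=0}^k a_j$ and invoke the reindexing identity to obtain the first displayed formula for $r_k$.
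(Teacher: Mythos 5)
Your proposal has a genuine gap, and it is the central one. In the lemma (and in the paper's proof) one fixes $s(\gamma)=\psi=\psi_1\cdots\psi_k\in\Psi^k$ and counts the admissible $\gamma\in\Psi_r^k\cup\Psi^k$; this count is the same for every $\psi$, so the factor $4\cdot 3^{k-1}$ (the number of choices of $\psi$) comes out immediately, and $r_k$ is obtained by stratifying the $\gamma$'s either by the number $i$ of cancellations in $\gamma\psi$ (first formula) or by the length $j$ of the reduced product $\gamma\psi$ (second formula): $a_0=1$ corresponds to $\gamma=\psi^{-1}$, $a_1=1$ to $\gamma=(\psi_1\cdots\psi_{k-1})^{-1}$, and for $2\le j\le k-1$ (resp.\ $j=k$) $a_j$ counts the words $\gamma=w(\psi_1\cdots\psi_i)^{-1}$ with the appropriate length of $w$. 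You instead stratify by $j=\mathrm{length}(\gamma)$ and read $a_j$ as a per-$\gamma$ count of admissible $s(\gamma)$. Those are different numbers: for $\mathrm{length}(\gamma)=1$ the number of admissible $\psi$ is $3^{k-1}$, not $1$ (for $k=2$, $\xi^{-1}$ occurs three times in Table 2), and there is no family of relations with $\gamma=1$ at all, since $J_{s(\gamma)}$ can never equal $\Gamma-J_{S(\gamma)}$, the latter containing $1$ and $\Psi_r^k$; indeed $R_2=48$ is exhausted by Tables 1--3 with no such rows. With your stratification the total is $\sum_{j=1}^{k}4\cdot 3^{j-1}b_j$ for per-stratum counts $b_j\neq a_j$, and the factor $4\cdot 3^{k-1}$ does not come out stratum by stratum; you acknowledge this (``the bookkeeping must be arranged so that \dots'') but never carry out the reconciliation, which is exactly what has to be proved.

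Two subsidiary claims are also incorrect and would derail the count even within your framework. The characterization of admissible pairs as ``either $s(\gamma)$ begins with $\gamma^{-1}$ or $\gamma$ begins with $s(\gamma)^{-1}$'' misses most relations: the actual condition is only that at least one letter cancels (i.e.\ $\gamma$ ends with the inverse of a nonempty initial segment of $s(\gamma)$) and that the reduced product $\gamma s(\gamma)$ again has length at most $k$; for instance the relation $(\eta\xi^{-1},\,\xi^2,\,\Psi^2-\{\eta\xi\})$ of Table 3 satisfies neither of your two alternatives. Likewise, the heuristic that cancellation ``cannot consume more than half of a reduced interlocking product'' is false (full cancellation $\gamma=\psi^{-1}$ occurs and accounts for $a_0$); the bounds $\min\{i,k-i\}$, $\lfloor j/2\rfloor$ and $\lfloor k/2\rfloor$ come instead from the two length constraints $\mathrm{length}(\gamma)\le k$ and $\mathrm{length}(\gamma s(\gamma))\le k$ imposed on the prefix $w$. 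Repairing your argument essentially means switching to the paper's count: fix $\psi\in\Psi^k$, show the number of admissible $\gamma$ equals $r_k$ independently of $\psi$, and multiply by $|\Psi^k|=4\cdot 3^{k-1}$.
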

\begin{proof}
Let $\psi=\psi_1\psi_2\dots\psi_k$ be a reduced initial word in $\Psi^k$. Since we know that the isometries $\psi_1^{-1}$, $(\psi_1\psi_2)^{-1}$,\dots,$(\psi_1\psi_2\dots\psi_{k-1})^{-1}$ are all in $\Psi^l_r$ and $\psi^{-1}\in\Psi^k$, we count the group--theoretical relations $(\gamma, s(\gamma), S(\gamma))$  according to the number $i$ of cancellations in the product $\gamma s(\gamma)$ for $i=1,2,\dots,k-1$, where $s(\gamma)=\psi$ for $\gamma\in\Psi_r^k\cup\Psi^k$.

Note that the product $\psi_{i}^{-1}\cdots\psi_2^{-1}\psi_1^{-1}\psi$ gives a group--theoretical relation with $i$--cancellation. Assume that the product $\gamma\psi$ also gives a relation with $i$--cancellation. Then we have $\gamma=w\psi_{i}^{-1}\cdots\psi_2^{-1}\psi_1^{-1}$ for some $w\in\Psi^*_r$. Since we have to have $1\leq length(w\psi_{i}^{-1}\cdots\psi_2^{-1}\psi_1^{-1}\psi)\leq k$, we derive that $1\leq length(w)\leq\min\{i,k-i\}$ where $k\geq 2$. We have $2$ choices for the last letter of $w$ and $3$ choices for the rest of the letters of $w$. Therefore, there are $1+2\sum_{j=1}^{\min\{i,k-i\}}3^{j-1}$ group--theoretical relations with $i$--cancellation. Finally, the product $(\psi_1\dots\psi_{k-1}\psi_k)^{-1}\psi$ provides the group--theoretical relation with $k$--cancellation. There is only $1$ such relation. There are $4\cdot 3^{k-1}$ many choices for the isometry $\psi\in\Psi^k$. Thus, the first part of (\ref{l22p1})  follows.

For the second part of (\ref{l22p1}), let $j$ denote the length of the product $\gamma\psi$, for $0\leq j\leq k$. If $j$ is $0$ or $1$, then we derive that $\gamma=(\psi_1\psi_2\cdots\psi_k)^{-1}$ or $\gamma=(\psi_1\psi_2\cdots\psi_{k-1})^{-1}$, respectively. There is only $1$ group--theoretical relation for each case. Let $a_0^k=1$ and $a_1^k=1$. Assume that $j=k$. Let $i$ denote the number of cancellations in the product $\gamma\psi$. Since $j=length(\gamma)+k-2i$, we get $0<i\leq\lfloor k/2\rfloor$. Then we have $\gamma=w(\psi_1\psi_2\cdots\psi_{i})^{-1}$ for some $w\in\Psi_r^k$ such that $length(w)=i$. There are $2$ choices for the first letter of $w$ and $3$ choices for the rest. Consequently, there are $2\sum_{i=1}^{\lfloor k/2\rfloor}3^{i-1}$ many products $\gamma\psi$ whose length is $k$.

An argument analogous to the one above can be repeated for each $j\in\{2,\dots,k-1\}$ to count the number of products $\gamma\psi$ so that $length(\gamma\psi)=j$ with the exception that $w=1$. In those cases, we get $1$ additional product $\gamma\psi$, where $\gamma$ is $(\psi_1\psi_2\dots\psi_{k-j+1})^{-1}$ for each $j\in\{2,\dots,k-1\}$. Hence, we obtain the sum $1+2\sum_{i=1}^{\lfloor j/2\rfloor}3^{i-1}$ for $2\leq j\leq k-1$, which concludes the proof.
\end{proof}

As an example, we will list all of the group--theoretical relations for the symmetric decomposition $\Gamma_{\mathcal{D}^2}$. There are $R_2=48$ relations by \fullref{lem22}. First we list the relations $(\gamma,s(\gamma),S(\gamma))$ in  \fullref{table1} so that $\gamma s(\gamma)$ has length $0$. There are $12$ such relations. Note that those are the relations with $s(\gamma)=\gamma^{-1}$.
\begin{table}[h]
\begin{center}
\scalebox{.65}{
\begin{tabular}{|c|c|c|c|c|c|c|c|}
  \hline
&  $\gamma$ & $s(\gamma)$ & $S(\gamma)$ & & $\gamma$ & $s(\gamma)$ & $S(\gamma)$\\
  \hline
$1$ &  $\xi^{-2}$  &$\xi^{2}$ & $\{\xi^{-1}\eta,\xi^{-2},\xi^{-1}\eta^{-1}\}$  & $7$ &  $\eta^{-2}$  & $\eta^{2}$ & $\{\eta^{-1}\xi,\eta^{-1}\xi^{-1},\eta^{-2}\}$ \\ 
  \hline 
$2$ &  $\eta\xi^{-1}$ & $\xi\eta^{-1}$ & $\{\eta\xi,\eta^{2},\eta\xi^{-1}\}$   & $8$  &  $\xi\eta^{-1}$  & $\eta\xi^{-1}$ & $\{\xi^{2},\xi\eta,\xi\eta^{-1}\}$        \\ 
  \hline
$3$ &  $\eta^{-1}\xi^{-1}$  & $\xi\eta$ & $\{\eta^{-1}\xi,\eta^{-1}\xi^{-1},\eta^{-2}\}$  & $9$ & $\xi^{-1}\eta^{-1}$  & $\eta\xi$ & $\{\xi^{-1}\eta,\xi^{-2},\xi^{-1}\eta^{-1}\}$   \\ 
  \hline
$4$ &   $\xi\eta$  &$\eta^{-1}\xi^{-1}$ & $\{\xi^{2},\xi\eta,\xi\eta^{-1}\}$  & $10$ & $\eta\xi$  & $\xi^{-1}\eta^{-1}$ & $\{\eta\xi,\eta^{2},\eta\xi^{-1}\}$  \\ 
  \hline
$5$ &   $\xi^{-1}\eta $  & $\eta^{-1}\xi $ & $\{\xi^{-1}\eta,\xi^{-2},\xi^{-1}\eta^{-1}\}$ &  $11$ & $\eta^{-1}\xi$  & $\xi^{-1}\eta$ & $\{\eta^{-1}\xi,\eta^{-1}\xi^{-1},\eta^{-2}\}$  \\ 
  \hline
$6$ &   $\eta^{2}$  & $\eta^{-2}$ & $\{\eta\xi,\eta^{2},\eta\xi^{-1}\}$ & $12$ &  $\xi^2$  &$\xi^{-2}$ & $\{\xi^{2},\xi\eta,\xi\eta^{-1}\}$ \\ 
  \hline
\end{tabular}}
\caption{Group--theoretical properties of  $\Gamma_{\mathcal{D}^{2}}$ with $s(\gamma)=\gamma^{-1}$ or $length(\gamma s(\gamma))=0$.}\label{table1}
\end{center}
\end{table}

Next we give the group--theoretical relations $(\gamma,s(\gamma),S(\gamma))$ in  \fullref{table2} and \fullref{table3}  such that $\gamma s(\gamma)$ has length $1$ or $2$. There are $12$ and $24$ such relations, respectively.
\begin{table}[h!]
\begin{center}
\scalebox{.6}{
\begin{tabular}{|c|c|c|c|}
  \hline
 &  $\gamma$ & $s(\gamma)$ & $S(\gamma)$ \\
  \hline
1 &   $\xi^{-1}$  & $\xi^{2}$ & $\{\eta^{-1}\xi^{-1},\eta^{-1}\xi,\eta^{-2},\eta\xi,\eta^2,\eta\xi^{-1},\xi^{-1}\eta,\xi^{-2},\xi^{-1}\eta^{-1}\}$ \\
  \hline 
2 &   $\xi^{-1}$  & $\xi\eta^{-1}$ & $\{\xi^{2},\xi\eta^{-1},\xi\eta,\eta^2,\eta\xi^{-1},\eta\xi,\xi^{-1}\eta^{-1},\xi^{-1}\eta,\xi^{-2}\}$\\
  \hline
3 &   $\xi^{-1}$  & $\xi\eta$ & $\{\xi^{2},\xi\eta^{-1},\xi\eta,\eta^{-1}\xi^{-1},\eta^{-1}\xi,\eta^{-2},\xi^{-1}\eta^{-1},\xi^{-1}\eta,\xi^{-2}\}$ \\ 
  \hline
 4 &  $\eta$  & $\eta^{-1}\xi^{-1}$ & $\{\xi^{2},\xi\eta^{-1},\xi\eta,\eta^{-1}\xi^{-1},\eta^{-1}\xi,\eta^{-2},\eta^2,\eta\xi^{-1},\eta\xi\}$  \\ 
  \hline
5 &  $\eta$  & $\eta^{-1}\xi$ & $\{\eta^{-1}\xi^{-1},\eta^{-1}\xi,\eta^{-2},\eta^2,\eta\xi^{-1},\eta\xi,\xi^{-1}\eta^{-1},\xi^{-1}\eta,\xi^{-2}\}$ \\ 
 \hline
 6 &  $\eta$  &$\eta^{-2}$ & $\{\xi^{2},\xi\eta^{-1},\xi\eta,\eta^2,\eta\xi^{-1},\eta\xi,\xi^{-1}\eta^{-1},\xi^{-1}\eta,\xi^{-2}\}$  \\
  \hline
7 & $\eta^{-1}$  & $\eta^{2}$ & $\{\xi^{2},\xi\eta^{-1},\xi\eta,\eta^{-1}\xi^{-1},\eta^{-1}\xi,\eta^{-2},\xi^{-1}\eta^{-1},\xi^{-1}\eta,\xi^{-2}\}$ \\ 
  \hline
 8 &  $\eta^{-1}$ & $\eta\xi^{-1}$ & $\{\xi^{2},\xi\eta^{-1},\xi\eta,\eta^{-1}\xi^{-1},\eta^{-1}\xi,\eta^{-2},\eta^2,\eta\xi^{-1},\eta\xi\}$\\ 
  \hline
9 &  $\eta^{-1}$  & $\eta\xi$ & $\{\eta^{-1}\xi^{-1},\eta^{-1}\xi,\eta^{-2},\eta^2,\eta\xi^{-1},\eta\xi,\xi^{-1}\eta^{-1},\xi^{-1}\eta,\xi^{-2}\}$ \\
  \hline
10 & $\xi$  & $\xi^{-1}\eta^{-1}$ & $\{\xi^{2},\xi\eta^{-1},\xi\eta,\eta^2,\eta\xi^{-1},\eta\xi,\xi^{-1}\eta^{-1},\xi^{-1}\eta,\xi^{-2}\}$ \\
   \hline
11 & $\xi$  & $\xi^{-1}\eta$ & $\{\xi^{2},\xi\eta^{-1},\xi\eta,\eta^{-1}\xi^{-1},\eta^{-1}\xi,\eta^{-2},\xi^{-1}\eta^{-1},\xi^{-1}\eta,\xi^{-2}\}$\\
\hline
 12 &   $\xi$  & $\xi^{-2}$ & $\{\xi^{2},\xi\eta^{-1},\xi\eta,\eta^{-1}\xi^{-1},\eta^{-1}\xi,\eta^{-2},\eta^2,\eta\xi^{-1},\eta\xi\}$ \\ 
  \hline
\end{tabular}}
\caption{Group--theoretical properties of  $\Gamma_{\mathcal{D}^{2}}$ with $length(\gamma s(\gamma))=1$.}\label{table2}
\end{center}
\end{table}

\newpage

\begin{table}[h!]
\begin{center}
\scalebox{.7}{
\begin{tabular}{|c|c|c|c|c|c|c|c|}
  \hline
   & $\gamma$ & $s(\gamma)$ & $S(\gamma)$ & & $\gamma$  & $s(\gamma)$ & $S(\gamma)$ \\
  \hline
 1 &  $\eta\xi^{-1}$  &$\xi^2$ & $\Psi^2-\{\eta\xi\}$                         & 13 &  $\eta^{-1}\xi^{-1}$  & $\xi^2$ & $\Psi^2-\{\eta^{-1}\xi\}$\\
  \hline
2 &  $\eta^{-1}\xi^{-1}$  &$\xi\eta^{-1}$ & $\Psi^2-\{\eta^{-2}\}$     & 14 &  $\xi^{-2}$  & $\xi\eta^{-1}$ & $\Psi^2-\{\xi^{-1}\eta^{-1}\}$ \\ 
  \hline
 3 & $\xi^{-2}$  &$\xi\eta$ & $\Psi^2-\{\xi^{-1}\eta\}$                       & 15 &  $\eta\xi^{-1}$  & $\xi\eta$ & $\Psi^2-\{\eta^2\}$ \\
  \hline
4 &  $\eta^{2}$  &$\eta^{-1}\xi$ & $\Psi^2-\{\eta\xi\}$                          & 16 &    $\xi\eta$  &$\eta^{-1}\xi$ & $\Psi^2-\{\xi^2\}$  \\
  \hline
5 &  $\xi\eta$  &$\eta^{-2}$ & $\Psi^2-\{\xi\eta^{-1}\}$                     & 17 &  $\xi^{-1}\eta$  &$\eta^{-2}$ & $\Psi^2-\{\xi^{-1}\eta^{-1}\}$\\
  \hline 
6 &  $\xi\eta^{-1}$  &$\eta^{2}$ & $\Psi^2-\{\xi\eta\}$                       & 18 &   $\xi^{-1}\eta^{-1}$  &$\eta^{2}$ & $\Psi^2-\{\xi^{-1}\eta\}$ \\
  \hline
7 &  $\xi^{-1}\eta$  & $\eta^{-1}\xi^{-1}$ & $\Psi^2-\{\xi^{-2}\}$       & 19 &  $\eta^{2}$  &$\eta^{-1}\xi^{-1}$ & $\Psi^2-\{\eta\xi^{-1}\}$  \\
  \hline 
8 &  $\eta^{-2}$  & $\eta\xi$ & $\Psi^2-\{\eta^{-1}\xi\}$                   & 20 &  $\xi\eta^{-1}$  &$\eta\xi$ & $\Psi^2-\{\xi^2\}$ \\
  \hline
9 &  $\xi^{-1}\eta^{-1}$  &$\eta\xi^{-1}$ & $\Psi^2-\{\xi^{-2}\}$       & 21 & $\eta^{-2}$  &$\eta\xi^{-1}$ & $\Psi^2-\{\eta^{-1}\xi^{-1}\}$  \\
  \hline
10 &  $\eta^{-1}\xi$  &$\xi^{-1}\eta^{-1}$ & $\Psi^2-\{\eta^{-2}\}$    & 22 & $\xi^2$  &$\xi^{-1}\eta^{-1}$ & $\Psi^2-\{\xi\eta^{-1}\}$ \\
  \hline
11 &   $\xi^2$  &$\xi^{-1}\eta$ & $\Psi^2-\{\xi\eta\}$                     & 23 & $\eta\xi$  &$\xi^{-1}\eta$ & $\Psi^2-\{\eta^2\}$ \\
  \hline
12 & $\eta\xi$  &$\xi^{-2}$ & $\Psi^2-\{\eta\xi^{-1}\}$                     & 24 & $\eta^{-1}\xi$  &$\xi^{-2}$ & $\Psi^2-\{\eta^{-1}\xi^{-1}\}$\\
  \hline
\end{tabular}}
\caption{Group--theoretical properties of  $\Gamma_{\mathcal{D}^{2}}$ with $length(\gamma s(\gamma))=2$.}\label{table3}
\end{center}
\end{table}


Similarly, we also list some of the group--theoretical relations for the symmetric decomposition $\Gamma_{\mathcal{D}^3}$ as we shall need them in this section. By  \fullref{lem22} there are  in total $252$ group--theoretical relations. We list in the table below the ones that $\gamma s(\gamma)=1$.
\begin{table}[ht!]
\begin{center}
\scalebox{0.8}{
\begin{tabular}{|c|c|c|c|c|c|}
  \hline
 $\gamma$ & $s(\gamma)$ & $S(\gamma)$ & $\gamma$ & $s(\gamma)$ & $S(\gamma)$\\
  \hline
  $\xi^{-3}$  &$\xi^{3}$ & $\Psi_{\xi^{-1}}$                                           &  $\eta^{-3}$  & $\eta^{3}$ & $\Psi_{\eta^{-1}}$\\ 
  \hline
   $\eta\xi^{-2}$  &$\xi^{2}\eta^{-1}$ & $\Psi_{\eta}$                           & $\xi\eta^{-2}$ & $\eta^{2}\xi^{-1}$ & $\Psi_{\xi}$\\ 
  \hline
  $\eta^{-1}\xi^{-2}$  &$\xi^{2}\eta$ & $\Psi_{\eta^{-1}}$                    &  $\xi^{-1}\eta^{-2} $  & $\eta^{2}\xi $ & $\Psi_{\xi^{-1}}$  \\ 
  \hline
  $\xi\eta\xi^{-1}$  & $\xi\eta^{-1}\xi^{-1}$ & $\Psi_{\xi}$                  &  $\eta\xi\eta^{-1}$ & $\eta\xi^{-1}\eta^{-1}$ & $\Psi_{\eta}$\\ 
  \hline
  $\xi^{-1}\eta\xi^{-1}$  & $\xi\eta^{-1}\xi$ & $\Psi_{\xi^{-1}}$          & $\eta^{-1}\xi\eta^{-1}$  & $\eta\xi^{-1}\eta$ & $\Psi_{\eta^{-1}}$\\ 
  \hline
  $\eta^2\xi^{-1}$ & $\xi\eta^{-2}$ & $\Psi_{\eta}$                              &  $\xi^2\eta^{-1}$  &$\eta\xi^{-2}$ & $\Psi_{\xi}$ \\ 
  \hline
  $\eta^{-2}\xi^{-1}$  & $\xi\eta^{2}$ & $\Psi_{\eta^{-1}}$                   & $\xi^{-2}\eta^{-1}$  & $\eta\xi^{2}$ & $\Psi_{\xi^{-1}}$\\ 
  \hline
   $\xi\eta^{-1}\xi^{-1}$ & $\xi\eta\xi^{-1}$ & $\Psi_{\xi}$                  & $\eta\xi^{-1}\eta^{-1}$  &$\eta\xi\eta^{-1}$ & $\Psi_{\eta}$ \\ 
  \hline
  $\xi^{-1}\eta^{-1}\xi^{-1}$  & $\xi\eta\xi$ & $\Psi_{\xi^{-1}}$           & $\eta^{-1}\xi^{-1}\eta^{-1}$  & $\eta\xi\eta$ & $\Psi_{\eta^{-1}}$ \\ 
  \hline
  $\eta\xi\eta$  & $\eta^{-1}\xi^{-1}\eta^{-1}$ & $\Psi_{\eta}$             &  $\xi\eta\xi$  & $\xi^{-1}\eta^{-1}\xi^{-1}$ & $\Psi_{\xi}$\\ 
  \hline
  $\eta^{-1}\xi\eta$  & $\eta^{-1}\xi^{-1}\eta$ & $\Psi_{\eta^{-1}}$     & $\xi^{-1}\eta\xi$  & $\xi^{-1}\eta^{-1}\xi$ & $\Psi_{\xi^{-1}}$ \\ 
  \hline
   $\xi^2\eta$  &$\eta^{-1}\xi^{-2}$ & $\Psi_{\xi}$                                &  $\eta^2\xi$ & $\xi^{-1}\eta^{-2}$ & $\Psi_{\eta}$  \\ 
  \hline
   $\xi^{-2}\eta$  & $\eta^{-1}\xi^{2}$ & $\Psi_{\xi^{-1}}$                     &  $\eta^{-2}\xi$  & $\xi^{-1}\eta^2$ & $\Psi_{\eta^{-1}}$ \\ 
  \hline
   $\eta\xi^{-1}\eta$  &$\eta^{-1}\xi\eta^{-1}$ & $\Psi_{\eta}$              & $\xi\eta^{-1}\xi$ & $\xi^{-1}\eta\xi^{-1}$ & $\Psi_{\xi}$\\ 
  \hline 
  $\eta^{-1}\xi^{-1}\eta $  & $\eta^{-1}\xi\eta $ & $\Psi_{\eta^{-1}}$    & $\xi^{-1}\eta^{-1}\xi$  & $\xi^{-1}\eta\xi$ & $\Psi_{\xi^{-1}}$\\ 
  \hline
  $\xi\eta^{2}$ & $\eta^{-2}\xi^{-1}$ & $\Psi_{\xi}$                                & $\eta\xi^{2}$  & $\xi^{-2}\eta^{-1}$ & $\Psi_{\eta}$ \\ 
  \hline
  $\xi^{-1}\eta^{2}$  & $\eta^{-2}\xi$ & $\Psi_{\xi^{-1}}$                       & $\eta^{-1}\xi^{2}$  & $\xi^{-2}\eta$ & $\Psi_{\eta^{-1}}$\\ 
  \hline
$\eta^3$ & $\eta^{-3}$ & $\Psi_{\eta}$                                                   & $\xi^3$  &$\xi^{-3}$ & $\Psi_{\xi}$\\ 
  \hline
\end{tabular}}
\caption{Group--theoretical properties of  $\Gamma_{\mathcal{D}^{3}}$ with $s(\gamma)=\gamma^{-1}$ or $length(\gamma s(\gamma))=0$.}\label{Table4}
\end{center}
\end{table}

Under the hypothesis of the $\log 3$ theorem, we know that $\Gamma=\langle\xi,\eta\rangle$ is a free group on the generators $\xi$ and $\eta$ \cite[Proposition 9.2]{CSParadox}. For the symmetric decompositions of $\Gamma=\langle\xi,\eta\rangle$ we have the following statement:
\begin{thm}\label{thm3.4}
Let $\Gamma=\langle\xi,\eta\rangle$ be a free, geometrically infinite Kleinian group without parabolics and, $\Gamma_{\mathcal{D}^k}$ be a symmetric decomposition of $\Gamma$ for $k\geq 2$. If $z_0$ denotes a point in $\hyp$, then there is a family of Borel measures $\{\nu_{\psi}\}_{\psi\in\Psi^{k}}$ defined on $S_{\infty}$ such that $(i)\ \ A_{z_0}=\sum_{\psi\in\Psi^{k}}\nu_{\psi}$;  $(ii)\ \ A_{z_0}(S_{\infty})=1$; and
$$(iii)\quad\dis{\int_{S_{\infty}}\left(\lambda_{\gamma,z_0}\right)^2d\nu_{s(\gamma)}=1-\sum_{\psi\in S(\gamma)}\int_{S_{\infty}} d\nu_{\psi}}$$ for each group--theoretical relation $(\gamma, s(\gamma), S(\gamma))$ of $\Gamma_{\mathcal{D}^k}$, where $A_{z_0}$ is the area measure based at $z_0$.
\end{thm}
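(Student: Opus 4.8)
The plan is to construct the measures $\nu_\psi$ by the conformal--density method of Culler--Shalen \cite{CSParadox}, in the form already employed by Y\"uce \cite{Y}; the only new feature is that the index set is the depth--$k$ set $\Psi^k$ and that the relations to be verified are the ones tabulated via \fullref{lem22}. I would begin by recording the consequences of geometric infiniteness: since $\Lambda_{\Gamma\cdot z}=S_\infty$ for every $z$, the group $\Gamma$ is of divergence type with critical exponent $2$, so the $2$--dimensional $\Gamma$--invariant conformal density on $S_\infty$ is unique up to a positive scalar and, normalized to have total mass $1$, coincides with the round (area) density $\{A_z\}_{z\in\hyp}$. This gives $(ii)$ immediately and supplies the probability measure that the $\nu_\psi$ are to refine.

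For the construction, given a subset $J\subseteq\Gamma$ and a point $z\in\hyp$ put $\Theta_{J,z}(s)=\sum_{g\in J}e^{-s\,\tr{dist}(z,\,g\cdot z)}$, and on $\hype$ form the finite measures $\mu^{s}_{J}=\frac{1}{\Theta_{\Gamma,z_0}(s)}\sum_{g\in J}e^{-s\,\tr{dist}(z_0,\,g\cdot z_0)}\,\delta_{g\cdot z_0}$ for $s>2$. I would fix once and for all a sequence $s_n\downarrow 2$ along which $\mu^{s_n}_{J_\psi}$ converges weakly for each of the finitely many $\psi\in\Psi^k$ (possible by tightness), and let $\nu_\psi$ be the limit, a Borel measure on $S_\infty=\Lambda_\Gamma$. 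Since $\mu^{s}_{J}$ is additive in $J$ over disjoint unions, since $\mu^{s}_{J}\to 0$ weakly when $J$ is finite (its total mass $\Theta_{J,z_0}(s)/\Theta_{\Gamma,z_0}(s)$ tends to $0$), and since $\mu^{s}_{\Gamma}\to A_{z_0}$ by the uniqueness just invoked, the disjoint decomposition $\Gamma=\{1\}\cup\Psi_r^k\cup\bigcup_{\psi\in\Psi^k}J_\psi$ of $(\ref{symm.d.})$ yields $\sum_{\psi\in\Psi^k}\nu_\psi=A_{z_0}$, which is $(i)$.

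The relations in $(iii)$ reflect the $\Gamma$--equivariance of the construction. For $\gamma\in\Gamma$ and $J\subseteq\Gamma$, writing $h=\gamma g$ and using $\tr{dist}(z_0,\gamma g\cdot z_0)=\tr{dist}(\gamma^{-1}\cdot z_0,\,g\cdot z_0)$ one gets $\mu^{s}_{\gamma J}=\gamma_*\big(\rho^{s}\,\mu^{s}_{J}\big)$, where $\rho^{s}(x)=e^{-s(\tr{dist}(\gamma^{-1}\cdot z_0,\,x)-\tr{dist}(z_0,\,x))}$ converges uniformly on $\hype$ as $s\downarrow 2$ to $(\lambda_{\gamma,z_0})^2$, this limit being the Busemann/Poisson expression for the conformal expansion factor of $\gamma$ in the round metric at $z_0$. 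Passing to the limit along $s_n$ and using weak-$*$ continuity of $\gamma_*$ gives $\lim_n\mu^{s_n}_{\gamma J}=\gamma_*\big((\lambda_{\gamma,z_0})^2\,\nu_{J}\big)$ whenever $\nu_J=\lim_n\mu^{s_n}_J$ exists. Now fix a group--theoretical relation $(\gamma,s(\gamma),S(\gamma))$, i.e. $\gamma J_{s(\gamma)}=\Gamma-J_{S(\gamma)}$. By additivity and the vanishing on $\{1\}\cup\Psi_r^k$, the $\nu$--mass of the right-hand side is $1-\sum_{\psi\in S(\gamma)}\nu_\psi(S_\infty)$; by the identity just obtained with $J=J_{s(\gamma)}$, the $\nu$--mass of the left-hand side is $\int_{S_\infty}(\lambda_{\gamma,z_0})^2\,d\nu_{s(\gamma)}$. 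Equating the two proves $(iii)$. That every relation counted by \fullref{lem22} is indeed of the form $\gamma J_{s(\gamma)}=\Gamma-J_{S(\gamma)}$ is the routine free--group cancellation analysis already performed there and displayed in \fullref{table1}--\ref{Table4}.

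\emph{The main obstacle} lies entirely in the measure--theoretic input: one must know that $\Gamma$ is of divergence type, so that $\Theta_{\Gamma,z_0}(s)\to\infty$ and the normalized sums over the individual pieces $J_\psi$ have nonzero weak limits that genuinely refine $A_{z_0}$, and one must carry the conformal factor $(\lambda_{\gamma,z_0})^2$ correctly through $\gamma_*$ and through the limit --- in particular for the relations in which $\gamma\,s(\gamma)$ has one or more cancellations, so that $s(\gamma)\neq\gamma^{-1}$ and $S(\gamma)$ has several members, and not merely for the easy relations of \fullref{table1}. All of this is established in \cite{CSParadox} and \cite{Y} for coarser decompositions, so the remaining content is the observation that none of it depends on the depth $k$ of $\Gamma_{\mathcal{D}^k}$; the same construction then produces the family $\{\nu_\psi\}_{\psi\in\Psi^k}$ with the three asserted properties.
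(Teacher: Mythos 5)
Your outline is, at bottom, the same Culler--Shalen conformal--density method the paper uses, but it contains a genuine gap at the decisive analytic step. You assert that because $\Lambda_{\Gamma\cdot z}=S_{\infty}$ the group is of divergence type with critical exponent $2$, and everything downstream hangs on this: it is what makes the normalized sums assign vanishing mass to compact subsets of $\hyp$, what makes $\mu^{s}_{\Gamma}$ converge to $A_{z_0}$, and what identifies the limiting $2$--conformal density with the area density. That inference is not valid as stated: for a discrete group, having full limit set does not by itself force critical exponent $2$, let alone divergence type (by Bishop--Jones the \emph{conical} limit set has Hausdorff dimension equal to the exponent, and a full limit set can be mostly non-conical). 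For the groups considered here the conclusion is true, but the route to it is tameness: $\Gamma$ is finitely generated, hence tame by Agol \cite{Agol} and Calegari--Gabai \cite{CG}, and then Canary-type results --- packaged in the paper as \cite[Propositions 3.9 and 6.9]{CSParadox} --- force any $\Gamma$--invariant $D$--conformal density with $D\leq 2$ produced by the construction to be the area density with $D=2$. Your sketch never invokes tameness, so the normalization/identification step that you yourself flag as ``the main obstacle'' is left unsupported.

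Relatedly, the paper does not fix the exponent at $2$ from the outset. It cites \cite[Proposition 4.2]{CSParadox} (Patterson's construction adapted to the decomposition), which yields some $D\in[0,2]$, a $\Gamma$--invariant $D$--conformal density and the measures $\nu_{\psi}$ already satisfying the decomposition and the relation $(c)$ with exponent $D$; only afterwards is $D$ pinned to $2$ and the density to $\mathcal{A}$ via tameness. Your by-hand construction with weights $e^{-s\,\textnormal{dist}(z_0,\,g\cdot z_0)}$ and $s\downarrow 2$ works only if the Poincar\'e series diverges at $s=2$ (otherwise the limits need not concentrate on $S_{\infty}$ and Patterson's modification of the weights is required), so it presupposes exactly the fact you have not justified. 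Once the tameness input is added (or once you simply cite Proposition 4.2 together with Propositions 3.9 and 6.9, as the paper does), the remainder of your argument --- additivity over the disjoint decomposition for $(i)$, normalization for $(ii)$, and the change-of-variables/equivariance computation giving $(iii)$ for every relation $\gamma J_{s(\gamma)}=\Gamma-J_{S(\gamma)}$, independently of the depth $k$ --- is sound and parallels the paper's proof.
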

\begin{proof}
As in the proof of \cite[Lemma 3.3]{Y}, we follow the same scheme given in the proof of  \cite[Lemma 5.3]{CSParadox}. Therefore we shall provide a proof sketch. In particular this proof involves $\Gamma$--invariant $D$--conformal densities, first constructed by Patterson \cite{SJP} and extensively studied by Sullivan \cite{Su1, Su2}. Interested readers may refer to \cite{SJP,Su1,Su2,N,CSParadox} for details on $\Gamma$--invariant $D$--conformal densities and their use in the context of this paper.

The group $\Gamma$ acts freely on $\hyp$. The symmetric decomposition $\Gamma_{\mathcal{D}^k}$ of $\Gamma$ implies that the orbit $W^k=\Gamma\cdot z_0$, where 
\begin{equation*}\label{uniformPRD}
W^{k}=\{z_0\}\cup\{\gamma\cdot z_0\co\gamma\in \Psi^{k}_r\}\cup\bigcup_{\psi\in\Psi^{k}}\{\gamma\cdot z_0\co\gamma\in J_{\psi}\},
\end{equation*}
is an infinite disjoint union for $k\geq 2$. Let $\mathcal{V}^k$ be the finite collection of all sets of the form $\bigcup_{\psi\in\Psi}V_{\psi}^k$, or $V_0^k\cup\bigcup_{\psi\in\Psi}V_{\psi}^k$, or $\{z_0\}\cup\bigcup_{\psi\in\Psi}V_{\psi}^k$, or $\{z_0\}\cup V_0^k\cup\bigcup_{\psi\in\Psi}V_{\psi}^k$ for $\Psi\subset\Psi^k$, where $V_0^k=\{\gamma\cdot z_0\co\gamma\in\Psi_r^k\}$ and $V_{\psi}^k=\{\gamma\cdot z_0\co\gamma\in J_{\psi}\}$. The application of \cite[Proposition 4.2]{CSParadox} to $W^k$ and $\mathcal{V}^k$ implies that there exists a number $D\in[0,2]$, a $\Gamma$--invariant $D$--conformal density $\mathcal{M}=(\mu_z)$ for $\hyp$ and a family of Borel measures $\{\nu_{\psi}\}_{\psi\in\Psi^k}$ such that (a) $\mu_{z_0}=\sum_{\psi\in\Psi^k}\nu_{\psi}$, (b) $\mu_{z_0}(S_{\infty})=1$ and 
$$\tnr{(c)}\quad\dis{\int_{S_{\infty}}\left(\lambda_{\gamma,z_0}\right)^Dd\nu_{s(\gamma)}=1-\sum_{\psi\in S(\gamma)}\int_{S_{\infty}} d\nu_{\psi}}$$
for every group--theoretical relation $(\gamma, s(\gamma), S(\gamma))$ of the decomposition $\Gamma_{\mathcal{D}^k}$. 

Since $\Gamma$ is finitely generated, it is tame \cite{Agol,CG}. Then \cite[Propositions 6.9]{CSParadox} and \cite[Proposition 3.9]{CSParadox} imply that every $\Gamma$--invariant $D$--conformal density $\mathcal{M}$ is a constant multiple of the area density $\mathcal{A}$ or $D=2$. From (b), we get $\mathcal{M}=\mathcal{A}$. Finally ($iii$) follows from (c).
\end{proof}

The number of displacement functions for the decomposition $\Gamma_{\mathcal{D}^k}$ is determined by the number of group--theoretical relations counted in \fullref{lem22}. We aim to apply  \fullref{thm3.4} and \fullref{lem1.2} to each group-theoretical relation $(\gamma,s(\gamma),S(\gamma))$ for the decomposition $\Gamma_{\mathcal{D}^k}$ to determine these displacement functions for each $k\geq 2$. 

Let $I_1=\{1,2,\dots,3^{k-1}\}$, $I_2=\{3^{k-1}+1,\dots,2\cdot 3^{k-1}\}$, $I_3=\{2\cdot 3^{k-1}+1,\dots,3\cdot 3^{k-1}\}$ and, $I_4=\{3\cdot 3^{k-1}+1,\dots, 4\cdot 3^{k-1}\}$. For $d=4\cdot 3^{k-1}$ let us define the set $$\deltad=\left\{\left(x_1,x_2,\dots,x_{d}\right)\in\mathbb{R}^{d}_+:\sum_{i=1}^{d}x_i=1\right\}.$$ Points of $\Delta^{d-1}$ will be written in bold fonts, eg  $\tb{x}=(x_1,x_2,\dots,x_d)$. We shall use the functions $\sigma\co (0,1)\to (0,\infty)$ and $\Sigma_j\co\Delta^{d-1}\to (0,1)$ with formulas 
\begin{equation}\label{sigma}
\sigma(x)=\frac{1-x}{x}\quad\textnormal{and}\quad\Sigma_j(\tb{x})=\sum_{i\in I_j}x_i
\end{equation}
for $j=1,2,3,4$, respectively, to express the displacement functions compactly. In particular we prove the following;
\begin{prop}\label{dispfunc}
Let $\Gamma=\langle\xi,\eta\rangle$ be a free , geometrically infinite Kleinian group without parabolics and, $\Gamma_{\mathcal{D}^k}$ be a symmetric decomposition of $\Gamma$ for $k\geq 2$. Let $a_1, a_2,\dots,a_k$ be the integers given by \fullref{lem22}. Then there exists a set of functions 
\begin{equation}
\mathcal{G}^k=\bigcup_{i\in I^k}\{f_i^{k},g_{i}^{k,1},g_{i,1}^{k,2},\dots,g_{i,a_2}^{k,2},g_{i,1}^{k,3},\dots,g_{i,a_3}^{k,3},\dots, g_{i,1}^{k,k},\dots,g_{i,a_k}^{k,k}\}
\end{equation}
such that for any $z_0\in\hyp$ and for each $\gamma\in\Psi^k$, the expression $e^{2\dgamma}$ is bounded below by $f(\tb{x})$ for $\tb{x}\in\deltad$ for at least one of $f\in\mathcal{G}^k$, where
\begin{equation}
f_i^k(\tb{x})=\left\{\begin{array}{ll}
              \sigma(\Sigma_1(\tb{x}))\sigma(x_i)   & \tnr{if $i \bmod 4\equiv 0$,}\\
              \sigma(\Sigma_4(\tb{x}))\sigma(x_i)   & \tnr{if $i \bmod 4\equiv 1$,}\\
              \sigma(\Sigma_3(\tb{x}))\sigma(x_i)   & \tnr{if $i \bmod 4\equiv 2$,}\\
              \sigma(\Sigma_2(\tb{x}))\sigma(x_i)   & \tnr{if $i \bmod 4\equiv 3$.}
             \end{array}\right.
\end{equation}
\end{prop}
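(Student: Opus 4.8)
The plan is to read off the displacement functions directly from the combinatorial bookkeeping already set up, and to identify the subfamily $\mathcal{F}^k=\{f_1^k,\dots,f_d^k\}$ as the set of functions coming from the ``diagonal'' group--theoretical relations where $s(\gamma)=\gamma^{-1}$, i.e.\ those with $\operatorname{length}(\gamma s(\gamma))=0$. First I would recall that by \fullref{lem22} the symmetric decomposition $\Gamma_{\mathcal{D}^k}$ has exactly $R_k=4r_k 3^{k-1}$ group--theoretical relations, organized by the length $j=\operatorname{length}(\gamma s(\gamma))\in\{0,1,\dots,k\}$, with $a_j$ relations of each type attached to each of the $d=4\cdot 3^{k-1}$ words $\psi\in\Psi^k$ playing the role of $s(\gamma)$. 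For each such relation $(\gamma,s(\gamma),S(\gamma))$, the recipe is fixed: apply \fullref{thm3.4} with the measure $\nu=\nu_{s(\gamma)}$ to get $\nu(S_\infty)=\sum_{i\in p(\{s(\gamma)\})}x_i$ and $\int(\lambda_{\gamma,z_0})^2 d\nu_{s(\gamma)}=1-\sum_{\psi\in S(\gamma)}\int d\nu_\psi=1-\sum_{i\in p(S(\gamma))}x_i$, then feed these into \fullref{lem1.2} with $a=\nu(S_\infty)$, $b=\int(\lambda_{\gamma,z_0})^2d\nu$; the resulting lower bound for $e^{2\operatorname{dist}(z_0,\gamma\cdot z_0)}$ is $\tfrac{b(1-a)}{a(1-b)}=\sigma(a)\cdot\sigma^{-1}$-type product, which after substituting $x_i=\nu_{p^{-1}(i)}(S_\infty)$ is exactly a product of two $\sigma$-factors (or, for longer cancellation lengths, a product of $\sigma(\Sigma_j(\mathbf{x}))$-type factors). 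This establishes the existence and the explicit form of all functions in $\mathcal{G}^k$, with $\mathcal{F}^k$ being the twelve-at-a-time (more generally $d$-at-a-time) family indexed by $i\in I^k$ coming from the $j=0$ relations.

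Next I would verify the specific formula claimed for $f_i^k$. The key point is identifying, for a word $\psi=s(\gamma)\in\Psi^k$ with $p(\psi)=i$, which word $\gamma$ has $\gamma=\psi^{-1}$ and which set $S(\psi^{-1})$ appears. By the enumeration convention, $i\bmod 4$ records the \emph{last letter} of $\psi$ (namely $\xi,\eta^{-1},\eta,\xi^{-1}$ for $i\equiv 1,2,3,0$), hence the \emph{first letter} of $\gamma=\psi^{-1}$ is $\xi^{-1},\eta,\eta^{-1},\xi$ respectively. The combinatorics of the decomposition (illustrated for $k=2$ in \fullref{table1} and for $k=3$ in the relations $\xi^{-3}J_{\xi^3}=\Gamma-J_{\Psi_{\xi^{-1}}}$, etc.) shows that for the $j=0$ relation the set $S(\gamma)$ is precisely $\Psi_{\gamma_1}^k$, the block of words starting with the first letter $\gamma_1$ of $\gamma$; its $p$-image is one of $I_1,I_2,I_3,I_4$. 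Thus $a=\sum_{\ell\in I_j}x_\ell=\Sigma_j(\mathbf{x})$ where $j$ is determined by $\gamma_1$ as above (first letter $\xi\leadsto\Sigma_1$, $\eta^{-1}\leadsto\Sigma_2$, $\eta\leadsto\Sigma_3$, $\xi^{-1}\leadsto\Sigma_4$), and $1-b=\sum_{\psi\in S(\gamma)}x_{p(\psi)}=\Sigma_j(\mathbf{x})$ as well — wait, one must be careful: $b=\int(\lambda_{\gamma,z_0})^2d\nu_{s(\gamma)}$ gives $1-b=\sum_{\psi\in S(\gamma)}\int d\nu_\psi$, while $a=\nu_{s(\gamma)}(S_\infty)=x_i$. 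So actually $a=x_i$ and $1-b=\Sigma_j(\mathbf{x})$, giving $\tfrac{b(1-a)}{a(1-b)}=\tfrac{(1-\Sigma_j(\mathbf{x}))(1-x_i)}{x_i\,\Sigma_j(\mathbf{x})}=\sigma(\Sigma_j(\mathbf{x}))\,\sigma(x_i)$, matching the displayed formula once the correspondence first-letter-of-$\gamma \leftrightarrow$ residue of $i$ is checked case by case. (Cross-checking against $f^2_{12}(\mathbf{x})=\sigma(x_1+x_2+x_3)\sigma(x_{12})$ from relation $(\xi^2,\xi^{-2},\{\xi^2,\xi\eta,\xi\eta^{-1}\})$: here $i=12\equiv0$, $\gamma=\xi^2$ has first letter $\xi$, $p(S(\gamma))=\{1,2,3\}=I_1$, so $f^2_{12}=\sigma(\Sigma_1(\mathbf{x}))\sigma(x_{12})$ — consistent.)

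The proof then amounts to: (1) confirm via \cite[Proposition 9.2]{CSParadox} that $\Gamma$ is free so that the decomposition $\Gamma_{\mathcal{D}^k}$ and its relations make sense; (2) for each relation, run the \fullref{thm3.4}+\fullref{lem1.2} machine — noting that the hypothesis $0<\nu_{s(\gamma)}(S_\infty)<1$ needed to apply \fullref{lem1.2} is harmless, since if $x_i=0$ or $x_i=1$ for some $i$ then $\mathbf{x}\in\partial\Delta^{d-1}$ and the inequality $e^{2\operatorname{dist}}\ge f(\mathbf{x})$ is either vacuous or follows by a limiting/degenerate argument as in \cite{CSParadox} — and collect the functions; (3) organize them as the displayed union $\mathcal{G}^k=\bigcup_{i\in I^k}\{f_i^k,g_i^{k,1},g_{i,1}^{k,2},\dots\}$, where the $g$'s are the $j\ge1$ functions (whose explicit formulas the proposition defers but which have the same $\sigma$-product shape with more factors); (4) verify the $f_i^k$ formula by the case analysis on $i\bmod 4$ described above, using symmetry of the decomposition to reduce to one representative per block. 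The main obstacle is bookkeeping precision: correctly matching the enumeration $p$ to the structure of the sets $S(\gamma)$ across all four blocks and showing that the ``diagonal'' relation always yields $S(\gamma)=\Psi_{\gamma_1}^k$ with the stated $\Sigma_j$; there is no deep analytic content here beyond \fullref{thm3.4} and \fullref{lem1.2}, which are already in hand, so the work is entirely in making the combinatorial identification airtight and handling the boundary degeneracy when some $x_i\in\{0,1\}$.
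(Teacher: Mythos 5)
Your overall route is the paper's route: apply \fullref{thm3.4} to each group--theoretical relation $(\gamma,s(\gamma),S(\gamma))$, feed $a=\nu_{s(\gamma)}(S_{\infty})=x_{p(s(\gamma))}$ and $b=\int_{S_{\infty}}\lambda_{\gamma,z_0}^2\,d\nu_{s(\gamma)}=1-\sum_{\psi\in S(\gamma)}x_{p(\psi)}$ into \fullref{lem1.2}, and for the relations with $s(\gamma)=\gamma^{-1}$ identify $S(\gamma)$ with the block $\Psi^k_{\gamma_1}$ determined by the first letter of $\gamma$, so that $1-b=\Sigma_j(\tb{x})$ with $j$ matched to $i\bmod 4$ exactly as in the displayed formula; your case check and the cross-check against $f^2_{12}$ are correct and agree with \fullref{table1} and \fullref{Table4}.

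The genuine gap is your treatment of the hypothesis $0<\nu_{s(\gamma)}(S_{\infty})<1$. This is not a boundary technicality that is ``vacuous or handled by a limiting argument'': the conclusion of the proposition is a bound by $f(\tb{m}_k)$ at the specific point $\tb{m}_k=\left(\nu_{\psi}(S_{\infty})\right)_{\psi\in\Psi^k}$, and this point must lie in $\Delta^{d-1}$ (all coordinates strictly positive) for the $\sigma$--expressions to be defined and, later, for the comparison $G^k(\tb{m}_k)\geq\inf_{\tb{x}\in\Delta^{d-1}}G^k(\tb{x})$ in \fullref{minG} and \fullref{thm4.1} to apply. Moreover \fullref{lem1.2} cannot itself rule out $\nu_{s(\gamma)}=0$: in that case $a=0$ forces $b=\int\lambda^2\,d\nu_{s(\gamma)}=0$, so $a$ and $b$ are both zero and the lemma is simply inapplicable; there is no obvious family to take limits over. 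The paper closes this with a short combinatorial argument that you omit: if $\nu_{\psi_0}(S_{\infty})=0$ for some $\psi_0\in\Psi^k$, then the diagonal relation with $s(\gamma)=\psi_0$ gives $\sum_{\psi\in S(\psi_0)}\nu_{\psi}(S_{\infty})=1$, hence (since the total mass over $\Psi^k$ is $1$) every $\psi_2\notin S(\psi_0)$ has $\nu_{\psi_2}(S_{\infty})=0$; applying the same relation to such a $\psi_2$ whose block $S(\psi_2)$ is disjoint from $S(\psi_0)$ forces $\sum_{\psi\in S(\psi_2)}\nu_{\psi}(S_{\infty})=1$, which kills all mass on $S(\psi_0)$ and contradicts the existence of $\psi_1\in S(\psi_0)$ with $\nu_{\psi_1}(S_{\infty})>0$. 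With that claim supplied, the rest of your bookkeeping goes through and coincides with the paper's proof.
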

\begin{proof}
Let $\{\nu_{\psi}\}_{\psi\in\Psi^k}$ be the family of Borel measures on $S_{\infty}$ given by  \fullref{thm3.4} for $\Gamma=\langle\xi,\eta\rangle$. Then we claim that $0<\nu_{\psi}(S_{\infty})<1$ for every $\psi\in\Psi^k$ for every $k\geq 2$. To prove the claim it is enough to show that $\nu_{\psi_0}(S_{\infty})\neq 0$ for all $\psi_0\in\Psi^k$. 

Assume that $\nu_{\psi_0}(S_{\infty})=0$ for a given $\psi_0\in\Psi^k$. Note that $(\psi_0,\psi_0^{-1},S(\psi_0))$ is a group-theoretical property for $\Gamma_{\mathcal{D}^k}$ when $S(\psi_0)$ is the set of words in $\Psi^k$ which doesn't start with the first letter of $\psi_0$. Since we have $\psi_0^{-1}=s(\psi_0)$, we get $\sum_{\psi\in
S(\psi_0)}\nu_{\psi}=1$ by  \fullref{thm3.4} ($iii$). Then we see that $\nu_{\psi_1}(S_{\infty})\neq 0$ for some $\psi_1\in S(\psi_0)$. Let $\psi_2\in\Psi^k-S(\psi_0)$. If $S(\psi_2)$ denotes the set of all words in $\Psi_k$ which doesn't start with the first letter of $\psi_2$, then $(\psi_2,\psi_2^{-1},S(\psi_2))$ is a group-theoretical relation for $\Gamma_{\mathcal{D}^k}$. By the equalities $\sum_{\psi\in\Psi^k}\nu_{\psi}=1$ and $\sum_{\psi\in S(\psi_0)}\nu_{\psi}=1$ we derive that
$\nu_{\psi_2}(S_{\infty})=0$. By  \fullref{thm3.4} ($iii$), we obtain that $\sum_{\psi\in S(\psi_2)}\nu_{\psi}=1$. Using the facts that
$\sum_{\psi\in\Psi^k}\nu_{\psi}=1$ and $S(\psi_0)\cap S(\psi_2)=\emptyset$, we find that $\nu_{\psi_1}(S_{\infty})=0$, a contradiction.

 \fullref{thm3.4} ($iii$) and ($ii$) show that $\nu_{s(\gamma)}(S_{\infty})$ and $\int_{S_{\infty}}\lambda^{2}_{\gamma,z_0}d\mu_{V_{s(\gamma)}}$ satisfy the hypothesis of \fullref{lem1.2} for each group-theoretical relation $(\gamma,s(\gamma),S(\gamma))$ of $\Gamma_{\mathcal{D}^k}$. Hence by letting $\nu=\nu_{s(\gamma)}$, $a=\nu_{s(\gamma)}(S_{\infty})$ and $b=\int_{S_{\infty}}\lambda^{2}_{\gamma,z_0}d\mu_{V_{s(\gamma)}}$ in \fullref{lem1.2} we obtain the lower bounds
\begin{equation}\label{eqn4}
\begin{array}{lll}
e^{\dis 2\dgamma } & \geq &  \dis\frac{\sigma\left(\dis{\int_{S_{\infty}} d\nu_{s(\gamma)}}\right)}{\sigma\left(\dis{\int_{S_{\infty}}\lambda^{2}_{\gamma,z_0}d\mu_{V_{s(\gamma)}}}\right)}\\& = & \dis\sigma\left(\sum_{\psi\in S(\gamma)}m_{p(\psi)}\right)\sigma\left(m_{p(s(\gamma))}\right)
\end{array}
\end{equation}
for each relation $(\gamma,s(\gamma), S(\gamma))$ of $\Gamma_{\mathcal{D}^k}$, where $m_{p(\psi)}=\int_{S_{\infty}} d\nu_{\psi}$ for the bijection $p:\Psi^k\ra I^k=\{i\in\mathbb{Z}:1\leq i\leq 4\cdot 3^{k-1}\}$ in (\ref{Enum}). We replace each constant $m_{p(\psi)}$ appearing in (\ref{eqn4}) with the variable $x_{p(\psi)}$. Let $\tb{m}_k=(m_1,m_2,\dots,m_{d})\in\Delta^{d-1}$. 

The constants obtained on the right hand-side of the inequalities in the expression (\ref{eqn4}) can be considered as the values of the functions in $\mathcal{G}^k$ at the point $\tb{m}_k$. The first group of functions $\{f_i^{k}\}_{i\in I^k}$ are determined by the relations $(\gamma,s(\gamma),S(\gamma))$ so that $length(\gamma s(\gamma))=0$. The second group $\{g_i^{k,1}\}_{i\in I^k}$ is determined by the relations with $length(\gamma s(\gamma))=1$. Finally, the third group of functions $$\{g_{i,1}^{k,2},\dots,g_{i,a_2}^{k,2}\}\cup\{g_{i,1}^{k,3},\dots,g_{i,a_3}^{k,3}\}\cup\dots\cup\{g_{i,1}^{k,k},\dots,g_{i,a_k}^{k,k}\}$$ are determined by the relations with the condition $2\leq length(\gamma s(\gamma))\leq k$. Hence we obtain $R_k$ many displacement functions so that $e^{2\dgamma}$ is bounded below by $f(\tb{m}_k)$ for at least one of $f\in\mathcal{G}^k$. The formulas of the functions $\{f_i^k\}_{i\in I^k}$ are derived from the fact that they are obtained by the group--theoretical relations $(\gamma, s(\gamma), S(\gamma))$ for $s(\gamma)=\gamma^{-1}$. 
\end{proof}

As an illustration, we list some of the displacement functions for the symmetric decomposition $\Gamma_{\mathcal{D}^2}$.
These displacement functions are produced by using \fullref{thm3.4} for $k=2$,  \fullref{lem1.2} and the group--theoretical relations listed in \fullref{table1} given above:
\begin{small}
\begin{equation}\label{deffs}
\begin{array}{ll}
\dis{f_{1}^{2}(\tb{x})=\frac{1-x_{10}-x_{11}-x_{12}}{x_{10}+x_{11}+x_{12}}\cdot\frac{1-x_{1}}{x_{1}}}, & \dis{f_{7}^{2}(\tb{x})=\frac{1-x_{4}-x_{5}-x_{6}}{x_{4}+x_{5}+x_{6}}\cdot\frac{1-x_{7}}{x_{7}}}\\
\dis{f_{2}^{2}(\tb{x})=\frac{1-x_{7}-x_{8}-x_{9}}{x_{7}+x_{8}+x_{9}}\cdot\frac{1-x_{2}}{x_{2}}}, & \dis{f_{8}^{2}(\tb{x}) =\frac{1-x_{1}-x_{2}-x_{3}}{x_{1}+x_{2}+x_{3}}\cdot\frac{1-x_{8}}{x_{8}}},\\
\dis{f_{3}^{2}(\tb{x})=\frac{1-x_{4}-x_{5}-x_{6}}{x_4+x_5+x_6}\cdot\frac{1-x_{3}}{x_{3}}}, & \dis{f_{9}^{2}(\tb{x})=\frac{1-x_{10}-x_{11}-x_{12}}{x_{10}+x_{11}+x_{12}}\cdot\frac{1-x_{9}}{x_{9}}}, \\
\dis{f_{4}^{2}(\tb{x})=\frac{1-x_{1}-x_{2}-x_{3}}{x_{1}+x_{2}+x_{3}}\cdot\frac{1-x_{4}}{x_{4}}}, & \dis{f_{10}^{2}(\tb{x})=\frac{1-x_{7}-x_{8}-x_{9}}{x_{7}+x_{8}+x_{9}}\cdot\frac{1-x_{10}}{x_{10}}},\\
\dis{f_{5}^{2}(\tb{x})=\frac{1-x_{10}-x_{11}-x_{12}}{x_{10}+x_{11}+x_{12}}\cdot\frac{1-x_{5}}{x_{5}}}, & \dis{f_{11}^{2}(\tb{x}) =\frac{1-x_{4}-x_{5}-x_{6}}{x_{4}+x_{5}+x_{6}}\cdot\frac{1-x_{11}}{x_{11}}},\\
\dis{f_{6}^{2}(\tb{x})=\frac{1-x_{7}-x_{8}-x_{9}}{x_{7}+x_{8}+x_{9}}\cdot\frac{1-x_{6}}{x_{6}}}, & \dis{f_{12}^{2}(\tb{x})=\frac{1-x_{1}-x_{2}-x_{3}}{x_{1}+x_{2}+x_{3}}\cdot\frac{1-x_{12}}{x_{12}}}.\\
\end{array}
\end{equation}
\end{small} 

\noindent Let $\tb{m}=(\nu_{\xi^2}(S_{\infty}),\nu_{\xi\eta^{-1}}(S_{\infty}),\dots,\nu_{\xi^{-2}}(S_{\infty}))\in\Delta^{11}$. For instance, by \fullref{lem1.2} we have the inequalities 
\[
\begin{array}{ccc}
\dxitwo\geq 0.5\log f_1^2(\tb{m}), &  & \dxieta\geq 0.5\log f_3^2(\tb{m}),\\
\dxietanone\geq 0.5\log f_2^2(\tb{m}), &  & \detaxi\geq 0.5\log f_4^2(\tb{m})
\end{array}
\]
obtained by the group--theoretical relations 
($1$), ($2$), ($3$) and ($4$) in  \fullref{table1}. Some other displacement functions for the symmetric decomposition $\Gamma_{\mathcal{D}^2}$ are
\begin{equation}\label{defgs}
\begin{array}{lll}
g_1^{2,1}(\tb{x}) &= &\dis\frac{1-x_4-x_5-x_6-x_7-x_8-x_9-x_{10}-x_{11}-x_{12}}{x_4+x_5+x_6+x_7+x_8+x_9+x_{10}+x_{11}+x_{12}}\cdot\frac{1-x_1}{x_1},
\\
g_2^{2,1}(\tb{x}) & = &\dis\frac{1-x_1-x_2-x_3-x_7-x_8-x_9-x_{10}-x_{11}-x_{12}}{x_1+x_2+x_3+x_7+x_8+x_9+x_{10}+x_{11}+x_{12}}\cdot\frac{1-x_2}{x_2},
\\
g_3^{2,1}(\tb{x}) & = &\dis\frac{1-x_1-x_2-x_3-x_4-x_5-x_6-x_{10}-x_{11}-x_{12}}{x_1+x_2+x_3+x_4+x_5+x_6+x_{10}+x_{11}+x_{12}}\cdot\frac{1-x_3}{x_3},
\\
g_4^{2,1}(\tb{x}) & = &\dis\frac{1-x_1-x_2-x_3-x_4-x_5-x_6-x_{7}-x_{8}-x_{9}}{x_1+x_2+x_3+x_4+x_5+x_6+x_{7}+x_{8}+x_{9}}\cdot\frac{1-x_4}{x_4},
\\
g_5^{2,1}(\tb{x}) &=&\dis\frac{1-x_4-x_5-x_6-x_7-x_8-x_9-x_{10}-x_{11}-x_{12}}{x_4+x_5+x_6+x_7+x_8+x_9+x_{10}+x_{11}+x_{12}}\cdot\frac{1-x_5}{x_5},
\\
g_6^{2,1}(\tb{x}) & = & \dis\frac{1-x_1-x_2-x_3-x_7-x_8-x_9-x_{10}-x_{11}-x_{12}}{x_1+x_2+x_3+x_7+x_8+x_9+x_{10}+x_{11}+x_{12}}\cdot\frac{1-x_6}{x_6},
\end{array}
\end{equation}
obtained by the group--theoretical relations (1), (2), (3), (4), (5) and (6) in \fullref{table2}, respectively. Then these functions imply the inequalities 
\[\begin{array}{ll}
\dxiinv\geq 0.5\log g_1^{2,1}(\tb{m}), & \deta\geq 0.5\log g_4^{2,1}(\tb{m}),\\
\dxiinv\geq 0.5\log g_2^{2,1}(\tb{m}), & \deta\geq 0.5\log g_5^{2,1}(\tb{m}),\\
\dxiinv\geq 0.5\log g_3^{2,1}(\tb{m}), & \deta\geq 0.5\log g_6^{2,1}(\tb{m}).
\end{array}
\] 
By the group--theoretical relations (2), (5), (13) and (16) in  \fullref{table3} we also obtain the following displacement functions for the symmetric decomposition $\Gamma_{\mathcal{D}^2}$ of $\Gamma=\langle\xi,\eta\rangle$:
\begin{small}
\begin{equation}\label{dispothergs}
\begin{array}{ll}
g^{2,2}_{1,1}(\tb{x})=\left(\dis{1\bigg/\sum_{i=1,i\neq 5}^{12}x_i}-1\right)\cdot\dis{\frac{1-x_1}{x_1}}, & g^{2,2}_{1,5}(\tb{x})=\left(\dis{1\bigg/\sum_{i=1,i\neq 1}^{12}x_i}-1\right)\cdot\dis{\frac{1-x_5}{x_5}},\\
g^{2,2}_{1,2}(\tb{x})=\left(\dis{1\bigg/\sum_{i=1,i\neq 6}^{12}x_i}-1\right)\cdot\dis{\frac{1-x_2}{x_2}}, & g^{2,2}_{1,6}(\tb{x})=\left(\dis{1\bigg/\sum_{i=1,i\neq 2}^{12}x_i}-1\right)\cdot\dis{\frac{1-x_6}{x_6}}.
\end{array}
\end{equation}
\end{small}

The functions in (\ref{dispothergs}) provide the lower bounds for the hyperbolic displacements listed below: 
\[\begin{array}{c}
\detaxi\geq 0.5\log g_{1,1}^{2,2}(\tb{m}),\quad \dxieta\geq 0.5\log g_{1,5}^{2,2}(\tb{m}),\\ \detaxi\geq 0.5\log g_{1,2}^{2,2}(\tb{m}),\quad  \dxieta\geq 0.5\log g_{1,6}^{2,2}(\tb{m}).
\end{array}
\]
There are in total $48$ such inequalities for the displacements under the isometries $\psi\in\Psi_r^2\cup\Psi^2$ determined by the symmetric decomposition $\Gamma_{\mathcal{D}^2}$ (see \fullref{lem22}). Notice that the displacement functions $f_4^2$, $f_3^2$, $g_1^{2,1}$, $g_2^{2,1}$, $g_3^{2,1}$, $g_4^{2,1}$, $g_5^{2,1}$, $g_6^{2,1}$, $g_{1,1}^{2,2}$, $g_{1,2}^{2,2}$, $g_{1,5}^{2,2}$ and $g_{1,6}^{2,2}$, which were studied in \cite{Y}, give lower bounds for the hyperbolic displacements under the set of isometries $\Gamma^{\dagger} =\{\xi, \eta, \xi\eta\}\subset\Psi^2_r\cup\Psi^2$ in the symmetric decomposition $\Gamma_{\mathcal{D}^2}$. 

As another example, by the group--theoretical relations in \fullref{Table4}, \fullref{thm3.4} for $k=3$ and \fullref{lem1.2} we obtain the formulas of some of the displacement functions $\{f_i^3\}_{i\in I^3}$ for the symmetric decomposition $\Gamma_{\mathcal{D}^3}$ as
\begin{equation}
\begin{array}{ll}
f^3_{i}(\tb{x})  =  \left(1\bigg/\dis{\sum_{l=28}^{36}x_l}-1\right)\cdot\dis{\frac{1-x_i}{x_i}}, & f^3_{j}(\tb{x})  = \left(1\bigg/\dis{\sum_{l=19}^{27}x_l}-1\right)\cdot\dis{\frac{1-x_j}{x_j}},\\
f^3_{m}(\tb{x})  =  \left(1\bigg/\dis{\sum_{l=10}^{18}x_l}-1\right)\cdot\dis{\frac{1-x_m}{x_m}}, & f^3_{n}(\tb{x}) = \left(1\bigg/\dis{\sum_{l=1}^{9}x_l}-1\right)\cdot\dis{\frac{1-x_n}{x_n}}\\
\end{array}
\end{equation}
for $i\in\{1,5,9,\dots,33\}$, $j\in\{2,6,10,\dots,34\}$, $m\in\{3,7,11,\dots,35\}$ and $n\in\{4,8,12,\dots,36\}$ so that $\dgamma\geq 0.5\log f_i^3(\tb{m})$ for some $i\in I^3$ for every $\gamma\in\Psi^3$, where $\tb{m}=(\nu_{\xi^3}(S_{\infty}),\nu_{\xi^2\eta^{-1}}(S_{\infty}),\dots,\nu_{\xi^{-3}}(S_{\infty}))\in\Delta^{35}$. There are $252$ such displacement functions for the displacements under the isometries $\gamma\in\Psi_r^3\cup\Psi^3$ determined by the symmetric decomposition $\Gamma_{\mathcal{D}^3}$ (see \fullref{lem22}). 
To calculate a lower bound for the maximum of the hyperbolic displacements under the isometries in $\Psi_r^k\cup\Psi^k$, we shall compute the greatest lower bound for the maximum of all of the functions in $\mathcal{G}^k$ over the simplex $\Delta^{d-1}$. In particular, if $G^k$ is the continuous function defined as
\begin{equation}\label{G}
\begin{array}{lllll}
G^k&: & \Delta^{d-1} &\ra&\mathbb{R}\\
  &  & \tb{x} &\mapsto&\max\{f(\tb{x}):f\in\mathcal{G}^k\},
\end{array}
\end{equation}
we aim to calculate $\inf_{\tb{x}\in\Delta^{d-1}}G^k(\tb{x})$. The details of this computation are given in \fullref{S3}.



\section{Infima of the Maximum of the Functions in $\mathcal{G}^k$ on $\Delta^{d-1}$}\label{S3}

Calculations given in this section are for a fixed integer $k\geq 2$. Therefore, we shall drop the superscript $k$, the marker of the symmetric decomposition $\Gamma_{\mathcal{D}^k}$ of $\Gamma=\langle\xi,\eta\rangle$, from the displacement functions $\{f_i^k\}_{i\in I^k}$ whose formulas are listed in \fullref{dispfunc}.

If $\mathcal{F}^k=\{f_i\}_{i\in I^k}$, we will show that $\inf_{\tb{x}\in\Delta^{d-1}}G^k(\tb{x})=\inf_{\tb{x}\in\Delta^{d-1}}F^k(\tb{x})$ for every $k\geq 2$ (see \fullref{minF} and \ref{minG}), where $F^k$ is the continuous function defined as
\begin{equation}\label{F}
\begin{array}{lllll}
F^k &\co & \Delta^{d-1} &\to &\mathbb{R}\\
  &  & \tb{x} &\mapsto &\max\left(f_1(\tb{x}),f_2(\tb{x}),\dots,f_{d}(\tb{x})\right).
\end{array}
\end{equation}
Therefore, it is enough to find $\inf_{\tb{x}\in\Delta^{d-1}}F^k(\tb{x})$. We first prove the following lemma:
\begin{lemma}\label{lemtwo}
If $F^k$ is the function defined in (\ref{F}), then $\alpha_*=\inf_{\tb{x}\in\Delta^{d-1}}F^k(\tb{x})$ is attained in $\Delta^{d-1}$ and contained in the interval $[1,12\cdot 3^{k-1}-3]$ for $k\geq 2$.
\end{lemma}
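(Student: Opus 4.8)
The plan is to establish the two-sided bound by one explicit evaluation together with a constraint argument, and to get attainment from coercivity of $F^k$ on the open simplex. Throughout, write $f_i(\tb{x})=\sigma(\Sigma_{j(i)}(\tb{x}))\,\sigma(x_i)$, where $j(i)\in\{1,2,3,4\}$ is the block index attached to $i$ via $i\bmod 4$ as in \fullref{dispfunc}. For the upper bound I would evaluate $F^k$ at the barycenter $\tb{c}=(1/d,\dots,1/d)$: then $\Sigma_j(\tb{c})=3^{k-1}/d=1/4$, so $\sigma(\Sigma_j(\tb{c}))=3$, while $\sigma(c_i)=d-1=4\cdot 3^{k-1}-1$, giving $f_i(\tb{c})=3(d-1)=12\cdot 3^{k-1}-3$ for every $i\in I^k$; hence $F^k(\tb{c})=12\cdot 3^{k-1}-3$ and $\alpha_*\le 12\cdot 3^{k-1}-3$. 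For the lower bound I would use that $1\in I_1$, so $1\notin I_4$, and that $j(1)=4$, i.e. $f_1(\tb{x})=\sigma(\Sigma_4(\tb{x}))\sigma(x_1)$. Since $x_1+\Sigma_4(\tb{x})\le\sum_{i=1}^{d}x_i=1$ and all the quantities involved are strictly positive on $\Delta^{d-1}$, clearing denominators shows that $\sigma(\Sigma_4(\tb{x}))\sigma(x_1)\ge 1$ is equivalent to $x_1+\Sigma_4(\tb{x})\le 1$; hence $f_1\ge 1$ everywhere and $F^k(\tb{x})\ge 1$, so $\alpha_*\ge 1$.

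It remains to show the infimum is attained, for which I would prove that $F^k$ is proper on $\Delta^{d-1}$, namely $F^k(\tb{x}_n)\to\infty$ whenever $\tb{x}_n\in\Delta^{d-1}$ converges to some $\bar{\tb{x}}\in\partial\Delta^{d-1}$. Put $Z=\{m:\bar x_m=0\}\ne\emptyset$. If some $m\in Z$ has $\Sigma_{j(m)}(\bar{\tb{x}})<1$, then $\sigma(\Sigma_{j(m)}(\tb{x}_n))$ tends to a positive limit while $\sigma((x_n)_m)\to\infty$, so $F^k(\tb{x}_n)\ge f_m(\tb{x}_n)\to\infty$. Otherwise $\Sigma_{j(m)}(\bar{\tb{x}})=1$ for every $m\in Z$; choosing $m_0\in Z$ and setting $j_0=j(m_0)$, the identity $\Sigma_{j_0}(\bar{\tb{x}})=1$ forces $\bar x_l=0$ for all $l\notin I_{j_0}$, so $\{1,\dots,d\}\setminus I_{j_0}\subseteq Z$. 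Since $j(\cdot)$ depends only on the residue mod $4$ and $\{1,\dots,d\}\setminus I_{j_0}$ contains two consecutive integers, it contains an index $m_1$ with $j(m_1)\ne j_0$; then $I_{j(m_1)}$ is disjoint from $I_{j_0}$, so $\Sigma_{j(m_1)}(\bar{\tb{x}})=0\ne 1$, contradicting $m_1\in Z$. Hence properness holds. Then the sublevel set $\{\tb{x}\in\Delta^{d-1}:F^k(\tb{x})\le 12\cdot 3^{k-1}-3\}$ is nonempty (it contains $\tb{c}$), bounded, and closed in $\mathbb{R}^d$ — any limit on $\partial\Delta^{d-1}$ is excluded by properness, and a limit in the open simplex stays in the sublevel set by continuity — hence compact. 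Continuity of $F^k$ then yields a minimizer there, whose value is $\alpha_*$, and combining with the bounds above gives $\alpha_*\in[1,12\cdot 3^{k-1}-3]$.

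The delicate point is the coercivity statement. The naive claim that $f_i(\tb{x}_n)\to\infty$ as soon as $x_i\to 0$ is false: when $\Sigma_{j(i)}(\bar{\tb{x}})=1$ the two factors $\sigma(\Sigma_{j(i)})$ and $\sigma(x_i)$ pull in opposite directions. One is therefore forced to pass to a different vanishing coordinate, and the argument hinges on the combinatorics of the index map $j(\cdot)$ together with the partition $\{1,\dots,d\}=I_1\sqcup I_2\sqcup I_3\sqcup I_4$ to guarantee that a suitable coordinate exists. Everything else reduces to routine manipulation of the function $\sigma$.
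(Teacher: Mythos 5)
Your proposal is correct and follows essentially the same route as the paper: evaluation at the barycenter $(1/d,\dots,1/d)$ gives the upper bound $12\cdot 3^{k-1}-3$, the elementary inequality $\sigma(a)\sigma(b)\ge 1$ for $a+b\le 1$ gives the lower bound $1$, and blow-up of some $f_i$ along sequences tending to $\partial\Delta^{d-1}$ yields attainment of the infimum. The only difference is that you spell out the coercivity step in detail (handling the case $\Sigma_{j(m)}\to 1$ via the partition $I_1\sqcup I_2\sqcup I_3\sqcup I_4$), whereas the paper simply asserts it with a reference to the analogous argument in \cite{Y}.
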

\begin{proof}
This proof uses analogous arguments given in \cite[Lemma 4.2]{Y}. To save space we give a proof sketch. By the formulas of $f_i$ given in \fullref{dispfunc}, for some $f_i\in\mathcal{F}^k$ we see that $f_i(\tb{x}_n )$ approaches to infinity on any sequence $\{\tb{x}_n\}\subset\Delta^{d-1}$ which limits on $\partial\Delta^{d-1}$. This observation implies that  $\inf_{\tb{x}\in\Delta^{d-1}}F^k(\tb{x})=\min_{\tb{x}\in\Delta^{d-1}}F^k(\tb{x})$.  

We have $f_i(\tb{x})>1$ for every $\tb{x}\in\Delta^{d-1}$ which shows that $  \alpha_*\geq 1$. Consider the point $\ys=(1/d,1/d,\dots,1/d)\in\Delta^{d-1}$, where $d=4\cdot 3^{k-1}$. Then for every $k\geq 2$ we get  $\Sigma_1(\tb{y})=\Sigma_2(\tb{y})=\Sigma_3(\tb{y})=\Sigma_4(\tb{y})=1/4$. Again by the formulas of $f_i$ given in   \fullref{dispfunc}, we have $f_i(\tb{y}^*)=3\cdot (4\cdot 3^{k-1}-1)$ for every $i\in I^k$. As a result we obtain $  \alpha_*\in [1,12\cdot 3^{k-1}-3]$.
\end{proof}

We shall use the notation $\xs$ to denote a point at which the infimum of $F^k$ is attained on $\Delta^{d-1}$. To calculate $  \alpha_*=\min_{\tb{x}\in\Delta^{d-1}}F^k(\tb{x})$, we exploit the convexity properties of the displacement functions in $\mathcal{F}^k$. 

For $j\in\{1,2,3,4\}$ and $i\in I^k$, introduce the functions $f\co\Delta\to (0,1)$, $g\co\Delta\to (0,1)$ and $\Sigma_j^i\co\Delta^{d-1}\to\R$ defined by 
\begin{equation}\label{fg}
\begin{array}{c}
\dis{f(x,y)=\frac{1-x}{x}\cdot\frac{1-y}{y}},\ \ \dis{g(x,y)=\frac{1-x-y}{x+y}\cdot\frac{1-y}{y}},\ \ \dis{\Sigma_j^i(\tb{x})=\sum_{l\in I_j,l\neq i}x_l}
\end{array}
\end{equation}
where $\Delta=\{(x,y)\in\mathbb{R}^2\co x+y<1,\ 0<x, 0<y\}$. Remember that we have the sets
\[
\begin{array}{c}
I_1=\{1,\dots,3^{k-1}\}, I_2=\{3^{k-1}+1,\dots,2\cdot 3^{k-1}\}, I_3=\{2\cdot 3^{k-1}+1,\dots,3\cdot 3^{k-1}\},\\
I_4=\{3\cdot 3^{k-1}+1,\dots, 4\cdot 3^{k-1}\}.
\end{array}
\]
Given a displacement function $f_i(\tb{x})=\sigma(\Sigma_j(\tb{x}))\sigma(x_i)$ in $\mathcal{F}^k$ for $j\in\{1,2,3,4\}$ and $i\in I^k$ in  \fullref{dispfunc}, it can be expressed as 
\begin{equation}
f_i(\tb{x})=\left\{\begin{array}{ll}
                                      f(\Sigma_j(\tb{x}),x_i) & \textnormal{if $i\notin I_j$,}\\
                                      g(\Sigma_j^i(\tb{x}),x_i) & \textnormal{if $i\in I_j$.}
                  \end{array}\right.
\end{equation}
So the convexity of $f_i\in\mathcal{F}^k$ follows from the convexities of $f$ and $g$. We shall use the statement below which gives a sufficient condition to check the convexities of $f$ and $g$:
\begin{thm}\label{thmconvex}
Let $f$ be a twice continuously differentiable real-valued function on an open convex set $C$ in $\mathbb{R}^n$. Then $f$ is a strictly convex function if its Hessian matrix $H_f(\tb{x})=(\partial^2f/\partial x_i\partial x_j(\tb{x}))$ for $i,j=1,\dots,n$ is positive definite for every $\tb{x}\in C$.
\end{thm}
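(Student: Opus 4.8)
The plan is to reduce the multivariable statement to the classical one-dimensional fact that a twice-differentiable function of one variable with everywhere-positive second derivative is strictly convex. Fix two distinct points $\tb{x},\tb{y}\in C$ and, since $C$ is convex, consider the segment parametrization $\gamma(t)=(1-t)\tb{x}+t\tb{y}$ for $t\in[0,1]$, which lies entirely in $C$. Define $\phi\co[0,1]\to\mathbb{R}$ by $\phi(t)=f(\gamma(t))$. Because $f$ is twice continuously differentiable and $\gamma$ is affine (hence smooth), $\phi$ is twice continuously differentiable on $[0,1]$, and the chain rule gives $\phi'(t)=\nabla f(\gamma(t))\cdot(\tb{y}-\tb{x})$ and then $\phi''(t)=(\tb{y}-\tb{x})^{T}H_f(\gamma(t))(\tb{y}-\tb{x})$.

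Next I would invoke the hypothesis: since $H_f(\gamma(t))$ is positive definite and $\tb{y}-\tb{x}\neq 0$, we get $\phi''(t)>0$ for every $t\in[0,1]$. The remaining step is to deduce strict convexity of $\phi$ on $[0,1]$ from $\phi''>0$. This is standard: for $0\le a<b\le 1$ and $s\in(0,1)$, write $c=(1-s)a+sb$ and apply Taylor's theorem with Lagrange remainder at $c$ to both $\phi(a)$ and $\phi(b)$, obtaining $\phi(a)=\phi(c)+\phi'(c)(a-c)+\tfrac12\phi''(\xi_1)(a-c)^2$ and $\phi(b)=\phi(c)+\phi'(c)(b-c)+\tfrac12\phi''(\xi_2)(b-c)^2$ for some $\xi_1,\xi_2$ in the respective open subintervals; forming the combination $(1-s)\phi(a)+s\phi(b)$, the first-order terms cancel and the quadratic remainder terms are strictly positive, so $(1-s)\phi(a)+s\phi(b)>\phi(c)$. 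Applying this with $a=0$, $b=1$, $s=t$ yields exactly $f((1-t)\tb{x}+t\tb{y})=\phi(t)<(1-t)\phi(0)+t\phi(1)=(1-t)f(\tb{x})+tf(\tb{y})$, which is the definition of strict convexity of $f$ on $C$ since $\tb{x},\tb{y},t$ were arbitrary.

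There is no serious obstacle here; the only points requiring a little care are the correct bookkeeping in the chain-rule computation of $\phi''$ (making sure the quadratic form in $\tb{y}-\tb{x}$ appears, not something with the wrong sign or a spurious first-order term) and the verification that the Taylor remainders are genuinely strictly positive, which uses both $\phi''>0$ on the whole interval and $a\neq c\neq b$. One could alternatively replace the Taylor argument by a mean value theorem argument (showing $\phi'$ is strictly increasing and hence $\phi$ strictly convex), which is equivalent; I would present whichever is shorter. I would keep the write-up brief, as this is a textbook fact invoked for convenience.
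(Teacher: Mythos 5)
Your argument is correct: restricting $f$ to the segment $\gamma(t)=(1-t)\tb{x}+t\tb{y}$, computing $\phi''(t)=(\tb{y}-\tb{x})^{T}H_f(\gamma(t))(\tb{y}-\tb{x})>0$ from positive definiteness, and deducing strict convexity of $\phi$ via the Taylor (or equivalently the increasing-derivative) argument is exactly the standard proof of this fact. Note that the paper itself gives no proof here -- it explicitly defers to the literature, citing Rockafellar's Convex Analysis for an analogous statement -- so there is nothing to compare against beyond observing that your write-up is the textbook argument the citation points to, and it is complete as stated.
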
 
As this theorem is one of the standard facts from convex analysis, various proofs are readily available in the literature. Therefore no proof will be included here. Interested readers may refer to \cite[Theorem 4.5]{RTRoc} for an analogous statement and its proof. 

In particular  \fullref{thmconvex} implies that a twice continuously differentiable real--valued function $f(x,y)$ is strictly convex on an open convex set $C$ if $f_{xx}(\tb{x})>0$, $f_{yy}(\tb{x})>0$ and $det\ H_f(\tb{x})>0$ for every $\tb{x}\in C$. Then we have the following lemmas:
\begin{lemma}\label{convex2}
Let $C_g=\{(x,y)\in\Delta\co x+2y-xy-y^2<3/4\}$. Then $C_g$ is an open convex set and, $g(x,y)$ is a strictly convex function on $C_g$. 
\end{lemma}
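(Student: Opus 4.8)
The plan is to handle the two claims separately: that $C_g$ is open and convex, and that $g$ is strictly convex on $C_g$; each reduces to a one-variable computation.

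First, openness of $C_g$ is immediate, since it is the intersection of the open set $\Delta$ with the preimage of $(-\infty,3/4)$ under the polynomial $(x,y)\mapsto x+2y-xy-y^2$. For convexity I would not work with that polynomial directly (it is neither convex nor concave), but instead apply the linear isomorphism $(x,y)\mapsto(s,t):=(x+y,y)$. A short algebraic check shows $x+2y-xy-y^2=s+t-st$ and that $\Delta$ is carried onto the convex set $\{0<t<s<1\}$; on this region $1-s>0$, so the defining inequality $s+t-st<3/4$ becomes $t<\phi(s)$ with $\phi(s)=1-\frac{1}{4(1-s)}$. Since $\phi''(s)=-\frac{1}{2(1-s)^3}<0$, the function $\phi$ is strictly concave, hence its hypograph is convex, and $C_g$ in the new coordinates is the intersection of that hypograph with $\{0<t<s<1\}$, so it is convex; pulling back by the linear isomorphism preserves convexity.

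Next, to obtain strict convexity of $g$ on $C_g$ I would invoke \fullref{thmconvex}, so it is enough to verify $g_{xx}>0$, $g_{yy}>0$ and $\det H_g>0$ on $C_g$. The computation stays short if one uses the product structure $g(x,y)=A(x+y)\,B(y)$, where $A(u)=\frac1u-1$ and $B(y)=\frac1y-1$ each depend on a single variable. Writing $P=A''B$, $Q=A'B'$ and $R=AB''$, one finds $g_{xx}=P$, $g_{xy}=P+Q$ and $g_{yy}=P+2Q+R$, so the cross terms cancel in the determinant and $\det H_g=PR-Q^2=AA''\cdot BB''-(A')^2(B')^2$. Substituting $AA''=\frac{2(1-u)}{u^4}$ and $(A')^2=\frac{1}{u^4}$ (and the analogous expressions in $y$) gives $\det H_g=\frac{1}{(x+y)^4\,y^4}\bigl(4(1-x-y)(1-y)-1\bigr)$. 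On all of $\Delta$ one checks directly that $g_{xx}=\frac{2(1-y)}{(x+y)^3\,y}>0$ and that $g_{yy}$ expands as a sum of three manifestly positive terms, so those two conditions always hold. Finally $\det H_g>0$ is equivalent to $(1-x-y)(1-y)>\frac14$, and expanding the left-hand side shows this is precisely $x+2y-xy-y^2<\frac34$, the condition defining $C_g$. Hence $H_g$ is positive definite exactly on $C_g$, and \fullref{thmconvex} finishes the proof.

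I do not expect a real obstacle here. The one point needing a little thought is that $C_g$ is convex even though its defining polynomial is indefinite; the substitution $(x,y)\mapsto(x+y,y)$ is exactly what converts this into the transparent fact that $\phi$ is concave. After that, writing $g$ as $A(x+y)B(y)$ is what keeps the Hessian computation manageable, and the coincidence that the sign condition $\det H_g>0$ reproduces the very inequality defining $C_g$ is what makes the statement clean.
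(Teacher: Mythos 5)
Your proposal is correct and takes essentially the same route as the paper: convexity of $C_g$ is obtained by exhibiting its boundary as the graph of a concave function of one variable (you do this after the linear substitution $(s,t)=(x+y,y)$, while the paper solves the boundary equation directly for $x$ as a function of $y$), and strict convexity of $g$ follows from the positive-definite-Hessian criterion of \fullref{thmconvex}. Your product-structure computation $g=A(x+y)B(y)$ is just a cleaner organization of the same Hessian calculation and reproduces the paper's determinant $\bigl(3+4x(y-1)-8y+4y^2\bigr)/\bigl(y^4(x+y)^4\bigr)$ exactly, so there is no gap.
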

\begin{proof}
Consider the equality $x+2y-xy-y^2=3/4$. For $x=\frac{3/4+y^2-2y}{1-y}$ we have $x''=\frac{1}{2(-1+y)^3}<0$ for every $y\in(0,3/4)$, which implies the first assertion of the lemma. Note that $g$ is twice continuously differentiable on $C_g$. Consider the Hessian matrix $H_g(\tb{x})$ of $g$:
\begin{eqnarray*}
\left[\begin{array}{ll}
g_{xx}(\textbf{x}) & g_{xy}(\textbf{x})\\
g_{yx}(\textbf{x}) & g_{yy}(\textbf{x})
\end{array}\right] & = & \left[
\begin{array}{ll}
\displaystyle{\frac{2(1-y)}{(x+y)^3y}} & \displaystyle{\frac{x+3y-2y^2}{(x+y)^3y^2}}\\
\displaystyle{\frac{x+3y-2y^2}{(x+y)^3y^2}} & \displaystyle{\frac{2x^2(x^2+3xy+3y^2-y^3)}{x^2y^3(x+y)^3}}\end{array}\right]
\end{eqnarray*}
for $\textbf{x}=(x,y)\in\Delta$. It is clear that $g_{xx}(\tb{x})>0$ for every $\tb{x}\in C_g$. We also have $g_{yy}(\tb{x})>0$ for every $\tb{x}\in C_g$ because $$x^2y^3(x+y)^3g_{yy}(\tb{x})=2x^2(x^2+3xy+y^2(3-y))>0.$$ The determinant $(3+4x(-1+y)-8y+4y^2)/(y^4(x+y)^4)$ of $H_{g}(\tb{x})$ is positive for every $(x,y)\in C_{g}$. Hence, $g(x,y)$ is strictly convex on $C_{g}$ by  \fullref{thmconvex}.
\end{proof}
\begin{lemma}\label{convex1}
Let $C_f=\{(x,y)\in\Delta\co 7x+(18-8\sqrt{2})y<3+\sqrt{2}\}$. Then $C_f$ is an open convex set and $f(x,y)$ is a strictly convex function on $C_f$.  
\end{lemma}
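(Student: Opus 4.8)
The plan is to mirror the two-step argument already used for \fullref{convex2}. \emph{Step 1 (the set $C_f$).} Since the defining inequality $7x+(18-8\sqrt2)y<3+\sqrt2$ is linear, $C_f$ is simply the intersection of the open convex set $\Delta$ with an open half-plane, hence open and convex; no boundary-curve computation of the kind carried out for $C_g$ in \fullref{convex2} is needed. For the next step it is convenient to record that $\overline{C_f}$ is the closed triangle $T$ with vertices $(0,0)$, $P=\bigl(\tfrac{3+\sqrt2}{7},0\bigr)$ and $Q=\bigl(0,\tfrac{5+3\sqrt2}{14}\bigr)$: the line $7x+(18-8\sqrt2)y=3+\sqrt2$ meets the coordinate axes exactly at $P$ and $Q$ (note $\tfrac{3+\sqrt2}{18-8\sqrt2}=\tfrac{5+3\sqrt2}{14}$), lies on the far side of both $(1,0)$ and $(0,1)$, and so intersects $\overline{\Delta}$ in the triangle $T$ near the origin.

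\emph{Step 2 (the Hessian).} The function $f(x,y)=(x^{-1}-1)(y^{-1}-1)$ is smooth on $\Delta\supseteq C_f$, and a direct computation gives $f_{xx}=\tfrac{2(1-y)}{x^3y}$, $f_{yy}=\tfrac{2(1-x)}{xy^3}$ and $f_{xy}=\tfrac{1}{x^2y^2}$, whence $f_{xx}>0$ and $f_{yy}>0$ throughout $\Delta$ while $\det H_f(x,y)=\tfrac{4(1-x)(1-y)-1}{x^4y^4}$. Thus $H_f(x,y)$ is positive definite exactly when $4(1-x)(1-y)>1$, that is, when $x+y-xy<\tfrac34$; by \fullref{thmconvex} it then suffices to check this single inequality on all of $C_f$.

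\emph{Step 3 (the key estimate).} Since $C_f\subseteq T$, it is enough to show $\max_{T}h<\tfrac34$ for $h(x,y)=x+y-xy$. As $\nabla h=(1-y,1-x)$ never vanishes on $T$, the maximum is attained on $\partial T$; on the two edges contained in the coordinate axes $h$ is linear, and along the slanted edge $PQ$, writing $(x,y)=(1-t)P+tQ$ one gets $\tfrac{d^2}{dt^2}h=-2\,\dot x\,\dot y>0$ because $\dot x<0<\dot y$, so the restriction is a convex quadratic. In every case the edgewise maximum occurs at a vertex, so $\max_T h=\max\{h(0,0),h(P),h(Q)\}=\max\{0,\tfrac{3+\sqrt2}{7},\tfrac{5+3\sqrt2}{14}\}=\tfrac{5+3\sqrt2}{14}$, and $\tfrac{5+3\sqrt2}{14}<\tfrac34$. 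Hence $4(1-x)(1-y)>1$ on $C_f$, the matrix $H_f$ is positive definite there, and \fullref{thmconvex} gives that $f$ is strictly convex on $C_f$.

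I expect Step 3 to be the only real point of substance: the constants $7$, $18-8\sqrt2$ and $3+\sqrt2$ appear to be chosen precisely so that $\overline{C_f}$ lies strictly inside the region $\{x+y-xy<\tfrac34\}$ on which $H_f$ is positive definite, the value $\tfrac{5+3\sqrt2}{14}$ being the maximum of $x+y-xy$ over the triangle $\overline{C_f}$; the Hessian computation in Step 2 and the convexity bookkeeping in Step 1 are routine.
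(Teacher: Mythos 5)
Your proof is correct, and it shares the paper's skeleton --- the identical Hessian computation and the appeal to \fullref{thmconvex} --- but it verifies the key determinant inequality by a different device. The paper works with the sufficient condition $x+xy+y<3/4$ (note $\det H_f>0$ exactly when $x+y-xy<3/4$, which is implied by $x+y+xy<3/4$) and shows that the defining line $7x+(18-8\sqrt{2})y=3+\sqrt{2}$ is tangent to the convex curve $x+xy+y=3/4$ at $\bigl((2-\sqrt{2})/2,\sqrt{2}/4\bigr)$, so that $C_f$ lies below that curve; you instead keep the exact condition $x+y-xy<3/4$ and bound $h(x,y)=x+y-xy$ over the closed triangle $\overline{C_f}$ by an edge-by-edge argument (no interior critical point since $\nabla h\neq 0$, linearity on the axis edges, convexity of the restriction along $PQ$ because $\tfrac{d^2}{dt^2}h=-2\dot x\dot y>0$), obtaining $\max_{\overline{C_f}}h=\tfrac{5+3\sqrt{2}}{14}<\tfrac34$; I checked the intercept identity $\tfrac{3+\sqrt{2}}{18-8\sqrt{2}}=\tfrac{5+3\sqrt{2}}{14}$ and the vertex values, and the argument is sound. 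Both routes are valid; yours is self-contained and even gives the slightly sharper statement that the closure of $C_f$ lies in the positivity region, while the paper's tangency computation is what actually explains the constants $7$, $18-8\sqrt{2}$, $3+\sqrt{2}$: the line is chosen tangent to the stronger curve $x+xy+y=3/4$, and that tangency point and curve are reused in \fullref{convex3} to compare $C_f$ with $C_g$ --- so your closing guess about why the constants were chosen is slightly off, though immaterial to the correctness of your proof.
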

\begin{proof}
It is clear to see that $C_f$ is an open convex set and $f$ is twice continuously differentiable on $C_f$. Now consider the Hessian matrix $H_{f}(\textbf{x})$ of $f$:
\begin{eqnarray*}
\left[
\begin{array}{ll}
f_{xx}(\textbf{x}) & f_{xy}(\textbf{x})\\
f_{yx}(\textbf{x}) & f_{yy}(\textbf{x})
\end{array}\right]                & = &  \left[
\begin{array}{ll}
\displaystyle{\frac{2(1-y)}{x^3y}} & \displaystyle{\frac{1}{x^2y^2}}\\
\displaystyle{\frac{1}{x^2y^2}} & \displaystyle{\frac{2(1-x)}{y^3x}}
\end{array}\right]
\end{eqnarray*}
at $\textbf{x}=(x,y)\in\Delta$. Note that $f_{xx}(\tb{x})>0$ and $f_{yy}(\tb{x})>0$ for every $\tb{x}\in C_f$. The determinant $(3+4x(-1+y)-4y)/(x^4y^4)$ of $H_f(\tb{x})$ is positive for every $(x,y)\in\Delta$ if $x+xy+y<3/4$ . The line $7x+(18-8\sqrt{2})y=3+\sqrt{2}$ is tangent to the curve $x+xy+y=3/4$ at the point $P((2-\sqrt{2})/2,\sqrt{2}/4)$. Since for $y=\frac{3/4-x}{1+x}$ we have $y''=\frac{7/4}{(1+x)^3}>0$ for every $x\in(0,3/4)$, the function $f(x,y)$ is strictly convex on $C_f$ by  \fullref{thmconvex}.
\end{proof}
\begin{lemma}\label{convex3}
The functions $f(x,y)$ and $g(x,y)$ are strictly convex functions on the open convex set $C_f\cap C_g$. 
\end{lemma}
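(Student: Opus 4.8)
The plan is to observe that Lemma \ref{convex3} is an immediate corollary of Lemmas \ref{convex2} and \ref{convex1} together with the general fact that a function which is strictly convex on an open convex set remains strictly convex when restricted to any open convex subset. First I would note that $C_f \cap C_g$ is an open convex set, being the intersection of the two open convex sets $C_f$ and $C_g$ (each shown to be open and convex in the preceding two lemmas); in particular it is a legitimate domain on which to speak of strict convexity. Then, since $C_f \cap C_g \subseteq C_f$ and $f$ is strictly convex on $C_f$ by \fullref{convex1}, the restriction of $f$ to $C_f \cap C_g$ is strictly convex; symmetrically, since $C_f \cap C_g \subseteq C_g$ and $g$ is strictly convex on $C_g$ by \fullref{convex2}, the restriction of $g$ to $C_f \cap C_g$ is strictly convex.

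Alternatively, and perhaps more in keeping with the computational style of the two preceding proofs, one can argue directly via \fullref{thmconvex}: the Hessian computations carried out in the proofs of \fullref{convex2} and \fullref{convex1} already establish that $H_g(\tb{x})$ is positive definite for every $\tb{x} \in C_g$ and that $H_f(\tb{x})$ is positive definite for every $\tb{x} \in C_f$. Since $C_f \cap C_g$ is contained in both $C_g$ and $C_f$, both Hessians are positive definite at every point of $C_f \cap C_g$, and since $C_f \cap C_g$ is open and convex and both $f$ and $g$ are twice continuously differentiable there, \fullref{thmconvex} applies to each and yields the strict convexity of both $f$ and $g$ on $C_f \cap C_g$.

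There is essentially no obstacle here: the only point requiring a word of justification is that the intersection of two convex sets is convex (immediate from the definition) and that the intersection of two open sets is open, so that $C_f \cap C_g$ is genuinely an open convex set and the hypotheses of \fullref{thmconvex} — or of the restriction principle for strictly convex functions — are met. The lemma is best regarded as a bookkeeping step that packages the two separate convexity domains into the single common domain $C_f \cap C_g$ on which both model functions $f$ and $g$ behave well, which is exactly what is needed to later deduce the strict convexity of each displacement function $f_i \in \mathcal{F}^k$ on the corresponding set $C_{f_i}$ and ultimately the uniqueness assertion (B) for $\xs$.
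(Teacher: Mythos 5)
Your proposal is correct, and your restriction argument is genuinely shorter than what the paper does. The paper does not simply restrict: it re-establishes positive definiteness of both Hessians on $C_f\cap C_g$ by comparing the three boundary curves $7x+(18-8\sqrt{2})y=3+\sqrt{2}$, $x+xy+y=3/4$ and $x+2y-xy-y^2=3/4$ (which meet in $\Delta$ only at $P$, their $x$--ordering flipping at $y=\sqrt{2}/4$), so that the slice of $C_f\cap C_g$ agrees with the $C_g$--slice for $y<\sqrt{2}/4$ and with the $C_f$--slice for $y>\sqrt{2}/4$ and in either case lies in the region $x+xy+y<3/4$ where $\det H_f>0$; it then applies \fullref{thmconvex} on the open convex set $C_f\cap C_g$. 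Your second argument is essentially this Hessian route, with the curve comparison replaced by the simpler observation, already contained in the proof of \fullref{convex1} via the tangency of the line to $x+xy+y=3/4$ at $P$, that $C_f$ itself lies in that region, hence so does $C_f\cap C_g$; your first argument (strict convexity passes to open convex subsets) bypasses Hessians altogether and is all the stated lemma needs, given \fullref{convex1} and \fullref{convex2}. What the paper's extra computation buys is only the explicit description of which constraint is active on which side of $y=\sqrt{2}/4$; it is not logically required for the statement, and the points you do check — openness and convexity of the intersection, and strict convexity of $f$ and $g$ on the larger sets — are exactly the hypotheses needed, so there is no gap in your argument.
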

\begin{proof}
By the proof of  \fullref{convex1} we know that $f$ has a positive definite Hessian matrix over the set $C=\{(x,y)\in\Delta\co x+xy+y<3/4\}$. Note that $C_f\subset C$. The curves $7x+(18-8\sqrt{2})y=3+\sqrt{2}$, $x+xy+y=3/4$ and $x+2y-xy-y^2=3/4$ intersect in $\Delta$ only at the point $P$ defined in the lemma above. Since we have \[\frac{3+\sqrt{2}-(18-8\sqrt{2})y}{7}<\frac{3/4-y}{1+y}<\frac{3/4+y^2-2y}{1-y}\] for $y\in(\sqrt{2}/4,3/4)$ and  \[\frac{3/4+y^2-2y}{1-y}<\frac{3+\sqrt{2}-(18-8\sqrt{2})y}{7}<\frac{3/4-y}{1+y}\] for $y\in(0,\sqrt{2}/4)$, the conclusion of the lemma follows.
\end{proof}

Let $f_i(\tb{x})=\sigma(\Sigma_j(\tb{x}))\sigma(x_i)$ be a displacement function in $\mathcal{F}^k$ described in  \fullref{dispfunc}. If $i\in I_j$, then define the set
\begin{equation}\label{C1}
C_{f_i}=\{\tb{x}=(x_1,\dots,x_{d})\in\Delta^{d-1}\co \Sigma_j^i(\tb{x})+2x_i-\Sigma_j^i(\tb{x})x_i-(x_i)^2<3/4\},
\end{equation}
if $i\notin I_j$, by abusing the notation, define the set
\begin{equation}\label{C2}
C_{f_i}=\{\tb{x}=(x_1,\dots,x_{d})\in\Delta^{d-1}\co 7\Sigma_j(\tb{x})+(18-8\sqrt{2})x_i<3+\sqrt{2}\}.
\end{equation}
If $C_{f_i}$ for $i\in I^k$ are described as above, then $\dis{\cap_{i=1}^d C_{f_i}}$ is nonempty, where $d=4\cdot 3^{k-1}$. Because, if we consider the point $\tb{y}^*=(1/d,1/d,\dots,1/d)\in\Delta^{d-1}$, then $\Sigma_j(\tb{y}^*)=1/4$ and  
$\Sigma_j^i(\tb{y}^*)=1/4-1/(4\cdot 3^{k-1})$. For $k=2$ and $k\geq 3$, we clearly have
\[
7\Sigma_j(\tb{y}^*)+(18-8\sqrt{2})y_i=\frac{7}{4}+\frac{18-8\sqrt{2}}{4\cdot 3^{k-1}}\leq\frac{7}{4}+\frac{18-8\sqrt{2}}{12}<3+\sqrt{2}.
\]
Thus $\tb{y}^*$ is in $C_{f_i}$ for every $f_i(\tb{x})=\sigma(\Sigma_j(\tb{x})\sigma(x_i)\in\mathcal{F}^k$ such that $i\in I_j$. Similarly for $k=2$ and $k\geq 3$ we have the inequalities
\[
\Sigma_j^i(\tb{y}^*)+2y_i-\Sigma_j^i(\tb{y}^*)y_i-(y_i)^2=\frac{1}{4}+\frac{3}{16\cdot 3^{k-1}}\leq\frac{5}{16}<\frac{3}{4},
\]
which shows that $\tb{y}^*$ is in $C_{f_i}$ for every $f_i(\tb{x})=\sigma(\Sigma_j(\tb{x}))\sigma(x_i)\in\mathcal{F}^k$ such that $i\notin I_j$. 

We shall prove further statements about the elements of the sets $C_{f_i}$. In each statement we consider the following cases
\[
\begin{array}{ccc}
(1)\  k=2, & (2)\  \tnr{$k> 2$ and $k$ is even}, & (3)\ \tnr{$k>2$ and $k$ is odd}.
\end{array}
\]
We will carry out the calculations for $k=2$, if necessary for $k=3$ or $4$, and indicate how to generalise these calculations for the cases in (2) and (3) for easy reading.
For $k\geq 2$ let us define the functions
\begin{equation}\label{mn}
m(k)=\lceil 3^{k-1}/4\rceil,\quad n(k)=\lfloor 3^{k-1}/4\rfloor,\quad\tnr{ and }\quad \alpha(k)=12\cdot 3^{k-1}-3.
\end{equation}
Assume that $k$ is even and $k\geq 2$. We note that there are $m=m(k)$ many elements in $I_1$ which are equivalent to $1$ in modulo $4$. The same is true for the number of elements equivalent to $2$ or $3$. But there are $n=n(k)$ many elements in $I_1$ which are equivalent to $0$ in modulo $4$. In other words we obtain the list $(m,m,m,n)$ for the number of elements in $I_1$ which are equivalent to $1$, $2$, $3$ or $0$, respectively. Together with $I_2$, $I_3$ and $I_4$, 
we have the lists 
\begin{equation}\label{keven}
\begin{tabular}{c|cccl}
            &  1 & 2  & 3  & 0 \\
\hline  
$I_1$  & m & m & m & n\\
$I_2$  & m & m & n  & m\\
$I_3$  & m & n  & m & m\\
$I_4$  & n  & m & m & m. 
\end{tabular}
\end{equation}
Note that the lists for $k=2$ are
$(1,1,1,0)$, $(1,1,0,1)$, $(1,0,1,1)$ and $(0,1,1,1)$. This table will be used in  \fullref{unique1}, \ref{unique2}, \ref{unique5} and \ref{unique6}.

Assume that $k>2$ is odd. In this case there are $m$ many elements in $I_1$ which are equivalent to $1$ in modulo $4$. There are $n$ many elements each in $I_1$ which are equivalent to $2$, $3$ or $0$ in modulo $4$. In other words we obtain the list $(m,n,n,n)$ for the number of elements in $I_1$ which are equivalent to $1$, $2$, $3$ or $0$, respectively. Together with $I_2$, $I_3$ and $I_4$ we have the lists 
\begin{equation}\label{kodd}
\begin{tabular}{c|cccc}
             &  1 & 2  & 3  & 0 \\
\hline  
$I_1$  & m & n & n & n\\
$I_2$  & n & m & n  & n\\
$I_3$  & n & n  & m & n\\
$I_4$  & n  & n & n & m.
\end{tabular}
\end{equation}
This table will be used in \fullref{unique3}, \ref{unique4}, \ref{unique7} and \ref{unique8}. In particular we shall deploy the tables in (\ref{keven}) and (\ref{kodd}) to add the terms in the summations in the lemmas below indexed over some or all of the elements of $I_1$, $I_2$, $I_3$ and $I_4$. Since we only use modulo $4$, we shall indicate $a \bmod 4\equiv b$ with $a\equiv b$ in the rest of this text . Let us define the function  for $k\geq 2$. Then we have the followings:

\begin{lemma}\label{unique1}
Let $\mathcal{F}^k=\{f_i\}$ for ${i\in I^k}$ be the set of displacement functions listed in \fullref{dispfunc} and $F^k$ be as in (\ref{F}). Let $\xs$ be a point in $\Delta^{d-1}$ so that $  \alpha_*=F^k(\xs)$ for $d=4\cdot 3^{k-1}$. Let $f_i\in\mathcal{F}^k$ be of the form $f_i=f(\Sigma_j,x_i)$ for $j\in\{1,2,3,4\}$ and $i\in I^k=I_1\cup I_2\cup I_3\cup I_4$ where $\Sigma_j(\tb{x})$ and $f$ are defined in (\ref{sigma}) and (\ref{fg}), respectively. If $k\geq 2$ is even, $j=1$ and $i\in I_2$ such that $i\equiv 0$,
then  $\xs\in C_{f_i}$, defined in (\ref{C2}).
\end{lemma}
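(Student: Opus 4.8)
The plan is to bound the coordinates of $\xs$ using only the defining inequalities $f_l(\xs)\le\alpha_*$ together with the upper bound $\alpha_*\le\alpha(k)=12\cdot 3^{k-1}-3$ from \fullref{lemtwo}, and then check the linear inequality defining $C_{f_i}$ by a direct computation. First I would record, for every $l\in I^k$, that $f_l(\xs)\le F^k(\xs)=\alpha_*$. Writing $f_l(\tb x)=\sigma(\Sigma_{j}(\tb x))\sigma(x_l)$ with $j=j(l)$ the index attached to $l\bmod 4$ as in \fullref{dispfunc}, and using that $\Sigma_{j}(\xs),x_l^*\in(0,1)$ and $\alpha_*>1$ (a minimizer is interior, since each $f_i$ blows up on $\partial\Delta^{d-1}$ by \fullref{lemtwo}), a one-line rearrangement of $\tfrac{(1-\Sigma_j(\xs))(1-x_l^*)}{\Sigma_j(\xs)\,x_l^*}\le\alpha_*$ gives the coordinatewise bound $x_l^*\ge h(\Sigma_{j(l)}(\xs))$, where $h(s)=\tfrac{1-s}{1+(\alpha_*-1)s}$.

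Next I would sum these $d=4\cdot 3^{k-1}$ inequalities. Since the residue classes modulo $4$ partition $I^k$ into four blocks of $3^{k-1}$ indices each and each block is the preimage of one of $\Sigma_1,\Sigma_2,\Sigma_3,\Sigma_4$, the sum of the right-hand sides equals $3^{k-1}\bigl(h(\Sigma_1(\xs))+h(\Sigma_2(\xs))+h(\Sigma_3(\xs))+h(\Sigma_4(\xs))\bigr)$, while the $x_l^*$ sum to $1$. The function $h$ is strictly convex on $(0,1)$ (its second derivative is $2\alpha_*(\alpha_*-1)(1+(\alpha_*-1)s)^{-3}>0$), so Jensen's inequality, using $\Sigma_1(\xs)+\cdots+\Sigma_4(\xs)=1$, gives $\sum_j h(\Sigma_j(\xs))\ge 4h(1/4)=\tfrac{12}{\alpha_*+3}$. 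Hence $1\ge 3^{k-1}\cdot\tfrac{12}{\alpha_*+3}$, i.e. $\alpha_*\ge 12\cdot 3^{k-1}-3=\alpha(k)$; combined with \fullref{lemtwo} this forces $\alpha_*=\alpha(k)$, and then every inequality in the chain is an equality. In particular the equality case of Jensen yields $\Sigma_1(\xs)=\Sigma_2(\xs)=\Sigma_3(\xs)=\Sigma_4(\xs)=\tfrac14$.

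Finally, since $i\in I_2$ and all coordinates of $\xs$ are positive, $x_i^*$ is a proper summand of $\Sigma_2(\xs)$, so $x_i^*<\Sigma_2(\xs)=\tfrac14$. Because $i\equiv 0$ and $i\notin I_1$, the set $C_{f_i}$ is the one in (\ref{C2}) with $j=1$, and
\[
7\Sigma_1(\xs)+(18-8\sqrt{2})\,x_i^* \;<\; \frac{7}{4}+\frac{18-8\sqrt{2}}{4} \;=\; \frac{25-8\sqrt{2}}{4}\;<\;3+\sqrt{2},
\]
the last inequality being equivalent to $13<12\sqrt{2}$. Therefore $\xs\in C_{f_i}$, as claimed.

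The only real (and minor) obstacle is recognizing that one should aggregate the coordinatewise bounds over the whole index set $I^k$ and invoke Jensen, rather than arguing locally; after that the computation is routine. Note that this reasoning is insensitive to the parity of $k$ and to which block contains $i$, so the same argument should settle the companion statements \fullref{unique2}--\fullref{unique8} uniformly. Alternatively, if one wishes to avoid pinning down $\alpha_*$ at this stage, summing only over $I^k\setminus I_1$ (respectively $I^k\setminus I_2$) and counting multiplicities with the tables (\ref{keven})--(\ref{kodd}) still yields $\Sigma_1(\xs)\le\tfrac14$ and $\Sigma_2(\xs)\le\tfrac14$, which is all that is strictly needed, at the cost of one short single-variable inequality check.
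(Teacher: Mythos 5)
Your proposal is correct, and it reaches the conclusion by a genuinely different and far shorter route than the paper. The paper proves this lemma by contradiction: assuming $7\Sigma_1(\xs)+(18-8\sqrt{2})x_i^*\geq 3+\sqrt{2}$, it runs a long case analysis (cases (A), (B), (C) and their many subcases), using the bound $\alpha_*\leq 12\cdot 3^{k-1}-3$ from \fullref{lemtwo} together with the multiplicity tables in (\ref{keven}) to force coordinate sums above $1$. You instead aggregate globally: each of the $d$ inequalities $f_l(\xs)\le\alpha_*$ rearranges to $x_l^*\ge h(\Sigma_{j(l)}(\xs))$ with $h(s)=\tfrac{1-s}{1+(\alpha_*-1)s}$, each residue class modulo $4$ contributes exactly $3^{k-1}$ indices, and summing plus Jensen (using convexity of $h$ and $\sum_j\Sigma_j(\xs)=1$) gives $1\ge 3^{k-1}\cdot\tfrac{12}{\alpha_*+3}$, i.e.\ $\alpha_*\ge 12\cdot 3^{k-1}-3$; with \fullref{lemtwo} this forces $\alpha_*=12\cdot 3^{k-1}-3$, and the equality case of the strictly convex $h$ then pins $\Sigma_j(\xs)=\tfrac14$ for all $j$ (indeed $x_l^*=1/d$ for all $l$), after which membership in $C_{f_i}$ is the one-line check $\tfrac{7}{4}+\tfrac{18-8\sqrt{2}}{4}=\tfrac{25-8\sqrt{2}}{4}<3+\sqrt{2}$. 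I verified the rearrangement, the count of residues, the convexity computation $h''(s)=2\alpha_*(\alpha_*-1)(1+(\alpha_*-1)s)^{-3}$, and the final inequality; all are sound, and nothing you use depends circularly on later results. What your approach buys is considerable: it is insensitive to the parity of $k$, to the residue class of $i$, and to which block $I_j$ contains $i$, so it proves \fullref{unique2}--\fullref{unique8} verbatim, and in fact it already yields the conclusions of \fullref{minF} and \fullref{unique9} (the value of $\alpha_*$ and that the minimizer is the barycenter, hence unique) without the convexity machinery; the paper's argument, by contrast, only places $\xs$ inside the sets $C_{f_i}$ and must still run the convexity and symmetry analysis afterwards. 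The one caution concerns your closing aside: the variant summing only over $I^k\setminus I_1$ would need the $m(k),n(k)$ multiplicity tables and is not obviously as clean as stated, but your main argument does not rely on it.
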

\begin{proof}
Assume on the contrary that $\xs\notin C_{f_i}$. By the definition of  $C_{f_i}$ we obtain that
\begin{equation}\label{ineq}
7\Sigma_1(\tb{x}^*)+(18-8\sqrt{2})x_i^*\geq 3+\sqrt{2}.
\end{equation} 
Let $\Sigma_1^*=\Sigma_1(\xs)$,  $\Sigma_2^*=\Sigma_2(\xs)$,  $\Sigma_3^*=\Sigma_3(\xs)$ and  $\Sigma_4^*=\Sigma_4(\xs)$ defined in (\ref{sigma}), where $\Sigma_1^*+\Sigma_2^*+\Sigma_3^*+\Sigma_4^*=1$  since $\xs\in\Delta^{d-1}$.  
Also let $N=\frac{1}{71} \left(13+7 \sqrt{2}\right)\approx 0.3225$. We consider the cases below: 
\begin{equation}\label{ABC}
\begin{array}{lll}
(A)\ \ \Sigma_1^*\geq N\ \tnr{ and } \ x_i^*\geq N, & (B)\ \ \Sigma_1^*\geq N > x_i^*,  & (C)\ \  x_i^*\geq N > \Sigma_1^*.
\end{array}
\end{equation}
We shall assume without the loss of generality that $k=2$. Assume that ($A$) holds. Then since $\Sigma_2^*>x_i^*$, we have $\Sigma_1^*+\Sigma_2^*> 2N$. This gives the inequality
\begin{equation}\label{sum1}
\Sigma_3^*+\Sigma_4^*< M=1-2N=\frac{1}{71} \left(45-14 \sqrt{2}\right)\approx 0.3549,
\end{equation}
which implies the following cases:
\begin{equation}\label{1-3}
\begin{array}{lll}
(i)\   \Sigma_3^*< M/2,\ \ \Sigma_4^*< M/2, &
(ii)\ \Sigma_3^*< M/2\leq \Sigma_4^*, &
(iii)\ \Sigma_4^*< M/2\leq\Sigma_3^*.
\end{array}
\end{equation}
Assume that ($i$) holds. Since $\Sigma_3^*< M/2$ and $\Sigma_4< M/2$, by the inequalities  $\sigma(M/2)\sigma(x_l^*)<\sigma(\Sigma_r^*)\sigma(x_l^*)\leq\alpha(k)\big|_{k=2}=33$ for $r=3,4$ we find that 
\begin{equation}\label{xis}
x_l^*> X(k)\bigg|_{k=2}=\frac{\sigma(M/2)}{\alpha(k)+\sigma(M/2)}\bigg|_{k=2}=\frac{2-M}{2+(\alpha(k)-1)M}\bigg|_{k=2}\approx 0.1231
\end{equation}
 for every ${l\in I^k}$ so that ${l\equiv 1,2}$. Using the table in (\ref{keven}) for $l\notin I_1$ we calculate that 
\begin{equation}\label{eqnn1}
\begin{multlined}
\Sigma_1^*+x_i^*+\sum_{l\equiv 1,2}x_l >  2N+\sum_{l\equiv 1,2}X(k)\bigg|_{k=2}=2N+(4m(k)+2n(k))X(k)\bigg|_{k=2} \\   \shoveleft[6.5cm]{=  2N+4X(2)
\approx 1.1376>1,}
\end{multlined}
\end{equation}
 a contradiction. The inequalities in (\ref{eqnn1}) hold for every even $k>2$.  Hence case ($i$) does't hold. 

Assume that ($ii$) holds in (\ref{1-3}). By (\ref{xis}) we already know that  $x_l^*> X(k)\big|_{k=2}$ for every $l\in I^k$ such that $l\equiv 2$. For $l\in I_2$ by the table in (\ref{keven}) we derive that  
\begin{equation}\label{eqn32}
\begin{multlined}
\Sigma_1^*+\Sigma_2^*+\Sigma_4^*> L(k)\bigg|_{k=2}=2N+\sum_{l\equiv 2}X(k)\bigg|_{k=2}+\frac{M}{2}\\ \shoveleft[2.5cm]{=2N+m(k)X(k)\bigg|_{k=2}+\frac{M}{2}=2N+X(2)
+\frac{M}{2}
\approx 0.9457,}
\end{multlined}
\end{equation}
which shows that $\Sigma_3^*< R(k)\big|_{k=2}=1-L(k)\big|_{k=2}
\approx 0.0543$. This implies that 
 $x_r^*< Q(k)\big|_{k=2}=\dis(R(k)/3^{k-1})\big|_{k=2}<\dis X(k)\big|_{k=2}$ for some $r\in I_3$ such that $r\not\equiv 2$. We shall examine the cases $r\in I_3$ so that $r\equiv 1$, or $3$, or $0$ in this order.  

Assume that $r\equiv 1$. Using  $\sigma(\Sigma_4^*)\sigma(Q(k))\big|_{k=2}<\sigma(\Sigma_4^*)\sigma(x_r^*)\leq\alpha(k)\big|_{k=2}=33$ we calculate that
\begin{equation*}
\Sigma_4^*> S(k)\bigg|_{k=2}=\frac{\sigma(Q(k))}{\alpha(k)+\sigma(Q(k))}\bigg|_{k=2}
\approx 0.6217,
\end{equation*}
which leads to the contradiction   
\begin{equation}\label{eqnn3}
\Sigma_1^*+\Sigma_2^*+\Sigma_4^*> N+N+S(k)\big|_{k=2}=2N+S(2)\approx 1.2667>1.
\end{equation}
So we conclude that $x_r^*\geq Q(k)|_{k=2}$ for every $r\in I_3$ such that $r\equiv 1$.

Assume that $r\equiv 3$. Since  $\sigma(\Sigma_2^*)\sigma(Q(k))\big|_{k=2}<\sigma(\Sigma_2^*)\sigma(x_r^*)\leq\alpha(k)\big|_{k=2}=33$ we obtain that $\Sigma_2^*> S(k)\big|_{k=2}=S(2)$. Then for $l\in I_3$ using the table in (\ref{keven}) we see that
\begin{equation}\label{eqnn4}
\begin{multlined}
\Sigma_1^*+\Sigma_2^*+\sum_{l\equiv 1}x_l^*+\sum_{l\equiv\ 2}x_l^*+\Sigma_4^*  \\ \shoveleft[1.5cm]{> N+S(k)\bigg|_{k=2}+\sum_{l\equiv\ 1}Q(k)\bigg|_{k=2}+\sum_{l\equiv\ 2}X(k)\bigg|_{k=2}+\frac{M}{2}}\\ \shoveleft[2.5cm]{= N+S(k)\bigg|_{k=2}+m(k)Q(k)\bigg|_{k=2}+n(k)X(k)\bigg|_{k=2}+\frac{M}{2}}\\ \shoveleft[3.5cm]{=N+S(2)+Q(2)+\frac{M}{2}\approx 1.1398>1,}
\end{multlined}
\end{equation} 
a contradiction. So we must have $x_r^*\geq Q(k)\big|_{k=2}$ for every $r\in I_3$ such that $r\equiv 3$.

Assume that $r\equiv 0$. Using $\sigma(\Sigma_1^*)\sigma(Q(k))\big|_{k=2}< \sigma(\Sigma_1^*)\sigma(x_r^*)\leq\alpha(k)\big|_{k=2}=33$ we get $\Sigma_1^*> S(k)\big|_{k=2}=S(2)$. From the table in (\ref{keven}) for $l\in I_2\cup I_3=\{4,5,6,7,8,9\}$ and $t\in I_3$ we calculate 
\begin{equation}\label{eqnn8}
\begin{multlined}
\Sigma_1^*+x_i^*+\sum_{l\equiv 2}x_l^*+\sum_{t\equiv 1,3}x_t^*
+\Sigma_4^* \\ \shoveleft[.5cm]{
> S(k)\bigg|_{k=2}+N
+\sum_{l\equiv\ 2}X(k)\bigg|_{k=2}+\sum_{t\equiv\ 1,3}Q(k)\bigg|_{k=2}+\frac{M}{2}}\\ \shoveleft[1.5cm]{= S(k)\bigg|_{k=2}+N
+(m(k)+n(k))X(k)\bigg|_{k=2}+2m(k)Q(k)\bigg|_{k=2}+\frac{M}{2}} 
\\ \shoveleft[2.5cm]{=S(2)+N+X(2)
+2Q(2)+\frac{M}{2}\approx 1.2810>1,}
\end{multlined}
\end{equation} 
a contradiction. The inequalities in (\ref{eqnn3}), (\ref{eqnn4}) and (\ref{eqnn8})  hold for every even $k>2$. Therefore case ($ii$) doesn't hold.

In case ($iii$) in (\ref{1-3}) we see that the inequalities for $\Sigma_3^*$ and $\Sigma_4^*$ are switched. So the discussion that shows that  case ($ii$) doesn't hold works for case ($iii$) as well by switching the roles of $\Sigma_3^*$ and  $I_3$  with $\Sigma_4^*$ and $I_4$, respectively. We obtain the same expressions on the right-hand side of the inequalities in (\ref{eqn32}), (\ref{eqnn3}), (\ref{eqnn4}) and (\ref{eqnn8}). In particular we repeat the computations given in the order $l\equiv 1,3,0$ for $l\in I_3$ above in the order $l\equiv 2,3,0$ for $l\in I_4$. So case ($iii$) doesn't hold. As a result we conclude that (A) in (\ref{ABC}) is not the case.

We consider the next case $\Sigma_1^*\geq N>x_i^*$ ($B$) in (\ref{ABC}). Then we derive the inequality 
\begin{equation}\label{sum2}
\Sigma_2^*+\Sigma_3^*+\Sigma_4^*\leq M=1-N=\frac{58-7\sqrt{2}}{71}\approx 0.6774,
\end{equation}
which implies the following cases:
\begin{equation}\label{1-7}
\begin{array}{llll}
(i)  & \Sigma_2^*\leq M/3, & \Sigma_3^*\leq M/3, & \Sigma_4^*\leq M/3,\\
(ii) & \Sigma_2^*\leq M/3, & \Sigma_3^*\leq M/3, & \Sigma_4^*\geq M/3,\\
(iii) & \Sigma_2^*\leq M/3, & \Sigma_3^*\geq M/3, & \Sigma_4^*\leq M/3,\\
(iv) & \Sigma_2^*\leq M/3, & \Sigma_3^*\geq M/3, & \Sigma_4^*\geq M/3,\\
(v)  & \Sigma_2^*\geq M/3, & \Sigma_3^*\leq M/3, & \Sigma_4^*\leq M/3,\\
(vi) & \Sigma_2^*\geq M/3, & \Sigma_3^*\leq M/3, & \Sigma_4^*\geq M/3,\\
(vii) & \Sigma_2^*\geq M/3, & \Sigma_3^*\geq M/3, & \Sigma_4^*\leq M/3.
\end{array}
\end{equation}
We examine the cases ($i$)-($vii$). 
Assume that ($i$) holds. Since we have $\Sigma_r^*\leq M/3$ for $r=2,3,4$, using  the inequality $\sigma(M/3)\sigma(x_l^*)\leq\sigma(\Sigma_r^*)\sigma(x_l^*)\leq\alpha(k)\big|_{k=2}=33$ we find that 
\begin{equation}\label{xis2}
x_l^*\geq X(k)\bigg|_{k=2}=\frac{\sigma(M/3)}{\alpha(k)+\sigma(M/3)}\bigg|_{k=2}=\frac{3-M}{3+(\alpha(k)-1)M}\bigg|_{k=2}\approx 0.0941
\end{equation}
 for every ${l\in I^k}$ so that ${l\equiv 3,2,1}$. Since $\Sigma_2^*\leq M/3$ in this case, for ${l\in I_2=\{4,5,6\}}$ by the table in (\ref{keven}) we obtain that 
\begin{equation}\label{eqnn9}
\begin{multlined}
x_i^*<\sum_{{l\equiv 0}}x_l^*\leq Y(k)\bigg|_{k=2}=\frac{M}{3}-\sum_{l\equiv 1,2,3}X(k)\bigg|_{k=2}\\ \shoveleft[2.2cm]{=\frac{M}{3}-(2m(k)+n(k))X(k)\bigg|_{k=2}=\frac{M}{3}-2X(2)\approx 0.0376.}
\end{multlined}
\end{equation}  
By the inequality in (\ref{ineq}) we derive that 
\begin{equation}\label{eqnn10}
\begin{multlined}
\Sigma_1^*\geq L(k)\bigg|_{k=2}=\left(\frac{8 \sqrt{2}-18}{7}\right)Y(k)\bigg|_{k=2}+\frac{\sqrt{2}+3}{7}\approx 0.5587.
\end{multlined}
\end{equation} 
Then using the table in (\ref{keven}) for ${l\in I_2\cup I_3\cup I_4=\{4,5,6,7,8,9,10,11,12\}}$ we obtain a contradiction which is 
\begin{equation}\label{eqnn11}
\begin{multlined}
\Sigma_1^*+\sum_{{l\equiv 1,2,3}}x_l^*\geq L(k)\bigg|_{k=2}+(6m(k)+3n(k))X(k)\bigg|_{k=2}\\ \shoveleft[5.5cm]{=L(2)+6X(2)\approx 1.1593>1.}
\end{multlined}
\end{equation}
The inequalities in (\ref{eqnn11}) holds for every even $k>2$. Hence we conclude that case ($i$) doesn't hold. 
 
Assume that ($ii$) in (\ref{1-7}) holds. Since we have $\Sigma_2^*\leq M/3$ and $\Sigma_3^*\leq M/3$, we find $x_l^*\geq X(k)\big|_{k=2}$ for every ${l\in I^k}$ such that ${l\equiv 2,3}$ by (\ref{xis2}). By the inequality $\Sigma_2^*\leq M/3$, for ${l\in I_2=\{4,5,6\}}$ we obtain from the table in (\ref{keven}) that 
\begin{equation}\label{eqnn12}
\begin{multlined}
x_i^*<\sum_{l\equiv 0,1}x_l^*\leq Y(k)\bigg|_{k=2}=\frac{M}{3}-\sum_{{l\equiv 2,3}}X(k)\bigg|_{k=2}\\ \shoveleft[2.5cm]{=\frac{M}{3}-(m(k)+n(k))X(k)\bigg|_{k=2}=\frac{M}{3}-X(2)\approx 0.1317.}
\end{multlined}
\end{equation} 
By using the inequality in (\ref{ineq}) we obtain 
\begin{equation}\label{eqnn13}
\begin{multlined}
\Sigma_1^*\geq L(k)\bigg|_{k=2}=\left(\frac{8 \sqrt{2}-18}{7}\right)Y(k)\bigg|_{k=2}+\frac{\sqrt{2}+3}{7}\approx 0.5048.
\end{multlined}
\end{equation}  
 We claim that $\Sigma_4^*< 4/13$. Because otherwise for ${l\in I_2\cup I_3=\{4,5,6,7,8,9\}}$ using the table in (\ref{keven})
we derive that 
\begin{equation}\label{eqnn14}
\begin{multlined}
\Sigma_1^*+\sum_{{l\equiv 2,3}}x_l^*+\Sigma_4^*\geq L(k)\bigg|_{k=2}+\sum_{{l\equiv 2,3}}X(k)\bigg|_{k=2}+\frac{4}{13}\\\shoveleft[3.5cm]{=L(k)\bigg|_{k=2}+2(m(k)+n(k))X(k)\bigg|_{k=2}+\frac{4}{13}}\\ \shoveleft[4cm]{=L(2)+2X(2)+\frac{4}{13}\approx 1.0007>1,}
\end{multlined}
\end{equation}
 a contradiction. Using the inequalities $\sigma(x_l^*)\sigma(4/13)<\sigma(x_l^*)\sigma(\Sigma_4^*)\leq\alpha(k)\big|_{k=2}=33$, we find that $x_l^*>(9/(9+4\alpha(k)))\big|_{k=2}$ for ${l\in I^k}$ such that ${l\equiv 1}$. Then using the table in (\ref{keven}) for ${l\in I_2\cup I_3}$ we get 
\begin{equation}\label{eqnn15}
\begin{multlined}
\Sigma_1^*+\sum_{{l\equiv 1,2,3}}x_l^*+\Sigma_4^*> L(k)\bigg|_{k=2}+\sum_{{l\equiv 1}}\frac{9}{9+4\alpha(k)}\bigg|_{k=2}+\sum_{{l\equiv 2,3}}X(k)\bigg|_{k=2}+\frac{M}{3}\\ \shoveleft[2cm]{ =L(k)\bigg|_{k=2}+\frac{9(2m(k))}{9+4\alpha(k)}\bigg|_{k=2}+2(m(k)+n(k))X(k)\bigg|_{k=2}+\frac{M}{3}}\\ \shoveleft[2.5cm]{ =L(2)+\frac{18}{141}+2X(2)+\frac{M}{3}\approx 1.0465>1,}
\end{multlined}
\end{equation}
a contradiction. The inequalities in (\ref{eqnn14}) and (\ref{eqnn15}) hold for every even $k>2$. Therefore, case ($ii$) doesn't hold.

We can repeat the argument given above for case ($ii$) for case ($iii$) in (\ref{1-7})  as well by switching the roles of $\Sigma_3^*$ and $\Sigma_4^*$.  Note that the number of elements in $I_2\cup I_3$ which are equivalent to $2$ or $3$ modulo $4$ is the same as the number of elements in $I_2\cup I_4$ which are equivalent to $1$ or $3$ modulo $4$ by table in (\ref{keven}).  We get the same inequalities in (\ref{eqnn12}), (\ref{eqnn14}) and (\ref{eqnn15}). Hence case ($iii$) doesn't hold.

Assume that case ($iv$) holds in (\ref{1-7}). Since $\Sigma_2^*\leq M/3$, we have $x_l^*\geq X(k)\big|_{k=2}$ for  every $l\equiv 3$ by (\ref{xis2}). We shall examine the following cases: 
\begin{equation}\label{abc}
\begin{array}{ll}
(a)\quad x_i^*\leq (M/3^k)\big|_{k=2}, & (b)\quad (M/3^k)\big|_{k=2}<x_i^*< M/3, 
\end{array}
\end{equation}
Assume that ($a$) holds. Then by the inequality in (\ref{ineq}) we derive the expression below
\begin{equation}\label{eqnn17}
\Sigma_1^*\geq L(k)\bigg|_{k=2}=\left(\frac{3+\sqrt{2}}{7} +\frac{2(4 \sqrt{2}-9) M}{7\cdot 3^k}\right)\bigg|_{k=2}\approx 0.5587.
\end{equation} 
By the table in (\ref{keven}), for ${l\in I_2=\{4,5,6\}}$ we obtain a contradiction which is given as
\begin{equation}\label{eqnn18}
\begin{multlined}
\Sigma_1^*+\Sigma_2^*+\Sigma_3^*+\Sigma_4^*  > L(k)\bigg|_{k=2}+\sum_{{l\equiv 3}}X(k)\bigg|_{k=2}+\frac{2M}{3}\\ \shoveleft[1.5cm]{ = L(k)\bigg|_{k=2}+n(k)X(k)\bigg|_{k=2}+\frac{2M}{3}
 = L(2)+\frac{2M}{3}\approx 1.0103>1.}
\end{multlined}
\end{equation}
The inequalities in (\ref{eqnn18}) holds for every even $k>2$. So ($a$) is not the case. 

Assume that ($b$) holds. Since we have $x_i^*< M/3$, by the inequality in (\ref{ineq}) we obtain 
\begin{equation}\label{eqnn19}
\Sigma_1^*\geq L=\frac{11}{1491}\left(73 \sqrt{2}-47\right)\approx 0.4149
\end{equation}
 We claim that $\Sigma_3^*<10/33$. Because otherwise we calculate for ${l\in I_2=\{4,5,6\}}$ that 
\begin{equation}\label{eqnn20}
\begin{multlined}
\Sigma_1^*+x_i^*+\sum_{{l \equiv 3}}x_l^*+\Sigma_3^*+\Sigma_4^*>L+\frac{M}{3^k}\bigg|_{k=2}+\sum_{{l\equiv 3}}X(k)\bigg|_{k=2}+\frac{10}{33}+\frac{M}{3}\\ \shoveleft[.5cm]{=L+\frac{M}{9}+n(k)X(k)\bigg|_{k=2}+\frac{10}{33}+\frac{M}{3}=L+\frac{4M}{9}+\frac{10}{33}\approx 1.0190>1,}
\end{multlined}
\end{equation}
a contradiction. A similar contradiction arises if we assume $\Sigma_4^*<10/33$ in the inequality above instead of $\Sigma_3^*$. By $\sigma(x_l^*)\sigma(10/33)<\sigma(x_l^*)\sigma(\Sigma_r^*)\leq\alpha(k)\big|_{k=2}=33$ for ${r=3,4}$ we find that $x_l^*>(23/(23+10\alpha(k)))\big|_{k=2}$ for ${l\equiv 1,2}$. Then we compute by the table in (\ref{keven}) for ${l\in I_2=\{4,5,6\}}$ that
\begin{equation}\label{eqnn21}
\begin{multlined}
\Sigma_1^*+x_i^*+\sum_{{l\equiv 1,2,3}}x_l^*+\Sigma_3^*+\Sigma_4^*\\ \shoveleft[1cm]{>L+\frac{M}{3^k}\bigg|_{k=2}+\sum_{{l\equiv 3}}X(k)\bigg|_{k=2}+\sum_{{l\equiv 1,2}}\frac{23}{23+10\alpha(k)}\bigg|_{k=2}+\frac{2M}{3}}\\ 
\shoveleft[2.5cm]{=L+\frac{M}{3^k}\bigg|_{k=2}+n(k)X(k)\bigg|_{k=2}+\frac{23(2m(k))}{23+10\alpha(k)}\bigg|_{k=2}+\frac{2M}{3}} \\ \shoveleft[3.5cm]{=L+\frac{46}{353}+\frac{7M}{9}\approx 1.0721>1,}
\end{multlined}
\end{equation}
a contradiction. The inequalities in (\ref{eqnn20}) and (\ref{eqnn21}) hold for every even $k>2$. Hence ($b$) is not the case either. Hence case ($iv$) doesn't hold. 

Assume that case ($v$) holds in (\ref{1-7}). Since $\Sigma_3^*\leq M/3$ and $\Sigma_4^*\leq M/3$, by using  (\ref{xis2}) above we obtain $x_l^*\geq X(k)\big|_{k=2}$ for ${l\equiv 1,2}$. We shall examine the cases ($a$) and ($b$)  in (\ref{abc}) and, additionally in ($c$), where 
\begin{equation}\label{c}
(c)\quad M/3\leq x_i^*<N.
\end{equation}

If $x_i^*\leq (M/3^k)\big|_{k=2}$ ($a$), by (\ref{ineq}) we obtain $\Sigma_1^*\geq L(k)\big|_{k=2}$, where $L(k)$ is defined in (\ref{eqnn17}). 
We claim that $\Sigma_2^*< 13/50$. Because otherwise using the table in (\ref{keven}) for ${l\in I_3\cup I_4=\{7,8,9,10,11,12\}}$ we get
\begin{equation}\label{eqnn22}
\begin{multlined}
\Sigma_1^*+\Sigma_2^*+\sum_{{l\equiv 1,2}}x_l^*\geq L(k)\bigg|_{k=2}+\frac{13}{50}+\sum_{{l\equiv 1,2}}X(k)\bigg|_{k=2}\\ \shoveleft[.0cm]{=L(k)\bigg|_{k=2}+\frac{13}{50}+2(m(k)+n(k))X(k)\bigg|_{k=2}=L(2)+\frac{13}{50}+2X(2)\approx 1.0069>1,}
\end{multlined}
\end{equation}
a contradiction. By the inequalities $\sigma(x_l^*)\sigma(13/50)<\sigma(\Sigma_2^*)\sigma(x_l^*)\leq\alpha(k)\big|_{k=2}= 33$ for ${l\equiv 3}$, we find $x_l^*>(37/(37+13\alpha(k)))\big|_{k=2}$. For ${l\in I_3\cup I_4}$
this gives a contradiction that is 
\begin{equation}\label{eqnn23}
\begin{multlined}
\Sigma_1^*+\Sigma_2^*+\sum_{{l\equiv 1,2,3}}x_l^*
> L(k)\bigg|_{k=2}+\frac{M}{3}+\sum_{{l\equiv 3}}\frac{37}{37+13\alpha(k)}\bigg|_{k=2}+\sum_{{l\equiv 1,2}}X(k)\bigg|_{k=2}
\\ \shoveleft[2cm]{=L(k)\bigg|_{k=2}+\frac{M}{3}+\frac{37(2(m(k)))}{37+13\alpha(k)}\bigg|_{k=2}+2(m(k)+n(k))X(k)\bigg|_{k=2}}\\ \shoveleft[2.5cm]{=L(2)+\frac{M}{3}+\frac{37}{233}+2X(2)\approx 1.1315>1.}
\end{multlined}
\end{equation}
This rules out the assumption $x_i^*\leq (M/3^k)\big|_{k=2}$ in ($a$). The inequalities in (\ref{eqnn22}) and (\ref{eqnn23}) hold for every even $k>2$.

Assume that $(M/3^k)\big|_{k=2}< x_i^*< M/3$ in ($b$). Since $x_i^*< M/3$, again by (\ref{ineq}) we calculate that  $\Sigma_1^*>\frac{11}{1491}\left(73 \sqrt{2}-47\right)=L$. 
We claim that $\Sigma_2^*< 2/5$. Otherwise by the table in (\ref{keven}) for ${l\in I_3\cup I_4=\{7,8,9,10,11,12\}}$ we would obtain a contradiction
\begin{equation}\label{eqnn24}
\begin{multlined}
\Sigma_1^*+\Sigma_2^*+\sum_{{l\equiv 1,2}}x_l^*> L+\frac{2}{5}+\sum_{{l\equiv 1,2}}X(k)\bigg|_{k=2}\\ \shoveleft[.5cm]{=L+\frac{2}{5}+2(m(k)+n(k))X(k)\bigg|_{k=2}=L+\frac{2}{5}+2X(2)\approx 1.0031>1.}
\end{multlined}
\end{equation}
Then the inequalities $\sigma(x_l^*)\sigma(2/5)<\sigma(\Sigma_2^*)\sigma(x_l^*)\leq \alpha(k)\big|_{k=2}= 33$ for ${l\equiv 3}$ imply that $x_l^*>(3/(3+2\alpha(k)))\big|_{k=2}$ for  ${l\equiv 3}$. We repeat the argument above to improve on these lower bounds as follows: We  claim that $\Sigma_2^*< 16/51$. Otherwise from the table in (\ref{keven}),  for ${l\in I_3\cup I_4}$ we see that
\begin{equation}\label{eqnn25}
\begin{multlined}
\Sigma_1^*+\Sigma_2^*+\sum_{{l\equiv 1,2}}x_l^*+\sum_{{l\equiv 3}}x_l^*>L+ \frac{16}{51}+\sum_{{l\equiv 1,2}}X(k)\bigg|_{k=2}+\sum_{{l\equiv 3}}\frac{3}{3+2\alpha(k)}\bigg|_{k=2}\\ \shoveleft[4cm]{=L+\frac{16}{51}+2(m(k)+n(k))X(k)\bigg|_{k=2}+\frac{3(2m(k))}{3+2\alpha(k)}\bigg|_{k=2}}\\ \shoveleft[4.5cm]{=L+\frac{16}{51}+\frac{2}{23}+2X(2)\approx 1.0038>1,}
\end{multlined}
\end{equation}
a contradiction. By $\sigma(x_l^*)\sigma(16/51)<\sigma(\Sigma_2^*)\sigma(x_l^*)\leq\alpha(k)\big|_{k=2}= 33$, we find that $x_l^*>(35/(35+16\alpha(k)))\big|_{k=2}$ for ${l\equiv 3}$. We claim that $\Sigma_1^*< 15/32$. Because otherwise by the table in (\ref{keven}) for $l\in I_3\cup I_4$ we would obtain 
\begin{equation}\label{eqnn255}
\begin{multlined}
\Sigma_1^*+\Sigma_2^*+\sum_{{l\equiv 1,2,3}}x_l^*>\frac{15}{32}+\frac{M}{3}+\sum_{{l\equiv 1,2}}X(k)\bigg|_{k=2}+\sum_{{l\equiv 3}}\frac{35}{35+16\alpha(k)}\bigg|_{k=2}\\ \shoveleft[3.5cm]{=\frac{15}{32}+\frac{M}{3}+2(m(k)+n(k))X(k)\bigg|_{k=2}+\frac{35(2m(k))}{35+16\alpha(k)}\bigg|_{k=2}}\\ \shoveleft[4cm]{=\frac{15}{32}+\frac{M}{3}+2X(2)+\frac{70}{563}\approx 1.0071>1,}
\end{multlined}
\end{equation}
a contradiction. By the inequalities $\sigma(x_l^*)\sigma(15/32)<\sigma(\Sigma_1^*)\sigma(x_l^*)\leq\alpha(k)\big|_{k=2}=33$ for ${l\equiv 0}$ we find that $x_l^*>(17/17+15\alpha(k)))\big|_{k=2}$. As a result for ${l\in I_3\cup I_4}$ we obtain  
\begin{equation}\label{eqnn28}
\begin{multlined}
\Sigma_1^*+\Sigma_2^*+\sum_{{l\equiv 1,2,3,0}}x_l^*\geq L+\frac{M}{3}+\sum_{{l\equiv 1,2}}X(k)\bigg|_{k=2}\\ \shoveleft[5cm]{+\sum_{{l\equiv 3}}\frac{35}{35+16\alpha(k)}\bigg|_{k=2}+\sum_{{l\equiv 0}}\frac{17}{17+15\alpha(k)}\bigg|_{k=2}}\\ \shoveleft[1.5cm]{= L+\frac{M}{3}+2(m(k)+n(k))X(k)\bigg|_{k=2}+\frac{35(2m(k))}{35+16\alpha(k)}\bigg|_{k=2}+\frac{17(2m(k))}{17+15\alpha(k)}\bigg|_{k=2}}\\ \shoveleft[2.5cm]{=L+\frac{M}{3}+2X(2)+\frac{70}{563}+\frac{17}{256}+\approx 1.0197>1,}
\end{multlined}
\end{equation}
a contradiction, which rules out the assumption $(M/3^k)\big|_{k=2}< x_i^*\leq M/3$. Again all of the inequalities in (\ref{eqnn24}), (\ref{eqnn25}), (\ref{eqnn255}) and (\ref{eqnn28}) hold for every even $k>2$.

Assume that $M/3<x_i^*<N$ (c). Using the table in (\ref{keven}), for ${l\in I_2=\{4,5,6\}}$ we derive that
\begin{equation}\label{eqn67}
\Sigma_2^*\geq x_i^*+\sum_{{l\equiv 1,2}}x_l^*\geq S(k)\bigg|_{k=2}=\frac{M}{3}+2m(k)X(k)\bigg|_{k=2}\approx 0.4140.
\end{equation}
Since $\xs\in\Delta^{11}$ and $\Sigma_1^*\geq N$ by (B), we have 
\begin{equation*}
\Sigma_3^*+\Sigma_4^*\leq L(k)\bigg|_{k=2}=1-N-S(k)\bigg|_{k=2}\approx 0.2634.
\end{equation*} 
Let $Q(k)\big|_{k=2}=(L(k)/2)\big|_{k=2}$. We shall examine the cases below:
\begin{equation}\label{def}
(d)\ \Sigma_3^*\tnr{ and } \Sigma_4^*< Q(k)\bigg|_{k=2},\ (e)\ \Sigma_3^*\leq Q(k)\bigg|_{k=2}<\Sigma_4^*,\  (f)\ \Sigma_4^*\leq Q(k)\bigg|_{k=2}<\Sigma_3^*.
\end{equation}

Assume that ($d$) holds. Using $\sigma(x_l^*)\sigma(Q(k))\big|_{k=2}\leq \sigma(\Sigma_r^*)\sigma(x_l^*)\leq\alpha(k)\big|_{k=2}= 33$ for $r=3,4$, for ${l\in I_3\cup I_4=\{7,8,9,10,11,12\}}$ such that ${l\equiv 1,2}$, we obtain
\begin{equation*}
x_l^*\geq T(k)\bigg|_{k=2}=\frac{\sigma(Q(k))}{\alpha(k)+\sigma(Q(k))}\bigg|_{k=2}\approx 0.1665.
\end{equation*}
As an implication of the inequality above by the table in (\ref{keven}) for ${l\in I_3\cup I_4}$, we get
\begin{equation}\label{eqnn29}
\begin{multlined}
\Sigma_1^*+\Sigma_2^*+\sum_{{l\equiv 1,2}}x_l^*>N+ S(k)\bigg|_{k=2}+\sum_{{l\equiv 1,2}}T(k)\bigg|_{k=2}\\ \shoveleft[0cm]{=N+ S(k)\bigg|_{k=2}+2(m(k)+n(k))T(k)\bigg|_{k=2}=N+S(2)+2T(2)\approx 1.0696>1,}
\end{multlined}
\end{equation}
a contradiction. The inequalities in (\ref{eqnn29})  hold for every even $k>2$. This rules out the assumption in ($d$).

Assume that ($e$) holds in (\ref{def}). Since $\Sigma_3^*\leq Q(k)\big|_{k=2}$, we obtain $x_l^*\geq T(k)\big|_{k=2}$ for ${l\equiv 2}$. We claim that $\Sigma_1^*<12/33$.  Otherwise by the table in (\ref{keven}), for ${l\in I_3}$, we find 
\begin{equation}\label{eqnn31}
\begin{multlined}
\Sigma_1^*+\Sigma_2^*+\sum_{{l\equiv 1,2}}x_l^*+\Sigma_4^*\geq\frac{12}{33}+ S(k)\bigg|_{k=2}+\sum_{{l\equiv 1}}X(k)\bigg|_{k=2}+\sum_{{l\equiv 2}}T(k)\bigg|_{k=2}+Q(k)\bigg|_{k=2}\\ \shoveleft[.5cm]{=\frac{12}{33}+ S(k)\bigg|_{k=2}+m(k)X(k)\bigg|_{k=2}+n(k)T(k)\bigg|_{k=2}+Q(k)\bigg|_{k=2}}\\ \shoveleft[4.5cm]{=\frac{12}{33}+ S(2)+X(2)+Q(2)\approx 1.0035>1,}
\end{multlined}
\end{equation}
a contradiction. Using $\sigma(x_l^*)\sigma(12/33)<\sigma(\Sigma_1^*)\sigma(x_l^*)\leq\alpha(k)\big|_{k=2}= 33$ for ${l\equiv 0}$, we calculate that $x_l^*> (7/(7+4\alpha(k)))\big|_{k=2}$. Then for ${l\in I_3}$ we obtain a contradiction
\begin{equation}\label{eqnn32}
\begin{multlined}
\Sigma_1^*+\Sigma_2^*+\sum_{{l\equiv 1,2,0}}x_l^*+\Sigma_4^*\\ \shoveleft[.1cm]{>N+ S(k)\bigg|_{k=2}+\sum_{{l\equiv 1}}X(k)\bigg|_{k=2}+\sum_{{l\equiv 2}}T(k)\bigg|_{k=2}+\sum_{{l\equiv 0}}\frac{7}{7+4\alpha(k)}\bigg|_{k=2}+Q(k)\bigg|_{k=2}}\\ \shoveleft[.3cm]{=N+S(k)\bigg|_{k=2}+m(k)X(k)\bigg|_{k=2}+n(k)T(k)\bigg|_{k=2}+\frac{7m(k)}{7+4\alpha(k)}\bigg|_{k=2}+Q(k)\bigg|_{k=2}}\\ \shoveleft[4.5cm]{=N+S(2)+X(2)+Q(2)\approx 1.0127>1.}
\end{multlined}
\end{equation}
The inequalities in (\ref{eqnn31}) and (\ref{eqnn32}) hold for every even $k>2$. This shows that ($e$) doesn't hold.

Assume that ($f$) holds in (\ref{def}). In this case we can use the argument above that proves that ($e$) doesn't hold. By interchanging the roles of $\Sigma_3^*$ and $I_3$ with $\Sigma_4^*$ and  $I_4$, respectively, we repeat the computations. We obtain the same inequalities in (\ref{eqnn31}) and (\ref{eqnn32}) which imply that ($f$) doesn't hold.
As a result we rule out the case ($c$). In particular we conclude that case ($v$) in (\ref{1-7}) does not hold.

Assume that case ($vi$) holds in (\ref{1-7}). Since $\Sigma_3^*\leq M/3$ in this case, we know by (\ref{xis2}) that $x_l^*\geq X(k)\big|_{k=2}$ for ${l\equiv 2}$. We examine the cases ($a$), ($b$), and ($c$) in (\ref{abc}) and (\ref{c}). If $x_i^*\leq (M/3^k)\big|_{k=2}$ ($a$), we obtain by (\ref{ineq}) that 
$$\Sigma_1^*\geq L(k)\big|_{k=2}=\frac{1}{4473}\left(761+1229 \sqrt{2}\right)\approx 0.5587,$$
 where $L(k)$ is explicitly given in (\ref{eqnn17}). Then we derive the following contradiction
\begin{equation}\label{eqnn34}
\begin{multlined}
\Sigma_1^*+\Sigma_2^*+\Sigma_4^*\geq L(k)\bigg|_{k=2}+\frac{2M}{3}\approx 1.0103>1.
\end{multlined}
\end{equation}
So ($a$) is not the case. The inequalities in (\ref{eqnn34}) holds for every even $k>2$.

If $(M/3^k)\big|_{k=2}< x_i^*< M/3$ ($b$), we get  $\Sigma_1^*>\frac{11}{1491}\left(73 \sqrt{2}-47\right)=L$ by (\ref{ineq}). We claim that $\Sigma_2^*< 9/25$. Because otherwise we find
\begin{equation}\label{eqnn35}
\begin{multlined}
\Sigma_1^*+\Sigma_2^*+\Sigma_4^*> L+\frac{9}{25}+\frac{M}{3}\approx 1.0007>1,
\end{multlined}
\end{equation}
a contradiction. By the inequality $\sigma(x_l^*)\sigma(9/25)<\sigma(\Sigma_2^*)\sigma(x_l^*)\leq\alpha(k)\big|_{k=2}=33$ for ${l\equiv 3}$, we find that $x_l^*>(16/(16+9\alpha(k)))\big|_{k=2}$. 
Next we claim that $\Sigma_4^*< 31/100$. Otherwise for ${l\in I_3}=\{7,8,9\}$ we would obtain using the table in (\ref{keven}) that 
\begin{equation}\label{eqnn36}
\begin{multlined}
\Sigma_1^*+\Sigma_2^*+\sum_{{l\equiv 2,3}}x_l^*+\Sigma_4^*> L+\frac{M}{3}+\sum_{{l\equiv 2}}X(k)\bigg|_{k=2}+\sum_{{l\equiv 3}}\frac{16}{16+9\alpha(k)}\bigg|_{k=2}+\frac{31}{100}\\ \shoveleft[4cm]{=L+\frac{M}{3}+n(k)X(k)\bigg|_{k=2}+\frac{16m(k)}{16+9\alpha(k)}\bigg|_{k=2}+\frac{31}{100}}\\ \shoveleft[4.5cm]{=L+\frac{M}{3}+\frac{16}{313}+\frac{31}{100}\approx 1.0018>1,}
\end{multlined}
\end{equation}
a contradiction. By the inequality $\sigma(x_l^*)\sigma(31/100)<\sigma(\Sigma_4^*)\sigma(x_l^*)\leq\alpha(k)\big|_{k=2}=33$ for ${l\equiv 1}$, we see that $x_l^*>(69/(69+31\alpha(k)))\big|_{k=2}$. Also we claim that $\Sigma_1^*< 11/25$. Otherwise by the table in (\ref{keven}), for ${l\in I_3}$ we compute that 
\begin{equation}\label{eqnn37}
\begin{multlined}
\Sigma_1^*+\Sigma_2^*+\sum_{{l\equiv 1,2,3}}x_l^*+\Sigma_4^*\\ > \frac{11}{25}+\sum_{{l\equiv 1}}\frac{69}{69+31\alpha(k)}\bigg|_{k=2}+\sum_{{l\equiv 2}}X(k)\bigg|_{k=2}+\sum_{{l\equiv 3}}\frac{16}{16+9\alpha(k)}\bigg|_{k=2}+\frac{2M}{3}\\ \shoveleft[2.5cm]{=\frac{11}{25}+\frac{69m(k)}{69+31\alpha(k)}\bigg|_{k=2}+n(k)X(k)\bigg|_{k=2}+\frac{16m(k)}{16+9\alpha(k)}\bigg|_{k=2}+\frac{2M}{3}}\\ \shoveleft[3.5cm]{= \frac{11}{25}+\frac{69}{1092}+\frac{16}{313}+\frac{2M}{3}\approx1.0059>1,}
\end{multlined}
\end{equation}
a contradiction. By the inequality $\sigma(\Sigma_1^*)\sigma(11/25)<\sigma(\Sigma_1^*)\sigma(x_l^*)\leq\alpha(k)\big|_{k=2}= 33$ this implies that $x_l^*>(14/(14+11\alpha(k)))\big|_{k=2}$ for ${l\equiv 0}$. Finally using the table in (\ref{keven}) for ${l\in I_3}=\{7,8,9\}$ we obtain a contradiction because 
\begin{equation}\label{eqnn38}
\begin{multlined}
\Sigma_1^*+\Sigma_2^*+\Sigma_3^*+\Sigma_4^*
\geq L+\sum_{{l\equiv 1}}\frac{69}{69+31\alpha(k)}\bigg|_{k=2}+\sum_{{l\equiv 2}}X(k)\bigg|_{k=2}
\\ 
\shoveleft[4cm]{+\sum_{{l\equiv 3}}\frac{16}{16+9\alpha(k)}\bigg|_{k=2}+\sum_{{l\equiv 0}} \frac{14}{14+11\alpha(k)}\bigg|_{k=2}+\frac{2M}{3}}
\\ \shoveleft[0cm]{=L+\frac{69m(k)}{69+31\alpha(k)}\bigg|_{k=2}+n(k)X(k)\bigg|_{k=2}+\frac{16m(k)}{16+9\alpha(k)}\bigg|_{k=2}
+\frac{14m(k)}{14+11\alpha(k)}\bigg|_{k=2}+\frac{2M}{3}}\\ 
\shoveleft[3.5cm]{= L+\frac{69}{1092}+\frac{16}{313}+\frac{14}{377}+\frac{2M}{3}\approx1.0180>1.}
\end{multlined}
\end{equation}
This shows that ($b$) doesn't hold. The inequalities in (\ref{eqnn35}), (\ref{eqnn36}), (\ref{eqnn37}) and (\ref{eqnn38}) hold for every even $k>2$. 

Assume that $x_i^*\geq M/3$ ($c$). Then by the table in (\ref{keven}) for ${l\in I_2}=\{4,5,6\}$ we calculate that
\begin{equation}\label{eqn79}
\Sigma_2^*> \frac{M}{3}+\sum_{{l\equiv 2}}X(k)\bigg|_{k=2}=\frac{M}{3}+m(k)X(k)\bigg|_{k=2}\approx 0.3199.
\end{equation} 
We claim that $\Sigma_1^*< 23/50$. Because otherwise for ${l\in I_3}$ we would compute that 
\begin{equation}\label{eqnn39}
\begin{multlined}
\Sigma_1^*+\Sigma_2^*+\Sigma_3^*+\Sigma_4^*
\geq \frac{23}{50}+\frac{M}{3}+m(k)X(k)\bigg|_{k=2}+\sum_{{l\equiv 2}}X(k)\bigg|_{k=2}+\frac{M}{3}\\ \shoveleft[.2cm]{=\frac{23}{50}+\frac{M}{3}+(m(k)+n(k))X(k)\bigg|_{k=2}+\frac{M}{3}=\frac{23}{50}+2X(2)+\frac{2M}{3}\approx 1.0058>1,}
\end{multlined}
\end{equation}
a contradiction. Then we find that $x_l^*>(27/(27+23\alpha(k)))\big|_{k=2}$ by the inequality $\sigma(x_l^*)\sigma(23/50)<\sigma(\Sigma_1^*)\sigma(x_l^*)\leq 33$ for ${l\equiv 0}$. Similarly we claim that  $\Sigma_2^*< 21/50$. Otherwise by the table (\ref{keven}) for ${l\in I_3}=\{7,8,9\}$ we obtain 
\begin{equation}\label{eqnn40}
\begin{multlined}
\Sigma_1^*+\Sigma_2^*+\Sigma_3^*+\Sigma_4^*
> N+\frac{21}{50}+\sum_{{l\equiv 2}}X(k)\bigg|_{k=2}+\sum_{{l\equiv 0}}\frac{27}{27+23\alpha(k)}\bigg|_{k=2}+\frac{M}{3}\\ \shoveleft[2cm]{=N+\frac{21}{50}+n(k)X(k)\bigg|_{k=2}+\frac{27m(k)}{27+23\alpha(k)}\bigg|_{k=2}+\frac{M}{3}}\\
\shoveleft[3cm]{=N+\frac{21}{50}+\frac{27}{786}+\frac{M}{3}\approx 1.0027>1,}
\end{multlined}
\end{equation}
another contradiction. By the inequality $\sigma(x_l^*)\sigma(21/50)<\sigma(\Sigma_2^*)\sigma(x_l^*)\leq 33$ we derive that $x_l^*> (29/(29+21\alpha(k)))\big|_{k=2}$ for ${l\equiv 3}$. Then we claim that $\Sigma_4^*<14/49$. Otherwise for ${l\in I_2\cup I_3}=\{4,5,6,7,8,9\}$ so that ${l\neq i}$ by the table in (\ref{keven}) we would find a contradiction 
\begin{equation}\label{eqnn41}
\begin{multlined}
\Sigma_1^*+x_i^*+\sum_{{l\equiv 2,3,0}}x_l^*+\Sigma_4^*\\ > N+\frac{M}{3}+\sum_{{l\equiv 2}}X(k)\bigg|_{k=2}+\sum_{{l\equiv 3}}\frac{29}{29+21\alpha(k)}\bigg|_{k=2}+\sum_{{l\equiv 0}}\frac{27}{27+23\alpha(k)}\bigg|_{k=2}+\frac{14}{49} \\ \shoveleft[1.5cm]{=N+\frac{M}{3}+(m(k)+n(k))X(k)\bigg|_{k=2}+\frac{29(m(k)+n(k))}{29+21\alpha(k)}\bigg|_{k=2}}\\ \shoveleft[8cm]{+\frac{27(2m(k)-1)}{27+23\alpha(k)}\bigg|_{k=2}+\frac{14}{49}}\\
\shoveleft[3cm]{=N+\frac{M}{3}+X(2)+\frac{29}{722}+\frac{27}{786}+\frac{14}{49}\approx 1.0027>1.}
\end{multlined}
\end{equation}
Now using the inequalities $\sigma(x_l^*)\sigma(14/49)<\sigma(\Sigma_4^*)\sigma(x_l^*)\leq\alpha(k)\big|_{k=2}= 33$ for ${l\equiv 1}$ we see that $x_l^*>(5/(5+2\alpha(k)))\big|_{k=2}$. As a result using the table in (\ref{keven}) for ${l\in I_2\cup I_3}$ so that $l\neq i$ we obtain a contradiction
\begin{equation}\label{eqnn42}
\begin{multlined}
\Sigma_1^*+x_i^*+\sum_{{l\equiv 1,2,3,0}}x_l^*+\Sigma_4^*>N+\frac{M}{3}+\sum_{{l\equiv 2}}X(k)\bigg|_{k=2}\\ \shoveleft[1cm]{+\sum_{{l\equiv 3}}\frac{29}{29+21\alpha(k)}\bigg|_{k=2}+\sum_{{l\equiv 1}}\frac{5}{5+2\alpha(k)}\bigg|_{k=2}+\sum_{{l\equiv 0}}\frac{27}{27+23\alpha(k)}\bigg|_{k=2}+\frac{M}{3}} \\ \shoveleft[2cm]{=N+\frac{M}{3}+(m(k)+n(k))X(k)\bigg|_{k=2}+\frac{29(m(k)+n(k))}{29+21\alpha(k)}\bigg|_{k=2}}\\ \shoveleft[5.5cm]{+\frac{5(2m(k))}{5+2\alpha(k)}\bigg|_{k=2}+\frac{27(2m(k)-1)}{27+23\alpha(k)}\bigg|_{k=2}+\frac{M}{3}}\\
\shoveleft[2.5cm]{=N+\frac{M}{3}+X(2)+\frac{29}{722}+\frac{10}{71}+\frac{27}{786}+\frac{M}{3}\approx 1.0836>1.}
\end{multlined}
\end{equation}
This eliminates the case $x_i^*\geq M/3$ (c). The inequalities in (\ref{eqnn39}),  (\ref{eqnn40}), (\ref{eqnn41}) and (\ref{eqnn42}) hold for every even $k>2$. Hence we conclude that case ($vi$) doesn't hold.

Assume that case ($vii$) holds in (\ref{1-7}). Note that the inequalities for $\Sigma_3^*$ and $\Sigma_4^*$ are switched in this case. Therefore the argument  given above which shows that case ($vi$) doesn't hold can be repeated by replacing the roles of $\Sigma_3^*$ and $I_3$ with $\Sigma_4^*$ and $I_4$.  We obtain the same inequalities in (\ref{eqnn36}),  (\ref{eqnn37}), (\ref{eqnn38}), (\ref{eqn79}), (\ref{eqnn39}),  (\ref{eqnn40}), (\ref{eqnn41}) and (\ref{eqnn42}) which imply that case ($vii$) does't hold. As a result we derive that $\Sigma_1^*\geq N>x_i^*$ (B) in (\ref{ABC}) is not the case either.

Assume that $x_i^*\geq N>\Sigma_1^*$ ($C$) in (\ref{ABC}). Note that  $\Sigma_2^*> x_i^*\geq N>M/3$. We need to consider the following cases
\begin{equation}\label{1-7-2}
\begin{array}{llll}
(i)  & \Sigma_1^*\leq M/3, & \Sigma_3^*\leq M/3, & \Sigma_4^*\leq M/3,\\
(ii) & \Sigma_1^*\leq M/3, & \Sigma_3^*\leq M/3, & \Sigma_4^*\geq M/3,\\
(iii) & \Sigma_1^*\leq M/3, & \Sigma_3^*\geq M/3, & \Sigma_4^*\leq M/3,\\
(iv) & \Sigma_1^*\leq M/3, & \Sigma_3^*\geq M/3, & \Sigma_4^*\geq M/3,\\
(v)  & \Sigma_1^*\geq M/3, & \Sigma_3^*\leq M/3, & \Sigma_4^*\leq M/3,\\
(vi) & \Sigma_1^*\geq M/3, & \Sigma_3^*\leq M/3, & \Sigma_4^*\geq M/3,\\
(vii) & \Sigma_1^*\geq M/3, & \Sigma_3^*\geq M/3, & \Sigma_4^*\leq M/3.
\end{array}
\end{equation}
If ($i$) holds, we see that $x_i^*\geq{\frac{1}{2982}\left(599+470 \sqrt{2}\right)}=L$ by (\ref{ineq}). Since $\Sigma_1^*\leq M/3$, $\Sigma_3^*\leq M/3$ and $\Sigma_4^*\leq M/3$ , we get $x_l^*\geq X(k)\big|_{k=2}$ for ${l\equiv 1,2,0}$ by (\ref{xis2}). Then by the table in (\ref{keven}) for ${l\in I^2}=\{1,\dots,12\}$ we find a contradiction which is
\begin{equation}\label{eqnn43}
x_i^*+\sum_{{l\equiv 1,2,0}}x_l^*\geq L+(9m(k)+3n(k)-1)X(k)\bigg|_{k=2}=L+8X(2)\approx  1.1766>1.
\end{equation}
The inequality in (\ref{eqnn43}) holds for every even $k>2$. Therefore case ($i$) doesn't hold.

If ($ii$) holds, we again have $x_i^*\geq L$. We also have $x_l^*\geq X(k)\big|_{k=2}$ for ${l\equiv 2,0}$ by (\ref{xis2}). Then by the table in (\ref{keven}) for ${l\in I_1\cup I_2\cup I_3}=\{1,2,3,4,5,6,7,8,9\}$ so that ${l\equiv 2,0}$  and ${l\neq i}$ we find that 
\begin{equation}\label{eqnn44}
x_i^*+\sum_{{l\equiv 2,0}}x_l^*+\Sigma_4^*\geq S(k)\bigg|_{k=2}=L+(4m(k)+2n(k)-1)X(k)\bigg|_{k=2}+\frac{M}{3}\approx 0.9319.
\end{equation}
For ${l\in I_1\cup I_2\cup I_3}$  this implies that $\sum_{{l\equiv 1,3}}x_l^*\leq 1-S(k)\big|_{k=2}\approx 0.0681$. Then for some ${r\in I_1\cup I_2\cup I_3}$ so that ${r\equiv 1,3}$ we get
\begin{equation}\label{eqnn45}
x_r^*\leq R(k)\bigg|_{k=2}=\frac{1-S(k)}{5m(k)+n(k)}\bigg|_{k=2}=R(2)=\frac{1-S(2)}{5}\approx 0.0136.
\end{equation} 
If ${r\equiv 1}$ in (\ref{eqnn45}), by the inequality $\sigma(\Sigma_4^*)\sigma(R(k))\big|_{k=2}\leq\sigma(\Sigma_4^*)\sigma(x_r^*)\leq\alpha(k)\big|_{k=2}= 33$ we derive
\begin{equation*}\label{eqnn46}
\Sigma_4^*\geq T(k)\bigg|_{k=2}=\frac{\sigma(R(k))}{\alpha(k)+\sigma(R(k))}\bigg|_{k=2}\approx 0.6870.
\end{equation*}
Then by the table in (\ref{keven}) for ${l\in I_1\cup I_2\cup I_3}$ such that ${l\neq i}$ we obtain a contradiction 
\begin{equation}\label{eqnn47}
\begin{multlined}
x_i^*+\sum_{{l\equiv 2,0}}x_l^*+\Sigma_4^*\geq L+\sum_{{l\equiv 2,0}}X(k)\bigg|_{k=2}+T(k)\bigg|_{k=2}\\ \shoveleft[4.0cm]{
=L+(4m(k)+2n(k)-1)X(k)\bigg|_{k=2}+T(k)\bigg|_{k=2}}\\ \shoveleft[6cm]{=L+3X(2)+T(2)\approx 1.3931>1.}
\end{multlined}
\end{equation}
 So $x_r^*>R(k)\big|_{k=2}$ for ${r\equiv 1}$. 

If ${r\equiv 3}$ in (\ref{eqnn45}),  then we calculate that $\Sigma_2^*\geq T(k)\big|_{k=2}$ by  using the inequality $\sigma(\Sigma_2^*)\sigma(R(k))\big|_{k=2}\leq\sigma(\Sigma_2^*)\sigma(x_r^*)\leq\alpha(k)\big|_{k=2}= 33$. As a result using the table in (\ref{keven}) for ${s\in I_1\cup I_3}$ and ${l\in I_1\cup I_3=\{1,2,3,7,8,9\}}$ we find a contradiction that is
\begin{equation}\label{eqn89}
\begin{multlined}
\sum_{{s\equiv 1}}x_s^*+\Sigma_2^*+\sum_{{l\equiv 2,0}}x_l^*+\Sigma_4^*\geq \sum_{{s\equiv 1}}R(k)\bigg|_{k=2}+T(k)\bigg|_{k=2}+\sum_{{l\equiv 2,0}}X(k)\bigg|_{k=2}+\frac{M}{3}\\ \shoveleft[1.5cm]{=2m(k)R(k)\bigg|_{k=2}+T(k)\bigg|_{k=2}+(2m(k)+2n(k))X(k)\bigg|_{k=2}+\frac{M}{3}}\\\shoveleft[3.5cm]{=\frac{2(1-S(2))}{5}+T(2)+2X(2)+\frac{M}{3}=P\approx 1.1283>1.}
\end{multlined}
\end{equation}
The inequalities in (\ref{eqnn47}) and (\ref{eqn89}) hold for every $k>2$. Hence we conclude that ($ii$) doesn't hold. 

Assume that case ($iii$) holds in (\ref{1-7-2}). We use the same argument given above that shows that case ($ii$) doesn't hold by switching the roles of $\Sigma_4^*$ and $\Sigma_3^*$. We get the same inequalities in (\ref{eqnn44}), (\ref{eqnn47}),   (\ref{eqn89}) and,  (\ref{eqnn45}). Hence case ($iii$) also doesn't hold for every even $k\geq 2$. 

Assume that case ($iv$) holds  in (\ref{1-7-2}). Since $x_i^*\geq L={\frac{1}{2982}\left(599+470 \sqrt{2}\right)}$ by (\ref{ineq}), $\Sigma_3^*\geq M/3$ and $\Sigma_4^*\geq M/3$, we obtain  
$$x_i^*+\Sigma_3^*+\Sigma_4^*\geq L+\frac{2M}{3}=K\approx 0.8754.$$ 
Then we find that $\Sigma_1^*+\Sigma_2^i(\xs)\leq 1-K$, where $\Sigma_2^i$ is defined in (\ref{fg}). For some ${r\in I_1\cup I_2-\{i\}}$ we must have $x_r^*\leq R(k)\big|_{k=2}=((1-K)/(2\cdot 3^{k-1}-1))\big|_{k=2}$. If ${r\equiv 1}$, we see that 
\begin{equation*}\label{eqnn49}
\Sigma_4^*\geq T(k)\bigg|_{k=2}=\frac{\sigma(R(k))}{\alpha(k)+\sigma(R(k))}\bigg|_{k=2}\approx 0.5425
\end{equation*}
by the inequality $\sigma(\Sigma_4^*)\sigma(R(k))\big|_{k=2}\leq\sigma(\Sigma_4^*)\sigma(x_r^*)\leq\alpha(k)\big|_{k=2}= 33$. So we obtain a contradiction because,
\begin{equation}\label{eqnn50}
x_i^*+\Sigma_3^*+\Sigma_4^*\geq L+\frac{M}{3}+T(k)\bigg|_{k=2}=L+\frac{M}{3}+T(2)\approx 1.1921>1.
\end{equation}
If ${r\equiv 2}$, we get $\Sigma_3\geq T(k)\big|_{k=2}$ by  $\sigma(\Sigma_3^*)\sigma(R(k))\big|_{k=2}\leq\sigma(\Sigma_3^*)\sigma(x_l^*)\leq\alpha(k)\big|_{k=2}= 33$. This gives the inequality in (\ref{eqnn50}) again, a contradiction. Thus we have $x_r^*>R(k)\big|_{k=2}$ for ${r\equiv 1,2}$. 

If ${r\equiv 3}$, then by the inequality $\sigma(\Sigma_2^*)\sigma(R(k))\big|_{k=2}\leq\sigma(\Sigma_2^*)\sigma(x_l^*)\leq \alpha(k)\big|_{k=2}= 33$ we derive that $\Sigma_2^*\geq T(k)\big|_{k=2}$. So by the table in (\ref{keven}) for ${r\in I_1=\{1,2,3\}}$ we find 
\begin{equation}\label{eqnn51}
\begin{multlined}
\sum_{{r\equiv 1,2}}x_r^*+\Sigma_2^*+\Sigma_3^*+\Sigma_4^*\geq \sum_{{r\equiv 1,2}}R(k)\bigg|_{k=2}+T(k)\bigg|_{k=2}+\frac{2M}{3}\\ \shoveleft[.0cm]{=2m(k)R(k)\bigg|_{k=2}+T(k)\bigg|_{k=2}+\frac{2M}{3}=\frac{2(1-K)}{5}+T(2)+\frac{2M}{3}\approx1.0440 >1,}
\end{multlined}
\end{equation}
a contradiction. The inequalities in (\ref{eqnn50}) and (\ref{eqnn51}) hold for every even $k>2$. Hence case ($iv$) doesn't hold.

Assume that case ($v$) holds in  (\ref{1-7-2}). Since $\Sigma_3^*\leq M/3$ and $\Sigma_4^*\leq M/3$, by (\ref{xis2}) we have  $x_l^*\geq X(k)\big|_{k=2}$ for ${l\equiv 1,2}$. Using the table in (\ref{keven}), for ${l\in I_2=\{4,5,6\}}$ we derive from (C) that
\begin{equation}\label{Sg2}
\Sigma_2^*> x_i^*+\sum_{{l\equiv 1,2}}x_l^*> S(k)\bigg|_{k=2}=N+2m(k)X(k)\bigg|_{k=2}=N+2X(2)\approx 0.5107.
\end{equation}
Since $\xs\in\Delta^{11}$ and $\Sigma_1^*\geq M/3$ , we have 
\begin{equation*}
\Sigma_3^*+\Sigma_4^*< L(k)\bigg|_{k=2}=1-\frac{M}{3}-S(k)\bigg|_{k=2}=L(2)=\frac{4}{639} \left(58-7 \sqrt{2}\right)\approx 0.2634.
\end{equation*} 
Let $Q(k)\big|_{k=2}=L(k)\big|_{k=2}$. We shall examine the  cases below in the rest of the argument:
\begin{equation}\label{def2}
(d)\ \Sigma_3^*\tnr{ and } \Sigma_4^* <\dis Q(k)\bigg|_{k=2},\ (e)\ \Sigma_3^*<\dis Q(k)\bigg|_{k=2}\leq\Sigma_4^*,\  (f)\ \Sigma_4^*<\dis Q(k)\bigg|_{k=2}\leq\Sigma_3^*.
\end{equation}
Assume that ($d$) holds. Using $\sigma(x_l^*)\sigma(Q(k))\big|_{k=2}<\sigma(\Sigma_3^*)\sigma(x_l^*)\leq\alpha(k)\big|_{k=2}= 33$ for ${l\equiv 2}$ and $\sigma(x_l^*)\sigma(Q(k))\big|_{k=2}<\sigma(\Sigma_4^*)\sigma(x_l^*)\leq\alpha(k)\big|_{k=2}= 33$ for ${l\equiv 1}$, we obtain
\begin{equation*}
x_l^*> T(k)\bigg|_{k=2}=\frac{\sigma(Q(k))}{\alpha(k)+\sigma(Q(k))}\bigg|_{k=2}\approx 0.1665
\end{equation*}
for ${l\in I_3\cup I_4=\{7,8,9,10,11,12\}}$.  We claim that $\Sigma_1^*<4/25$. Because otherwise, by the table in (\ref{keven}) for ${l\in I_3\cup I_4}$, we get a contradiction 
\begin{equation}\label{eqnn52}
\begin{multlined}
\Sigma_1^*+\Sigma_2^*+\sum_{{l\equiv 1,2}}x_l^*>\frac{4}{25}+ S(k)\bigg|_{k=2}+\sum_{{l\equiv 1,2}}T(k)\bigg|_{k=2}\\ \shoveleft[1.7cm]{=\frac{4}{25}+ S(k)\bigg|_{k=2}+2(m(k)+n(k))T(k)\bigg|_{k=2}}\\ \shoveleft[3.7cm]{=\frac{4}{25}+ S(2)+2T(2)\approx 1.0037>1.}
\end{multlined}
\end{equation}
By the inequalities $\sigma(x_l^*)\sigma(4/25)<\sigma(\Sigma_1^*)\sigma(x_l^*)\leq\alpha(k)\big|_{k=2}= 33$ for ${l\equiv 0}$, we calculate that $x_l^*>  (21/(21+4\alpha(k)))\big|_{k=2}$. For ${l\in I_3\cup I_4=\{7,8,9,10,11,12\}}$ this implies
\begin{equation}\label{eqnn53}
\begin{multlined}
\Sigma_1^*+\Sigma_2^*+\sum_{{l\equiv 0,1,2}}x_l^*>\frac{M}{3}+ S(k)\bigg|_{k=2}+\sum_{{l\equiv 0}}\frac{21}{21+4\alpha(k)}\bigg|_{k=2}+\sum_{{l\equiv 1,2}}T(k)\bigg|_{k=2}\\ \shoveleft[2.2cm]{=\frac{M}{3}+ S(k)\bigg|_{k=2}+\frac{21(2m(k))}{21+4\alpha(k)}\bigg|_{k=2}+2(m(k)+n(k))T(k)\bigg|_{k=2}} \\ \shoveleft[3.2cm]{=\frac{M}{3}+ S(2)+\frac{42}{153}+2T(2)\approx 1.3441>1,}
\end{multlined}
\end{equation}
a contradiction. The inequalities in (\ref{eqnn52}) and (\ref{eqnn53}) hold for every even $k>2$. This rules out the assumption in ($d$).

Assume that ($e$) holds in (\ref{def2}). Since $\Sigma_3^*< Q(k)\big|_{k=2}$, we obtain $x_l^*> T(k)\big|_{k=2}$ for ${l\equiv 2}$. We claim that $\Sigma_1^*< 3/11$.  Otherwise by the table in (\ref{keven}), for ${l\in I_3}$, we find 
\begin{equation}\label{eqnn54}
\begin{multlined}
\Sigma_1^*+\Sigma_2^*+\sum_{{l\equiv 1,2}}x_l^*+\Sigma_4^*>\frac{3}{11}+ S(k)\bigg|_{k=2}+\sum_{{l\equiv 1}}X(k)\bigg|_{k=2}+\sum_{{l\equiv 2}}T(k)\bigg|_{k=2}+Q(k)\bigg|_{k=2}\\ \shoveleft[1.5cm]{=\frac{3}{11}+ S(k)\bigg|_{k=2}+m(k)X(k)\bigg|_{k=2}+n(k)T(k)\bigg|_{k=2}+Q(k)\bigg|_{k=2}}\\ \shoveleft[3.5cm]{=\frac{3}{11}+ S(2)+X(2)+Q(2)\approx 1.0093>1,}
\end{multlined}
\end{equation}
a contradiction. Using $\sigma(x_l^*)\sigma(3/11)<\sigma(\Sigma_1^*)\sigma(x_l^*)\leq\alpha(k)\big|_{k=2}= 33$ for ${l\equiv 0}$, we calculate that $x_l^*>  (8/(8+3\alpha(k)))\big|_{k=2}$. Then by the table in (\ref{keven}) for ${l\in I_3}$ we obtain
\begin{equation}\label{eqnn56}
\begin{multlined}
\Sigma_1^*+\Sigma_2^*+\sum_{{l\equiv 1,2,0}}x_l^*+\Sigma_4^*\\ \shoveleft[.1cm]{>\frac{M}{3}+S(k)\bigg|_{k=2}+\sum_{{l\equiv 1}}X(k)\bigg|_{k=2}+\sum_{{l\equiv 2}}T(k)\bigg|_{k=2}+\sum_{{l\equiv 0}}\frac{8}{8+3\alpha(k)}\bigg|_{k=2}+Q(k)\bigg|_{k=2}}\\ \shoveleft[.1cm]{=\frac{M}{3}+ S(k)\bigg|_{k=2}+m(k)X(k)\bigg|_{k=2}+n(k)T(k)\bigg|_{k=2}+\frac{8m(k)}{8+3\alpha(k)}\bigg|_{k=2}+Q(k)\bigg|_{k=2}}\\\shoveleft[2.5cm]{=\frac{M}{3}+ S(2)+X(2)+\frac{8}{107}+Q(2)\approx 1.0372>1,}
\end{multlined}
\end{equation}
a contradiction. The inequalities in (\ref{eqnn54}) and (\ref{eqnn56}) hold for every even $k>2$. This shows that ($e$) doesn't hold.

Assume that ($f$) holds in (\ref{def2}). We can use the argument above that proves that ($e$) doesn't hold to show that ($f$) also doesn't hold by interchanging the roles of $\Sigma_3^*$ and $I_3$ with $\Sigma_4^*$ and $I_4$, respectively. We get the same inequalities  in (\ref{eqnn54}) and (\ref{eqnn56}). As a result we conclude that case ($v$) in (\ref{1-7-2}) does not hold.

Assume that case ($vi$) holds in (\ref{1-7-2}). Since $\Sigma_3^*\leq M/3$, we have $x_l^*\geq X(k)\big|_{k=2}$ for every ${l\equiv 2}$ by (\ref{xis2}). Using the inequalities in (\ref{Sg2}) and (\ref{keven}) for ${l\in I_2}$ and the assumption of (C) we find that 
\begin{equation}\label{eqn101}
\Sigma_2^*>  x_i^*+\sum_{{l\equiv 2}}x_l^*>S(k)\bigg|_{k=2}=N+m(k)X(k)\bigg|_{k=2}=N+ X(2)\approx 0.4166.
\end{equation}
Since $\Sigma_1^*\geq M/3$ and $\Sigma_4^*\geq M/3$ we see that 
$$\Sigma_3^*\leq L(k)\bigg|_{k=2}=1-\frac{2M}{3}-S(k)\bigg|_{k=2}=1-\frac{2M}{3}-S(2)\approx 0.1317.$$ 
We must have $x_r^*\leq R(k)\big|_{k=2}= (L(k)/3^{k-1})\big|_{k=2}$ for some ${r\in I_3}$. Since we have $R(k)\big|_{k=2}<X(k)\big|_{k=2}$ for every even $k\geq 2$, we deduce that ${r\not\equiv 2}$. 

Assume that ${r\equiv 0}$. By
$\sigma(\Sigma_1^*)\sigma(R(k))\big|_{k=2}\leq\sigma(\Sigma_1^*)\sigma(x_r^*)\leq\alpha(k)\big|_{k=2}=33$, we obtain 
\begin{equation}\label{eqnn60}
\Sigma_1^*\geq T(k)\bigg|_{k=2}=\frac{\sigma(R(k))}{\alpha(k)+\sigma(R(k))}\bigg|_{k=2}\approx 0.3975.
\end{equation}
We claim that $\Sigma_1^*< 9/25$. Otherwise by the table in (\ref{keven}) for ${l\in I_3=\{7,8,9\}}$ we would find a contradiction which is  
\begin{equation}\label{eqnn57}
\begin{multlined}
\Sigma_1^*+\Sigma_2^*+\sum_{{l\equiv 2}}x_l^*+\Sigma_4^*> \frac{9}{25}+S(k)\bigg|_{k=2}+\sum_{{l\equiv 2}}X(k)\bigg|_{k=2}+\frac{M}{3}\\ \shoveleft[.1cm]{=\frac{9}{25}+S(k)\bigg|_{k=2}+n(k)X(k)\bigg|_{k=2}+\frac{M}{3}=\frac{9}{25}+S(2)+\frac{M}{3}\approx 1.0025>1.}
\end{multlined}
\end{equation}
Then we find that $x_l^*>(16/(16+9\alpha(k)))\big|_{k=2}$ for all ${l\equiv 0}$ by using the inequality 
$\sigma(9/25)\sigma(x_l^*)<\sigma(\Sigma_1^*)\sigma(x_l^*)\leq\alpha(k)\big|_{k=2}=33$. Then for ${l\in I_3}$ we obtain 
\begin{equation}\label{eqnn58}
\begin{multlined}
\Sigma_1^*+\Sigma_2^*+\sum_{{l\equiv 0,2}}x_l^*+\Sigma_4^*> T(k)\bigg|_{k=2}+S(k)\bigg|_{k=2}\\\shoveleft[2cm]{+\sum_{{l\equiv 0}}\frac{16}{16+9\alpha(k)}\bigg|_{k=2}+\sum_{{l\equiv 2}}X(k)\bigg|_{k=2}+\frac{M}{3}}\\ \shoveleft[2cm]{=T(k)\bigg|_{k=2}+S(k)\bigg|_{k=2}+\frac{16m(k)}{16+9\alpha(k)}\bigg|_{k=2}+n(k)X(k)\bigg|_{k=2}+\frac{M}{3}}\\ \shoveleft[4cm]{=T(2)+S(2)+\frac{16}{313}+\frac{M}{3}\approx 1.0911>1,}\end{multlined}
\end{equation}
a contradiction, which shows that $x_r^*>R(k)\big|_{k=2}$ for all ${r\equiv 0}$. 

Assume that ${r\equiv 1}$. By $\sigma(\Sigma_4^*)\sigma(R(k))\big|_{k=2}\leq\sigma(\Sigma_4^*)\sigma(x_r^*)\leq\alpha(k)\big|_{k=2}=33$ we get $\Sigma_4^*\geq T(k)\big|_{k=2}$, defined in (\ref{eqnn60}). Then we derive the same inequalities in (\ref{eqnn58}) since $\Sigma_1^*\geq M/3$ and $\Sigma_4^*\geq T(k)\big|_{k=2}$ switched roles. So we must have $x_r^*>R(k)\big|_{k=2}$ for all ${r\equiv 1}$. 

Assume that ${r\equiv 3}$. By $\sigma(\Sigma_2^*)\sigma(R(k))\big|_{k=2}\leq\sigma(\Sigma_2^*)\sigma(x_r^*)\leq\alpha(k)\big|_{k=2}=33$ we get $\Sigma_2^*\geq T(k)\big|_{k=2}$, defined in (\ref{eqnn60}). But $S(k)>T(k)$ for every even $k\geq 2$. So we shall use $S(k)$ for the calculations. We claim that $\Sigma_2^*<23/50$. Otherwise using the table in (\ref{keven}) for ${l\in I_3}$ we derive a contradiction that is 
\begin{equation}\label{eqnn61}
\begin{multlined}
\Sigma_1^*+\Sigma_2^*+\sum_{{l\equiv 1,2,0}}x_l^*+\Sigma_4^*> \frac{M}{3}+\frac{23}{50}+\sum_{{l\equiv 1}}R(k)\bigg|_{k=2}\\ 
\shoveleft[2.8cm]{+\sum_{{l\equiv 2}}X(k)\bigg|_{k=2}+\sum_{{l\equiv 0}}\frac{16}{16+9\alpha(k)}\bigg|_{k=2}+\frac{M}{3}}\\ \shoveleft[1cm]{=\frac{M}{3}+\frac{23}{50}+m(k)R(k)\bigg|_{k=2}+n(k)X(k)\bigg|_{k=2}+\frac{16m(k)}{16+9\alpha(k)}\bigg|_{k=2}+\frac{M}{3}}\\ \shoveleft[4cm]{=\frac{M}{3}+\frac{23}{50}+R(2)+\frac{16}{313}+\frac{M}{3}\approx 1.0067>1.}\end{multlined}
\end{equation}
Then we compute that $x_l^*>(27/(27+23\alpha(k)))\big|_{k=2}$ for all ${l\equiv 3}$ by using the inequality 
$\sigma(23/50)\sigma(x_l^*)\leq\sigma(\Sigma_1^*)\sigma(x_l^*)\leq\alpha(k)\big|_{k=2}=33$. We claim that $\Sigma_4^*<11/48$. Otherwise by the table in (\ref{keven}) for ${l\in I_3}$ we obtain 
\begin{equation}\label{eqnn62}
\begin{multlined}
\Sigma_1^*+\Sigma_2^*+\sum_{{l\equiv 0,1,2,3}}x_l^*+\Sigma_4^*> \frac{M}{3}+S(k)\bigg|_{k=2}+\sum_{{l\equiv 0}}\frac{16}{16+9\alpha(k)}\bigg|_{k=2}\\ \shoveleft[2cm]{+\sum_{{l\equiv 3}}\frac{27}{27+23\alpha(k)}\bigg|_{k=2}+\sum_{{l\equiv 1}}R(k)\bigg|_{k=2}+\sum_{{l\equiv 2}}X(k)\bigg|_{k=2}}+\frac{11}{48}\\ \shoveleft[2cm]{=\frac{M}{3}+S(k)\bigg|_{k=2}+\frac{16m(k)}{16+9\alpha(k)}\bigg|_{k=2}+\frac{27m(k)}{27+23\alpha(k)}\bigg|_{k=2}}\\ \shoveleft[6cm]{+m(k)R(k)\bigg|_{k=2}+n(k)X(k)\bigg|_{k=2}+\frac{11}{48}}\\ \shoveleft[3cm]{=\frac{M}{3}+S(2)+\frac{16}{313}+\frac{27}{786}+R(2)+\frac{11}{48}\approx 1.0010>1,}
\end{multlined}
\end{equation}
a contradiction. So we find that $x_l^*>(37/(37+11\alpha(k)))\big|_{k=2}$ for all ${l\equiv 1}$ by the inequality 
$\sigma(1/4)\sigma(x_l^*)\leq\sigma(\Sigma_2^*)\sigma(x_l^*)\leq\alpha(k)\big|_{k=2}=33$. Then using the table in (\ref{keven}) for ${l\in I_3}$ we compute that
\begin{equation}\label{eqnn63}
\begin{multlined}
\Sigma_1^*+\Sigma_2^*+\sum_{{l\equiv 0,1,2,3}}x_l^*+\Sigma_4^*> \frac{M}{3}+S(k)\bigg|_{k=2}+\sum_{{l\equiv 0}}\frac{16}{16+9\alpha(k)}\bigg|_{k=2}\\ \shoveleft[1cm]{+\sum_{{l\equiv 3}}\frac{27}{27+23\alpha(k)}\bigg|_{k=2}+\sum_{{l\equiv 1}}\frac{37}{37+11\alpha(k)}\bigg|_{k=2}+\sum_{{l\equiv 2}}X(k)\bigg|_{k=2}}+\frac{M}{3}\\ \shoveleft[1.5cm]{=\frac{M}{3}+S(k)\bigg|_{k=2}+\frac{16m(k)}{16+9\alpha(k)}\bigg|_{k=2}+\frac{27m(k)}{27+23\alpha(k)}\bigg|_{k=2}}\\ \shoveleft[5cm]{+\frac{37m(k)}{37+11\alpha(k)}\bigg|_{k=2}+n(k)X(k)\bigg|_{k=2}+\frac{M}{3}}\\ \shoveleft[3cm]{=\frac{M}{3}+S(2)+\frac{16}{313}+\frac{27}{786}+\frac{37}{400}+\frac{M}{3}\approx 1.0486>1,}
\end{multlined}
\end{equation}
a contradiction. The inequalities in (\ref{eqnn57}), (\ref{eqnn58}), (\ref{eqnn61}), (\ref{eqnn62}) and (\ref{eqnn63}) hold for every even $k>2$. Hence case ($vi$) doesn't hold. 

Assume that case ($vii$) in (\ref{1-7-2}) holds. We use the argument given above to prove that  case ($vi$) doesn't hold to show that  case ($vii$) also doesn't hold by switching the roles of $\Sigma_3^*$ and $I_3$ with $\Sigma_4^*$ and $I_4$, respectively.  We find the inequalities in (\ref{eqn101}), (\ref{eqnn57}), (\ref{eqnn58}), (\ref{eqnn61}), (\ref{eqnn62}) and (\ref{eqnn63}). As a result we find that $x_i^*\geq N>\Sigma_1^*$ ($C$) in (\ref{ABC}) is not the case. Finally the conclusion of the lemma follows.  
\end{proof}

The proof of \fullref{unique1}  is symmetric in the sense that it can be repeated for any other displacement function $f_i$ in $\mathcal{F}^k$ for the choices of indices $j\in\{1,2,3,4\}$ and $i\in I^k=\{1,2,\dots,4\cdot 3^{k-1}\}$  satisfying the hypothesis of the lemma for any $k\geq 2$. Rearrangement of the relevant index sets is required. In fact we have the following statements:
\begin{lemma}\label{unique2}
Under the hypothesis of  \fullref{unique1}, if $k\geq 2$ is even, then we have $\xs\in C_{f_i}$, defined in (\ref{C2}), for each of the following cases:
\begin{equation}\label{cases}
\begin{array}{lllll}
j=1, & i\equiv 0, &  i\in I_2, &  i\in I_3, &  i\in I_4,\\
j=2, & i\equiv 3, &  i\in I_1, &  i\in I_3, &  i\in I_4,\\
j=3, & i\equiv 2, &  i\in I_1, &  i\in I_2, &  i\in I_4,\\
j=4, & i\equiv 1, &  i\in I_1, &  i\in I_2, &  i\in I_3.
\end{array}
\end{equation}
\end{lemma}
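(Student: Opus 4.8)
The plan is to deduce every one of the twelve configurations in (\ref{cases}) from the single case $j=1$, $i\in I_2$, $i\equiv 0$ already settled in \fullref{unique1}, using the symmetry built into the symmetric decomposition $\Gamma_{\mathcal{D}^k}$ wherever it is available and re-running the argument of \fullref{unique1} directly where it is not.

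First I would make the symmetry explicit. Let $\phi$ be one of the eight bijections of $\Xi\cup\Xi^{-1}=\{\xi,\eta,\xi^{-1},\eta^{-1}\}$ that commute with inversion, i.e. the centralizer, in the symmetric group on those four letters, of the involution $\xi\leftrightarrow\xi^{-1}$, $\eta\leftrightarrow\eta^{-1}$. Applying $\phi$ letter by letter preserves reduced words, so it induces a length-preserving bijection $\bar\phi$ of $\Psi^k$, hence, via the enumeration $p$ of (\ref{Enum}), a permutation of $I^k$ and a coordinate-permuting affine isometry $P_\phi$ of $\Delta^{d-1}$. Directly from \fullref{dispfunc} one checks that $P_\phi$ permutes the family $\mathcal{F}^k$, so that $F^k\circ P_\phi=F^k$ and $P_\phi$ carries minimizers of $F^k$ to minimizers; and, from (\ref{C2}) and the obvious equivariance of the functions $\Sigma_j$, that $P_\phi$ carries each $C_{f_i}$ onto $C_{f_{i'}}$ for the correspondingly relabeled index $i'$. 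I would also record that the tables (\ref{keven}) are invariant under this action, because in each block the unique ``$n$''-entry sits in the residue class paired with that block's leading generator $\gamma$ via $\gamma\mapsto\gamma^{-1}$, and this pairing is $\phi$-equivariant. With this in hand, for each target $(j,i)$ of (\ref{cases}) whose associated word $p^{-1}(i)$ starts and ends with letters built from \emph{different} generators --- eight of the twelve cases, all lying in a single orbit of the symmetry group together with the case of \fullref{unique1} --- I would choose $\phi$ carrying the word-class ``starts with $\eta^{-1}$, ends with $\xi^{-1}$'' to that of $p^{-1}(i)$ and conclude $\xs\in C_{f_i}$ by applying \fullref{unique1} to the minimizer $P_\phi^{-1}(\xs)$.

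The remaining four cases are those in which $p^{-1}(i)$ starts and ends with the \emph{same} letter, i.e. $j=1$, $i\in I_4$, $i\equiv 0$ and its three images under the symmetry group; it is enough to treat $j=1$, $i\in I_4$, $i\equiv 0$. Here no global symmetry sends $I_2$ to $I_4$ while fixing $I_1$ --- the block $I_1$ occurring in $f_i=f(\Sigma_1,x_i)$ and the block $I_4$ containing $i$ both involve $\xi$ --- so I would instead repeat the proof of \fullref{unique1} line by line, with $I_4$ and the pair $\{I_2,I_3\}$ playing the roles that $I_2$ and $\{I_3,I_4\}$ played there. What makes this legitimate is that the only combinatorial inputs consumed by that proof are the partition identity $\Sigma_1^*+\Sigma_2^*+\Sigma_3^*+\Sigma_4^*=1$, the bounds $\sigma(\Sigma_r^*)\sigma(x_l^*)\le F^k(\xs)\le\alpha(k)$ supplied by \fullref{lemtwo}, and the column sums of (\ref{keven}) over the three blocks other than $I_1$; the column sums over $I_2\cup I_3\cup I_4$ do not depend on which of those three we single out as ``the block containing $i$'', and the residue $\equiv 0$ always meets each of them in an ``$m$''-entry, so every constant $N,M,X(k),Q(k),\dots$ and every displayed inequality of \fullref{unique1} reappears unchanged and forces the same contradictions.

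The hard part is not conceptual but the verification in the last step: I would have to go through each numerical estimate in the (long) proof of \fullref{unique1} and confirm that it references only the relabeling-invariant combinatorial quantities listed above, so that the argument really does transfer verbatim to the four diagonal configurations. Once that audit is done the eight symmetric cases cost essentially nothing beyond the equivariance checks of the first step, and the lemma is complete.
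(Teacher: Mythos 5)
Your argument is correct, but it reaches the lemma by a partly different route from the paper. The paper's proof of \fullref{unique2} is a direct re-run: for each pair $(j,i)$ in (\ref{cases}) it reorganises the case divisions (\ref{ABC})--(\ref{1-7-2}) and repeats the estimates of \fullref{unique1}, reading the relevant counts off the table (\ref{keven}). You instead exploit equivariance: the eight bijections of $\{\xi^{\pm 1},\eta^{\pm 1}\}$ commuting with inversion act on $\Psi^k$, induce coordinate permutations of $\Delta^{d-1}$ that permute $\mathcal{F}^k$, hence preserve the set of minimisers of $F^k$ and carry the sets $C_{f_i}$ onto one another; since \fullref{unique1} applies to an arbitrary minimiser, this disposes of the eight cases whose word $p^{-1}(i)$ has first and last letters from different generators (a single orbit through the \fullref{unique1} configuration) and reduces the four ``diagonal'' cases to the one representative $j=1$, $i\in I_4$, $i\equiv 0$, which you then handle the paper's way. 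This buys a large reduction in case-checking and anticipates the symmetry technique the paper only deploys later, in \fullref{minF}, at the cost of routine equivariance verifications. One point of precision on your audit claim: the re-run consumes invariants finer than column sums of (\ref{keven}) over $I_2\cup I_3\cup I_4$ --- \fullref{unique1} repeatedly uses residue counts inside individual blocks and assorted unions (for instance the count $m$ of $l\equiv 2$ in $I_2$ behind (\ref{eqn32}), or the counts $m$ and $n$ inside $I_3$ behind (\ref{eqnn4})). The transfer nonetheless goes through, because for even $k$ the $n$-entries of (\ref{keven}) sit exactly on the block--residue pairing $I_1\leftrightarrow 0$, $I_2\leftrightarrow 3$, $I_3\leftrightarrow 2$, $I_4\leftrightarrow 1$, so swapping $I_2$ with $I_4$ together with the residues $3$ and $1$ preserves both the table and the rule assigning $\Sigma_j$ to a residue class; every constant and displayed inequality of \fullref{unique1} then recurs verbatim, which is exactly what the paper's own, equally terse, proof asserts. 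In fact this observation would let you avoid the re-run altogether: a coordinate permutation of $\Delta^{d-1}$ realising that abstract relabelling exists because corresponding cells of (\ref{keven}) have equal cardinality, and it again permutes $\mathcal{F}^k$ (in the spirit of the sets $\Theta_l$ in the proof of \fullref{minF}), so the diagonal orbit also follows from \fullref{unique1} by pure equivariance.
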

\begin{proof}
We reorganise the inequalities in (\ref{ABC}), (\ref{sum1}), (\ref{1-3}), (\ref{sum2}), (\ref{1-7}) and (\ref{1-7-2}) according to each $j$ and $i$ listed in the lemma. Then we follow the computations carried out in the proof of \fullref{unique1} for the chosen $j$ and $i$.  By using the table in (\ref{keven}) we perform analogous computations given  in the proof of  \fullref{unique1} and get the same inequalities in the proof.  This  implies the conclusion of the lemma. 
\end{proof}

\begin{lemma}\label{unique3}
Under the hypothesis of \fullref{unique1}, if $k>2$ is odd, $j=1$ and $i\in I_2$ so that $i\equiv 0$, then  $\xs\in C_{f_i}$, defined in (\ref{C2}).
\end{lemma}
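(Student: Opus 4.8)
The plan is to run the proof of \fullref{unique1} essentially word for word, with two substitutions: the distribution table (\ref{keven}) for even $k$ is replaced by the table (\ref{kodd}) for odd $k$, and the explicit base case $k=2$ is replaced by $k=3$. Arguing by contradiction, from $\xs\notin C_{f_i}$ and the definition (\ref{C2}) one gets $7\Sigma_1(\xs)+(18-8\sqrt{2})x_i^*\geq 3+\sqrt{2}$, and one splits according to whether $\Sigma_1^*$ and $x_i^*$ lie above the threshold $N=\frac{1}{71}(13+7\sqrt{2})$, giving cases $(A)$, $(B)$, $(C)$ of (\ref{ABC}); each of these breaks into the subcases (\ref{1-3}), (\ref{1-7}) and (\ref{1-7-2}) exactly as recorded there. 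The convexity input of \fullref{convex1}--\fullref{convex3} and the measure-theoretic machinery of \fullref{thm3.4} and \fullref{lem1.2} are not touched.

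In every subcase the mechanism is unchanged: bound below the $\Sigma$-masses that are not yet committed; use $\sigma(\Sigma_r^*)\sigma(x_l^*)\leq\alpha(k)=12\cdot 3^{k-1}-3$ to force a lower bound (called $X(k)$, $S(k)$, $T(k)$, $Q(k)$, $R(k)$ in the proof) on each coordinate $x_l^*$ indexed by an appropriate residue class; and then sum these lower bounds over the index family read off from the table until $\sum_l x_l^*>1$, a contradiction. The only arithmetic that changes is the count of indices in each residue class: whereas (\ref{keven}) records the pattern $(m,m,m,n)$ down the rows, (\ref{kodd}) records $(m,n,n,n)$, $(n,m,n,n)$, $(n,n,m,n)$, $(n,n,n,m)$ with $m=m(k)$, $n=n(k)$ as in (\ref{mn}). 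Since $m(k)-n(k)\in\{0,1\}$ and every numerical inequality in (\ref{eqnn1})--(\ref{eqnn63}) was verified with a comfortable margin above $1$ for all even $k\geq 2$, the analogous sums for odd $k$ differ only by lower-order terms and still exceed $1$; I would check the $k=3$ instances explicitly (using $m(3)=3$, $n(3)=2$, $\alpha(3)=105$) and invoke the same monotonicity-in-$k$ remarks used in \fullref{unique1} to pass from $k=3$ to all odd $k>3$.

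The main obstacle is bookkeeping, not mathematics: one must re-index each of the displayed sums in the proof of \fullref{unique1} through (\ref{kodd}) and recompute the handful of rational constants and ad hoc fractions, then confirm each total is still $>1$. No genuinely new estimate is required. Accordingly I would present the proof briefly, writing ``repeat the argument of \fullref{unique1} with the table (\ref{kodd}) in place of (\ref{keven}), the case $k=3$ playing the role of $k=2$'', and displaying only those base-case computations whose numerical value differs from the even case; the residue shifts needed for the companion lemmas that follow are then obtained the same way by permuting the roles of $I_1,I_2,I_3,I_4$.
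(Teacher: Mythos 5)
Your proposal is correct and is essentially the paper's own proof: the paper likewise reruns the case analysis of \fullref{unique1} verbatim, replacing the residue-class counts from the table in (\ref{keven}) by those in (\ref{kodd}) (the substitutions being recorded equation-by-equation in \fullref{table5}) and asserting that every re-indexed sum still exceeds $1$. Your extra step of evaluating the $k=3$ instances explicitly with $m(3)=3$, $n(3)=2$, $\alpha(3)=105$ and then appealing to the same behaviour in $k$ is just a concrete way of carrying out the verification the paper states in the same summary fashion.
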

\begin{proof}
Since $j=1$, $i\equiv 0$ and $i\in I_2$, we use the same steps given in the proof of  \fullref{unique1} with the same organisations listed in  (\ref{ABC}), (\ref{sum1}), (\ref{1-3}), (\ref{sum2}), (\ref{1-7}) and (\ref{1-7-2}). 

Because $k>2$ is odd, there are changes to be made in the counts of certain summations. These changes are listed in detail in  \fullref{table5} below. Without changing the orders of  the sums appearing in each of the inequalities and computations, from left to right we replace the terms given under the column '$k\geq 2$, even' with the terms given under the column '$k>2$, odd' for the indicated equations in the proof of  \fullref{unique1}. Let $m=m(k)$ and $n=n(k)$ defined in (\ref{mn}). 

\begin{table}[h!]
\begin{center}
\scalebox{0.62}{
\begin{tabular}{|r|c|c|c|r|c|c|c|}
  \hline
              &               & $k\geq 2$, even        &$k>2$, odd  & & & $k\geq 2$, even & $k>2$, odd  \\
  \hline
   A.i & (\ref{eqnn1}) & $4m+2n$ & $5n+m$  & c&              (\ref{eqnn39}) & $m+n$ & $m+n$  \\
  \hline
    ii & (\ref{eqn32}) &  $m$         & $m$         &   vii &                                      &  $m+n$     &  $m+n$   \\
  \hline
    iii    &                    &  $m$         &  $n$          &   c  &          (\ref{eqnn40}) & $n,\ m$ & $n,\ n$                      \\
  \hline
  ii & (\ref{eqnn4}) & $m$, $n$  & $n$, $n$ &    vii &                                       &  $n,\ m$     &  $n,\ m$  \\
   \hline
   iii &                      & $m$, $n$  & $n$, $n$ &  c &                (\ref{eqnn41}) & $m+n,\ m+n,\ 2m-1$  & $m+n,\ m+n,\ 2n-1$\\
  \hline
  ii & (\ref{eqnn8})  & $m+n,\ 2m$  &   $m+n,\ m+n$ & vii   &                        &  $m+n,\ m+n,\ 2m-1$     & $2n,\ 2n,\ n+m-1$   \\ 
    \hline
 iii     &                      & $m+n,\ 2m$  & $2n,\ 2n$  &  c &        (\ref{eqnn42}) & $m+n,\ m+n,\ 2m,\ 2m-1$ & $m+n,\ m+n,\ 2n,\ 2n-1$    \\ 
  \hline
 B.i  & (\ref{eqnn9})  & $2m+n$               & $m+2n$  &   vii &                           & $m+n,\ m+n,\ 2m,\ 2m-1$  & $2n,\ 2n,\ m+n,\ n+m-1$\\ 
  \hline
 i  & (\ref{eqnn11})  & $6m+3n$  & $2m+7n$ &  C.i   &     (\ref{eqnn43}) &  $9m+3n-1$ & $9n+3m-1$      \\
\hline
ii    & (\ref{eqnn12})  & $m+n$                  & $m+n$ & ii & (\ref{eqnn44}) &  $4m+2n-1$ & $5n+m-1$      \\ 
 \hline
  iii     &                        & $m+n$                  & $2n$     & iii &                           & $4m+2n-1$  & $4n+2m-1$   \\ 
\hline
  ii & (\ref{eqnn14})    & $2m+2n$  & $2m+2n$        &  ii  &  (\ref{eqnn45}) & $5m+n$ & $4n+2m$\\ 
 \hline
  iii   &                        &    $2m+2n$          &  $4n$ &  iii    &                              & $5m+n$ & $5n+m$                           \\ 
  \hline
 ii  & (\ref{eqnn15})  & $2m$, $2m+2n$ &  $2n$, $2m+2n$ & ii  &     (\ref{eqnn47}) & $4m+2n-1$ & $5n+m-1$            \\ 
   \hline
iii       &                      & $2m$, $2m+2n$ &  $m+n$, $4n$  &  iii   &                                 &  $4m+2n-1$ & $4n+2m-1$                   \\
\hline
iv.a &  (\ref{eqnn18})    & $n$ & $n$      &            ii        &                        (\ref{eqn89})                   &  $2m,\ 2m+2n$ & $m+n,\ 4n$ \\ 
   \hline
iv.b  &  (\ref{eqnn20}) & $n$                                 & $n$ &    iii       &                                              & $2m,\ 2m+2n$ & $2n,\ 2m+2n$ \\ 
   \hline
 b &  (\ref{eqnn21}) & $n$, $2m$          & $n$, $m+n$  &     iv    &    (\ref{eqnn51})  & $2m$ & $m+n$ \\ 
  \hline
v.a & (\ref{eqnn22})  & $2m+2n$                               & $4n$    &   v    &       (\ref{Sg2}) & $2m$ & $n+m$ \\ 
 \hline
  a  &  (\ref{eqnn23})  & $2m,\ 2m+2n$                                & $m+n,\ 4n$  & v.d & (\ref{eqnn52}) & $2m+2n$ & $4n$  \\ 
 \hline
v.b &  (\ref{eqnn24})  & $2m+2n$                                & $4n$ & d & (\ref{eqnn53}) & $2m,\ 2m+2n$ & $m+n,\ 4n$ \\ 
\hline
  b &  (\ref{eqnn25})  & $2m+2n,\ 2m$                                & $4n,\ m+n$  & e & (\ref{eqnn54}) & $m,\ n$ & $n,\ n$\\ 
  \hline
 b &  (\ref{eqnn255})  & $2m+2n,\ 2m$                             & $4n,\ m+n$   & f & & $m,\ n$ & $n,\ n$  \\ 
  \hline
 b &   (\ref{eqnn28})  & $2m+2n,\ 2m,\ 2m$           & $4n,\ m+n,\ m+n$  &  e &   (\ref{eqnn56})  & $m,\ n,\ m$ & $n,\ n,\ n$  \\ 
  \hline  
  v.c &  (\ref{eqn67})  & $2m$                               & $m+n$      & f  &    & $m,\ n,\ m$ & $n,\ n,\ m$ \\ 
  \hline 
 c.d   &  (\ref{eqnn29})  & $2m+2n$                   & $4n$    & vi &  (\ref{eqn101}) &  $m$ &  $m$  \\ 
  \hline
  e &  (\ref{eqnn31}) & $m,\ n$ & $n,\ n$  &    vii    &                                             & $m$ & $n$ \\ 
  \hline
   f &                     & $m,\ n$ & $n,\ n$  &  vi   &     (\ref{eqnn57}) & $n$ & $n$                                                                   \\ 
\hline
   e &   (\ref{eqnn32}) & $m,\ n,\ m$ & $n,\ n,\ n$     & vii    &      & $n$ & $n$                                                        \\
  \hline
     f  &                  & $m,\ n,\ m$ & $n,\ n,\ m$     & vi &   (\ref{eqnn58}) & $m,\ n$ &  $n,\ n$   \\
 \hline    
 vi.b &   (\ref{eqnn36}) & $n,\ m$ & $n,\ m$   & vii &   & $m,\ n$ & $m,\ n$   \\ 
  \hline
    vii    &                       & $n,\ m$ & $n,\ n$  & vi &  (\ref{eqnn61}) & $m,\ n,\ m$ & $n,\ n,\ n$ \\ 
  \hline
  b  &  (\ref{eqnn37}) & $m,\ n,\ m$ & $n,\ n,\ m$ &vii &  & $m,\ n,\ m$ & $n,\ n,\ m$ \\ 
\hline
 vii &    & $m,\ n,\ m$ & $n,\ n,\ n$     & vi &  (\ref{eqnn62}) & $m,\ m,\ m,\ n$ &  $n,\ m,\ n,\ n$ \\
 \hline
  b  &   (\ref{eqnn38}) &  $m,\ n,\ m,\ m$ & $n,\ n,\ m,\ n$ & vii &   & $m,\ m,\ m,\ n$ & $m,\ n,\ n,\ n$    \\ 
 \hline
  vii  &   &    $m,\ n,\ m,\ m$  & $n,\ n,\ n,\ m$  & vi  & (\ref{eqnn63}) & $m,\ m,\ m,\ n$ & $n,\ m,\ n,\ n$  \\ 
 \hline
vi.c &   (\ref{eqn79}) & $m$  &  $m$ & vii &  & $m,\ m,\ m,\ n$ & $m,\ n,\ n,\ n$  \\ 
 \hline
 vii &    & $m$  &  $m$ &  &  &  & \\  
\hline
 \end{tabular}}
\caption{List of changes to be made in the proof of \fullref{unique1}.} \label{table5}
\end{center}
\end{table}

After the changes are made, all of the inequalities listed in the table are still satisfied giving the necessary lower bounds for contradictions. The entries in the table without hyperlinks are for the equations in the line one above with a hyperlink for which the computations are not explicitly carried out in the proof of \fullref{unique1}, eg  case ($iii$) in (\ref{1-3}). Hence the conclusion of the lemma holds.  
\end{proof}

\begin{lemma}\label{unique4}
Under the hypothesis of  \fullref{unique1}, if $k>2$ is odd, then we have $\xs\in C_{f_i}$, defined in (\ref{C2}), for each of the cases in (\ref{cases}).
\end{lemma}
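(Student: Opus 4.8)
The plan is to reduce this lemma to \fullref{unique3} in exactly the way \fullref{unique2} reduces to \fullref{unique1}: by exploiting the block symmetry of $\mathcal{F}^k$ together with the count--substitutions already tabulated in \fullref{table5}. Since the statement is \fullref{unique2} with ``even'' replaced by ``odd'', and \fullref{unique3} is \fullref{unique1} with ``even'' replaced by ``odd'', essentially no new idea is needed --- only a careful merge of the two bookkeeping steps that appear separately in the proofs of \fullref{unique2} and \fullref{unique3}.

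First I would fix one of the twelve triples $(j,i)$ appearing in (\ref{cases}) and record, as in the proof of \fullref{unique2}, the permutation of the blocks $I_1,I_2,I_3,I_4$ that carries the base configuration $j=1$, $i\in I_2$, $i\equiv 0$ --- the one analysed in \fullref{unique1} and \fullref{unique3} --- to $(j,i)$. Such a permutation is induced by a bijection of $\Xi\cup\Xi^{-1}=\{\xi,\eta^{-1},\eta,\xi^{-1}\}$, and it exists because $\Gamma_{\mathcal{D}^k}$ is symmetric, so $\mathcal{F}^k$ is invariant under all such bijections. I would then rewrite the case trees (\ref{ABC}), (\ref{sum1}), (\ref{1-3}), (\ref{sum2}), (\ref{1-7}) and (\ref{1-7-2}) with the roles of $\Sigma_1^*,\dots,\Sigma_4^*$ and of the blocks permuted accordingly; this is verbatim the move made in \fullref{unique2}.

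Next, since $k>2$ is odd, I would replace the block--residue counts coming from (\ref{keven}) by those coming from (\ref{kodd}); the precise changes, equation by equation through the proof of \fullref{unique1}, are exactly the ones collected in \fullref{table5}. After performing for the re-indexed configuration the substitutions listed there --- every even--case coefficient of the form $am+bn$ in the running partial sums becomes its odd--case value, with $m=m(k)$, $n=n(k)$ as in (\ref{mn}) --- each displayed inequality in the proof of \fullref{unique1} still produces a coordinate sum strictly larger than $1$, hence a contradiction under the assumption $\xs\notin C_{f_i}$. Therefore $\xs\in C_{f_i}$, and ranging over the twelve triples in (\ref{cases}) finishes the proof.

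The part I expect to be delicate is simply checking compatibility: that applying the block permutation and then the \fullref{table5} substitutions really leaves every one of the numerical estimates in the proof of \fullref{unique1} valid. This is plausible because the substitutions in \fullref{table5} only alter the integer coefficients of $m$ and $n$ in the various partial sums, while the constants $N$, $M$, $X(k)$, $S(k)$, $T(k)$, $Q(k)$, $R(k)$ and the skeleton of the argument are unchanged by a block permutation; but it does require walking through cases ($A$)--($C$) and each of their subcases, as was done once in \fullref{unique1}, to confirm that no inequality degrades below the threshold $1$ for any odd $k>2$.
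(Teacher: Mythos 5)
Your proposal matches the paper's own proof: the paper likewise handles each pair $(j,i)$ in (\ref{cases}) by reorganising the case inequalities (\ref{ABC}), (\ref{sum1}), (\ref{1-3}), (\ref{sum2}), (\ref{1-7}), (\ref{1-7-2}) for the chosen configuration and then rerunning the computations of \fullref{unique1} with the coefficients taken from the ``$k>2$, odd'' column of \fullref{table5}. So this is essentially the same argument, with the same (acknowledged) burden of verifying that every numerical estimate still exceeds $1$ after the substitutions.
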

\begin{proof}
Given a pair of $j$ and $i$ listed in the lemma, we reorganise the inequalities in (\ref{ABC}), (\ref{sum1}), (\ref{1-3}), (\ref{sum2}), (\ref{1-7}) and (\ref{1-7-2}) accordingly.  By using the terms listed under the columns  '$k>2$, odd' in  \fullref{table5} for the indicated equations, we repeat the arguments presented in the proof of \fullref{unique1} for the chosen $j$ and $i$. 
\end{proof}

We shall continue proving statements about the elements of the sets $C_{f_i}$ for $f_i\in\mathcal{F}^k$. Note that there is no displacement function $f_i\in\mathcal{F}^2$ in the form $f_i=g(\Sigma_j^i,x_i)$ if $k=2$. Therefore in the following statements we shall give the explicit computations for $k=3$. We have the lemmas below:

\begin{lemma}\label{unique5}
Let $\mathcal{F}^k=\{f_i\}$ for ${i\in I^k}$ be the set of displacement functions listed in \fullref{dispfunc} and $F^k$ be as in (\ref{F}). Let $\xs$ be a point in $\Delta^{d-1}$ so that $  \alpha_*=F^k(\xs)$ for $d=4\cdot 3^{k-1}$. Let $f_i\in\mathcal{F}^k$ be of the form $f_i=g(\Sigma_j^i,x_i)$ for $j\in\{1,2,3,4\}$ and $i\in I^k=I_1\cup I_2\cup I_3\cup I_4$, where $\Sigma_j^i(\tb{x})$ and $g$ are defined in  (\ref{fg}), respectively. If $k>2$ is {odd}, $j=1$ and $i\in I_1$ such that $i\equiv 0$,
then  $\xs\in C_{f_i}$, defined in (\ref{C1}).
\end{lemma}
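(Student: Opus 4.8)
The plan is to mimic the structure of the proof of \fullref{unique1} verbatim, with the single change that the defining inequality of the set $C_{f_i}$ is now the quadratic constraint in (\ref{C1}) rather than the linear one in (\ref{C2}). So I would argue by contradiction: assume $\xs\notin C_{f_i}$, which by the definition of $C_{f_i}$ in (\ref{C1}) means
\begin{equation*}
\Sigma_1^i(\xs)+2x_i^*-\Sigma_1^i(\xs)x_i^*-(x_i^*)^2\geq 3/4,
\end{equation*}
where $\Sigma_1^i(\tb{x})=\sum_{l\in I_1,\,l\neq i}x_l$. Since $j=1$ and $i\in I_1$, one has $\Sigma_1(\xs)=\Sigma_1^i(\xs)+x_i^*$, so this is a statement purely about $\Sigma_1^*=\Sigma_1(\xs)$ and $x_i^*$. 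Geometrically, the failure of the constraint forces $(\Sigma_1^i(\xs),x_i^*)$ into a region bounded away from the ``center'' of the simplex, and the whole point of the computation is that this is incompatible with $\xs$ being the minimizer of $F^k$, because the minimality of $F^k(\xs)=\alpha_*$ gives, via \fullref{lemtwo}, the universal bounds $\sigma(\Sigma_r^*)\sigma(x_l^*)\leq f_l(\xs)\leq F^k(\xs)=\alpha_*\leq\alpha(k)$ for all relevant $l$ and $r$, and these in turn force the coordinates $x_l^*$ to be large enough that $\sum_l x_l^*$ exceeds $1$.

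Concretely, I would again split into the three regimes $k=2$ (which here is vacuous since no $f_i\in\mathcal{F}^2$ has the form $g(\Sigma_j^i,x_i)$, as noted just before the lemma), $k>2$ even, and $k>2$ odd, doing the explicit arithmetic for the smallest relevant case $k=3$ and then indicating the general pattern using the counting tables (\ref{keven}) and (\ref{kodd}). Within the case $k=3$ I would introduce the analogues of the thresholds $N$, $M$, $X(k)$, $S(k)$, $T(k)$, $Q(k)$, $R(k)$ used in \fullref{unique1}, now calibrated to the quadratic inequality above rather than the line $7\Sigma_1+(18-8\sqrt2)x_i=3+\sqrt2$; the first step is to solve the quadratic boundary curve $\Sigma_1^i+2x_i-\Sigma_1^i x_i-x_i^2=3/4$ for $\Sigma_1^i$ in terms of $x_i$ (or vice versa) to locate the critical value of $x_i$ where the tangent line to the relevant level curve changes behavior, play the role that $N=(13+7\sqrt2)/71$ plays in (\ref{ABC}). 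From there the case analysis $(A)/(B)/(C)$ on whether $\Sigma_1^*$ and $x_i^*$ are large or small, then the sub-cases $(i)$–$(vii)$ on the relative sizes of the remaining $\Sigma_r^*$, proceeds exactly as before: in each leaf of the case tree one extracts lower bounds on batches of coordinates $x_l^*$ from the inequalities $\sigma(\Sigma_r^*)\sigma(x_l^*)\leq\alpha(k)$, sums them using the appropriate row of table (\ref{keven}) or (\ref{kodd}), and obtains a total strictly exceeding $1$, contradicting $\xs\in\Delta^{d-1}$.

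The main obstacle, as in \fullref{unique1}, is bookkeeping rather than ideas: one must verify that the numerical margins in every one of the dozen-or-so final inequalities are still strictly greater than $1$ after replacing the linear constraint by the quadratic one, and that these margins persist for all even $k>2$ (resp.\ all odd $k>2$) once the counts $m(k)$, $n(k)$ are substituted. I would package the $k>2$ odd case, and the $k>2$ even case, by the same device used in \fullref{unique3} and \fullref{unique4}: present a table of term-count replacements keyed to the equation numbers of the $k=3$ computation, remark that each inequality remains valid after the substitution, and thereby reduce the general statement to the explicit $k=3$ arithmetic. The delicate point to watch is that the quadratic inequality is slightly \emph{tighter} than a comparable linear one near the relevant tangency point (indeed \fullref{convex3} shows the curve $x+xy+y=3/4$ lies between the line of (\ref{C2}) and the curve $x+2y-xy-y^2=3/4$ near $P$), so one should double-check that the ``contradiction margins'' do not degrade below $1$; I expect they remain comfortably above $1$ because the coordinate lower bounds $X(k)$, $T(k)$, etc., are themselves only mildly sensitive to which of the three constraint curves is used.
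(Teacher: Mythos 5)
Your proposal follows essentially the same route as the paper's proof: argue by contradiction from the quadratic constraint in (\ref{C1}), recalibrate the threshold to the quadratic boundary curve (the paper takes $N=\tfrac14(3-\sqrt{3})$), run the case analysis on $\Sigma_1^i(\xs)$ versus $x_i^*$ and on the sizes of the remaining $\Sigma_r^*$, extract coordinate lower bounds from $\sigma(\Sigma_r^*)\sigma(x_l^*)\leq\alpha(k)$, and sum them via the table in (\ref{kodd}) until the total exceeds $1$, with the explicit arithmetic done at $k=3$ and observed to persist for all odd $k>3$. The numerical verification you defer as bookkeeping (including the thirteen subcases of the paper's case (B) and the bounds obtained directly from the quadratic inequality, such as $\Sigma_1^i(\xs)\geq\frac{3-2M}{4-M}$) is exactly what the paper carries out, so the approach matches and contains no gap in strategy.
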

\begin{proof}
Assume on the contrary that $\xs\notin C_{f_i}$. Then by the definition of $C_{f_i}$ we have 
\begin{equation}\label{U5E1}
\Sigma_1^i(\xs)+(2-\Sigma_1^i(\xs))x_i^*-(x_i^*)^2\geq 3/4.
\end{equation}
Let $\Sigma_1^*=\Sigma_1(\xs)$,  $\Sigma_2^*=\Sigma_2(\xs)$,  $\Sigma_3^*=\Sigma_3(\xs)$ and  $\Sigma_4^*=\Sigma_4(\xs)$ defined in (\ref{sigma}), where $\Sigma_1^*+\Sigma_2^*+\Sigma_3^*+\Sigma_4^*=1$  since $\xs\in\Delta^{d-1}$.  We have $\Sigma_1^i(\xs)+x_i^*=\Sigma_1^*$. Also let $N=\frac{1}{4} \left(3-\sqrt{3}\right)\approx 0.3170$. Remember that $\sigma(x)=1/x-1$. We consider the cases: 
\begin{equation}\label{ABC2}
\begin{array}{c}
(A)\ \Sigma_1^i(\xs)\geq N,\ x_i^*\geq N,\ (B)\ \Sigma_1^i(\xs)\geq N > x_i^*,\ (C)\ x_i^*\geq N > \Sigma_1^i(\xs).
\end{array}
\end{equation}
Assume without loss of generality that $k=3$. Assume that (A) holds.  We derive that $\Sigma_1^*\geq 2N$. Then we have the inequality 
\begin{equation}\label{sum3}
\Sigma_2^*+\Sigma_3^*+\Sigma_4^*\leq M=1-2N=\frac{1}{2} \left(\sqrt{3}-1\right)\approx 0.3660,
\end{equation}
which implies the following cases:
\begin{equation}\label{1-7-3}
\begin{array}{llll}
(i)  & \Sigma_2^*\leq M/3, & \Sigma_3^*\leq M/3, & \Sigma_4^*\leq M/3,\\
(ii) & \Sigma_2^*\leq M/3, & \Sigma_3^*\leq M/3, & \Sigma_4^*\geq M/3,\\
(iii) & \Sigma_2^*\leq M/3, & \Sigma_3^*\geq M/3, & \Sigma_4^*\leq M/3,\\
(iv) & \Sigma_2^*\geq M/3, & \Sigma_3^*\leq M/3, & \Sigma_4^*\leq M/3, \\
(v) & \Sigma_2^*\leq M/3, & \Sigma_3^*\geq M/3, & \Sigma_4^*\geq M/3,\\
(vi) & \Sigma_2^*\geq M/3, & \Sigma_3^*\leq M/3, & \Sigma_4^*\geq M/3,\\
(vii) & \Sigma_2^*\geq M/3, & \Sigma_3^*\geq M/3, & \Sigma_4^*\leq M/3.
\end{array}
\end{equation}
Assume that ($i$) holds. By using $\sigma(M/3)\sigma(x_l^*)\leq\sigma(\Sigma_r^*)\sigma(x_l^*)\leq\alpha(k)\big|_{k=3}=105$ for $r=2,3,4$ we find that 
\begin{equation}\label{xis3}
x_l^*\geq X(k)\bigg|_{k=3}=\frac{\sigma(M/3)}{\alpha(k)+\sigma(M/3)}\bigg|_{k=3}=\frac{3-M}{(\alpha(k)-1) M+3}\bigg|_{k=3}\approx 0.0641
\end{equation}
for every ${l\in I_2\cup I_3\cup I_4=\{10,11,\dots,36\}}$ so that ${l\equiv 1,2,3}$. Then using the table in (\ref{kodd}) for ${l\in I_2\cup I_3\cup I_4}$ we see that 
\begin{equation}\label{2eqnn1}
\begin{multlined}
\sum_{{l\equiv 1,2,3}}x_l^*\geq \sum_{{l\equiv 1,2,3}}X(k)\bigg|_{k=3}=(2m(k)+7n(k))X(k)\bigg|_{k=3}=20X(3)\approx 1.2828>1,
\end{multlined}
\end{equation}
a contradiction. The inequality in (\ref{2eqnn1}) holds for every odd $k>3$. Therefore case ($i$) doesn't hold. 

Assume that ($ii$) holds in (\ref{1-7-3}). By $\sigma(M/3)\sigma(x_l^*)\leq\sigma(\Sigma_r^*)\sigma(x_l^*)\leq\alpha(k)\big|_{k=3}=105$ for $r=2,3$ we obtain $x_l^*\geq X(k)\big|_{k=3}$ for every ${l\in I_2\cup I_3=\{10,11,\dots,27\}}$ so that ${l\equiv 2,3}$ by (\ref{xis3}). Then using the table in (\ref{kodd}) for ${l\in I_2\cup I_3}$ we see that 
\begin{equation}\label{2eqnn2}
\begin{multlined}
\Sigma_1^*+\sum_{{l\equiv 2,3}}x_l^*+\Sigma_4^*\geq 2N+\sum_{{l\equiv 2,3}}X(k)\bigg|_{k=3}+\frac{M}{3}\\ \shoveleft{=2N+2(m(k)+n(k))X(k)\bigg|_{k=3}+\frac{M}{3}=2N+10X(3)+\frac{M}{3}\approx 1.3974>1,}
\end{multlined}
\end{equation}
a contradiction. The inequality in (\ref{2eqnn2}) holds for every odd $k>3$. Therefore case ($ii$) doesn't hold. 

Assume that ($iii$) holds in (\ref{1-7-3}). We can repeat the argument given above for this case as well. We need to switch the role of ${I_2\cup I_3}$ with ${I_2\cup I_4=\{10,\dots,18,28,\dots,36\}}$ because, $\Sigma_2^*\leq M/3$ and $\Sigma_4^*\leq M/3$.  By using the table in (\ref{kodd}) for ${l\in I_2\cup I_4}$ we get 
\begin{equation}\label{2eqnn3}
\begin{multlined}
\Sigma_1^*+\sum_{{l\equiv 3,1}}x_l^*+\Sigma_3^*\geq 2N+\sum_{{l\equiv 3,1}}X(k)\bigg|_{k=3}+\frac{M}{3}\\ \shoveleft[1.0cm]{=2N+4n(k)X(k)\bigg|_{k=3}+\frac{M}{3}=2N+8X(3)+\frac{M}{3}\approx 1.2691>1,}
\end{multlined}
\end{equation}
a contradiction. The inequality above holds for every odd $k>3$. So case ($iii$) doesn't hold.

Assume that ($iv$) holds in (\ref{1-7-3}). We use the same argument used in case ($iii$) by switching the role of $I_2\cup I_4$ with $I_3\cup I_4$. Then we get the same inequality in (\ref{2eqnn3}) which hold for every odd $k\geq 3$. This is because by the table in (\ref{kodd}) the number $4n(k)$ of elements in ${I_2\cup I_4}$ equivalent to $1$ or $3$ is the same as the number of elements in ${I_3\cup I_4}$ equivalent to $2$ or $1$. So case ($iv$) doesn't hold. 

Assume that ($v$) holds in (\ref{1-7-3}). Since $\Sigma_2^*\leq M/3$ in this case, we calculate that $x_l^*\geq X(k)\big|_{k=3}$ for every ${l\in I_2=\{10,11,\dots,18\}}$ so that ${l\equiv 3}$ by (\ref{xis3}). Then for ${l\in I_2}$ we find a contradiction which is
\begin{equation}\label{2eqnn5}
\begin{multlined}
\Sigma_1^*+\sum_{{l\equiv 3}}x_l^*+\Sigma_3^*+\Sigma_4^*> 2N+\sum_{{l\equiv 3}}X(k)\bigg|_{k=3}+\frac{2M}{3}\\ \shoveleft[1.0cm]{=2N+n(k)X(k)\bigg|_{k=2}+\frac{2M}{3}=2N+2X(3)+\frac{2M}{3}\approx 1.0063>1.}
\end{multlined}
\end{equation}
Since the inequality in (\ref{2eqnn5}) holds for every odd $k>3$, case ($v$) doesn't hold.  

The argument given above for case ($v$) also shows that cases ($vi$) and ($vii$) don't hold. Because we can repeat the computations for case ($iv$) by switching the role of $I_2$  with $I_3$ for case ($vi$). For case ($vii$), we switch the role of $I_2$ with $I_4$.  By the table in (\ref{kodd}) we obtain the same inequalities in  (\ref{2eqnn5}).  As a result we conclude that $\Sigma_1^i(\xs)\geq N$ and $x_i^*\geq N$ (A) in (\ref{ABC2}) is not the case.

Assume that (B) holds in (\ref{ABC2}).  We know that $\Sigma_1^i(\xs)\geq N$. Then we have the inequality 
\begin{equation}\label{sum5}
x_i^*+\Sigma_2^*+\Sigma_3^*+\Sigma_4^*\leq M=1-N=\frac{1}{4} \left(1+\sqrt{3}\right)\approx 0.6830.
\end{equation}
Note that if two of the terms $\Sigma_2^*$, $\Sigma_3^*$ or $\Sigma_4^*$  are less than or equal to $M/4$ simultaneously, then the third one cannot be less than or equal to $M/4$. Because by using the inequality $\sigma(M/4)\sigma(x_l^*)\leq\sigma(\Sigma_r^*)\sigma(x_l^*)\leq\alpha(k)\big|_{k=3}=105$ for $r=2,3,4$ we find that 
\begin{equation}\label{xis4}
x_l^*\geq X(k)\bigg|_{k=3}=\frac{\sigma(M/4)}{\alpha(k)+\sigma(M/4)}\bigg|_{k=3}=\frac{4-M}{(\alpha(k)-1) M+4}\bigg|_{k=3}\approx 0.0442
\end{equation}
for every ${l\in I_2\cup I_3\cup I_4=\{10,11,\dots,36\}}$ so that ${l\equiv 1,2,3}$. Then using the table in (\ref{kodd}) for ${l\in I_4}$, ${l\in I_3}$ and ${l\in I_2}$, respectively, in the each of following inequalities we see that 
\begin{equation}\label{2eqnn6_1}
\sum_{{l\equiv 2,3}}x_l^*\geq 2n(k)X(k)\bigg|_{k=3}\approx 0.1768>\frac{M}{4},
\end{equation}
\begin{equation}\label{2eqnn6_2}
\sum_{{l\equiv 1,3}}x_l^*\geq (n(k)+m(k))X(k)\bigg|_{k=3}\approx 0.2210>\frac{M}{4},
\end{equation}
\begin{equation}\label{2eqnn6_3}
\sum_{{l\equiv 1,2}}x_l^*\geq (n(k)+m(k))X(k)\bigg|_{k=3}\approx 0.2210>\frac{M}{4}.
\end{equation}
The inequalities in (\ref{2eqnn6_1}), (\ref{2eqnn6_2}) and (\ref{2eqnn6_3}) hold for every odd $k>3$.
This  implies the following first $6$ of $13$ cases:
\begin{equation}\label{1-7-4}
\begin{array}{lllll}
(i) & x_i^*\geq M/4& \Sigma_2^*\leq M/4, & \Sigma_3^*\leq M/4, & \Sigma_4^*\geq M/4,\\
(ii) & x_i^*\geq M/4& \Sigma_2^*\leq M/4, & \Sigma_3^*\geq M/4, & \Sigma_4^*\leq M/4,\\
(iii) & x_i^*\geq M/4&  \Sigma_2^*\geq M/4, & \Sigma_3^*\leq M/4, & \Sigma_4^*\leq M/4,\\
(iv) & x_i^*\geq M/4 & \Sigma_2^*\leq M/4, & \Sigma_3^*\geq M/4, & \Sigma_4^*\geq M/4,\\
(v) & x_i^*\geq M/4& \Sigma_2^*\geq M/4, & \Sigma_3^*\leq M/4, & \Sigma_4^*\geq M/4,\\
(vi)& x_i^*\geq M/4 & \Sigma_2^*\geq M/4, & \Sigma_3^*\geq M/4, & \Sigma_4^*\leq M/4.
\end{array}
\end{equation}

Assume that ($i$) holds in (\ref{1-7-4}). Since $\Sigma_2^*\leq M/4$ and $\Sigma_3^*\leq M/4$, we obtain that  $x_l^*\geq X(k)\big|_{k=3}$ for all ${l\in I^k}$ such that ${l\equiv 3,2}$ by (\ref{xis4}).  Then we compute for ${l\in I_2\cup I_3=\{10,11,\dots,27\}}$ that
\begin{equation}\label{2eqnn7}
\begin{multlined}
\Sigma_1^i(\xs)+x_i^*+\sum_{{l\equiv 2,3}}x_l^*+\Sigma_4^*\geq N+\frac{M}{2}+ \sum_{{l\equiv 2,3}}X(k)\bigg|_{k=3}\\ \shoveleft[0cm]{=N+\frac{M}{2}+2(m(k)+n(k))X(k)\bigg|_{k=3}=N+\frac{M}{2}+10X(3)\approx 1.1006>1,}
\end{multlined}
\end{equation}
a contradiction. The inequality in (\ref{2eqnn7}) holds for every odd $k>3$. So case ($i$) doesn't hold. 

Assume that ($ii$) holds in (\ref{1-7-4}). Since $\Sigma_2^*\leq M/4$ and $\Sigma_4^*\leq M/4$, we know that  $x_l^*\geq X(k)\big|_{k=3}$ for all ${l\in I^k}$ such that ${l\equiv 3,1}$ by (\ref{xis4}).  Then using the table in (\ref{kodd}) for ${l\in I_2\cup I_4=\{10,\dots,18,28,\dots,36\}}$ we calculate that
\begin{equation}\label{2eqnn8}
\begin{multlined}
\Sigma_1^i(\xs)+x_i^*+\sum_{{l\equiv 3,1}}x_l^*+\Sigma_3^*\geq N+\frac{M}{2}+ \sum_{{l\equiv 3,1}}X(k)\bigg|_{k=3}\\ \shoveleft[0cm]{=N+\frac{M}{2}+4n(k)X(k)\bigg|_{k=3}=N+\frac{M}{2}+8X(3)\approx 1.0121>1,}
\end{multlined}
\end{equation}
a contradiction.  The inequality in (\ref{2eqnn8}) holds for every odd $k>3$. Therefore, case ($ii$) doesn't hold.

For case ($iii$) in (\ref{1-7-4}) we get the same inequality in (\ref{2eqnn8}) by replacing the index set ${I_2\cup I_4}$ with the index set ${I_3\cup I_4=\{19,\dots,36\}}$. Since the inequalities in (\ref{2eqnn8}) hold for every odd $k>3$, case ($iii$) doesn't hold. 

Assume that case ($iv$) holds. By the inequality $\Sigma_2^*\leq M/4$ in this case, we obtain $x_l^*\geq X(k)\big|_{k=3}$ for every ${l\equiv 3}$ by (\ref{xis4}). We claim that $\Sigma_3^*<13/50$. Because otherwise by the table in (\ref{kodd}) for ${l\in I_2}$ we would get 
\begin{equation}\label{2eqnn9}
\begin{multlined}
\Sigma_1^i(\xs)+x_i^*+\sum_{{l\equiv 3}}x_l^*+\Sigma_3^*+\Sigma_4^*\geq N+\frac{M}{4}+ \sum_{{l\equiv 3}}X(k)\bigg|_{k=3}+\frac{13}{50}+\frac{M}{4}\\ \shoveleft[0cm]{=N+\frac{M}{2}+n(k)X(k)\bigg|_{k=3}+\frac{13}{50}=N+\frac{M}{2}+2X(3)+\frac{13}{50}\approx 1.0069>1,}
\end{multlined}
\end{equation}
a contradiction.  A similar contradiction arises if we assume that $\Sigma_4^*\geq 13/50$ by the same inequality in (\ref{2eqnn9}). Then by  
$\sigma(13/50)\sigma(x_l^*)<\sigma(\Sigma_r^*)\sigma(x_l^*)\leq\alpha(k)\big|_{k=3}=105$ for $r=3,4$ we obtain $x_l^*> (37/(37+13\alpha(k)))\big|_{k=3}$ for every ${l\in I_2=\{10,11,\dots,18\}}$ so that ${l\equiv 2,1}$.  By the table in (\ref{kodd}) for ${l\in I_2}$ we calculate that 
\begin{equation}\label{2eqnn10}
\begin{multlined}
\Sigma_1^*+\sum_{{l\equiv 3,2,1}}x_l^*+\Sigma_3^*+\Sigma_4^*\geq N+\frac{3M}{4}+ \sum_{{l\equiv 3}}X(k)\bigg|_{k=3}+\sum_{{l\equiv 2,1}}\frac{37}{37+13\alpha(k)}\bigg|_{k=3}\\ \shoveleft[4cm]{=N+\frac{3M}{4}+n(k)X(k)\bigg|_{k=3}+\frac{37(n(k)+m(k))}{37+13\alpha(k)}\bigg|_{k=3}}\\ \shoveleft[4.5cm]{=N+\frac{3M}{4}+2X(3)+\frac{185}{1402}\approx 1.0496>1,}
\end{multlined}
\end{equation}
a contradiction, where $\Sigma_1^i(\xs)+x_i^*=\Sigma_1^*$. The inequality in (\ref{2eqnn10}) holds for every odd $k>3$. Hence case ($iv$) doesn't hold. 

For the case ($v$) we can use the argument given above for case ($iv$) by switching the role of ${I_2}$ with ${I_3}$. We obtain the same inequalities in (\ref{2eqnn9}) and (\ref{2eqnn10}). Therefore case ($v$) also doesn't hold.  

For case ($vi$) we again follow the same computations given above for case ($iv$) by switching the role of ${I_2}$ with ${I_4=\{28,\dots,36\}}$. By using the table in (\ref{kodd}) we find the same inequality in (\ref{2eqnn9}). But we need to change $n$ and $n+m$ in (\ref{2eqnn10}) with $n$ and $2n$, respectively. Resulting sum will still be greater than $1$ for every odd $k\geq 3$. As a result case ($vi$) doesn't hold either. So we ruled out the first $6$ cases in (\ref{1-7-4}) out of $13$ possible cases. 

Under the assumption of (B) in (\ref{ABC2}) we have the following $7$ additional cases:
\begin{equation}\label{1-7-5}
\begin{array}{lllll}
(vii) & x_i^*\leq M/4& \Sigma_2^*\leq M/4, & \Sigma_3^*\leq M/4, & \Sigma_4^*\geq M/4,\\
(viii) & x_i^*\leq M/4& \Sigma_2^*\leq M/4, & \Sigma_3^*\geq M/4, & \Sigma_4^*\leq M/4,\\
(ix) & x_i^*\leq M/4&  \Sigma_2^*\geq M/4, & \Sigma_3^*\leq M/4, & \Sigma_4^*\leq M/4,\\
(x) & x_i^*\leq M/4 & \Sigma_2^*\leq M/4, & \Sigma_3^*\geq M/4, & \Sigma_4^*\geq M/4,\\
(xi) & x_i^*\leq M/4& \Sigma_2^*\geq M/4, & \Sigma_3^*\leq M/4, & \Sigma_4^*\geq M/4,\\
(xii)& x_i^*\leq M/4 & \Sigma_2^*\geq M/4, & \Sigma_3^*\geq M/4, & \Sigma_4^*\leq M/4,\\
(xiii) & x_i^*\leq M/4 & \Sigma_2^*\geq M/4, & \Sigma_3^*\geq M/4, & \Sigma_4^*\geq M/4.
\end{array}
\end{equation} 
Before we proceed to examine the cases in this group, we derive the following inequality from (\ref{U5E1}). Since $x_i^*\leq M/4$ and $\Sigma_1^i(\xs)-(x_i^*)^2<\Sigma_1^i(\xs)$, we obtain 
\begin{equation}\label{Sig1i}
\Sigma_1^i(\xs)\geq L=\frac{3-2M}{4-M}\approx 0.4926.
\end{equation}

Assume that case ($vii$) holds. Since $\Sigma_2^*\leq M/4$ and $\Sigma_3^*\leq M/4$, we know that  $x_l^*\geq X(k)\big|_{k=3}$ for every ${l\equiv 2,3}$ by (\ref{xis4}). We claim that $\Sigma_4^*< 4/25$. Assume otherwise. Then by the table in (\ref{kodd}) for ${l\in I_2\cup I_3=\{10,11,\dots, 27\}}$ we see that
\begin{equation}\label{2eqnn11}
\begin{multlined}
\Sigma_1^i(\xs)+\sum_{{l\equiv 3,2}}x_l^*+\Sigma_4^*\geq L+ \sum_{{l\equiv 3,2}}X(k)\bigg|_{k=3}+\frac{4}{25}\\ \shoveleft[0cm]{=L+2(n(k)+m(k))X(k)\bigg|_{k=3}+\frac{4}{25}=L+10X(3)+\frac{4}{25}\approx 1.0947>1,}
\end{multlined}
\end{equation}
a contradiction. By  
$\sigma(4/25)\sigma(x_l^*)<\sigma(\Sigma_4^*)\sigma(x_l^*)\leq\alpha(k)\big|_{k=3}=105$ we obtain that $x_l^*> (21/(21+4\alpha(k)))\big|_{k=3}$ for every ${l\in I_2\cup I_3\cup I_4=\{10,11,\dots, 36\}}$ so that ${l\equiv 1}$. Then for ${l\in I_2\cup I_3\cup I_4}$  we calculate that 
\begin{equation}\label{2eqnn12}
\begin{multlined}
\Sigma_1^i(\xs)+\sum_{{l\equiv 3,2,1}}x_l^*\geq L+ \sum_{{l\equiv 3,2}}X(k)\bigg|_{k=3}+\sum_{{l\equiv 1}}\frac{21}{21+4\alpha(k)}\bigg|_{k=3}\\ \shoveleft[2.5cm]{=L+(2m(k)+4n(k))X(k)\bigg|_{k=3}+\frac{21(3n(k))}{21+4\alpha(k)}\bigg|_{k=3}}\\ \shoveleft[3.5cm]{=L+14X(3)+\frac{126}{441}\approx 1.3972>1,}
\end{multlined}
\end{equation}
a contradiction. The inequalities in (\ref{2eqnn11}) and (\ref{2eqnn12}) hold for every odd $k>3$. Hence case ($vii$) doesn't hold. 

For cases ($viii$) and ($ix$) we can repeat the computations given above for case ($vii$) by switching the roles of $I_2$ and $I_3$ with $I_2$ and $I_4$ respectively for case ($viii$) and, with $I_3$ and $I_4=\{28,\dots,36\}$ respectively for case ($ix$). For both of the cases we obtain the same inequality in (\ref{2eqnn11}) showing that $\Sigma_3^*<4/25$ and $\Sigma_4^*<4/25$. In the inequality in (\ref{2eqnn12}) we need to replace $2m+4n$ and $3n$ with $m+5n$ and $m+2n$ respectively using the table in (\ref{kodd}). Resulting inequalities hold for every odd $k\geq 3$. So both of these cases also don't hold. 

Assume that case ($x$) holds in (\ref{1-7-5}). Since $\Sigma_2^*\leq M/4$, we get $x_l^*\geq X(k)\big|_{k=3}$ for all ${l\equiv 3}$ by (\ref{xis4}). We claim that $\Sigma_3^*< 1/4$. Assume the contrary. Then for ${l\in I_2}$ by the table in (\ref{kodd}) we compute that 
\begin{equation}\label{2eqnn13}
\begin{multlined}
\Sigma_1^i(\xs)+\sum_{{l\equiv 3}}x_l^*+\Sigma_3^*+\Sigma_4^*\geq L+ \sum_{{l\equiv 3}}X(k)\bigg|_{k=3}+\frac{1}{4}+\frac{M}{4}\\ \shoveleft[0cm]{=L+n(k)X(k)\bigg|_{k=3}+\frac{1}{4}+\frac{M}{4}=L+2X(3)+\frac{1}{4}+\frac{M}{4}\approx 1.0018>1,}
\end{multlined}
\end{equation}
a contradiction. Using a similar argument above we can also show that $\Sigma_4^*<1/4$. So by $\sigma(1/4)\sigma(x_l^*)<\sigma(\Sigma_r^*)\sigma(x_l^*)\leq\alpha(k)\big|_{k=3}=105$ for $r=3,4$, we derive that  $x_l^*>(3/(3+\alpha(k)))\big|_{k=3}$ for every ${l\equiv 2,1}$. 
Then for ${l\in I_2=\{10,\dots,18\}}$ by the table in (\ref{kodd}) we calculate that 
\begin{equation}\label{2eqnn16}
\begin{multlined}
\Sigma_1^i(\xs)+\sum_{{l\equiv 1,2,3}}x_l^*+\Sigma_3^*+\Sigma_4^*\geq L+ \sum_{{l\equiv 3}}X(k)\bigg|_{k=3}+\sum_{{l\equiv 1,2}}x_l^*+\frac{2M}{4}\\ \shoveleft[2cm]{=L+n(k)X(k)\bigg|_{k=3}+\frac{3(m(k)+n(k))}{3+\alpha(k)}\bigg|_{k=3}+\frac{2M}{4}}\\ \shoveleft[2.5cm]{=L+2X(3)+\frac{15}{108}+\frac{M}{2}\approx 1.0614>1,}
\end{multlined}
\end{equation}
a contradiction. The inequalities (\ref{2eqnn13}) and (\ref{2eqnn16}) hold for every odd $k>3$. Hence case ($x$) doesn't hold. 

Assume that case ($xi$) holds in (\ref{1-7-5}). By switching the role of $I_2$ in case ($x$) with $I_3=\{19,\dots,27\}$ we repeat the same argument given for case ($x$) to show that case ($xi$) doesn't hold as well. Using the table in (\ref{kodd}) we obtain the same inequalities in (\ref{2eqnn13}) and (\ref{2eqnn16}) which show that this case also doesn't hold. 

For case ($xii$) in (\ref{1-7-5})  we again repeat an analog of the argument given above for case ($x$). We need to replace $n$ and $n+m$ in (\ref{2eqnn16}) with $n$ and $2n$, respectively. Then the resulting inequality holds for every odd $k\geq 3$.  Hence case ($xii$) doesn't hold.

It is clear that case ($xiii$) in (\ref{1-7-5}) doesn't hold. Because we derive the following inequality otherwise
\begin{equation*}
\Sigma_1^i(\xs)+\Sigma_2^*+\Sigma_3^*+\Sigma_4^*\geq L+\frac{3M}{4}\approx 1.0049>1,
\end{equation*}
a contradiction. As a conclusion $\Sigma_1^i(\xs)\geq N> x_i^*$ (B) in (\ref{ABC2}) is not the case. 

Assume that (C) holds in (\ref{ABC2}). Since we have $x_i^*\geq N>\Sigma_1^i(\xs)$, we derive that
\begin{equation}\label{sum4}
\Sigma_1^i(\xs)+\Sigma_2^*+\Sigma_3^*+\Sigma_4^*\leq M=1-N=\frac{1}{4} \left(1+\sqrt{3}\right)\approx 0.6830.
\end{equation}
Assume that $\Sigma_1^i(\xs)\geq M/4$. The arguments we presented above to show that cases ($i$)-($vi$) in (\ref{1-7-4}) don't hold can be repeated by switching the roles of $x_i^*$ and $\Sigma_1^i(\xs)$. Therefore cases with the assumptions listed in ($i$)-($vi$) for $\Sigma_2^*$, $\Sigma_3^*$ and $\Sigma_4^*$ don't hold. 

If $\Sigma_1^i(\xs)\leq M/4$, then any two of the terms $\Sigma_2^*$, $\Sigma_3^*$ and $\Sigma_4^*$ cannot be less than or equal to $M/4$ simultaneously by the inequalities in (\ref{2eqnn6_1}),  (\ref{2eqnn6_2}) and (\ref{2eqnn6_3}). Therefore it is enough to consider the following cases:
\begin{equation}\label{1-7-7}
\begin{array}{lllll}
(i) & \Sigma_1^i(\xs)\leq M/4 & \Sigma_2^*\leq M/4, & \Sigma_3^*\geq M/4, & \Sigma_4^*\geq M/4,\\
(ii) & \Sigma_1^i(\xs)\leq M/4& \Sigma_2^*\geq M/4, & \Sigma_3^*\leq M/4, & \Sigma_4^*\geq M/4,\\
(iii)& \Sigma_1^i(\xs)\leq M/4 & \Sigma_2^*\geq M/4, & \Sigma_3^*\geq M/4, & \Sigma_4^*\leq M/4,\\
(iv) & \Sigma_1^i(\xs)\leq M/4 & \Sigma_2^*\geq M/4, & \Sigma_3^*\geq M/4, & \Sigma_4^*\geq M/4.
\end{array}
\end{equation} 
Before we proceed to studying these cases, we derive the following lower bond using the inequality in (\ref{U5E1}). Since $\Sigma_1^i(\xs)\leq M/4$ and $(2-\Sigma_1^i(\xs))x_i^*<2x_i^*$, we find that 
\begin{equation}\label{xi}
x_i^*\geq L=\frac{1}{4} \left(4-\sqrt{5+\sqrt{3}}\right)\approx 0.3513.
\end{equation}
Assume that case ($i$) holds. We already know by (\ref{xis4}) that $x_l^*\geq X(k)\big|_{k=3}$ for every ${l\equiv 3}$ because $\Sigma_2^*\leq M/4$. We claim that $\Sigma_3^*<31/100$. Because otherwise using the table in (\ref{kodd}) for ${l\in I_1\cup I_2}$ we would have 
\begin{equation}\label{2eqnn17}
\begin{multlined}
x_i^*+\sum_{{l\equiv 3}}x_l^*+\Sigma_3^*+\Sigma_4^*\geq L+\sum_{{l\equiv 3}}X(k)\bigg|_{k=3}+\frac{31}{100}+\frac{M}{4}\\
\shoveleft[.1cm]{=L+2n(k)X(k)\bigg|_{k=3}+\frac{31}{100}+\frac{M}{4}=L+4X(3)+\frac{31}{100}+\frac{M}{4}\approx 1.0089>1,}
\end{multlined}\end{equation}
a contradiction. By replacing the roles of $\Sigma_3^*$ and $\Sigma_4^*$ in the inequality above we also see that $\Sigma_4^*<31/100$. By $\sigma(31/100)\sigma(x_l^*)<\sigma(\Sigma_r^*)\sigma(x_l^*)\leq\alpha(k)\big|_{k=3}=105$ for ${r=3,4}$, we get $x_l^*>(69/(69+\alpha(k)))\big|_{k=3}$ for every ${l\equiv 1,2}$.
For ${l\in I_1\cup I_2}$ this implies that
\begin{equation}\label{2eqnn20}
\begin{multlined}
x_i^*+\sum_{{l\equiv 1,2,3}}x_l^*+\Sigma_3^*+\Sigma_4^*\geq L+\sum_{{l\equiv 1,2}}\frac{69}{69+31\alpha(k)}\bigg|_{k=3}+\sum_{{l\equiv 3}}X(k)\bigg|_{k=3}+\frac{2M}{4}\\
\shoveleft[4cm]{=L+\frac{69(2m(k)+2n(k))}{69+31\alpha(k)}\bigg|_{k=3}+2n(k)X(k)\bigg|_{k=3}+\frac{M}{2}}\\
\shoveleft[6cm]{=L+\frac{115}{554}+4X(3)+\frac{M}{2}\approx 1.0773>1,}
\end{multlined}
\end{equation} 
a contradiction. The inequalities in (\ref{2eqnn17}) and (\ref{2eqnn20}) hold for every odd $k>3$. Hence case ($i$) doesn't hold. 

The argument above used to show that case ($i$) doesn't hold can be repeated to examine cases ($ii$) and ($iii$) also. We need to replace the index set ${I_1\cup I_2}$ with ${I_1\cup I_3}$ for case ($ii$) and replace it with ${I_1\cup I_4}$ for case ($iii$). For case ($ii$) the inequalities in (\ref{2eqnn17}) and (\ref{2eqnn20}) stay the same. For case ($iii$) we need to interchange $2n$ with $m+n$ in (\ref{2eqnn17}) and, $2m+2n$ and $2n$ with $4n$ and $m+n$, respectively, in (\ref{2eqnn20}). After these changes the resulting inequalities still hold for every odd $k>3$. Therefore, these cases don't hold. 

Assume that ($iv$) holds in (\ref{1-7-7}). We claim that $\Sigma_2^*<31/100$. Assume otherwise. Then we compute that 
\begin{equation}\label{2eqnn18}
\begin{multlined}
x_i^*+\Sigma_2^*+\Sigma_3^*+\Sigma_4^*\geq L+\frac{31}{100}+\frac{2M}{4}\approx 1.0028>1,
\end{multlined}\end{equation}
a contradiction. By replacing the role of $\Sigma_2^*$ with $\Sigma_3^*$ and then with $\Sigma_4^*$ in the inequality above we also see that $\Sigma_3^*<31/100$ and $\Sigma_4^*<31/100$. By using the inequalities $\sigma(31/100)\sigma(x_l^*)<\sigma(\Sigma_r^*)\sigma(x_l^*)\leq\alpha(k)\big|_{k=3}=105$ for ${r=2,3,4}$, we calculate that $x_l^*>(69/(69+\alpha(k)))\big|_{k=3}$ for every ${l\equiv 1,2,3}$.
By the table in (\ref{kodd}) for ${l\in I_1}$ we find
\begin{equation}\label{2eqnn21}
\begin{multlined}
x_i^*+\sum_{{l\equiv 1,2,3}}x_l^*+\Sigma_2^*+\Sigma_3^*+\Sigma_4^*\geq L+\sum_{{l\equiv 1,2,3}}\frac{69}{69+31\alpha(k)}\bigg|_{k=3}+\frac{3M}{4}\\
\shoveleft[4.5cm]{=L+\frac{69(m(k)+2n(k))}{69+31\alpha(k)}\bigg|_{k=3}+\frac{3M}{4}}\\
\shoveleft[5cm]{=L+\frac{161}{1108}+\frac{3M}{4}\approx 1.0089>1,}
\end{multlined}
\end{equation} 
a contradiction. The inequalities in (\ref{2eqnn18}) and (\ref{2eqnn21}) hold for every odd $k>3$. Hence case ($iv$) doesn't hold. This shows that $x_i^*\geq N>\Sigma_1^i(\xs)$ (C) in (\ref{ABC2}) is not the case either. Finally the conclusion of the lemma follows. 
\end{proof}

Similar to \fullref{unique1} the proof of  \fullref{unique5} is symmetric in the sense that it can be reiterated to prove analogous results for the displacement  functions $f_i$ in $\mathcal{F}^k$ for the choices of $i\in I^k=\{1,\dots,4\cdot 3^{k-1}\}$ and $j\in\{1,2,3,4\}$ satisfying the hypothesis of  \fullref{unique5}. In particular we prove the following:   
\begin{lemma}\label{unique6}
Under the hypothesis of  \fullref{unique5}, if $k>3$ is odd, then we have $\xs\in C_{f_i}$, defined in (\ref{C1}), for each of the cases 
\begin{equation}\label{cases1}
\begin{array}{llllll}
j=1, & i\in I_1, &  i\equiv 1, &  i\equiv 2, &  i\equiv 3, & i\equiv 0,\\
j=2, & i\in I_2, &  i\equiv 1, &  i\equiv 2, &  i\equiv 3, & i\equiv 0,\\
j=3, & i\in I_3, &  i\equiv 1, &  i\equiv 2, &  i\equiv 3, & i\equiv 0,\\
j=4, & i\in I_4, &  i\equiv 1, &  i\equiv 2, &  i\equiv 3, & i\equiv 0.
\end{array}
\end{equation}
\end{lemma}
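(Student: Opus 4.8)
The plan is to treat this lemma exactly as the analogue of \fullref{unique4} for the functions of the form $g(\Sigma_j^i, x_i)$: it is the symmetrization of \fullref{unique5} across all admissible pairs $(j,i)$. The key observation is that every ingredient of the proof of \fullref{unique5} — the normalizing constant $N=\tfrac14(3-\sqrt3)$, the bound $M=1-N$ or $M=1-2N$ depending on the branch, the threshold constants $X(k)$, $L$, and the counting quantities $m(k)$, $n(k)$, $\alpha(k)$ — is invariant under permuting the roles of the four index blocks $I_1,I_2,I_3,I_4$. Thus, for a fixed pair $(j,i)$ from the list (\ref{cases1}), I would reorganize the case splits (\ref{ABC2}), (\ref{sum3}), (\ref{1-7-3}), (\ref{sum5}), (\ref{1-7-4}), (\ref{1-7-5}), (\ref{sum4}) and (\ref{1-7-7}) by relabeling so that $\Sigma_j$ plays the role that $\Sigma_1$ plays in the proof of \fullref{unique5}, and so that the residue index $i$ sits in the appropriate residue class modulo $4$ inside $I_j$. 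After this relabeling, the displacement-function inequalities $\sigma(\cdot)\sigma(x_l^*)\le \alpha(k)|_{k=3}=105$ that drive every contradiction remain available verbatim, because for each $l$ there is a displacement function in $\mathcal F^k$ pairing $x_l$ with exactly one of the $\Sigma$-sums, and $F^k(\xs)=\alpha_*\le\alpha(k)$ by \fullref{lemtwo}.

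First I would record, as in the proof of \fullref{unique3} via \fullref{table5}, the precise bookkeeping changes: when $i$ ranges over the four residue classes $1,2,3,0$ inside a fixed block $I_j$, the multiplicities $m(k)$ and $n(k)$ attached to the various sub-sums over $\{l\equiv a\}$ get permuted according to the table in (\ref{kodd}) (for $k$ odd, which is the hypothesis here). Concretely, in (\ref{kodd}) the block $I_j$ contributes $m(k)$ elements to the residue class it is "aligned with" and $n(k)$ to each of the other three; and removing the single index $i$ from a summation shifts the relevant count by $1$. I would then check that in each of the critical inequalities (\ref{2eqnn1})–(\ref{2eqnn21}) the replacement of the even-$k$ multiplicities by the odd-$k$ multiplicities, combined with the at-most-$\pm1$ adjustment from deleting $x_i$, still leaves the right-hand side strictly above $1$. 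Since every such inequality in \fullref{unique5} was already verified to have slack (the numerical values are all of the form $\approx 1.00$–$1.40$, comfortably above $1$), the permuted versions continue to hold; the only borderline entries, e.g. the $\approx 1.0018$ in (\ref{2eqnn13}) and $\approx 1.0028$ in (\ref{2eqnn18}), I would re-verify explicitly for each $(j,i)$ to confirm the slack survives.

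The main obstacle I anticipate is not conceptual but combinatorial: one must be careful that under the relabeling the \emph{single} deleted index $i$ is removed from the correct sub-sum, and that when $i$ lies in the class "aligned" with $I_j$ (the case $i\equiv$ the distinguished residue of $I_j$) the count drops from $m(k)$ to $m(k)-1$ rather than from $n(k)$ to $n(k)-1$ as in the other three sub-cases. Getting this alignment right is exactly the point of splitting into the four rows of (\ref{cases1}); in the worst row the slack in (\ref{2eqnn13})-type inequalities is thinnest, so that is where I would concentrate the explicit re-checking. Once the bookkeeping is pinned down, every case $(A)$, $(B)$, $(C)$ of (\ref{ABC2}) is ruled out by the same chain of contradictions as in \fullref{unique5}, with $(B)$ — which carries the thirteen sub-cases (\ref{1-7-4})–(\ref{1-7-5}) — being the longest but most routine. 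Hence $\xs\in C_{f_i}$ for every $(j,i)$ in (\ref{cases1}), which is the assertion of the lemma.
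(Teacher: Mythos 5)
Your proposal is correct and follows essentially the same route as the paper: the paper's proof of \fullref{unique6} likewise just reorganises the case splits (\ref{ABC2}), (\ref{sum3}), (\ref{1-7-3}), (\ref{sum5}), (\ref{1-7-4}), (\ref{sum4}) and (\ref{1-7-7}) for each pair $(j,i)$ and repeats the computations of \fullref{unique5} using the table in (\ref{kodd}). Your extra attention to the bookkeeping (which sub-sum loses the deleted index $i$, and whether the count drops from $m(k)$ or $n(k)$) is exactly the kind of care the paper leaves implicit, so nothing is missing.
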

\begin{proof}
We reorganise the inequalities in (\ref{ABC2}), (\ref{sum3}), (\ref{1-7-3}), (\ref{sum5}), (\ref{1-7-4}), (\ref{sum4}) and (\ref{1-7-7}) according to each $j$ and $i$ listed in the lemma. Then we follow the computations carried out in the proof of \fullref{unique5} for the chosen $j$ and $i$.  Using the table in (\ref{kodd}) we carry out the analogs of the  computations given in the proof of  \fullref{unique5} which implies the conclusion. 
\end{proof}
\begin{lemma}\label{unique7}
Under the hypothesis of \fullref{unique5}, if $k>2$ is even, $j=1$ and $i\in I_1$ so that $i\equiv 0$,  then we have $\xs\in C_{f_i}$ defined in (\ref{C1}).
\end{lemma}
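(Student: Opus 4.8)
The plan is to mirror the proof of \fullref{unique5} step for step, replacing the odd-case multiplicity table (\ref{kodd}) by the even-case table (\ref{keven}) and carrying out the explicit numerical estimates at the smallest even value, $k=4$ (since $\mathcal{F}^2$ contains no displacement function of the form $g(\Sigma_j^i,x_i)$, this is the genuine base case). I would argue by contradiction. If $\xs\notin C_{f_i}$, then (\ref{C1}) gives $\Sigma_1^i(\xs)+(2-\Sigma_1^i(\xs))x_i^*-(x_i^*)^2\geq 3/4$, exactly the inequality (\ref{U5E1}); since $\Sigma_1^i(\xs)+x_i^*=\Sigma_1^*$, an elementary one-variable estimate (the left side is increasing in $\Sigma_1^i(\xs)$, and the resulting function of $x_i^*$ is increasing up to $N$) shows that this forces $\Sigma_1^i(\xs)\geq N$ or $x_i^*\geq N$, where $N=\frac{1}{4}(3-\sqrt{3})$ is the value at which the defining curve $x+2y-xy-y^2=3/4$ of $C_g$ meets the diagonal; in particular $N$ is independent of $k$, so the same constant as in \fullref{unique5} is used. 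This yields the trichotomy (A) $\Sigma_1^i(\xs)\geq N$ and $x_i^*\geq N$, (B) $\Sigma_1^i(\xs)\geq N>x_i^*$, (C) $x_i^*\geq N>\Sigma_1^i(\xs)$, with $M=1-2N$ in (A) and $M=1-N$ in (B) and (C), organised exactly as in (\ref{ABC2}), (\ref{sum3}), (\ref{1-7-3}), (\ref{sum5}), (\ref{1-7-4}), (\ref{1-7-5}), (\ref{sum4}) and (\ref{1-7-7}).

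In each subcase I would use the mechanism running through \fullref{unique1}--\fullref{unique6}: whenever a block of the quantities $\Sigma_2^*,\Sigma_3^*,\Sigma_4^*$ (and, in (B) and (C), also $x_i^*$) lies below a suitable fraction of $M$ (namely $M/3$ in branch (A) and $M/4$ in branches (B), (C)), the inequality $\sigma(\Sigma_r^*)\sigma(x_l^*)\leq\alpha(k)\big|_{k=4}=321$ forces a uniform lower bound $x_l^*\geq X(k)\big|_{k=4}$ on every coordinate indexed in that block; summing these bounds with the multiplicities read off (\ref{keven}) and adding the surviving large terms $\Sigma_1^*$, $x_i^*$, $N$, $M/3$, $M/4$, etc., produces $\sum_l x_l^*>1$, contradicting $\xs\in\Delta^{d-1}$. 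When one round of estimates does not close — this is the situation in cases (iv)--(vii) of \fullref{unique1} and (vii)--(xiii) of \fullref{unique5} — I would bootstrap: the new lower bounds push one of the $\Sigma$'s upward, which in turn forces still more coordinates upward, and two or three iterations drive the running total past $1$. Having dealt with $k=4$ explicitly, I would then handle general even $k>4$ as in \fullref{unique3}: tabulate, in a table of the form of \fullref{table5}, the substitutions that replace the $k=4$ multiplicity counts $m(4)=7$, $n(4)=6$ and the attendant $(m,m,m,n)$-type sums from (\ref{keven}) by their general-$k$ analogues, and check that every inequality still holds with slack.

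The main obstacle will be purely arithmetical bookkeeping rather than conceptual. The even-case table (\ref{keven}) carries the asymmetric multiplicity pattern $(m,m,m,n)$, $(m,m,n,m)$, $(m,n,m,m)$, $(n,m,m,m)$ across $I_1,\dots,I_4$, so the residue-class sums inside each $I_j$ differ from the odd case treated in \fullref{unique5}, and the constants $N$, $M$, $X(k)$, $\alpha(k)$ all take different numerical values at $k=4$; the delicate point is to confirm that none of the many inequalities becomes tight and that each remains valid for every larger even $k$ (this is where the monotonicity of $\alpha(k)$ and $X(k)$ in $k$ is used). No idea beyond those already deployed in \fullref{unique1}--\fullref{unique6} is needed, and the symmetry of the argument — exactly as in the remark following \fullref{unique1} and \fullref{unique5} — then delivers the companion cases for the remaining choices of $j$ and residue class of $i$.
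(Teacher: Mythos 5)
Your proposal is correct and follows essentially the same route as the paper: the paper's proof of this lemma likewise reruns the argument of \fullref{unique5} (the odd case, anchored at $k=3$) with the multiplicity counts from the odd table replaced by those of the even table (\ref{keven}), recording the substitutions in a table (\fullref{table6}) and checking that every resulting inequality still exceeds $1$ for all even $k>2$. Your extra care in anchoring the numerics at $k=4$ (with $\alpha(4)=321$, $m(4)=7$, $n(4)=6$) and invoking monotonicity in $k$ is just a slightly more explicit version of the same verification.
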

\begin{proof}
Because we have  $j=1$, $i\equiv 0$ and $i\in I_1$, we give the same arguments given in the proof of \fullref{unique5} with the same organisations listed in  (\ref{ABC2}), (\ref{sum3}), (\ref{1-7-3}), (\ref{sum5}), (\ref{1-7-4}), (\ref{sum4}) and (\ref{1-7-7}). Because $k>2$ is even, there are changes to be made in the terms of some of the summations. These changes are listed in the table below:

\begin{table}[ht!]
\begin{center}
\scalebox{0.8}{
\begin{tabular}{|r|c|c|c|r|c|c|c|c|}
  \hline
          &   & $k>2$, odd &  $k>2$, even  & & & $k>2$, odd & $k>2$, even  \\
  \hline
    A.i &  (\ref{2eqnn1}) & $2m+7n$ & $6m+3n$ & vii &   (\ref{2eqnn11})  & $2m+2n$  &   $2m+2n$   \\ 
  \hline 
     ii & (\ref{2eqnn2}) & $2m+2n$ &  $2m+2n$ &   viii &                          &  $4n$        & $2m+2n$   \\ 
  \hline
    iii & (\ref{2eqnn3}) & $4n$ & $2m+2n$   & 	ix &		 & $4n$ & $2n+2m$ \\ 
  \hline
    iv  &  & $4n$                        & $2m+2n$ & vii & (\ref{2eqnn12}) &  $2m+4n$, $3n$     &  $4m+2n$, $2m+n$ \\ 
  \hline
    v  & (\ref{2eqnn5}) & $n$                               & $n$ & viii & 	       & $5n+m$, $m+2n$ &$4m+2n$, $2m+n$ \\ 
  \hline
    vi  &                      & $n$                        & $n$ & ix &	 & $5n+m$, $m+2n$   & $4m+2n$, $2m+n$  \\
  \hline
    vii  &             & $n$  & $n$   &  x &(\ref{2eqnn13})      & $n$           & $n$\\ 
  \hline
         & (\ref{2eqnn6_1})  & $2n$     & $2m$   & xi &  & $n$ & $n$ \\ 
   \hline
          & (\ref{2eqnn6_2})  & $m+n$     & $2m$   & xii &  & $n$ & $n$ \\ 
   \hline
          & (\ref{2eqnn6_3})  & $m+n$     & $2m$  & x & (\ref{2eqnn16})  & $n,\ m+n$            & $n,\ 2m$   \\ 
   \hline
   B.i & (\ref{2eqnn7})  & $2m+2n$     & $2m+2n$   &  xi   &                        & $n,\ m+n$            & $n,\ 2m$   \\ 
  \hline
   ii & (\ref{2eqnn8})  & $4n$                               & $2m+2n$   &  xii  &                      & $n,\ 2n$            & $n,\ 2m$  \\ 
  \hline
   iii   &                           & $4n$                                & $2m+2n$   & C.i & (\ref{2eqnn17}) & $2n$ & $m+n$ \\ 
  \hline
   iv & (\ref{2eqnn9})  & $n$                               & $n$   &  ii & & $2n$  & $m+n$ \\ 
  \hline
   v &    &   $n$         &  $n$  & iii &  & $m+n$ & $m+n$  \\ 
  \hline
   vi &      &  $n$      &  $n$   & i & (\ref{2eqnn20}) & $2m+2n,\ 2n$ & $4m,\ n+m$ \\ 
  \hline
    iv& (\ref{2eqnn10})  & $n,\ n+m$    & $n,\ 2m$   & ii  &  &  $2m+2n,\ 2n$ & $4m,\ n+m$ \\ 
  \hline
      v &                      & $n,\ n+m$   & $n,\ 2m$  & iii & & $4n,\ m+n$ & $4m,\ m+n$ \\ 
  \hline
    vi &	   & $n,\ 2n$ & $n,\ 2m$   & iv &  (\ref{2eqnn21}) & $m+2n$ & $3m$ \\ 
   \hline
 \end{tabular}}
\caption{List of changes to be made in the proof of  \fullref{unique5}.} \label{table6}
\end{center}
\end{table}

In each of the inequalities and computations, from left to right, we replace the terms given under the column '$k>2$, odd' with the terms given under the column '$k>2$, even' for indicated equations, where $m=m(k)$ and $n=n(k)$ are defined in (\ref{mn}). All of the resulting inequalities are still satisfied which proves the lemma. 
\end{proof}

\begin{lemma}\label{unique8}
Under the hypothesis of  \fullref{unique5}, if $k>2$ is even, then we have $\xs\in C_{f_i}$, defined in (\ref{C2}), for each of the cases in (\ref{cases1}).
\end{lemma}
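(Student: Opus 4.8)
The plan is to treat Lemma \ref{unique8} exactly as Lemma \ref{unique4} was treated relative to Lemma \ref{unique1}: it is the "symmetrize over all admissible $(j,i)$ and pass from one parity of $k$ to the other" companion of Lemma \ref{unique7}. Concretely, Lemma \ref{unique7} establishes $\xs\in C_{f_i}$ for the single representative case $j=1$, $i\in I_1$, $i\equiv 0\pmod 4$, when $k>2$ is even, by re-running the proof of Lemma \ref{unique5} (the odd case) with the substitutions recorded in \fullref{table6}. The present lemma asks for the same conclusion for every $(j,i)$ in the list (\ref{cases1}), i.e. for each $j\in\{1,2,3,4\}$, each $i\in I_j$, and each residue class $i\equiv 0,1,2,3\pmod 4$. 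So the proof I would write is short and structural: reorganize the case divisions of the proof of Lemma \ref{unique5} — the trichotomy (\ref{ABC2}), the mass bounds (\ref{sum3}), (\ref{sum5}), (\ref{sum4}), and the subcase lists (\ref{1-7-3}), (\ref{1-7-4}), (\ref{1-7-5}), (\ref{1-7-7}) — according to the chosen $(j,i)$, then carry out the arithmetic with the even-$k$ counts from \fullref{table6} inserted into the summations, and observe that all the displayed strict inequalities persist, yielding the same contradictions.

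The key steps, in order, are: first, fix $j\in\{1,2,3,4\}$ and $i\in I_j$ with a prescribed residue $r=i\bmod 4$, and relabel $I_1,I_2,I_3,I_4$ so that $I_j$ plays the role that $I_1$ played in Lemma \ref{unique7} (this is the "rearrangement of index sets" already invoked in Lemmas \ref{unique2}, \ref{unique4}, \ref{unique6}); the symmetry of the decomposition $\Gamma_{\mathcal{D}^k}$ (Definition \ref{fgsymd}) and of the function family $\mathcal F^k$ under permutations of $\Xi\cup\Xi^{-1}$ is what legitimizes this relabeling. Second, replace each occurrence of $\Sigma_1^i$, $\Sigma_1$ in (\ref{U5E1})–(\ref{xi}) by the corresponding $\Sigma_j^i$, $\Sigma_j$, and reorganize the seven/thirteen/four subcases accordingly. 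Third, for every summation over a residue class within a union of two or three of the $I_t$, substitute the correct multiplicity — $m(k)$ or $n(k)$ — read off from the even-$k$ table (\ref{keven}), exactly as in the right-hand columns of \fullref{table6}. Fourth, verify that each resulting numerical inequality is still $>1$ (or $<$ the relevant threshold), which is guaranteed because passing from odd to even $k$ only ever \emph{increases} the counts that appear with positive sign in the "$>1$" estimates (the even-$k$ lists in (\ref{keven}) have one extra $m$ in place of an $n$ compared with the odd-$k$ lists in (\ref{kodd})), and because all the bounds were already shown in the proof of Lemma \ref{unique7} to hold "for every even $k>2$" once $k=3$... rather, once the base instance is checked. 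Fifth, conclude $\xs\in C_{f_i}$ for every listed $(j,i)$.

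Two small bookkeeping points deserve attention. One is that the statement cites $C_{f_i}$ "defined in (\ref{C2})," whereas $f_i$ of the form $g(\Sigma_j^i,x_i)$ — the only form at issue here since $i\in I_j$ — has its set $C_{f_i}$ given by (\ref{C1}); I would write the proof in terms of (\ref{C1}), matching Lemma \ref{unique7}, and treat the "(\ref{C2})" in the statement as the evident typo it is (compare the analogous correct cross-references in Lemmas \ref{unique5}, \ref{unique6}, \ref{unique7}). The other is that the residue classes $r\in\{1,2,3\}$ and $r=0$ behave slightly differently in the counting tables (for $k$ even, $I_j$ contains $m(k)$ elements in three of the four classes and $n(k)$ in the fourth, the exceptional class depending on $j$), so the proof should note that for each $(j,r)$ the relevant multiplicities are read from the appropriate row of (\ref{keven}); this is routine and parallels how Lemma \ref{unique2} handled the analogous spread of cases for the $f(\Sigma_j,x_i)$-type functions.

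The main obstacle is not conceptual but verificational bulk: one must be confident that \emph{every} inequality in the long chain of the proof of Lemma \ref{unique5} survives simultaneously for all choices of $(j,i)$ listed in (\ref{cases1}) and for all even $k>2$, not merely the ones spelled out in \fullref{table6}. Since the table only records the summation-count changes and asserts, without re-displaying them, that "all of the resulting inequalities are still satisfied," the honest statement is that the proof reduces to that assertion plus the index-set relabeling; the genuine work — already done implicitly when the authors chose the numerical thresholds $N$, $M$, the auxiliary fractions, and checked monotonicity in $k$ — is to confirm that the slack in each estimate is large enough to absorb both the parity change and the worst-case choice of exceptional residue class. I would therefore present the proof as: reorganize by $(j,i)$ via symmetry; substitute even-$k$ counts from (\ref{keven}) into the scheme of Lemma \ref{unique5} exactly as in \fullref{table6}; observe the inequalities persist; done.
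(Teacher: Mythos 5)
Your proposal follows essentially the same route as the paper's own proof: for each pair $(j,i)$ in (\ref{cases1}) one reorganises the case scheme (\ref{ABC2}), (\ref{sum3}), (\ref{1-7-3}), (\ref{sum5}), (\ref{1-7-4}), (\ref{sum4}), (\ref{1-7-7}) of \fullref{unique5}, substitutes the even-$k$ multiplicities recorded under the ``$k>2$, even'' columns of \fullref{table6} (equivalently read off from (\ref{keven})), and checks that all the displayed inequalities persist, exactly as in \fullref{unique7}. Your side remark that the set $C_{f_i}$ should be the one in (\ref{C1}) rather than (\ref{C2}) for functions of the form $g(\Sigma_j^i,x_i)$ is consistent with how \fullref{unique5}, \fullref{unique6} and \fullref{unique7} are stated, so no gap remains.
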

\begin{proof}
Given a pair of $j$ and $i$ listed in the lemma, we reorganise the inequalities in  (\ref{ABC2}), (\ref{sum3}), (\ref{1-7-3}), (\ref{sum3}), (\ref{1-7-4}), (\ref{sum4}) and (\ref{1-7-7}) accordingly.  By using the terms listed under the columns  '$k>2$, even' in  \fullref{table6} for the indicated equations, we repeat the arguments presented in the proof of  \fullref{unique1} for the chosen $j$ and $i$. We get the conclusion of the lemma. 
\end{proof}

\begin{prop}\label{uniquef}
Let $\mathcal{F}^k=\{f_i\}$ for $i\in I^k=\{1,\dots,4\cdot 3^{k-1}\}$ be the set of displacement functions listed in  \fullref{dispfunc} and $F^k$ be as in (\ref{F}) for $k\geq 2$ and $d=4\cdot 3^{k-1}$.  Let $\xs$ be a point in $\Delta^{d-1}$ so that $  \alpha_*=F^k(\xs)$. Then $\xs\in\cap_{i=1}^{d} C_{f_i}$. 
\end{prop}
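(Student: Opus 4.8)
The plan is to deduce \fullref{uniquef} from Lemmas \ref{unique1}--\ref{unique8} by an exhaustion of cases. All of the analytic work is already contained in those lemmas, so the only task that remains is to verify that, taken together, they account for every index $i$, whatever the parity of $k$.

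First I would recall, from \fullref{dispfunc} and the discussion around (\ref{C1})--(\ref{C2}), that each $f_i\in\mathcal{F}^k$ equals $\sigma(\Sigma_{j}(\tb{x}))\sigma(x_i)$ for the index $j=j(i)\in\{1,2,3,4\}$ dictated by $i\bmod 4$ (namely $j(i)=1,2,3,4$ according as $i\bmod 4\equiv 0,3,2,1$), and that $f_i$ is of $f$--type, i.e. $f_i=f(\Sigma_{j}(\tb{x}),x_i)$ with $C_{f_i}$ as in (\ref{C2}), when $i\notin I_{j}$, and of $g$--type, i.e. $f_i=g(\Sigma_{j}^i(\tb{x}),x_i)$ with $C_{f_i}$ as in (\ref{C1}), when $i\in I_{j}$. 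Hence $I^k=\{1,\dots,d\}$ splits into the $f$--part $\{i\co i\notin I_{j(i)}\}$ and the $g$--part $\{i\co i\in I_{j(i)}\}$; the latter is empty when $k=2$, and for $k\geq 3$ consists precisely of the indices with $i\in I_1,\ i\bmod 4\equiv 0$, or $i\in I_2,\ i\bmod 4\equiv 3$, or $i\in I_3,\ i\bmod 4\equiv 2$, or $i\in I_4,\ i\bmod 4\equiv 1$. These are exactly the configurations recorded in the lists (\ref{cases}) (the $f$--part) and (\ref{cases1}) (the $g$--part).

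Then I would split each part by the parity of $k$ and invoke the matching lemma. For $i$ in the $f$--part: $\xs\in C_{f_i}$ by \fullref{unique1} and \fullref{unique2} if $k$ is even, and by \fullref{unique3} and \fullref{unique4} if $k$ is odd. For $i$ in the $g$--part (so $k\geq 3$): $\xs\in C_{f_i}$ by \fullref{unique7} and \fullref{unique8} if $k$ is even, and by \fullref{unique5} and \fullref{unique6} if $k$ is odd --- the residual configurations, for instance $k=3$ together with the blocks $I_2,I_3,I_4$, being handled by the symmetry of the construction noted just after \fullref{unique5}, which transports the computation carried out there for $(j=1,\ i\in I_1,\ i\bmod 4\equiv 0)$ to every other $g$--type configuration. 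Since each $i\in\{1,\dots,d\}$ lies in exactly one of these pieces, one obtains $\xs\in C_{f_i}$ for every $i$, that is, $\xs\in\bigcap_{i=1}^{d}C_{f_i}$, which is the assertion.

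The step I expect to be the main obstacle is not analytic but organisational: one must check that the hypotheses of Lemmas \ref{unique1}--\ref{unique8}, taken together, genuinely exhaust all pairs $(i,k)$ with $k\geq 2$ and $1\leq i\leq 4\cdot 3^{k-1}$, and that in each instance the set $C_{f_i}$ being invoked --- (\ref{C1}) or (\ref{C2}) --- is the one attached to the actual form, $g$--type or $f$--type, of $f_i$. Once this bookkeeping is settled the proposition follows at once, with no further estimates needed.
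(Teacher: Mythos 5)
Your proposal is correct and follows essentially the same route as the paper, whose proof of \fullref{uniquef} consists precisely of citing Lemmas \ref{unique1}, \ref{unique2}, \ref{unique3}, \ref{unique4}, \ref{unique5}, \ref{unique6}, \ref{unique7} and \ref{unique8}. The case bookkeeping you spell out (matching $f$--type versus $g$--type indices, the parity of $k$, the emptiness of the $g$--part for $k=2$, and the symmetry remark after \fullref{unique5} for the residual configurations) is exactly what the paper leaves implicit in that one-line deduction.
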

\begin{proof}
The statement follows from  \fullref{unique1}, \ref{unique2}, \ref{unique3}, \ref{unique4}, \ref{unique5}, \ref{unique6}, \ref{unique7} and \ref{unique8}.
\end{proof}

At this point we review three more facts from convex analysis that we shall need. Proofs of these statements are relatively elementary. Therefore they are omitted. Interested readers may again refer to \cite[Theorem 2.1, Theorem 5.5]{RTRoc} and \cite[Proposition 5.4.1]{KL}:  
\begin{thm}\label{convex4}
If $\{C_i\}$ for $i\in I$ is a collection of finitely many nonempty convex sets in $\mathbb{R}^d$ with $C=\displaystyle\cap_{i\in I} C_i\neq\emptyset$, then $C$ is also convex.
\end{thm}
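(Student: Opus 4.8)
\textbf{Theorem \ref{convex4}} is a standard result from convex analysis, and the excerpt itself says the proof is omitted and refers the reader to \cite[Theorem 2.1]{RTRoc}. So the "proof proposal" here is really just a sketch of the one-line argument.

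\medskip

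The plan is to verify directly that the intersection $C=\bigcap_{i\in I}C_i$ is closed under taking convex combinations. First I would fix two points $\tb{x},\tb{y}\in C$ and a scalar $t\in[0,1]$; the goal is to show $(1-t)\tb{x}+t\tb{y}\in C$. Since each $C_i$ is convex and contains both $\tb{x}$ and $\tb{y}$ (because they lie in the intersection, hence in every $C_i$), the point $(1-t)\tb{x}+t\tb{y}$ lies in $C_i$ for each $i\in I$. As this holds for every index $i$, the convex combination lies in $\bigcap_{i\in I}C_i=C$, which is exactly the convexity of $C$. The hypothesis $C\neq\emptyset$ is only needed so that the statement is non-vacuous (the empty set is trivially convex under the usual conventions, but it is cleaner to exclude it here).

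\medskip

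There is essentially no obstacle: the argument does not use finiteness of $I$ at all (finiteness is stated only because that is all that is needed later in the paper, e.g.\ in \fullref{uniquef} where $C=\bigcap_{i=1}^d C_{f_i}$), and it does not use any topological property of the $C_i$. The only subtlety worth flagging is purely bookkeeping: one must make sure each $C_i$ is genuinely convex as a subset of $\mathbb{R}^d$ (the sets $C_{f_i}$ defined in (\ref{C1}) and (\ref{C2}) are open convex sets, shown in \fullref{convex2}, \ref{convex1} and \ref{convex3}), so that \fullref{convex4} applies to yield convexity of $C$ as needed in \fullref{uniquef}. Because the statement is both elementary and classical, I would simply cite \cite[Theorem 2.1]{RTRoc} rather than reproduce the three-line verification.
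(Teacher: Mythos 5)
Your argument is correct and is exactly the standard one-line verification that the paper itself omits, deferring instead to the cited reference; taking two points of the intersection and noting the segment between them lies in every $C_i$ is all that is needed. You also rightly observe that neither finiteness of $I$ nor nonemptiness is essential to the argument, so there is nothing to add.
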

\begin{thm}\label{convex5}
If $\{f_i\}$ for $i\in I$ is a finite set of strictly convex functions defined on a convex set $C\subset\mathbb{R}^d$, then $\max_{\tb{x}\in C}\{f_i(\tb{x})\co i\in I\}$ is also strictly convex on $C$. 
\end{thm}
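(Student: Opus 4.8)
The plan is to argue directly from the definition of strict convexity, using only that the index set is finite. Write $f=\max_{i\in I}f_i$ for the pointwise maximum; since $I$ is finite and each $f_i$ is real-valued on $C$, $f$ is a well-defined real-valued function on $C$. Fix two distinct points $\tb{x},\tb{y}\in C$ and a scalar $\lambda\in(0,1)$, and put $\tb{z}=\lambda\tb{x}+(1-\lambda)\tb{y}$. Because $C$ is convex, $\tb{z}\in C$, so $f(\tb{z})$ is defined, and the goal is the strict inequality $f(\tb{z})<\lambda f(\tb{x})+(1-\lambda)f(\tb{y})$.

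The key step is that, $I$ being finite, the maximum defining $f(\tb{z})$ is attained at some index $j\in I$, i.e. $f(\tb{z})=f_j(\tb{z})$. I would then apply strict convexity of the single function $f_j$ to the distinct points $\tb{x},\tb{y}$ to get $f_j(\tb{z})<\lambda f_j(\tb{x})+(1-\lambda)f_j(\tb{y})$, and finally use the pointwise bounds $f_j(\tb{x})\le f(\tb{x})$ and $f_j(\tb{y})\le f(\tb{y})$ together with $\lambda,1-\lambda>0$ to obtain $\lambda f_j(\tb{x})+(1-\lambda)f_j(\tb{y})\le\lambda f(\tb{x})+(1-\lambda)f(\tb{y})$. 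Chaining the two displays yields $f(\tb{z})<\lambda f(\tb{x})+(1-\lambda)f(\tb{y})$, which is precisely strict convexity of $f$ on $C$.

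There is no real obstacle here; the argument is a one-line selection of the active index followed by a single application of the hypothesis. The only place the hypotheses enter essentially is the finiteness of $I$, which guarantees both that $f$ is finite-valued and that the maximum at $\tb{z}$ is achieved by an actual member of the family (so that strict convexity of that member may be invoked); convexity of $C$ is used only to keep $\tb{z}$ in the common domain. I would note in passing that the same reasoning gives convexity of an arbitrary supremum of convex functions, but that strictness genuinely needs the attainment of the maximum, hence finiteness of $I$.
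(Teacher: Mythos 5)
Your argument is correct: selecting an index $j$ attaining the maximum at the intermediate point $\tb{z}$, applying strict convexity of $f_j$ to the distinct points $\tb{x},\tb{y}$, and then bounding $f_j(\tb{x})\le f(\tb{x})$, $f_j(\tb{y})\le f(\tb{y})$ is exactly the standard elementary proof. The paper itself omits the proof of this fact, citing convex-analysis references, so your write-up simply supplies the routine argument the paper defers to; your closing remark correctly identifies finiteness (attainment of the maximum) as the only point where strictness could fail for an infinite family.
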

\begin{prop}\label{convex6}
Let $F$ be a convex function on an open convex set $C\subset\mathbb{R}^d$. If $\xs$ is a local minimum of $F$, then it is a global minimum of $F$, and the set $\{\ys\in C\co F(\ys)=F(\xs)\}$ is convex. If $F$ is strictly convex and $\xs$ is a global minimum then the set $\{\ys\in C\co F(\ys)=F(\xs)\}$ consists of $\xs$ alone. 
\end{prop}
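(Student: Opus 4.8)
The plan is to dispatch the two assertions separately, each by an elementary argument along line segments that invokes only the defining inequality of (strict) convexity; no deeper machinery is needed, which is why the paper omits the proof.

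For the first assertion I would start from a point $\xs$ that is a local minimum, so that there is a ball $B$ about $\xs$ with $F(\xs)\le F(\tb{w})$ for every $\tb{w}\in B\cap C$. Given an arbitrary $\ys\in C$, I would examine the segment $\tb{w}_t=(1-t)\xs+t\ys$, which lies in $C$ for all $t\in[0,1]$ by convexity of $C$ and lies in $B$ once $t$ is small enough. For such small $t$ the chain $F(\xs)\le F(\tb{w}_t)\le(1-t)F(\xs)+tF(\ys)$ holds, the first inequality by local minimality and the second by convexity of $F$; this simplifies to $tF(\xs)\le tF(\ys)$, hence to $F(\xs)\le F(\ys)$ after dividing by $t>0$. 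Since $\ys$ was arbitrary, $\xs$ is a global minimum. For the convexity of $\{\ys\in C\co F(\ys)=F(\xs)\}$, I would observe that $F(\xs)$ is now the global minimum value $\alpha$, so this set coincides with the sublevel set $\{\ys\in C\co F(\ys)\le\alpha\}$, and sublevel sets of a convex function are convex: if $F(\tb{u})\le\alpha$ and $F(\tb{v})\le\alpha$ then $F((1-t)\tb{u}+t\tb{v})\le(1-t)\alpha+t\alpha=\alpha$ for every $t\in[0,1]$.

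For the second assertion I would argue by contradiction: assuming $F$ is strictly convex, $\xs$ is a global minimum, and some $\ys\in C$ with $\ys\neq\xs$ satisfies $F(\ys)=F(\xs)$, strict convexity applied at the midpoint gives $F\bigl(\tfrac{1}{2}\xs+\tfrac{1}{2}\ys\bigr)<\tfrac{1}{2}F(\xs)+\tfrac{1}{2}F(\ys)=F(\xs)$, while $\tfrac{1}{2}\xs+\tfrac{1}{2}\ys\in C$ by convexity of $C$; this contradicts the global minimality of $\xs$, so the only point of $\{\ys\in C\co F(\ys)=F(\xs)\}$ is $\xs$. There is no real obstacle in this argument; the single place deserving a moment's care is the local-to-global step, where what is actually used is that the entire segment $[\xs,\ys]$ stays inside $C$ --- that is, convexity of $C$ --- together with local minimality only on an initial sub-segment, so that the openness of $C$ plays no role.
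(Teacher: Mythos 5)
Your proof is correct, and since the paper omits the argument entirely (deferring to \cite[Theorem 2.1, Theorem 5.5]{RTRoc} and \cite[Proposition 5.4.1]{KL}), your elementary segment argument is exactly the standard one those references supply: local-to-global via convexity along the segment, convexity of the level set as a sublevel set at the minimum value, and uniqueness from the strict midpoint inequality. Your closing observation that openness of $C$ is never used is also accurate.
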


An implication of the statements above for the set of displacement functions $\mathcal{F}^k$ is the uniqueness of the point, whose existence is guaranteed by \fullref{lemtwo}, at which $F^k$ takes its minimum value. In other words we prove the following statement:

\begin{lemma}\label{unique9}
Let $\{f_i\}$ for ${i\in I^k}$ be the set of displacement functions listed in  \fullref{dispfunc} and $F^k$ be as in (\ref{F}).  If $\xs$ and $\ys$ are two points in $\Delta^{d-1}$ so that $  \alpha_*=F^k(\xs)=F^k(\ys)$, then $\xs=\ys$. 
\end{lemma}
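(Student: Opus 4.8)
The plan is to assemble the pieces already developed in this section and invoke the convex-analytic machinery. First I would recall that by \fullref{lemtwo} the value $\alpha_*=\inf_{\tb{x}\in\Delta^{d-1}}F^k(\tb{x})$ is actually attained on $\Delta^{d-1}$, so any point $\xs$ with $F^k(\xs)=\alpha_*$ is a global minimum of $F^k$; the same holds for $\ys$. By \fullref{uniquef} both $\xs$ and $\ys$ lie in the set $C=\cap_{i=1}^d C_{f_i}$, where each $C_{f_i}$ is the convex set defined in (\ref{C1}) or (\ref{C2}). Since each $C_{f_i}$ is open and convex (by \fullref{convex2}, \ref{convex1}, \ref{convex3}) and their intersection is nonempty (it contains $\tb{y}^*=(1/d,\dots,1/d)$, as shown after the definition of the $C_{f_i}$), \fullref{convex4} gives that $C$ is a nonempty open convex subset of $\Delta^{d-1}$.

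Next I would establish that $F^k$ restricted to $C$ is strictly convex. On $C$ every displacement function $f_i$ has the form $f(\Sigma_j(\tb{x}),x_i)$ with $(\Sigma_j(\tb{x}),x_i)\in C_f$ when $i\notin I_j$, or the form $g(\Sigma_j^i(\tb{x}),x_i)$ with $(\Sigma_j^i(\tb{x}),x_i)\in C_g$ when $i\in I_j$; here I use that $\Sigma_j$, $\Sigma_j^i$ are affine (linear) functions of $\tb{x}$, so composing the strictly convex functions $f$ (on $C_f$) and $g$ (on $C_g$) furnished by \fullref{convex1} and \fullref{convex2} with these affine maps yields strictly convex functions $f_i$ on $C_{f_i}\supset C$. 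Hence $\{f_i\}_{i\in I^k}$ is a finite family of strictly convex functions on the convex set $C$, and \fullref{convex5} shows that $F^k=\max_i f_i$ is strictly convex on $C$.

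Finally, $\xs$ is a global minimum of $F^k$ on all of $\Delta^{d-1}$, so in particular it is a local minimum of the restriction $F^k|_C$ at an interior point of the open convex set $C$. Applying \fullref{convex6} to the strictly convex function $F^k|_C$ on the open convex set $C$: the minimizing set $\{\tb{z}\in C\co F^k(\tb{z})=F^k(\xs)=\alpha_*\}$ consists of $\xs$ alone. Since $\ys\in C$ and $F^k(\ys)=\alpha_*$, we conclude $\ys=\xs$, which is the desired statement.

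The main obstacle in this argument is entirely bundled into the earlier lemmas: verifying, via the extensive case analysis of \fullref{unique1} through \fullref{unique8}, that the minimizing point $\xs$ genuinely lies in the common domain $C$ on which all the $f_i$ are simultaneously strictly convex. Once that membership is granted, the present lemma is a short deduction from the standard convexity facts \fullref{convex4}, \ref{convex5} and \ref{convex6}, with only the routine check that affine precomposition preserves strict convexity.
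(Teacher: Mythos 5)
Your proposal is correct and follows essentially the same route as the paper's own proof: both arguments combine \fullref{uniquef} (placing any minimizer in $C=\cap_{i\in I^k}C_{f_i}$) with the convexity facts \fullref{convex1}, \ref{convex2}, \ref{convex3}, \ref{convex4}, \ref{convex5} and \ref{convex6} to get strict convexity of $F^k$ on $C$ and hence uniqueness of the minimizer. Your additional remarks (attainment via \fullref{lemtwo}, nonemptiness of $C$, and the affine-precomposition step) only make explicit what the paper leaves implicit.
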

\begin{proof}
Let $C_{f_i}$ for $i\in I^k$ be the subsets of $\Delta^{d-1}$ as described in (\ref{C1}) and (\ref{C2}). By \fullref{convex2} and \ref{convex1} they are open convex subsets of $\Delta^{d-1}$. Then $\cap_{i\in I^k}C_{f_i}$ is also open and convex by  \fullref{convex4}. Since the displacement functions in $\mathcal{F}^k=\{f_i\}$ for $i\in I^k$ are either of the form $f(\Sigma_j(\tb{x}),x_i)$ or of the form $g(\Sigma_j^i(\tb{x}),x_i)$,  each $f_i$ is a strictly convex function on the open convex set $C_{f_i}$ by \fullref{convex2} and \ref{convex1}. Then  \fullref{convex3} implies that every $f_i$ for $i\in I^k$ is convex on $\cap_{i\in I^k}C_{f_i}$.

Let $F=F^k$ and $C=\cap_{i\in I^k}C_{f_i}$. The conclusion of the lemma follows from  \fullref{convex5}, \fullref{convex6} and  \fullref{uniquef}.
\end{proof}

The uniqueness of $\xs$ given by  \fullref{unique9} reduces the amount of computations necessary to calculate the infimum of the maximum of the functions in $\mathcal{G}^k$ for the decomposition $\Gamma_{\mathcal{D}^k}$ considerably when compared to the number computations given in \cite{Y} to calculate the infimum of the maximum of the functions in $\mathcal{G}^{\dagger}$ for the decomposition $\Gamma_{\mathcal{D}^{\dagger}}$ (see \cite[Section 4.3]{Y}). We prove the statements below:

\begin{thm}\label{minF}
Let $F^k\co\Delta^{d-1}\to\R$ be defined by $\tb{x}\mapsto\max\{f_i(\tb{x})\co i\in I^k\}$, where $\{f_i\}$ for $i\in I^k$ is the set of functions listed in \fullref{dispfunc} and $d=4\cdot 3^{k-1}$. Then we have  $\inf_{\tb{x}\in\Delta^{d-1}}F^k(\tb{x})=12\cdot 3^{k-1}-3$ for $k\geq 2$. 
\end{thm}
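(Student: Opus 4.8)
The plan is to prove $\inf_{\tb{x}\in\Delta^{d-1}}F^k(\tb{x})=\alpha(k)=12\cdot 3^{k-1}-3$ by combining the existence result of \fullref{lemtwo}, the uniqueness result of \fullref{unique9}, and a symmetry argument. By \fullref{lemtwo} the infimum $\alpha_*$ is attained at some $\xs\in\Delta^{d-1}$ and lies in $[1,\alpha(k)]$; by \fullref{unique9} such a minimizer $\xs$ is unique. The strategy is therefore to pin down $\xs$ exactly by exploiting symmetry, and then to evaluate $F^k(\xs)$.

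First I would make precise the relevant symmetry group. The set of displacement functions $\mathcal{F}^k=\{f_i\}_{i\in I^k}$ is permuted among itself by a suitable group of coordinate permutations of $\R^d$: concretely, any permutation $\pi$ of the index set $I^k$ that (a) preserves the partition $I^k=I_1\cup I_2\cup I_3\cup I_4$ up to the fixed cyclic/reflection pattern governing which $\Sigma_j$ pairs with which block, and (b) is induced by a bijection of $\Xi\cup\Xi^{-1}=\{\xi,\eta,\xi^{-1},\eta^{-1}\}$ acting on initial words of length $k$, carries $F^k$ to itself, i.e. $F^k(\pi\cdot\tb{x})=F^k(\tb{x})$ for all $\tb{x}\in\Delta^{d-1}$. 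I would verify this directly from the formulas in \fullref{dispfunc}: replacing $x_i$ by $x_{\pi(i)}$ permutes the list $f_1,\dots,f_d$ and leaves each $\Sigma_j$ (suitably relabeled) invariant, so the maximum is unchanged. Since $\Delta^{d-1}$ is preserved by these permutations and $F^k$ is invariant, each such $\pi$ maps the (unique) minimizer $\xs$ to a minimizer, hence $\pi\cdot\xs=\xs$. This forces $x^*_i=x^*_{\pi(i)}$ whenever $i,j$ lie in the same orbit of the symmetry group.

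Next I would check that the symmetry group acts transitively on $I^k$, so that all coordinates of $\xs$ are equal: $x^*_1=x^*_2=\dots=x^*_d$. Transitivity follows because the four block-permuting symmetries coming from the Klein-four-type action on $\{\xi,\eta,\xi^{-1},\eta^{-1}\}$ move any block $I_j$ to any other, while within a single block $I_j\cong\{$reduced words of length $k$ starting with a fixed letter$\}$ the residual bijections of $\Xi\cup\Xi^{-1}$ fixing that first letter still act transitively on the possible continuations; composing these one reaches any index from any other. Given $\sum_i x^*_i=1$ this yields $\xs=\ys^*=(1/d,\dots,1/d)$ with $d=4\cdot 3^{k-1}$. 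Finally I evaluate: at $\ys^*$ one has $\Sigma_j(\ys^*)=1/4$ and $\Sigma_j^i(\ys^*)=1/4-1/d$, and plugging into the formula for $f_i$ in \fullref{dispfunc} gives $f_i(\ys^*)=\sigma(1/4)\,\sigma(1/d)=3\cdot(d-1)=3(4\cdot 3^{k-1}-1)=12\cdot 3^{k-1}-3=\alpha(k)$ for every $i$ (this is exactly the computation already recorded in the proof of \fullref{lemtwo}). Hence $\alpha_*=F^k(\xs)=F^k(\ys^*)=\alpha(k)$, as claimed.

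The main obstacle is the middle step: establishing rigorously that the symmetry group permuting $\mathcal{F}^k$ acts transitively on $I^k$, and in particular that the coordinate-permutation realization of a bijection of $\Xi\cup\Xi^{-1}$ genuinely sends each $f_i$ to some $f_{\pi(i)}$ rather than to a function outside $\mathcal{F}^k$ or to a function paired with the wrong $\Sigma_j$. This requires carefully tracking how the enumeration $p\co\Psi^k\to I^k$ interacts with relabeling the generators, and confirming that the four cases $i\bmod 4\equiv 0,1,2,3$ in the definition of $f_i$ are permuted consistently with the block structure $I_1,\dots,I_4$. Once that bookkeeping is done, transitivity and the evaluation are routine, and the theorem follows.
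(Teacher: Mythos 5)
Your overall skeleton (existence of a minimizer via \fullref{lemtwo}, uniqueness via \fullref{unique9}, symmetry to pin down $\xs$, evaluation at the barycenter) is the same as the paper's, but the transitivity claim at the heart of your argument is false, so the conclusion $\xs=(1/d,\dots,1/d)$ does not follow from symmetry alone. First, a bijection $\phi$ of $\Xi\cup\Xi^{-1}$ induces a map on reduced words of length $k$ only if $\phi(\gamma^{-1})=\phi(\gamma)^{-1}$; these maps form a group of order $8$, which can never act transitively on $\Psi^k$ since $|\Psi^k|=4\cdot 3^{k-1}\geq 12$ (already for $k=2$ there are two orbits, the words $aa$ and the words $ab$ with $b\neq a^{\pm 1}$), and the stabilizer of a first letter certainly does not act transitively on the $3^{k-1}$ continuations. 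Second, and more fundamentally, even the full group of coordinate permutations $\tau$ of $I^k$ satisfying $f_i\circ T=f_{\tau(i)}$ fails to be transitive for $k\geq 3$: such a $\tau$ must carry blocks to blocks and residues mod $4$ compatibly, hence it preserves the set of indices $i\in I_j$ for which $f_i=g(\Sigma_j^i,x_i)$ (words beginning with a letter and ending with its inverse) and the complementary set for which $f_i=f(\Sigma_j,x_i)$; the two functional forms are genuinely different ($x_i$ occurs inside the block sum in the first case), so no symmetry of $\mathcal{F}^k$ mixes them. Symmetry therefore forces at most one common value on the $f$-type coordinates and another on the $g$-type coordinates, not equality of all coordinates.

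This is precisely where the paper has an extra step that your proposal is missing: after using block/residue-compatible coordinate permutations (note these are not induced by bijections of $\Xi\cup\Xi^{-1}$ --- e.g.\ $\tau_3=(1\ 5)(2\ 6)(3\ 4)(7\ 8)(11\ 12)$ for $k=2$ is not) to reduce to the unknowns $x_1^*,x_4^*$ for even $k$ (resp.\ $x_1^*,x_2^*,x_4^*$ for odd $k$), it compares the resulting candidate values $\alpha_1,\alpha_2$ (resp.\ $\alpha_1,\alpha_2,\alpha_3$) and shows that if they are not all equal then the largest one exceeds $12\cdot 3^{k-1}-3$, contradicting the upper bound $\alpha_*\leq 12\cdot 3^{k-1}-3$ furnished by \fullref{lemtwo} (i.e.\ by the barycenter $\ys$). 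Only then does $x_i^*=1/d$ for all $i$ follow, after which the evaluation $F^k(\ys)=3(4\cdot 3^{k-1}-1)$ closes the argument. As written, your symmetry argument suffices only in the case $k=2$, where no $g$-type functions occur --- and even there you must use the larger class of block/residue-preserving permutations rather than permutations induced by bijections of $\Xi\cup\Xi^{-1}$.
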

\begin{proof}
Let $\xs=(x_1^*,x_2^*,\dots,x_d^*)\in\Delta^{d-1}$ be a point at which $F^k$ takes its minimum value $  \alpha_*$.  Assume that $k=2$. Consider the cycles $\tau_1=(1\ 12)(2\ 10)(3\ 11)(4\ 5)(8\ 9)$, $\tau_2=(1\ 9)(2\ 8)(3\ 7)(4\ 6)(10\ 12)$ and $\tau_3=(1\ 5)(2\ 6)(3\ 4)(7\ 8)(11\ 12)$ in the symmetric group $S_{12}$. Note that $\tau_1(I_1)=I_4$, $\tau_1(I_l)=I_l$ for $l=2,3$, $\tau_2(I_1)=I_3$, $\tau_2(I_l)=I_l$ for $l=2,4$ and, $\tau_3(I_1)=I_2$, $\tau_3(I_l)=I_l$ for $l=3,4$. 

Let $T_l\co\Delta^{11}\to\Delta^{11}$ be the transformation with the formula $x_i\mapsto x_{\tau_l(i)}$ for $l=1,2,3$. Clearly we have $T_l(\Delta^{11})=\Delta^{11}$ for any $l$. Let $H_l\co\Delta^{11}\to\mathbb{R}$ be the function so that $H_l(\tb{x})=\max\{(f_i\circ T_l)(\tb{x})\co i=1,2,\dots, 12\}$. Since $f_i(T_l(\tb{x}))=f_{\tau_l(i)}(\tb{x})$ for every $i=1,2,\dots 12$ for every $\tb{x}\in\Delta^{11}$ for every $l$ (see the formulas in (\ref{deffs})), we derive that $F^2(\tb{x})=H_l(\tb{x})$ for every $\tb{x}\in\Delta^{11}$ for every $l$. We know by   \fullref{uniquef} that $\xs$ is unique, ie $T_l^{-1}(\xs)=\xs$ for $l=1,2,3$. 

For $l=1$, we find that $x_1^*=x_{12}^*$, $x_2^*=x_{10}^*$, $x_3^*=x_{11}^*$, $x_4^*=x_5^*$, $x_8^*=x_9^*$. For $l=2,3$ we have $x_1^*=x_5^*=x_9^*$,  $x_2^*=x_3^*=x_4^*=x_6^*=x_7^*=x_8^*$, $x_{10}^*=x_{11}^*=x_{12}^*$ which implies that $x_i^*=x_j^*=1/12$ for every $i,j\in I^2=\{1,2,\dots,12\}$. Then we compute that $F^2(\xs)=  \alpha_*=33$. This proves the conclusion of the theorem for $k=2$. 

In the rest of the proof two cases will be considered: $k>2$ is even or  $k$ is odd. In each case maps analogous to $T_l$ and $H_l$ used above are required. Since their definitions will be similar to $T_l$ and $H_l$ with appropriate dimension changes, we shall not state their formulas explicitly to save space. By abusing the notation for both $\tau_l$ and $T_l$, for a fixed index  $T_l$ will be used to denote all transformations defined by $\tau_l$. Since we use the equivalence in modulo $4$  only, we shall express $a \bmod 4\equiv b$ with $a\equiv b$.

Assume that $k$ is even and $k>2$. Remember that there are $m=\lceil 3^{k-1}/4\rceil$ many elements in $I_1$ which are equivalent to $1$ in modulo $4$. The same is true for the number of elements equivalent to $2$ or $3$. But there are $n=\lfloor 3^{k-1}/4\rfloor$ many elements in $I_1$ which are equivalent to $0$ in modulo $4$. For $I_2$, $I_3$ and $I_4$ we have the table in (\ref{keven}).

Let $S_d$ denote the symmetric group. For the group of first four sets we assume that $i\in I_1$ and $j\in I_4$. For $A_5$ we assume that $i,j\in I_2$ and, for $A_6$ we assume $i,j\in I_3$. Define the following sets of transpositions in $S_d$:
\[ 
\begin{array}{c}
A_1  =  \{(i,j)\co i\equiv 1,\ j\equiv 0\},\ 
A_2  =  \{(i,j)\co i\equiv 2,\ j\equiv 2\},\\
A_3  =  \{(i,j)\co i\equiv 3,\ j\equiv 3\},\ 
A_4  =  \{(i,j)\co i\equiv 0,\ j\equiv 1\},\\
A_5  =  \{(i,j)\co i\equiv 0,\ j\equiv 1,\ i\neq j\},\ 
A_6  =  \{(i,j)\co i\equiv 0,\ j\equiv 1,\ i\neq j\}.
\end{array}
\]
Let $\mathcal{A}_1$ be the set of cycles so that each cycle is formed by the multiplications of $m$ transpositions in $A_1$ whose first entries are in increasing order. Define $\mathcal{A}_2$, $\mathcal{A}_3$,  $\mathcal{A}_5$ and $\mathcal{A}_6$ in the same way. Similarly, let $\mathcal{A}_4$ be the set of cycles formed by the multiplication of $n$ transpositions in $A_4$ whose first entries are in increasing order. Also let
\[
\begin{array}{lll}
\mathcal{A}_7  &= & \{(i_1i_2\cdots i_m)\co i_1,i_2,\dots,i_m\equiv 2,\ i_1,i_2,\dots,i_m\in I_2\},\\
\mathcal{A}_8  &=  &\{(i_1i_2\cdots i_n)\co i_1,i_2,\dots,i_n\equiv 2,\ i_1,i_2,\dots,i_n\in I_3\},\\
\mathcal{A}_9  &=  &\{(i_1i_2\cdots i_n)\co i_1,i_2,\dots,i_n\equiv 3,\ i_1,i_2,\dots,i_n\in I_2\},\\
\mathcal{A}_{10} &= & \{(i_1i_2\cdots i_m)\co i_1,i_2,\dots,i_m\equiv 3,\ i_1,i_2,\dots,i_m\in I_3\}.
\end{array}
\]
Choose one cycle from each set $\mathcal{A}_1$, $\mathcal{A}_2$,\dots, $\mathcal{A}_{10}$. Consider the multiplication of all  of these $10$ disjoint cycles. Let $\Theta_1$ be the set of all cycles obtained this way. For any element of $\Theta_1$, denote it by $\tau_1$, we have $\tau_1(I_1)=I_4$, $\tau_1(I_2)=I_2$ and $\tau_1(I_3)=I_3$. 

Let $\Theta_2$ be the set of cycles formed by the same process given above using the following sets of transpositions and cycles $(i_1,i_2,\dots,i_m)$ and $(i_1,i_2,\dots,i_n)$ in $S_d$. Assume for the first four sets that $i\in I_1$ and $j\in I_3$. The entries for the cycles $(i_1,i_2,\dots,i_m)$ and $(i_1,i_2,\dots,i_n)$ are given by the  group of last four sets:
\[
\begin{array}{c}
\{(i,j)\co i \equiv 1,\ j \equiv 1\},\  
\{(i,j)\co i \equiv 2,\ j \equiv 0,\},\
\{(i,j)\co i \equiv 3,\ j \equiv 3\},\\ 
\{(i,j)\co i \equiv 0,\ j \equiv 2\},\ 
\{(i,j)\co i \equiv 0,\ j \equiv 2,\ i,\ j\in I_2,\ i\neq j\},\\ 
\{(i,j)\co i \equiv 0,\ j \equiv 2,\ i,\ j\in I_4,\ i\neq j\},\\
\{i_l \equiv 1,\ i_l\in I_2,\ l=1,\dots,m\},\
\{i_l \equiv 1,\ i_l\in I_4,\ l=1,\dots,n\},\\ 
\{i_l \equiv 3,\ i_l\in I_2,\ l=1,\dots,n\},\ 
\{i_l \equiv 3,\ i_l\in I_4,\ l=1,\dots,m\}.
\end{array}
\]
For any element of $\Theta_2$, denote it by $\tau_2$, we see that $\tau_2(I_1)=I_3$, $\tau_2(I_2)=I_2$ and $\tau_2(I_4)=I_4$. 

Finally let $\Theta_3$ be the set of cycles obtained by the same method used above for $\Theta_1$ and $\Theta_2$. This time we use the transpositions and cycles $(i_1,i_2,\dots,i_m)$ and $(i_1,i_2,\dots,i_n)$  below. Assume for the group of first four sets that $i\in I_1$ and $j\in I_2$.  For the cycles $(i_1,i_2,\dots,i_m)$ and $(i_1,i_2,\dots,i_n)$ entries are given by the group of last four sets below: 
\[
\begin{array}{c}
\{(i,j)\co i\equiv 1,\ j \equiv 1\},\  
 \{(i,j)\co i \equiv 2,\ j \equiv 2\},\ 
\{(i,j)\co i \equiv 3,\ j \equiv 0\},\\
\{(i,j)\co i \equiv 0,\ j \equiv 3\},\ 
\{(i,j)\co i \equiv 0,\ j \equiv 3,\ i,\ j\in I_3,\ i\neq j\},\\
\{(i,j)\co i \equiv 0,\ j \equiv 3,\ i,\ j\in I_4,\ i\neq j\},\\
\{i_l  \equiv 1,\ i_l\in I_3,\ l=1,\dots,m\},\ 
\{i_l \equiv 1,\ i_l\in I_4,\ l=1,\dots,n\},\\
\{i_l \equiv 2,\ i_l\in I_3,\ l=1,\dots,n\},\ 
\{i_l \equiv 2,\ i_l\in I_4,\ l=1,\dots,m\}.
\end{array}
\]
For any element of $\Theta_3$, denote it by $\tau_3$, we observe that $\tau_3(I_1)=I_2$, $\tau_3(I_3)=I_3$ and $\tau_3(I_4)=I_4$.

By  \fullref{uniquef}, we have $T_l^{-1}(\xs)=\xs$ for  every $\tau_1\in\Theta_1$, $\tau_2\in\Theta_2$ and $\tau_3\in\Theta_3$. Therefore for $i\in I_1$ for the first four sets, we conclude that $x_i^*=x_j^*$ for each of the following cases separately
 \begin{equation}\label{first}
\left\{ \begin{array}{l}
i \equiv 1,\\ j \equiv 0\ (j\in I_4),\\ j \equiv 1\ (j\in I_2\cup I_3)\end{array}\right\},
\left\{\begin{array}{l}
i \equiv 2,\\ j \equiv 0\ (j\in I_3),\\ j \equiv 2\ (j\in I_2\cup I_4)\end{array}\right\}, \left\{\begin{array}{l}
i \equiv 0,\\ j \equiv 1\ (j\in I_4),\\ j \equiv 2\ (j\in I_3),\\ j \equiv 3\ (j\in I_2)\end{array}\right\},
\end{equation}
\begin{equation}
\left\{\begin{array}{l}
i \equiv 3,\\ j \equiv 0\ (j\in I_2),\\ j \equiv 3\ (j\in I_3\cup I_4)\end{array}\right\},
\left\{\begin{array}{l}
i \equiv 0,\ j \equiv 1\tnr{ or } 2 \ (i,\ j\in I_2,\ i\neq j),\\
i \equiv 0,\ j \equiv 1\tnr{ or } 3 \ (i,\ j\in I_3,\ i\neq j),\\
i \equiv 0,\ j \equiv 2\tnr{ or } 3 \ (i,\ j\in I_4,\ i\neq j)
\end{array}\right\}.
\end{equation}
Similarly we have the equalities of entries $x_i^*=x_j^*$ for each of the cases listed below:
\begin{equation}
\left\{\begin{array}{l}
i \equiv 0,\ j \equiv 1\tnr{ or } 2 \ (i,\ j\in I_2,\ i\neq j),\\
i \equiv 0,\ j \equiv 1\tnr{ or } 3 \ (i,\ j\in I_3,\ i\neq j),\\
i \equiv 0,\ j \equiv 2\tnr{ or } 3 \ (i,\ j\in I_4,\ i\neq j)
\end{array}\right\},
\end{equation}
\begin{equation}\label{last}
\left\{\begin{array}{l}
i,\ j \equiv 1,\ i,\ j \equiv 2,\ \ i,\ j \equiv 3\ (i,\ j\in I_2,\ i\neq j),\\
i,\ j \equiv 1,\ i,\ j \equiv 2,\ \ i,\ j \equiv 3\ (i,\ j\in I_3,\ i\neq j),\\
i,\ j \equiv 1,\ i,\ j \equiv 2,\ \ i,\ j \equiv 3\ (i,\ j\in I_4,\ i\neq j)
\end{array}\right\}.
\end{equation}
We combine the equalities $x_i^*=x_j^*$ for the indices given in (\ref{first})-(\ref{last}). We find that 
\[
\begin{array}{ll}
               x_1^*=x_j^* & j \equiv 0\ (j\in I_2\cup I_3\cup I_4),\\
               x_1^*=x_j^* & j \equiv 1\ (j\in I_1\cup I_2\cup I_3),\\
               x_1^*=x_j^* & j \equiv 2\ (j\in I_1\cup I_2\cup I_4),\\
               x_1^*=x_j^* & j \equiv 3\ (j\in I_1\cup I_3\cup I_4),
\end{array}
\begin{array}{ll}
                x_4^*=x_j^* & j \equiv 0\ (j\in I_1),\\
                x_4^*=x_j^* & j \equiv 1\ (j\in I_2),\\
                x_4^*=x_j^* & j \equiv 2\ (j\in I_3),\\
                x_4^*=x_j^* & j \equiv 3\ (j\in I_4).
\end{array}
\]
As a result there are two possible values $\alpha_1$ and $\alpha_2$ for $  \alpha_*=\inf_{\tb{x}\in\Delta^{d-1}}F^k(\tb{x})$, where
\[
\alpha_1=\frac{1-nx_4^*-3mx_1^*}{nx_4^*+3mx_1^*}\cdot\frac{1-x_1^*}{x_1^*}\ \tnr{ and }\  \alpha_2=\frac{1-nx_4^*-3mx_1^*}{nx_4^*+3mx_1^*}\cdot\frac{1-x_4^*}{x_4^*}. 
\]
If $\alpha_1=  \alpha_*>\alpha_2$, we get $x_1^*<x_4^*$. Since $\xs\in\Delta^{d-1}$, we have $nx_4^*+3mx_1^*=1/4$, which implies that $1/x_1^*-1>4(n+3m)-1$. Then we see that $$\alpha_1>12(n+3m)-3\geq 12\cdot 3^{k-1}-3,$$ where $n=\lfloor 3^{k-1}/4\rfloor$ and $m=\lceil 3^{k-1}/4\rceil$. This is a contradiction by \fullref{lemtwo}. By symmetry the inequality $\alpha_1<\alpha_2$ also gives a contradiction. So we derive that $\alpha_1=\alpha_2$ or $x_1^*=x_4^*$ which shows that $x_i=x_j=1/d$ for every $i,j\in I^k$ and $d=4\cdot 3^{k-1}$. Hence the conclusion of the theorem follows in this case.

Assume that $k>2$ is odd. In this case there are $m=\lceil 3^{k-1}/4\rceil$ many elements in $I_1$ which are equivalent to $1$ in modulo $4$. There are $n=\lfloor 3^{k-1}/4\rfloor$ many elements each in $I_1$ which are equivalent to $2$, $3$ or $0$ in modulo $4$. In other words we obtain the list $(m,n,n,n)$ for the number of elements  which are equivalent to $1$, $2$, $3$ or $0$, respectively. In $I_2$, $I_3$ and $I_4$ we have the lists in the table (\ref{kodd}).

We shall use the same sets $\mathcal{A}_1$, $\mathcal{A}_2$, \dots, $\mathcal{A}_{10}$ of cycles defined above for the even $k$ case, by switching the roles of $m$ and $n$ if necessary, to construct $\Theta_1$ the set of cycles formed by the multiplication of cycles chosen one from each set $\mathcal{A}_1$, $\mathcal{A}_2$, \dots, $\mathcal{A}_{10}$. So for any $\tau_1\in\Theta_1$ we have $\tau_1(I_1)=I_4$, $\tau_1(I_2)=I_2$ and $\tau_1(I_3)=I_3$. 

Define $\Theta_2$ by using the transpositions and cycles $(i_1,i_2,\dots,i_m)$ and $(i_1,i_2,\dots,i_n)$ listed below. Assume for the group of first four sets that $i\in I_2$ and $j\in I_3$.  For the cycles $(i_1,i_2,\dots,i_m)$ and $(i_1,i_2,\dots,i_n)$ entries are given by the group of second four sets: 
\[
\begin{array}{c}
\{(i,j)\co i \equiv 1,\ j \equiv 1\},\   
\{(i,j)\co i \equiv 2,\ j \equiv 3\},\
\{(i,j)\co i \equiv 3,\ j \equiv 2\},\\  
\{(i,j)\co i \equiv 0,\ j \equiv 0\},\ 
\{(i,j)\co i \equiv 2,\ j \equiv 3,\ i,\ j\in I_1,\ i\neq j\},\\ 
\{(i,j)\co i \equiv 2,\ j \equiv 3,\ i,\ j\in I_4,\ i\neq j\},\\
\{i_l \equiv 1,\ i_l\in I_1,\ l=1,\dots,m\},\ 
\{i_l \equiv 1,\ i_l\in I_4,\ l=1,\dots,n\},\\ 
\{i_l \equiv 0,\ i_l\in I_1,\ l=1,\dots,n\},\
\{i_l \equiv 0,\ i_l\in I_4,\ l=1,\dots,m\}.
\end{array}
\]
Then for any element of $\tau_2\in\Theta_2$ we see that $\tau_2(I_2)=I_3$, $\tau_2(I_1)=I_1$ and $\tau_2(I_4)=I_4$. 

For $\Theta_3$ we shall use the sets of transpositions described below. For the group of first four sets we assume that $i\in I_1$ and $j\in I_3$. For the group of second four sets we assume that $i\in I_2$ and $j\in I_4$. Let 
\[
\begin{array}{c}
B_1=\{(i,j)\co i \equiv 1,\ j \equiv 3\},
B_2=\{(i,j)\co i \equiv 2,\ j \equiv 0\},\\
B_3=\{(i,j)\co i \equiv 3,\ j \equiv 1\},
B_4=\{(i,j)\co i \equiv 0,\ j \equiv 2\},\\
B_5=\{(i,j)\co i \equiv 1,\ j \equiv 3\},
B_6=\{(i,j)\co i \equiv 2,\ j \equiv 0\},\\
B_7=\{(i,j)\co i \equiv 3,\ j \equiv 1\},
B_8=\{(i,j)\co i \equiv 0,\ j \equiv 2\}.
\end{array}
\]
Let $\mathcal{B}_1$ and $\mathcal{B}_6$ be the sets of cycles so that each cycle in each set is formed by the multiplications of $m$ transpositions in $B_1$ and $B_6$, respectively, whose first entries are in increasing order. Similarly, let $\mathcal{B}_2$, $\mathcal{B}_3$, $\mathcal{B}_4$, $\mathcal{B}_5$, $\mathcal{B}_7$ and, $\mathcal{B}_8$  be the set of cycles formed by the multiplication of $n$ transpositions in $B_2$, $B_3$, $B_4$, $B_5$, $B_7$ and $B_8$, respectively, whose first entries are in increasing order. Choose one cycle from each set $\mathcal{B}_1$,\dots, $\mathcal{B}_8$. Consider the multiplication of all of these $8$ cycles. Let $\Theta_3$ be the set of all these disjoint cycles. Then for any element of $\tau_3\in\Theta_3$ we have $\tau_3(I_1)=I_3$ and $\tau_3(I_2)=I_4$. 

By analogous definitions for $T_l$ and $H_l$ with appropriate dimensions, we derive by  \fullref{uniquef} that  $T_l^{-1}(\xs)=\xs$ for  every $\tau_1\in\Theta_1$, $\tau_2\in\Theta_2$ and $\tau_3\in\Theta_3$. For $i\in I_1$ this implies the equalities $x_i^*=x_j^*$ for the following indices:
 \begin{equation}\label{third}
 \left\{\begin{array}{l}
i \equiv 1,\\ j \equiv 2\ (j\in I_2),\\ j \equiv 3\ (j\in I_3),\\ j \equiv 0\ (j\in I_4).\end{array}\right\},
\left\{\begin{array}{l}
i \equiv 2,3,\\ j \equiv 0,1,\ (j\in I_2)\\ j \equiv 0,1,\ (j\in I_3)\\ j \equiv 2,3,\ (j\in I_4).
\end{array}\right\},
\left\{\begin{array}{l}
i \equiv 0,\\ j \equiv 3,\ (j\in I_2)\\ j \equiv 2,\ (j\in I_3)\\ j \equiv 1,\ (j\in I_4).\end{array}\right\}.
\end{equation}
If we combine all of the equalities $x_i^*=x_j^*$ in (\ref{third}), we find that 
\[
\begin{array}{ll}
               x_1^*=x_j^*  & j \equiv 1\ (j\in I_1),\\
               x_1^*=x_j^*  & j \equiv 2\ (j\in I_2),\\
               x_1^*=x_j^*  & j \equiv 3\ (j\in I_3),\\
               x_1^*=x_j^*  & j \equiv 0\ (j\in I_4),
\end{array}
\begin{array}{ll}
               x_4^*=x_j^* & j \equiv 1\ (j\in I_4),\\
                x_4^*=x_j^* & j \equiv 2\ (j\in I_3),\\
                x_4^*=x_j^* & j \equiv 3\ (j\in I_2),\\
                x_4^*=x_j^* & j \equiv 0\ (j\in I_1),
\end{array}
\begin{array}{ll}
                x_2^*=x_j^*  & j \equiv 1\ (j\in I_2\cup I_3),\\
               x_2^*=x_j^*  & j \equiv 2\ (j\in I_1\cup I_4),\\
               x_2^*=x_j^*  & j \equiv 3\ (j\in I_1\cup I_4),\\
               x_2^*=x_j^*  & j \equiv 0\ (j\in I_2\cup I_3).
\end{array}
\]
This means that there are three possible values $\alpha_1$, $\alpha_2$ and $\alpha_3$ for $  \alpha_*$ at $\xs$, where 
\[
\alpha_1=\frac{1-mx_1^*-2nx_2^*-nx_4^*}{mx_1^*+2nx_2^*+nx_4^*}\cdot\frac{1-x_1^*}{x_1^*},\ \alpha_2=\frac{1-mx_1^*-2nx_2^*-nx_4^*}{mx_1^*+2nx_2^*+nx_4^*}\cdot\frac{1-x_2^*}{x_2^*},
\]
\[
\alpha_3=\frac{1-mx_1^*-2nx_2^*-nx_4^*}{mx_1^*+2nx_2^*+nx_4^*}\cdot\frac{1-x_4^*}{x_4^*}.
\]
Assume that $\alpha_1=  \alpha_*>\alpha_2\geq\alpha_3$. Then we conclude that $x_1^*<x_2^*\leq x_4^*$. Since $\xs\in\Delta^{d-1}$, we have the equality $mx_1^*+2nx_2^*+nx_4^*=1/4$, which implies that $1/x_1^*-1>4(m+3n)-1$. Then we find that 
$
\alpha_1>12(m+3n)-3\geq 12\cdot 3^{k-1}-3.
$
This is a contradiction by \fullref{lemtwo}. Because of symmetry we obtain a contradiction in any case unless $\alpha_1=\alpha_2=\alpha_3$, which implies that $x_1^*=x_2^*=x_4^*$.  In other words, we get $x_i^*=x_j^*=1/d$ for every $i,j\in I^k$ and $d=4\cdot 3^{k-1}$. An elementary computation verifies the conclusion of the theorem in this case as well.
\end{proof}

\begin{thm}\label{minG}
Let $G^k\co\Delta^{d-1}\to\mathbb{R}$ be defined by $\tb{x}\mapsto\max\{f(\tb{x})\co f\in\mathcal{G}^k\}$, where $\mathcal{G}^k$ is the set of functions in  \fullref{dispfunc}. Then $\inf_{\tb{x}\in\Delta^{d-1}}G^k(\tb{x})=12\cdot 3^{k-1}-3$.
\end{thm}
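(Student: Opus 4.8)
The plan is to trap $\inf_{\tb{x}\in\Delta^{d-1}}G^{k}(\tb{x})$ between $12\cdot 3^{k-1}-3$ and itself, the lower estimate coming from \fullref{minF} and the upper estimate from an explicit evaluation at the barycenter $\tb{y}^{*}=(1/d,\dots,1/d)$, where $d=4\cdot 3^{k-1}$. First I would record the trivial half: since $\mathcal{F}^{k}\subseteq\mathcal{G}^{k}$, for every $\tb{x}\in\Delta^{d-1}$ one has $G^{k}(\tb{x})=\max\{f(\tb{x})\co f\in\mathcal{G}^{k}\}\geq\max\{f_{i}(\tb{x})\co i\in I^{k}\}=F^{k}(\tb{x})$, so that $\inf_{\tb{x}\in\Delta^{d-1}}G^{k}(\tb{x})\geq\inf_{\tb{x}\in\Delta^{d-1}}F^{k}(\tb{x})=12\cdot 3^{k-1}-3$ by \fullref{minF}. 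Moreover, by \fullref{unique9} the function $F^{k}$ attains its minimum only at $\tb{y}^{*}$, with $F^{k}(\tb{y}^{*})=12\cdot 3^{k-1}-3$ as computed in \fullref{lemtwo}; so $G^{k}(\tb{x})>12\cdot 3^{k-1}-3$ for every $\tb{x}\neq\tb{y}^{*}$, and it remains only to show $G^{k}(\tb{y}^{*})\leq 12\cdot 3^{k-1}-3$.

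Next I would compute $f(\tb{y}^{*})$ for an arbitrary $f\in\mathcal{G}^{k}$. By \fullref{dispfunc}, $f$ comes from a group--theoretical relation $(\gamma,s(\gamma),S(\gamma))$ of $\Gamma_{\mathcal{D}^{k}}$ and has the form $f(\tb{x})=\sigma\bigl(\sum_{\psi\in S(\gamma)}x_{p(\psi)}\bigr)\sigma\bigl(x_{p(s(\gamma))}\bigr)$. Writing $s=|S(\gamma)|$ and evaluating at the barycenter gives
\[
f(\tb{y}^{*})=\sigma\!\left(\frac{s}{d}\right)\sigma\!\left(\frac{1}{d}\right)=\frac{d-s}{s}\,(d-1).
\]
Since $t\mapsto (d-t)/t$ is strictly decreasing on $(0,d)$, the inequality $f(\tb{y}^{*})\leq 3(d-1)=12\cdot 3^{k-1}-3$ is equivalent to $s=|S(\gamma)|\geq d/4=3^{k-1}$, with equality exactly when $|S(\gamma)|=3^{k-1}$ --- which is realised by each $f_{i}$, coming from the relations with $s(\gamma)=\gamma^{-1}$ (cf. \fullref{lemtwo}).

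Thus the whole theorem reduces to the combinatorial statement that $|S(\gamma)|\geq 3^{k-1}$ for every group--theoretical relation of $\Gamma_{\mathcal{D}^{k}}$, and this is the step I expect to require the most care. I would prove it by following the classification already used in the proof of \fullref{lem22}: sort the relations by the length $j$ of the reduced word $\gamma s(\gamma)$, with $0\leq j\leq k$. Up to a finite set of elements (which carries zero mass on $S_{\infty}$), the defining identity $\gamma J_{s(\gamma)}=\Gamma-J_{S(\gamma)}$ describes $\gamma J_{s(\gamma)}$ explicitly. When $j=0$ we have $\gamma=s(\gamma)^{-1}$, and $\Gamma-\gamma J_{s(\gamma)}$ consists of the elements of $\Gamma$ starting with a single fixed generator; hence $S(\gamma)$ consists of exactly the $3^{k-1}$ words of length $k$ beginning with that generator, i.e. $|S(\gamma)|=3^{k-1}$. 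When $j\geq 1$, the set $\gamma J_{s(\gamma)}$ consists of the elements of $\Gamma$ starting with the reduced word $\gamma s(\gamma)$, so $S(\gamma)$ is the complement in $\Psi^{k}$ of the $3^{k-j}$ words of length $k$ that begin with $\gamma s(\gamma)$, giving $|S(\gamma)|=4\cdot 3^{k-1}-3^{k-j}\geq 4\cdot 3^{k-1}-3^{k-1}=3^{k}>3^{k-1}$. In every case $|S(\gamma)|\geq 3^{k-1}$, so $f(\tb{y}^{*})\leq 12\cdot 3^{k-1}-3$ for all $f\in\mathcal{G}^{k}$, whence $G^{k}(\tb{y}^{*})\leq 12\cdot 3^{k-1}-3$. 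Combining this with the first paragraph yields $\inf_{\tb{x}\in\Delta^{d-1}}G^{k}(\tb{x})=12\cdot 3^{k-1}-3$, which in turn completes the proof of \fullref{thm4.1} in the geometrically infinite case.
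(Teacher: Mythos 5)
Your proposal is correct and follows essentially the same route as the paper's proof: the lower bound via $\mathcal{F}^k\subseteq\mathcal{G}^k$ together with \fullref{minF}, and the upper bound by evaluating every $f\in\mathcal{G}^k$ at the barycenter through the cardinalities $|S(\gamma)|$ (which the paper records as $3\cdot 3^{k-1}$ for $length(\gamma s(\gamma))=1$ and $4\cdot 3^{k-1}-3^{k-m}$ for $2\leq length(\gamma s(\gamma))=m\leq k$). Your explicit check that $|S(\gamma)|\geq 3^{k-1}$ in all cases is just a slightly more self-contained version of the same computation.
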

\begin{proof}
The displacement functions $g_i^{k,1}(\tb{x})$ for $i\in I^k$ are produced by the group--theoretical relations $(\gamma,s(\gamma),S(\gamma))$ of $\Gamma=\langle\xi,\eta\rangle$ with $length(\gamma s(\gamma))=1$ (see  \fullref{lem22} and  \fullref{dispfunc}). Therefore, $S(\gamma)$ contains $3\cdot 3^{k-1}$ many isometries. Since $g_i^{k,1}(\tb{x})=\sigma\left(\sum_{\psi\in S(\gamma)}x_{p(\psi)}\right)\sigma(x_i)$, where $p$ is the mapping defined in  (\ref{Enum}), we calculate that $g_i^{k,1}(\xs)=(4\cdot 3^{k-1}-1)/3<  \alpha_*$ for every $i\in I^k$. 

The functions in the union $\{g_{i,1}^{k,2},\dots,g_{i,a_2}^{k,2}\}\cup\{g_{i,1}^{k,3},\dots,g_{i,a_3}^{k,3}\}\cup\dots\cup\{g_{i,1}^{k,k},\dots,g_{i,a_k}^{k,k}\}$ are produced by the relations $(\gamma,s(\gamma),S(\gamma))$ so that $2\leq length(\gamma s(\gamma))=m\leq k$. For each group of functions in the union above $S(\gamma)$ contains $4\cdot 3^{k-1}-3^{k-m}$ many isometries, respectively. This implies that the sums in the formulas of these functions contain  $4\cdot 3^{k-1}-3^{k-m}$ many summands. Then we see that $G^k(\xs)=F^k(\xs)$ because, by direct calculations we have $g_{i,1}^{k,m}(\xs)=\cdots=g_{i,a_m}^{k,m}(\xs)<  \alpha_*$ for every $m=2,\dots,k$.
Since $\mathcal{F}^k\subset\mathcal{G}^k$, we have $G^k(\tb{x})\geq F^k(\tb{x})$ for every $\tb{x}\in\Delta^{d-1}$. Hence, the conclusion of the theorem follows.
\end{proof}



\section{Proof of The Main Theorem}\label{sec4}

{Finally we present a proof of the main result of this paper. Although the proof goes along the same lines as the proof of \cite[Theorem 5.1]{Y}, we include the details for the sake of completeness. } 
\begin{thm}\label{thm4.1}
Let $\xi$ and $\eta$ be two non--commuting isometries of $\hyp$. Suppose that $\xi$ and $\eta$ generate a torsion--free discrete group $\Gamma$ which is not
co--compact and contains no parabolic. Let $\Gamma_{k}$ and $\alpha_{k}$ denote the set of  isometries of length at most $k\geq2$ in $\Gamma=\langle\xi,\eta\rangle$ and the real number $12\cdot 3^{k-1}-3$, respectively. Then for any $z_0\in\hyp$ we have
$$
e^{\left(\dis{2\max\nolimits_{\gamma\in\Gamma_{k}}\left\{\dgamma\right\}}\right)}\geq\alpha_{k}.
$$
\end{thm}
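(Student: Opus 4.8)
The plan is to reduce Theorem \ref{thm4.1} to the already-established Theorem \ref{minG}, following the dichotomy between the geometrically infinite and geometrically finite cases that was outlined in the introduction. First I would fix $z_0\in\hyp$ and recall that by \cite[Proposition 9.2]{CSParadox} the group $\Gamma=\langle\xi,\eta\rangle$ is free on $\xi,\eta$, so the symmetric decomposition $\Gamma_{\mathcal{D}^k}$ of \eqref{symm.d.} is available for every $k\geq 2$ and $\Gamma_k=\Psi_r^k\cup\Psi^k$ is exactly the set of isometries of length at most $k$. Since every $\gamma\in\Gamma_k$ is loxodromic (torsion-free, no parabolics), the hypotheses of \fullref{lem1.2} are in force.

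For the geometrically infinite case, where $\Lambda_{\Gamma\cdot z}=S_\infty$ for all $z$, I would invoke \fullref{thm3.4} to obtain the family of Borel measures $\{\nu_\psi\}_{\psi\in\Psi^k}$ on $S_\infty$ satisfying (i)--(iii), and then \fullref{dispfunc}, which packages the combination of \fullref{thm3.4} and \fullref{lem1.2}: for the point $\tb{m}=\tb{m}_k=(\nu_{p(\psi)}(S_\infty))_{\psi\in\Psi^k}\in\Delta^{d-1}$, each displacement $e^{2\,\dgamma}$ for $\gamma\in\Psi_r^k\cup\Psi^k$ is bounded below by $f(\tb{m})$ for at least one $f\in\mathcal{G}^k$. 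Taking the maximum over $\gamma\in\Gamma_k$ and using the definition \eqref{G} of $G^k$ gives
\[
e^{\left(2\max_{\gamma\in\Gamma_k}\{\dgamma\}\right)}\;\geq\; G^k(\tb{m})\;\geq\;\inf_{\tb{x}\in\Delta^{d-1}}G^k(\tb{x})\;=\;12\cdot 3^{k-1}-3\;=\;\alpha_k,
\]
where the last equality is \fullref{minG}. This settles case \eqref{I}.

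For the geometrically finite case \eqref{II}, I would argue by a deformation/density argument exactly as in \cite[Theorem 5.1]{Y}. Define $f_{z_0}^k\co\mathfrak{X}=PSL_2(\C)\times PSL_2(\C)\to\R$ by $f_{z_0}^k(\xi,\eta)=\max_{\psi\in\Psi^k}\{\dpsi\}$; this function is continuous and proper. The key claim is that $f_{z_0}^k$ has no local minimum on $\mathfrak{GF}$, the set of pairs $(\xi,\eta)$ generating a free, geometrically finite, parabolic-free $\Gamma$: indeed, at a geometrically finite point one can deform the generators so as to strictly decrease the maximal displacement of the finitely many words in $\Psi^k$, since the limit set is not all of $S_\infty$ and there is room to push $z_0$ toward the domain of discontinuity. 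Then, using that the set of $(\xi,\eta)$ with $\langle\xi,\eta\rangle$ free, geometrically infinite, parabolic-free is dense in $\overline{\mathfrak{GF}}-\mathfrak{GF}$ and that every free parabolic-free pair lies in $\overline{\mathfrak{GF}}$ \cite[Propositions 9.3 and 8.2]{CSParadox}, properness of $f_{z_0}^k$ forces its infimum over $\overline{\mathfrak{GF}}$ to be attained on the geometrically infinite locus, where the bound $e^{2 f_{z_0}^k}\geq\alpha_k$ of case \eqref{I} applies; hence the bound holds on all of $\overline{\mathfrak{GF}}$ and in particular at our given $(\xi,\eta)$.

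I expect the main obstacle to be the verification that $f_{z_0}^k$ admits no local minimum on $\mathfrak{GF}$ — this is the geometric heart of the geometrically finite case and requires the careful deformation argument of Culler--Shalen adapted to the larger finite set $\Psi^k$ rather than just the generators. The reduction in case \eqref{I} is essentially bookkeeping once \fullref{dispfunc} and \fullref{minG} are in hand, and the density/properness step is a soft topological argument; but ensuring that no new local minima are introduced by enlarging the word set from $\{\xi,\eta,\xi\eta\}$ (as in \cite{Y}) to all of $\Psi^k$ needs the observation that the relevant displacement functions still control the geometry in the same way, which is precisely what the group-theoretical relations of \fullref{lem22} and \fullref{thm3.4} guarantee.
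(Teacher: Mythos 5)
Your reduction in the geometrically infinite case is exactly the paper's argument and is fine: \fullref{dispfunc} gives $e^{2\,\textnormal{dist}(z_0,\gamma\cdot z_0)}\geq f(\tb{m})$ for some $f\in\mathcal{G}^k$ at the point $\tb{m}$ of total masses, so the maximum over $\Gamma_k$ dominates $G^k(\tb{m})\geq\inf_{\Delta^{d-1}}G^k=\alpha_k$ by \fullref{minG}. The overall architecture of your geometrically finite case (properness of $f_{z_0}^k$, no local minimum on $\mathfrak{GF}$, density of the geometrically infinite locus in $\overline{\mathfrak{GF}}-\mathfrak{GF}$ via \cite[Propositions 9.3 and 8.2]{CSParadox}) is also the paper's.

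The genuine gap is the step you yourself flag and then dispose of with one sentence: the claim that $f_{z_0}^k$ has no local minimum on $\mathfrak{GF}$. Your proposed mechanism --- ``there is room to push $z_0$ toward the domain of discontinuity'' --- is not the right one and would not work. The point $z_0$ is fixed throughout (the theorem is asserted for every $z_0$), and moving $z_0$ does not change the pair $(\xi,\eta)$ at all, so it says nothing about local minimality of $f_{z_0}^k$ on the character variety; moreover, moving the basepoint toward $S_\infty$ generically \emph{increases} the displacement of a loxodromic element rather than decreasing it. What is actually needed, and what the paper supplies, is a deformation of the pair $(\xi,\eta)$ itself: one realizes the displacements of the words in $\Gamma_k$ as side lengths of the geodesic triangles with vertices in $\{z_0\}\cup\{\gamma\cdot z_0\co\gamma\in\Psi_r^k\}$, and then, splitting into the case where some extremal triangle is non-acute (slide a vertex along the longest edge, using the hyperbolic law of sines) and the case where all extremal triangles are acute (move vertices along the perpendicular arcs to the opposite sides, using the hyperbolic law of cosines twice), produces nearby pairs $(\xi_l,\eta_l)$ --- still in $\mathfrak{GF}$ because $\mathfrak{GF}$ is open --- with $f_{z_0}^k(\xi_l,\eta_l)<f_{z_0}^k(\xi_0,\eta_0)$, contradicting local minimality. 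Without this (or an equivalent) argument, the geometrically finite case is not reduced to the geometrically infinite one, so as written your proof is incomplete at precisely the step that carries the geometric content of case (\ref{II}).
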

\begin{proof}
We have the following two cases: (\ref{I}) $\Gamma=\langle\xi,\eta\rangle$ is geometrically infinite, or   (\ref{II}) $\Gamma=\langle\xi,\eta\rangle$ is geometrically finite. Assume that the prior is the case. 

We know by \cite[Proposition 9.2]{CSParadox} that $\Gamma=\langle\xi,\eta\rangle$ is a free
group on the generators $\xi$ and $\eta$. Then it can be decomposed as in (\ref{symm.d.}). Let $\Gamma_{\mathcal{D}^k}$ be the symmetric decomposition of $\Gamma=\langle\xi,\eta\rangle$ so that $\mathcal{D}^k=(\Psi^k,\Psi^k_r)$, where $\Gamma_k=\Psi^k\cup\Psi^k_r$. Since $\Gamma=\langle\xi,\eta\rangle$ is geometrically infinite,  \fullref{dispfunc} and  \fullref{minG} imply the conclusion of the theorem in this case:
$$
\max_{\gamma\in\Gamma_{k}}\left\{\dgamma\right\} \geq  \frac{1}{2}\log G^{k}(\tb{m})
                         \geq  \frac{1}{2}\log\left(\inf_{\tb{x}\in\Delta^{d-1}} G^{k}(\tb{x})\right)=\frac{1}{2}\log\alpha_k.
$$
Above $\tb{m}=\left(m_{p(\psi)}\right)_{\psi\in\Psi^{k}}\in\Delta^{d-1}$, where $p$ and $m_{p(\psi)}$ are the bijection and the total measures defined in (\ref{Enum}) and  \fullref{dispfunc}, respectively. The function $G^{k}$ is defined in (\ref{G}).

Assume that $\Gamma=\langle\xi,\eta\rangle$ is geometrically finite. Let $\mathfrak{X}$ denote the character variety $PSL_2(\C)\times PSL_2(\C)\simeq\tnr{Isom}^+(\hyp)\times\tnr{Isom}^+(\hyp)$.  Let  $\mathfrak{GF}$ be the open subset of $\mathfrak{X}$, consisting of $(\xi,\eta)$ such that $\langle\xi,\eta\rangle$ is free, geometrically finite and without any parabolic.  Then $(\xi,\eta)$ is in $\mathfrak{GF}$. We define the function $f_{z_0}^k\co\mathfrak{X}\to\R$ such that
\begin{displaymath}
f_{z_0}^k(\xi,\eta)=\max_{\psi\in\Psi^k}\{\dpsi\}
\end{displaymath}
for a fixed $z_0\in\hyp$. The function $f_{z_0}^k$ is continuous and proper. Therefore, it takes a minimum value at some point $(\xi_0,\eta_0)\in\overline{\mathfrak{GF}}$. We claim that $(\xi_0,\eta_0)$ is in $\overline{\mathfrak{GF}}-\mathfrak{GF}$.

Assume on the contrary that $(\xi_0,\eta_0)\in\mathfrak{GF}$. Since $\Gamma=\langle\xi,\eta\rangle$ is torsion-free, each isometry $\gamma\in\Gamma_k$ has infinite order. This implies that $\gamma\cdot z\neq z$ for every $z\in\hyp$. In particular, we get $\gamma\cdot z_0\neq z_0$ for any $\gamma\in\Gamma_k$. Therefore, there
exists hyperbolic geodesic segments joining $z_0$ to $\gamma\cdot z_0$ for every $\gamma\in\Psi^k_r$. Note that, since we have
$\tr{dist}(z_0,\gamma_1\gamma_2\cdot z_0)=\tr{dist}(\gamma_1^{-1}\cdot z_0,\gamma_2\cdot z_0)$ and $\dgamma=\dgammainv$, all of the hyperbolic displacements under the isometries in $\Gamma_k$ are realised by the geodesic line segments 
joining the points $\{z_0\}\cup\{\gamma\cdot z_0\co\gamma\in\Psi^k_r\}$. 

Let us enumerate the elements of $\Psi_r^k$ for some index set $I$ in $\mathbb{N}$. Let $P_0=z_0$ and $P_i=\gamma_i\cdot z_0$ for every $i\in I$.  Let $\Delta_{ij}=\triangle{P_iP_0P_j}$ denote the geodesic triangle with vertices $P_i$, $P_0$ and $P_j$. The value $f_{z_0}^{k}(\xi_0,\eta_0)$ is the unique longest side length of $\Delta_{ij}$ for some $i,j\in I$. We shall denote these geodesic triangles with $\widetilde{\Delta}_{ij}$ and their vertices by $\widetilde{P}_i$, $P_0$ and $\widetilde{P}_j$. There are two cases to consider: (1) all of $\widetilde{\Delta}_{ij}$ are acute or (2) there exists at least one $\widetilde{\Delta}_{ij}$ which is not acute.

Assume that the latter (2) is the case (In the rest of the argument we shall use figures from $k=2$ case for illustrations). 
Fix one of the non-acute geodesic triangles $\widetilde{\Delta}_{ij}$ and denote it by $\Delta$. If $\widetilde{P}_i$ lies in $\gamma$, we let $P_i^{(l)}$ be a sequence of points in the interior of $\gamma$ so that $P_i^{(l)}\to\widetilde{P}_i$. Let $\gamma$ denote the longest edge of $\Delta$. By the hyperbolic law of sines, $\gamma$ is opposite to the non--acute angle. Let $P_j^{(l)}=\widetilde{P}_j$ and $P_0^{(l)}=P_0$ for every $l\in\mathbb{N}$. Otherwise, we let $P_j^{(l)}$ be a sequence of points in the interior of $\gamma$ so that $P_j^{(l)}\to\widetilde{P}_j$ and define $P_i^{(l)}=\widetilde{P}_i$ and $P_0^{(l)}=P_0$ for every $l\in\mathbb{N}$. 
\begin{figure}[h!]
\centering 
\includegraphics[scale=.65]{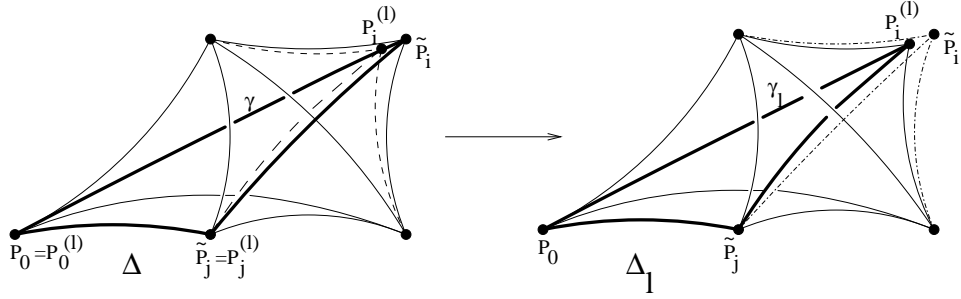}
\caption{Moving along $\gamma$ in case (2).}
\label{fig1}
\end{figure}

Let $\Delta_l$ be the geodesic triangle contained in $\Delta$ with vertices $P_0^{(l)}$, $P_i^{(l)}$ and $P_j^{(l)}$. By the construction, the unique longest side $\gamma_l$ of $\Delta_l$ is contained in $\gamma$ for all but finitely many $l$. Let $\{\xi_l\}$ be a sequence of isometries such that $\xi_l\to\xi_0$ and $\xi_l^{-1}\cdot z_0=P_i^{(l)}$. Similarly, let $\{\eta_l\}$ be a sequence of isometries such that $\eta_l\to\eta_0$ and $\eta_l\cdot z_0=P_j^{(l)}$. Then we have $(\xi_l,\eta_l)\in\mathfrak{GF}$ for all but finitely many $l$ and $f_{z_0}^{k}(\xi_l,\eta_l)=length(\gamma_l)<f_{z_0}^{k}(\xi_0,\eta_0)$, a contradiction.

Assume that all of $\Delta_{ij}$ are acute (1). Fix one of $\Delta_{ij}$ and call it $\Delta$. Then the perpendicular arc $\gamma_i$ from $\widetilde{P}_i$ to the geodesic containing $P_0$ and $\widetilde{P}_j$ meets it in the interior of the edge of $\Delta$ opposite to $\widetilde{P}_i$. Let $P_i^{(l)}$ be a sequence of points in the interior of $\gamma_i$ so that $P_i^{(l)}\to\widetilde{P}_i$. For each $l$, we see that
\begin{figure}[h!]
\centering
\includegraphics[scale=.65]{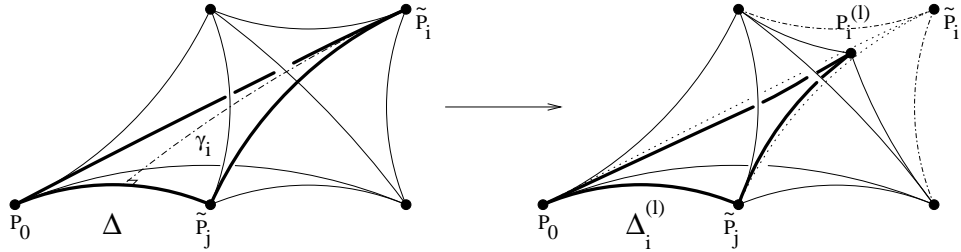}
\caption{Moving along $\gamma_i$ in case (1).}
\label{fig2}
\end{figure}
$d(P_i^{(l)},P_0)<d(\widetilde{P}_i,P_0)$ by applying the hyperbolic law of cosines to the right triangle containing $P_i^{(l)}$, $P_0$ and a sub--arc of $\gamma_i$. Similarly, we have $d(P_i^{(l)},\widetilde{P}_j)<d(\widetilde{P}_i,\widetilde{P}_j)$.

The geodesic triangle $\Delta_i^{(l)}$ with vertices $P_0$, $P_i^{(l)}$ and $\widetilde{P}_j$ is itself acute. This is because its angles at $P_0$ and $\widetilde{P}_j$ are less than those of $\Delta$. Also the angle of $\Delta$ at $\widetilde{P}_i$ is the limit of the angles at $P_i^{(l)}$. This implies that the perpendicular arc $\gamma_i^{(l)}$ from $\widetilde{P}_j$ to the geodesic containing $P_0$ and $P_i^{(l)}$ meets this geodesic inside of $\Delta_i^{(l)}$. 
\begin{figure}
\centering
\includegraphics[scale=.65]{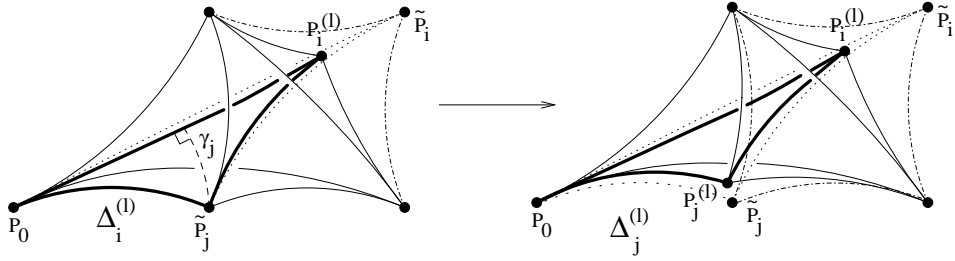}
\caption{Moving along $\gamma_j$ in case (1).}
\label{fig3}
\end{figure}
Let $P_j^{(l)}$ be the point on $\gamma_j^{(l)}$ at distance $1/l$ from $\widetilde{P}_j$. We find that $d(P_j^{(l)},P_0)<d(\widetilde{P}_j,P_0)$ and $d(P_j^{(l)},P_i^{(l)})<d(\widetilde{P}_j,P_i^{(l)})<d(\widetilde{P}_j,\widetilde{P}_i)$ by the hyperbolic law of cosines. As a result we obtain a triangle with vertices at $P_0$, $P_i^{(l)}$ and $P_j^{(l)}$ so that all edge lengths are less than those of $\Delta$. Let $\{\xi_l\}$ and $\{\eta_l\}$ be the sequences such that $\xi_l^{-1}\cdot z_0=P_i^{(l)}$ and $\eta_l\cdot z_0=P_j^{(l)}$. Then we have $f_{z_0}^{k}(\xi_l,\eta_l)<f_{z_0}^{k}(\xi_0,\eta_0)$ for all but finitely many $l$, a contradiction. So the claim is proved.

We know that the set of $(\xi,\eta)$ such that $\langle\xi,\eta\rangle$ is free, geometricly infinite and without parabolics is dense in $\overline{\mathfrak{GF}}-\mathfrak{GF}$. We also know that  every $(\xi,\eta)\in\mathfrak{X}$ with  $\langle\xi,\eta\rangle$ is free and without parabolic is in $\overline{\mathfrak{GF}}$ \cite[Propositions 9.3 and 8.2]{CSParadox}. This reduces geometrically finite case to geometrically infinite case. Finally, the conclusion of the theorem follows from the fact that $(\xi_0,\eta_0)\in\mathfrak{GF}$.
\end{proof} 

Notice that all of the arguments used in this paper to prove  \fullref{thm4.1} can be carried out in a more general setting; in particular in the case $\Gamma=\langle\xi_1,\dots,\xi_n\rangle$ is a purely loxodromic, finitely generated free Kleinian group for $n\geq 2$. In fact we can propose  
\begin{conjecture}
If $\Gamma_{k,n}$ is the set of all isometries of length at most $k\geq 2$ in $\Gamma$, then 
 $\max_{\gamma\in\Gamma_{k,n}}\{\textnormal{dist}(z_0,\gamma\cdot z_0)\}\geq 0.5\log((2n-1)(2n(2n-1)^{k-1}-1))$  for any $z_0\in\hyp$.
\end{conjecture}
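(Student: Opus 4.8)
The plan is to follow exactly the strategy already used for the rank-$2$ case, carrying each of the earlier results through the generalization $\Gamma=\langle\xi_1,\dots,\xi_n\rangle$ with $|\Xi\cup\Xi^{-1}|=2n$ in place of $4$. First I would set up the symmetric decomposition $\Gamma_{\mathcal{D}^{k,n}}$ of $\Gamma$ as in \fullref{S2}: the set $\Psi^k$ of words of length exactly $k$ now has cardinality $d=2n(2n-1)^{k-1}$, it splits into $2n$ groups $\Psi_{\gamma}^k$ indexed by $\gamma\in\Xi\cup\Xi^{-1}$, and \fullref{lem22} generalizes verbatim (the bookkeeping of cancellations is identical, only $4$ becomes $2n$ and $3$ becomes $2n-1$), giving $R_{k,n}=2n\cdot r_{k,n}\cdot(2n-1)^{k-1}$ group--theoretical relations with the same recursion for $r_{k,n}$. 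Then \fullref{thm3.4} goes through unchanged because its proof only uses freeness, tameness, and the existence of a $\Gamma$--invariant $D$--conformal density; these hold for any finitely generated purely loxodromic free Kleinian group. Consequently \fullref{lem1.2} produces, via the same substitution $\nu=\nu_{s(\gamma)}$, $a=\nu_{s(\gamma)}(S_\infty)$, $b=\int(\lambda_{\gamma,z_0})^2\,d\nu_{s(\gamma)}$, a family $\mathcal{G}^{k,n}$ of displacement functions on $\Delta^{d-1}$, among which is the distinguished subfamily $\mathcal{F}^{k,n}=\{f_i\}_{i\in I^{k}}$ with $f_i(\tb{x})=\sigma(\Sigma_j(\tb{x}))\sigma(x_i)$, where now $\Sigma_j$ sums over one of the $2n$ blocks $I_j$ of size $(2n-1)^{k-1}$.

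Next I would reprove the convexity package. \fullref{lemtwo} generalizes immediately: $f_i>1$ everywhere, $f_i\to\infty$ on sequences approaching $\partial\Delta^{d-1}$, so $\alpha_*=\min F^{k,n}$ is attained, and evaluating at the barycenter $\ys=(1/d,\dots,1/d)$ gives $\Sigma_j(\ys)=1/(2n)$ (since there are $2n$ blocks) and $f_i(\ys)=(2n-1)(2n(2n-1)^{k-1}-1)$, so $\alpha_*\in[1,(2n-1)(2n(2n-1)^{k-1}-1)]$. \fullref{convex2}, \ref{convex1} and \ref{convex3} are statements about the two fixed two--variable functions $f(x,y)$ and $g(x,y)$ and the sets $C_f$, $C_g$, which do not depend on $n$ at all, so they transfer without change. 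The sets $C_{f_i}$ defined in (\ref{C1}), (\ref{C2}) have the same form, and the barycenter computation showing $\ys\in\cap C_{f_i}$ still works: $\Sigma_j(\ys)=1/(2n)\le 1/4$ and $\Sigma_j^i(\ys)=1/(2n)-1/(2n(2n-1)^{k-1})$, and the inequalities $7\Sigma_j(\ys)+(18-8\sqrt2)y_i<3+\sqrt2$ and $\Sigma_j^i(\ys)+2y_i-\Sigma_j^i(\ys)y_i-y_i^2<3/4$ hold a fortiori for $n\ge2$. The analogues of Lemmas \ref{unique1}--\ref{unique8} — the technical heart — would be carried out exactly as before: the case analyses (the splittings (\ref{ABC}), (\ref{1-3}), (\ref{1-7}), (\ref{1-7-2}), etc.) are driven by comparing block-sums to thresholds of the form $M/j$, and the contradictions come from summing lower bounds $x_l^*\ge X(k)$ over elements of the blocks. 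The only change is that the number of summands increases (there are now $2n-1$ elements per congruence class per block instead of the fixed small counts in tables (\ref{keven})/(\ref{kodd})), which only makes the sums larger, so every contradiction survives; \fullref{uniquef} then gives $\xs\in\cap_{i=1}^d C_{f_i}$, and \fullref{convex4}, \ref{convex5}, \ref{convex6} give the uniqueness of $\xs$ as in \fullref{unique9}.

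Finally, I would reprove \fullref{minF} and \fullref{minG} in the general setting. Uniqueness of $\xs$ means $\xs$ is fixed by every permutation of $\{1,\dots,d\}$ that preserves $\mathcal{F}^{k,n}$; the symmetric group acting on $\Xi\cup\Xi^{-1}$ (order $(2n)!$, or at least the subgroup generated by the analogues of $\tau_1,\tau_2,\tau_3$ that permute the blocks $I_j$ among themselves while respecting the congruence-class structure) acts on $\Delta^{d-1}$ and fixes $\xs$, forcing $x_i^*=x_j^*$ for all $i,j$, hence $x_i^*=1/d$ and $\alpha_*=F^{k,n}(\xs)=(2n-1)(2n(2n-1)^{k-1}-1)$. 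The final step, as in \fullref{minG}, checks that every other function in $\mathcal{G}^{k,n}\setminus\mathcal{F}^{k,n}$ takes a value at $\xs$ strictly below $\alpha_*$: a relation with $length(\gamma s(\gamma))=m$ has $|S(\gamma)|=2n(2n-1)^{k-1}-(2n-1)^{k-m}$, so the corresponding $g(\xs)=\sigma(|S(\gamma)|/d)\sigma(1/d)$, a short computation giving a value $<\alpha_*$. Then $G^{k,n}(\xs)=F^{k,n}(\xs)=\alpha_*$ and $G^{k,n}\ge F^{k,n}$ everywhere, so $\inf G^{k,n}=\alpha_*$. Combined with the geometrically finite $\to$ infinite reduction of \fullref{thm4.1} (whose proof uses only the hyperbolic trigonometry of moving vertices of geodesic triangles and Culler--Shalen's density/deformation results \cite[Propositions 9.3, 8.2]{CSParadox}, all valid for $n\ge2$), this yields the conjectured bound $\max_{\gamma\in\Gamma_{k,n}}\{\dgamma\}\ge 0.5\log((2n-1)(2n(2n-1)^{k-1}-1))$. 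The main obstacle I anticipate is purely the combinatorial bookkeeping in the analogues of Lemmas \ref{unique1}--\ref{unique8}: one must verify that, for every $n\ge2$, every numerical inequality used to derive a contradiction (there are dozens, currently checked by hand for $k=2,3$ with explicit decimals) remains valid when the fixed small integer counts are replaced by the $n$-dependent counts $m(k)$, $n(k)$ times the appropriate multiplicities — this is conceptually routine, since enlarging the sums only helps, but writing it cleanly and uniformly in $n$ is the real work.
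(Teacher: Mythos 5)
Your proposal is, in outline, exactly the route the paper itself sketches at the end of \fullref{sec4}: a symmetric decomposition $\Gamma_{\mathcal{D}^{k,n}}$, an analogue of \fullref{lem22} counting relations, an analogue of \fullref{thm3.4} and \fullref{dispfunc} producing $\mathcal{G}^{k,n}\supset\mathcal{F}^{k,n}$, convexity and uniqueness of the minimizer, the symmetry argument forcing $x_i^*=1/d$, and the geometrically--finite--to--infinite reduction. But note that the paper states this result only as a conjecture and explicitly leaves the details ``to future studies''; what you have written is the same plan, not a proof, and the point where it stops being a proof is the same point the paper stops: the analogues of \fullref{unique1}--\ref{unique8}, i.e.\ the verification that the minimizer $\xs$ actually lies in $\cap_{i}C_{f_i}$, where all of the convexity machinery becomes applicable.

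The concrete gap is your claim that in those lemmas ``enlarging the sums only helps, so every contradiction survives.'' That monotonicity heuristic is not justified, and on inspection it is doubtful. The contradictions in \fullref{unique1}--\ref{unique8} are of the form (number of indices in a congruence class)$\times$(a per-coordinate lower bound such as $X(k)=\sigma(M/c)/(\alpha(k)+\sigma(M/c))$) plus fixed constants $>1$. For general $n$ the constant $\alpha(k)=12\cdot3^{k-1}-3$ is replaced by $\alpha_{k,n}=(2n-1)(2n(2n-1)^{k-1}-1)$, so each lower bound of type $X$ shrinks roughly like $1/\alpha_{k,n}$, while the number of summands per class grows only like $(2n-1)^{k-1}\approx d/(2n)$; the products therefore behave like $c/(2n)$ and \emph{decrease} with $n$, and in addition the case thresholds ($M/3$, $M/4$, the sets in (\ref{1-7}), (\ref{1-7-2}), \dots) must themselves be replaced by $n$-dependent fractions, since there are now $2n$ blocks $\Sigma_j$ rather than $4$. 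So none of the dozens of numerical inequalities transfers automatically from $k=2,3$, $n=2$; each must be re-derived as an inequality uniform in both $k$ and $n$, and it is not evident in advance that they all hold. The same caveat applies to the mod-$4$ bookkeeping in the tables (\ref{keven}), (\ref{kodd}) and to the permutations $\Theta_1,\Theta_2,\Theta_3$ in \fullref{minF}: for $2n$ letters the congruence structure and the generating set of symmetries fixing $\mathcal{F}^{k,n}$ have to be rebuilt, not merely relabeled. Until that quantitative work is actually carried out, your argument establishes the conjecture only modulo the same unproved technical core that the paper acknowledges.
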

We conclude this paper with a proof sketch for this conjecture. Details of the arguments outlined below will be left to the future studies. 

We consider the cases in (\ref{I}) and (\ref{II}). In the case $\Gamma=\langle\xi,\dots,\xi_n\rangle$ is geometrically infinite, we use symmetric decomposition $\Gamma_{\mathcal{D}^{k,n}}$ of $\Gamma$, where $\mathcal{D}^{k,n}=(\Psi^{k,n},\Psi^{k,n}_r)$ is defined in (\fullref{fgsymd}). Above $\Psi^{k,n}$ is the set of words of length $k$ and $\Psi^{k,n}_r$ is the set of words of length less than $k$.  

Let $d=2n\cdot (2n-1)^{k-1}$ and $R_{k,n}=k+(2n-2)\sum_{l=1}^{k-1}\sum_{s=0}^{\min\{l,k-l\}}(2n-1)^{s-1}$. It is possible to prove an analog of \fullref{lem22} stating that there are $d\cdot R_{k,n}$ many group--theoretical relations for the decomposition $\Gamma_{\mathcal{D}^{k,n}}$. Using these group--theoretical relations an analog of  \fullref{thm3.4} can be stated. This gives the decomposition of the area measure $A_{z_0}$ corresponding to the symmetric decomposition $\Gamma_{\mathcal{D}^{k,n}}$ of $\Gamma$. Then using  \fullref{lem1.2} we prove an analog of  \fullref{dispfunc} which provides a set $\mathcal{G}^{k,n}$  of  $d\cdot R_{k,n}$ many displacement functions so that only a set $\mathcal{F}^{k,n}$ of $d$ many of which are significant to compute the infimum of the maximum of the functions in $\mathcal{G}^{k,n}$ on the simplex $\Delta^{d-1}$. 

As in  \fullref{minF} and \ref{minG} the lower bounds proposed in the conjecture are a consequence of the uniqueness of the point $\xs\in\Delta^{d-1}$ at which the infimum of the maximum of the displacement functions in $\mathcal{F}^{k,n}$ is attained. The uniqueness of $\xs$ is implied by a statement  similar to \fullref{uniquef} stating that there exists a strictly convex set $C$ in $\Delta^{d-1}$ containing $\xs$ such that each displacement function in $\mathcal{F}^{k,n}$ is strictly convex on $C$. Since the infimum of the maximum of the functions in  $\mathcal{F}^{k,n}$ is itself convex on $C$, the uniqueness of $\xs$ follows from some standard facts in convex analysis. Using all of the bijections of $\Delta^{d-1}$ fixing the set $\mathcal{F}^{n,k}$ we derive that all of the coordinates of $\xs$ are equal. Then a simple computation gives the lower bounds in the conjecture completing the proof in the case (\ref{I}).

In the case (\ref{II}) $\Gamma=\langle\xi_1,\dots,\xi_n\rangle$ is geometrically finite, the assertion of the conjecture can be proved along the same lines as in the proof of  \fullref{thm4.1}.

%
%
%
%

\end{document}